\newcommand\vv{\vec{v}}
\newcommand\y{\vec{y}}
\newcommand\x{\vec{x}}
\newcommand\uu{\vec{u}}
\newcommand\CP{\mathbb{CP}}
\newcommand\ep{\varepsilon}
\newcommand\lla{\left\langle}
\newcommand\rra{\right\rangle}
\newcommand\red[1]{\color{red}#1\color{black}}
\newcommand\yellow[1]{\color{yellow}#1\color{black}}
\newcommand\FG[1]{\color{ForestGreen}#1\color{black}}
\newcommand\violet[1]{\color{black}#1\color{black}}
\newcommand\Navy[1]{\color{black}#1\color{black}}
\newcommand\NavyBlue[1]{\color{NavyBlue}#1\color{black}}
\newcommand\cube{\raisebox{-1pt}{\includegraphics[height=8pt]{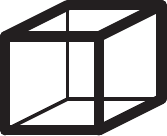}}\hspace{1pt}}
\theoremstyle{plain}
\newtheorem{theorem}{Theorem}[section]
\newtheorem{lemma}[theorem]{Lemma}
\newtheorem{obs}[theorem]{Observation}
\newtheorem{prop}[theorem]{Proposition}
\newtheorem{cor}[theorem]{Corollary}
\newtheorem*{Conj:n04}{Conjecture \ref{Conj:n04}}
\newtheorem*{P:n24}{Proposition \ref{P:n24}}
\newtheorem*{Q:n24}{Question \ref{Q:n24}}
\newtheorem*{T:Main}{Theorem \ref{T:Main}}
\newtheorem*{P:SmoothBoth}{Proposition \ref{P:SmoothBoth}}
\newtheorem*{P:Smooth}{Proposition \ref{P:Smooth}}
\newtheorem*{P:SmoothEven}{Proposition \ref{P:SmoothEven}}
\newtheorem*{C:SmoothEven}{Corollary \ref{C:SmoothEven}}
\newtheorem*{P:SmoothEfficient1}{Proposition \ref{P:SmoothEfficient1}}
\newtheorem*{C:SmoothEfficient2}{Corollary \ref{C:SmoothEfficient2}}
\newtheorem*{C:SOdd}{Corollary \ref{C:SmoothHandleOdd}}
\newtheorem*{C:SEven}{Corollary \ref{C:SmoothHandleEven}}
\newtheorem*{L:XI}{Lemma \ref{L:XI}}
\theoremstyle{definition}
\newtheorem{convention}[theorem]{Convention}
\newtheorem{notation}[theorem]{Notation}
\newtheorem{definition}[theorem]{Definition}
\newtheorem{question}{Question}
\newtheorem{conjecture}[question]{Conjecture}
\newtheorem{example}[theorem]{Example}
\theoremstyle{remark}
\newtheorem*{rem}{Remark}
\begin{document}

\title[Multisections of tori]{Efficient multisections of odd-dimensional tori}

\author{Thomas Kindred}

\address{Department of Mathematics and Statistics, Wake Forest University \\
Winston Salem, NC 27109, USA} 

\email{kindret@wfu.edu}
\urladdr{www.thomaskindred.com}

\begin{abstract}
Rubinstein--Tillmann generalized the notions of Heegaard splittings of 3-manifolds and trisections of 4-manifolds by defining {\it multisections} of PL $n$-manifolds, which are decompositions into $k=\lfloor n/2\rfloor+1$ $n$-dimensional 1-handlebodies with nice intersection properties. 
For each odd-dimensional torus $T^n$, we construct a multisection which is {\it efficient} in the sense that each 1-handlebody has genus $n$, which we prove is optimal; each multisection is {\it symmetric} with respect to both the permutation action of $S_n$ on the indices and the $\Z_k$ translation action along the main diagonal. We also construct such a trisection of $T^4$, lift all symmetric multisections of tori to certain cubulated manifolds, and obtain combinatorial identities as corollaries.
\end{abstract}

\maketitle

\section{Introduction}\label{S:Intro}

Every closed 3-manifold\footnote{Unless stated otherwise, all {\bf manifolds} are piecewise-linear (PL), compact, connected, and orientable. A manifold $X$ is {\bf closed} if $\partial X=\varnothing$. A general reference is \cite{rs}.} $X$ admits a decomposition into two 3-dimensional 1-handlebodies\footnote{A ${d}$-dimensional {\bf $\boldsymbol{h}$-handlebody} is a $d$-manifold obtained by gluing $d$-dimensional $r$-handles for various $r=0,\hdots, h$. Since we work in the PL category, the gluing maps must be PL and the attaching regions must be PL submanifolds.} glued along their boundaries. 
Gay--Kirby extended this classical notion of {\it Heegaard splittings} by proving that every closed 4-manifold admits a {\it trisection}, i.e. a decomposition $X=\bigcup_{i\in\Z_3}X_i$ where each $X_i$ is a 4-dimensional 1-handlebody, each $X_i\cap X_{i+1}$ is a 3-dimensional 1-handlebody, and $X_0\cap X_1\cap X_2$ is a closed surface. 
Rubinstein-Tillmann \cite{rt} then extended these decompositions to arbitrary dimension by proving that every closed (PL) manifold of arbitrary dimension admits a {\it PL multisection}:
 
\begin{definition}\label{Def:Multi}
A {\it PL multisection} of a closed manifold $X$ of dimension $n=2k-1$ (resp. $2k-2$) is a decomposition $X=\bigcup_{i\in\Z_k}X_i$, where:
\begin{itemize}
\item Each $X_i$ is an $n$-dimensional 1-handlebody.
\item 
$\bigcap_{i\in \Z_k}X_i$ is a closed $(n+1-k)$-dimensional submanifold.
\item $\bigcap_{i\in I}X_i$ is an $(n+1-|I|)$-dimensional $|I|$- (resp. ($|I|-1$)-) handlebody for each $I\subset\Z_k$ with $2\leq |I|\leq k-1$.\footnote{Rubinstein-Tillmann state this condition differently, requiring that each $\bigcap_{i\in I}X_i$ is an $(n+1-|I|)$-dimensional submanifold with an $|I|$- (resp. $(|I|-1)$-) dimensional spine, where a {spine} of a manifold $N$ is a subpolyhedron $P\subset\text{int}(N)$ onto which $N$ collapses. Certainly any $h$-handlebody has an $h$-dimensional spine. Conversely, given a spine $P$ of $N$, we may assume that $N$ is triangulated and $P$ is a simplicial subcomplex which admits no elementary collapses; then $N$ is PL homeomorphic to a regular neighborhood $R$ of $P$ in $N$, and $R$ has handle decomposition consisting of one $r$-handle for each $r$-simplex in $P$.}
\end{itemize}
\end{definition}


One may define {\it smooth} multisections of {\it smooth} manifolds analogously: the only extra condition is that for each nonempty $I\subset \Z_k$, the inclusion of $X_I=\bigcap_{i\in I}X_i$ into $X$ is a smooth embedding, with corners.\footnote{More precisely, for nonempty $I\subset \Z_k$, the set of corner points of $X_I$ must be:
\centerline{$\displaystyle{\text{corners}(X_I)=\bigcup_{i,j\notin I;~i\neq j}X_I\cap X_i\cap X_j.}$}} Lambert-Cole--Miller proved that every smooth 5-manifold admits a smooth trisection \cite{lcm}. 
In dimensions $n\geq 6$, the topic is wide open. In particular:

 \begin{question}\label{Q:1}
 Does every closed smooth manifold of arbitrary dimension admit a smooth multisection?
 \end{question}

The distinction between PL multisections and smooth ones comes down to that of PL and smooth handle decompositions.\footnote{Note that, while any smooth structure determines a (smooth) handle decomposition, and conversely, a PL handle decomposition does not necessarily determine a smooth structure.}  This is because any PL multisection $X=\bigcup_{i\in \Z_k}X_i$ gives rise to a nice PL handle decomposition (see Proposition \ref{P:SmoothBoth}) coming from handle decompositions of the various $X_I$; requiring each inclusion $X_I\hookrightarrow X$ to be smooth (with corners) ensures that the gluings in this handle decomposition are smooth. Henceforth, unless stated otherwise, all {multisections} are PL.



The topology of a closed manifold $X$ of dimension $n\neq 2$ bounds the {\it efficiency} of its multisection as follows. 
Let $g(X_i)$ denote the {\bf genus} of $X_i$.\footnote{$X_i$ is an $n$-dimensional 1-handlebody, so we have $X_i\cong \natural^g(S^1\times D^{n-1})$ for some $g=g(X_i)$. (Throughout, we denote PL homeomorphism by $\cong$.)} 

\begin{definition}\label{D:Efficiency}
The {\bf efficiency} of a multisection $X=\bigcup_{i\in\Z_k}X_i$ is 
\[\frac{1+\text{rank }\pi_1(X)}{1+\max_ig(X_i)}.\]
A multisection is {\bf efficient} if its efficiency is 1. 
\end{definition}
We will show:
\begin{C:SmoothEfficient2}
In any dimension $n\neq 2$, no multisection of any manifold has efficiency greater than 1, and in any efficient multisection $X=\bigcup_{i\in \Z_k}X_i$, all $X_i$ have the same genus, $g(X_i)= \textnormal{rank }\pi_1(X)$.%
\footnote{In dimension two, efficiency is strictly bounded above by 2; this bound is sharp, since any surface of even genus $g$ admits a multisection with efficiency $\frac{1+2g}{1+g}$.}
\end{C:SmoothEfficient2}

This notion of an {\it efficient} multisection generalizes a notion introduced by Lambert-Cole--Meier in \cite{lcm}.  They call a trisection of a simply-connected 4-manifold $X$ {\it efficient} if the genus of the central surface $\Sigma$ equals $b_2(X)$. Indeed, one always has $g(\Sigma)\leq b_2(X)$, and equality holds if and only if each piece of the trisection is a 4-ball.  

We close the introduction with an outline of the paper.  
\begin{itemize}[label=$\bullet$]
\item \textsection\ref{S:Smooth} establishes several general properties of multisections.

\item \textsection\ref{S:Motivation} begins a detailed investigation of multisections of {\it odd-dimensional tori}, starting with detailed descriptions the multisections of $T^n$ for $n=3,4,5$.
 Roughly stated, the main result is:

\begin{T:Main}
Each $n=(2k-1)$-torus admits an efficient multisection which is symmetric with respect to the $S_n$ permutation action on the indices and the $\Z_k$ translation action along the main diagonal.
\end{T:Main}

\noindent The full version of Theorem \ref{T:Main} gives a simple expression (\ref{E:X0ThmIntro}) for each piece $X_i$ of this multisection. The hard part is describing a handle decomposition of $X_I=\bigcap_{i\in I}X_i$ for arbitrary $n$ and $I\subsetneqq \Z_k$. 

\item \textsection\ref{S:StarShaped} introduces three types of building blocks; under our main construction, each handle of each $X_I$ 
will be a product of such blocks. 

\item \textsection\ref{S:Examples2} describes further examples of $X_I$ under our construction, each featuring a new complication in its handle decomposition.

\item \textsection\ref{S:Cutoff} proves several combinatorial facts about our main construction. In particular, \textsection\ref{S:DisjointCover} proves that $T^n=\bigcup_{i\in\Z_k}X_i$, and \textsection\ref{S:EXI} establishes a closed expression (\ref{E:XI}) for arbitrary $X_I$. Also, \textsection\ref{S:CombCor} establishes two combinatorial corollaries, which may be of independent interest.

\item \textsection\ref{S:MainGen} describes a handle decomposition of arbitrary $X_I$ from our main construction, confirms the details of this decomposition, shows that the central intersection $\bigcap_{i\in\Z_k}X_i$ is a closed $k$-manifold, and puts everything together to prove Theorem \ref{T:Main}

\item \textsection\ref{S:Cube} extends Theorem \ref{T:Main} to certain cubulated manifolds.  

\item Appendix 1 features tables, several detailing follow-up examples for the complications introduced in \textsection\textsection\ref{S:Motivation} and \ref{S:Examples2}, others detailing aspects of the handle decomposition described in \textsection\ref{S:Handles}. 

\item Appendix 2 describes four other ways one might try to multisect $T^n$. 

\end{itemize}

{\bf Thank you} to Mark Brittenham, Charlie Frohman, Hugh Howards, Peter Lambert-Cole, and Maggie Miller for helpful discussions.  Thank you to the anonymous referee for numerous suggestions to improve the clarity and exposition of the paper.  Special thank you to Alex Zupan for helpful discussions throughout the project, especially during its early stages, when we collaborated to find efficient trisections of $T^4$ and $T^5$.

\section{Multisections and their efficiency}\label{S:Smooth}

In this section, we describe a way of obtaining a (PL) handle decomposition of a manifold given a multisection (see Proposition \ref{P:SmoothBoth}), and we deduce, with the exception of 2-manifolds, that no multisection has efficiency greater than 1 (see Corollary \ref{C:SmoothEfficient2}).  
We begin, however, by describing examples of multisections in arbitrary dimension.

\subsection{Simple examples of multisections}

\begin{example}
For $n=2k-1$, the $n$-sphere 
\[S^n=\partial \prod_{i=0}^{k-1}D^2=\bigcup_{i=0}^{k-1}
\left(
\prod_{j=0}^{i-1}D^2
\times S^1\times
\prod_{j=i+1}^{k-1}D^2
\right)
\]
admits a multisection in which each 
\[X_i=
\prod_{j=0}^{i-1}D^2
\times S^1\times
\prod_{j=i+1}^{k-1}D^2
\]
is an $n$-dimensional 1-handlebody of genus 1. In dimension 3, this is the genus 1 Heegaard splitting of $S^3=D^2\times D^2$ with central surface $S^1\times S^1$. In arbitrary dimension $n$, the central intersection is the $k$-torus $\prod_{j=0}^{k-1}S^1$, and more generally, for each $I\subset \Z_k$ with $1\leq |I|=\ell\leq k-1$, the intersection
\[X_I=\bigcap_{j\in I}X_i=
\prod_{j=0}^{k-1}\left.\begin{cases}
S^1&j\in I\\
D^2&j\notin I \\
\end{cases}\right\}\cong \prod_{j=0}^{\ell-1} S^1\times \prod_{j=\ell}^{k-1}D^2\cong T^\ell\times D^{2(k-\ell)}
\]
is a thickened $\ell$-torus.
In dimension 5, Lambert-Cole--Miller use this construction and a second trisection of $S^5$, whose central intersection is a 3-sphere rather than a 3-torus, to show that, unlike Heegaard splittings of 3-manifolds and trisections of 4-manifolds, trisections of a given 5-manifold need not be stably equivalent \cite{lcm}.
\end{example}

\begin{example}\cite{rt}
Using homogeneous coordinates $[z_0:\cdots:z_{k-1}]$ on $\CP^{k-1}$, one can define a multisection by 
\[X_i=\left\{[z_0:\cdots:z_{k-1}]~\left|~|z_i|\geq |z_j|\text{ for }j=0,\hdots,{k-1}\right\}\right..\]
Then each $X_I$ with $|I|=\ell$ is related by permutation to a thickened torus
\begin{equation*}
\begin{split}
\bigcap_{i=0}^{\ell-1}X_i&=\left\{[1:z_1:\cdots:z_{k-1}]~\left|~\begin{matrix}|z_j|=1\text{ for }j=1,\hdots,\ell-1,\\ |z_j|\leq 1\text{ for }j=\ell,\hdots,k-1\end{matrix}\right.\right\}\\
&\cong T^{\ell-1}\times D^{2(k-\ell)}.
\end{split}
\end{equation*}
In particular, the central intersection is the $k$-torus
\[\left\{[1:z_1:\cdots:z_{k-1}]:~|z_1|=\cdots=|z_{k-1}|=1\right\}.\]
These symmetric multisections are also efficient, since each $X_i$ has genus 0.
\end{example}

\subsection{General properties of multisections}

\begin{prop}\label{P:4}
Let $Z_i$ be an $n$-dimensional $h_i$-handlebody, $i=1,2$, and let $\phi:Y_1\to Y_2$ glue compact $Y_i\subset \partial Z_i$, such that $Y_1\cong Y_2$ is an $h$-handlebody. 
Then $Z=Z_1\cup_\phi Z_2$ is an $h'$-handlebody for $h'=\max\{h_1,h_2,h+1\}$.
\end{prop}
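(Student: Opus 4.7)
The plan is to build a handle decomposition of $Z$ whose handles all have index $\leq h'$, by combining the given handle decompositions of $Z_1$, $Z_2$, and $Y:=Y_1\cong Y_2$.

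First I would set up a bicollar: fix collars of $Y_i\subset\partial Z_i$ in each $Z_i$, which glue via $\phi$ to form $Y\times[-1,1]\subset Z$ with $Y=Y\times\{0\}$. Pushing these collars off, $Z$ is PL equivalent to $Z_1\sqcup Z_2$ with a copy of $N:=Y\times[-1,1]$ attached along $Y\times\{-1,1\}$. Fix handle decompositions of $Z_1,Z_2,Y$ with index bounds $h_1,h_2,h$, and equip $N$ with the product handle decomposition in which each $r$-handle $h^r=D^r\times D^{n-1-r}$ of $Y$ thickens to the $n$-dimensional cell $h^r\times I$.

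The main step is to verify that, after first attaching all handles of $Z_1\sqcup Z_2$ (which yields a $\max\{h_1,h_2\}$-handlebody), we can then attach the thickenings in order of increasing index $r$, and that each thickened $r$-handle is realized as an $(r+1)$-handle of $Z$. Its attaching region splits into two pieces: the two caps $D^r\times D^{n-1-r}\times\{\pm 1\}$, which glue into $\partial Z_1$ and $\partial Z_2$; and the thickened attaching region $\partial D^r\times D^{n-1-r}\times I$, which glues onto previously attached thickenings of lower-index handles of $Y$. These combine as
\[\bigl(\partial D^r\times I\,\cup\,D^r\times\partial I\bigr)\times D^{n-1-r}\;\cong\;S^r\times D^{n-1-r},\]
the attaching region of an $(r+1)$-handle in dimension $n$. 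Summing contributions, $Z$ has a handle decomposition with indices at most $\max\{h_1,h_2,h+1\}=h'$, as required.

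The main obstacle I anticipate is bookkeeping the PL attachments: at each stage, one must ensure that the described attaching region sits inside the boundary of the previously attached region as a properly PL-embedded $S^r\times D^{n-1-r}$, which uses both the index ordering and the compatibility of the bicollar with the handle decomposition of $Y$. A quick sanity check is the base case $r=0$: a $0$-handle $D^{n-1}$ of $Y$ thickens to $D^n$ attached along $D^{n-1}\sqcup D^{n-1}=S^0\times D^{n-1}$, which is exactly the attaching region of a $1$-handle.
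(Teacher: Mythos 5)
Your proof is correct and takes essentially the same approach as the paper: both cut out a collar neighborhood $N\cong Y\times I$ of $Y$ (leaving $Z\setminus\mathrm{int}(N)\cong Z_1\sqcup Z_2$, a $\max\{h_1,h_2\}$-handlebody) and then re-glue the thickened $r$-handles of $Y$ along $\partial(D^r\times I)\times D^{n-1-r}\cong S^r\times D^{n-1-r}$, realizing each as an $(r+1)$-handle of $Z$. Your added care about attaching in order of increasing index and checking that the attaching region is a properly embedded $S^r\times D^{n-1-r}$ fleshes out details the paper leaves implicit, but it is the same argument.
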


\begin{proof}
By taking a regular neighborhood $N$ of $Y=\phi(Y_1)=\phi(Y_2)$ in $Z$,  where $N\equiv Y\times I$
, we may identify $Z\setminus\text{int}(N)$ with $Z_1\sqcup Z_2$, which is a 2-component $h''$-handlebody where $h''=\max\{h_1,h_2\}$.  Then, for each $i$-handle $H\equiv D^i\times{n-1-i}$ in $Y$, $0\leq i\leq h$, we can glue on  
$H\times I$ 
along $\partial(D^i\times I)\times D^{n-1-i}\cong S^i\times D^{n-1-i} $, and so attaching $H\times I$ is the same as attaching an $(i+1)$-handle, where $i+1\leq h+1$.  
\end{proof}

\begin{prop}\label{P:5}
Let $X=\bigcup_{i\in\Z_k}X_i$ be a multisection of a closed manifold of dimension $n=2k-1$ (resp. $n=2k-2$).
Then for each $1\leq j\leq i\leq k-1$:
\[
\bigcup_{t=0}^{j-1}X_t
\cap\bigcap_{t=j}^{i}X_t\]
is a $
(2k+j-i-2)$-dimensional $(i+j)$-handlebody (resp. $(2k+j-i-3)$-dimensional $(i+j-1)$-handlebody).
\end{prop}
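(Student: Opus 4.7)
The plan is to prove a more general statement by induction on the size of the first union set. For disjoint $A, I \subset \Z_k$ with $A$ nonempty and $|A \cup I| \leq k$, define
\[B(A,I) := \bigcup_{s\in A}X_s\cap\bigcap_{t\in I}X_t = \bigcup_{s\in A}\bigcap_{t\in\{s\}\cup I}X_t.\]
I would show that $B(A,I)$ is an $(n-|I|)$-dimensional $h$-handlebody with $h = 2|A|+|I|-1$ in the odd case (resp.~$h = 2|A|+|I|-2$ in the even case). The proposition then follows by taking $A = \{0,\dots,j-1\}$ and $I = \{j,\dots,i\}$, so that $|A|=j$ and $|I|=i-j+1$, which reproduces dimension $n-(i-j+1)=2k+j-i-2$ and handlebody index $2j+(i-j+1)-1=i+j$ (resp.~$i+j-1$).

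I would induct on $|A|$. The base case $|A|=1$ is immediate from Definition \ref{Def:Multi}: $B(\{s\},I) = \bigcap_{t\in\{s\}\cup I}X_t$ is a handlebody of the predicted type, with the edge case $|\{s\}\cup I|=k$ handled by noting that the central intersection is a closed manifold of dimension $n+1-k$, hence trivially a top-dimensional handlebody. For the inductive step I would write $A = A'\sqcup\{s\}$ with $|A'| = |A|-1 \geq 1$ and verify the two set identities
\[B(A,I) = B(A',I) \cup \bigcap_{t\in\{s\}\cup I}X_t,\qquad B(A',I)\cap\bigcap_{t\in\{s\}\cup I}X_t = B(A',I\cup\{s\}),\]
the second by distributing intersections over the union. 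The stratified boundary structure of each $\bigcap_{t\in J}X_t$ places the common intersection $B(A',I\cup\{s\})$ in the boundary of both pieces, so Proposition \ref{P:4} applies. Combined with the inductive hypothesis on $B(A',I)$ and $B(A',I\cup\{s\})$ and the multisection bound $h\bigl(\bigcap_{t\in\{s\}\cup I}X_t\bigr)\leq |I|+1$, this gives
\[h(B(A,I)) \leq \max\{2|A'|+|I|-1,\ |I|+1,\ 2|A'|+|I|+1\} = 2|A|+|I|-1,\]
completing the induction. The even case runs identically with handlebody indices reduced by one throughout.

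The main obstacle is bookkeeping: I must ensure $|A\cup I|\leq k$ is preserved along the recursion (which is automatic, since $|A|+|I|$ stays constant as $|A|$ decreases by one and $|I|$ grows by one), so the inductive hypothesis applies at each recursive call. A subtlety arises at the base case when $|\{s\}\cup I|=k$, reached precisely when the original $i=k-1$, so that $\bigcap_{t\in\{s\}\cup I}X_t$ is the closed central intersection rather than a handlebody with boundary; this is harmless because the regular-neighborhood argument in the proof of Proposition \ref{P:4} goes through when the gluing region $Y$ equals the entire boundary of a piece.
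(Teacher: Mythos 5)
Correct, and it follows the same core strategy as the paper's proof: peel one index off the union side, write the target as $Z_1\cup Z_2$ with $Z_1\cap Z_2$ controlled recursively, and invoke Proposition \ref{P:4}. The difference is in the shape of the induction. The paper runs a lexicographic induction on $(i,j)$ with both index sets taken to be consecutive intervals; its $Z_1=\bigcup_{t=0}^{j-2}X_t\cap\bigcap_{t=j}^{i}X_t$ then skips the index $j-1$ and is not literally an instance of the statement being proved, so an implicit relabeling of the pieces $X_t$ is needed before the induction hypothesis applies. Your formulation with arbitrary disjoint $A,I\subset\Z_k$ and induction on $|A|$ removes that relabeling: $B(A',I)$ and $B(A',I\cup\{s\})$ lie directly in the inductive family, and the invariant $|A|+|I|$ keeps $|A\cup I|\leq k$ throughout, as you observe. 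You also make explicit the closed-central-intersection case $|A\cup I|=k$, which the paper leaves tacit. One small overreach: in the even-dimensional case your blanket claim would make $B(\{s\},\varnothing)=X_s$ a $0$-handlebody, while Definition \ref{Def:Multi} only guarantees $X_s$ is a $1$-handlebody; but since $|I|$ only grows along the recursion and the proposition starts with $|I|\geq 1$, that corner is never reached, so the proof of the proposition itself is unaffected. Like the paper, you do not verify in detail that $B(A',I\cup\{s\})$ lies in the boundary of both $B(A',I)$ and $\bigcap_{t\in\{s\}\cup I}X_t$ as Proposition \ref{P:4} requires; the paper is equally brief on this point, which rests on the stratified structure of a multisection.
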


\begin{proof}
We address the odd-dimensional case, arguing by lexicographical induction on $(i,j)$. The even-dimensional case follows analogously. When $(i,j)=(1,1)$, the proposition is true by definition, since $X_0\cap X_1$ is a 
2-handlebody. 

Let $(i,j)>(1,1)$. Assume for each $(r,s)<(i,j)$ that $(X_0\cup \cdots\cup X_{s-1})\cap X_s\cap\cdots\cap X_r$  is a $(2k+s-r-2)$-dimensional $(r+s)$-handlebody. 
Let 
\[Z_1=\bigcup_{t=0}^{j-2}X_t\cap\bigcap_{t=j}^iX_t\]
 and 
 \[Z_2=\bigcap_{t=j-1}^{i}X_t,\] so that 
\[
\bigcup_{t=0}^{j-1}X_t
\cap\bigcap_{t=j}^{i}X_t=Z_1\cup Z_2\]
Then, by induction, $Z_1$ is a $(
2k+j-i-2)$-dimensional $(
i+j-2)$-handlebody, and, by the definition of multisection, $Z_2$ 
is a $(
2k+j-i-2)$-dimensional $(i+1-j)$-handlebody. Further,
\[Z_1\cap Z_2=
\bigcup_{t=0}^{j-2}X_t
\cap\bigcap_{t=j-1}^{i}X_t,
\]
which, by induction, is a $(
2k+j-i-3)$-dimensional $(i+j-1)$-handlebody. Therefore, by Proposition \ref{P:4}, $Z_1\cup Z_2$ is a $(2k+j-i-2)$-dimensional $h$-handlebody, where 
\[\pushQED{\qed}h=\max\{i+j-2,i+1-j,i+j\}=i+j.\qedhere\]
\end{proof}

\begin{prop}\label{P:SmoothBoth}
Let $X=\bigcup_{i\in\Z_k}X_i$ be a multisection of a closed manifold of dimension $n=2k-1$ (resp. $n=2k-2$). Then $X$ admits a handle decomposition in which each $X_j$ contributes only $r$-handles for $r\leq 2j+1$ (resp. $r\leq 2j$).
\end{prop}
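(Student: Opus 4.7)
The plan is to induct on $j$, building up $X$ one piece at a time. Let $Y_j := \bigcup_{t=0}^{j-1} X_t$, so that $Y_0 = \varnothing$ and $Y_k = X$; at stage $j$, $X_j$ is attached to $Y_j$ along
\[
Y_j \cap X_j \;=\; \bigcup_{t=0}^{j-1} X_t \cap X_j,
\]
which, for $j \geq 1$, is an $(n{-}1)$-dimensional $2j$-handlebody in the odd case (resp.\ a $(2k{-}3)$-dimensional $(2j{-}1)$-handlebody in the even case) by Proposition~\ref{P:5} applied with $i = j$.

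Applying the construction from the proof of Proposition~\ref{P:4}, I take a collar neighborhood $(Y_j \cap X_j) \times I$ of the gluing region inside $X$ and decompose the passage $Y_j \to Y_{j+1} = Y_j \cup X_j$ into (i) adjoining $X_j$ as a disjoint $1$-handlebody, contributing $r$-handles with $r \leq 1$, together with (ii) attaching the collar, which contributes one $(r{+}1)$-handle for each $r$-handle of $Y_j \cap X_j$, hence handles of index at most $2j{+}1$ in the odd case (resp.\ at most $2j$ in the even case). Attributing all handles added at stage $j$ to $X_j$, this immediately gives the desired bound in the odd case: $X_j$'s contribution has index $\leq \max\{1, 2j{+}1\} = 2j{+}1$ for every $j \geq 0$.

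In the even case, the bound $\max\{1, 2j\}$ equals $2j$ for every $j \geq 1$, so it remains only to handle the base $j = 0$, where $X_0$'s own $1$-handles have index $1$ rather than the desired $\leq 0$. I resolve this by first cancelling handle pairs in $X_0$ down to a single $0$-handle plus $g(X_0)$ $1$-handles, then reassigning those $1$-handles to the contribution of $X_1$; the stage-$1$ bound $r \leq 2$ easily accommodates them, leaving $X_0$'s contribution as a single $0$-handle. I expect this base-case bookkeeping in even dimensions to be the main subtlety; aside from it, the proof is a direct induction using Propositions~\ref{P:4} and~\ref{P:5}.
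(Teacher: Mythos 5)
Your argument is essentially the paper's own: induct on $j$, building $X$ one piece at a time, apply Proposition \ref{P:5} with $i=j$ to identify the handle structure of the gluing region $(\bigcup_{t<j}X_t)\cap X_j$, and use the collar trick from Proposition \ref{P:4} to read off the indices of the handles added at stage $j$. The odd-dimensional case is reproduced faithfully, and that is the only case the paper spells out.

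You are right to flag the even base case. The paper dismisses it (``the even-dimensional case follows analogously''), but for $j=0$ the claimed bound $r\le 2j=0$ fails whenever $X_0$ has positive genus. However, your proposed repair---relabelling $X_0$'s $1$-handles as part of $X_1$'s contribution---does not work under the interpretation the proposition is actually put to. The proof of Proposition \ref{P:SmoothEfficient1} invokes the claim that ``all the $n$- and $(n-1)$-handles are in $X_{k-1}$'', so ``$X_j$'s contribution'' must mean handles lying physically in $X_j$; this is also why the paper takes the collar $N$ of the gluing region inside $X_j$ rather than two-sided inside $X$, as your phrasing suggests. A $1$-handle of $X_0$ sits in $X_0$ and cannot be declared to belong to $X_1$.

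The honest fix is to weaken the stated even-case bound to $r\le\max\{1,2j\}$, which is what your induction (and the paper's) actually proves: for $j\ge 1$ this equals $2j$, while for $j=0$ the only contribution is $X_0$'s own $0$- and $1$-handles. The weakening costs nothing downstream, since for $j<k-1$ and $n\neq 2$ one has $\max\{1,2j\}\le n-2<n-1$, so the $(n-1)$- and $n$-handles still come only from $X_{k-1}$, which is all that Proposition \ref{P:SmoothEfficient1} and Corollary \ref{C:SmoothEfficient2} require.
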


\begin{proof}
We address the odd-dimensional case; the even-dimensional case follows analogously.  Arguing by induction on $i$, we will show that $X_0\cup \cdots\cup X_i$ admits a handle decomposition in which each $X_j$ contributes only $r$-handles for $r\leq 2j+1$. The base case is trivial.  For the induction step, consider 
\[(X_0\cup \cdots\cup X_{i-1})\cup_{(X_0\cup \cdots\cup X_{i-1})\cap X_{i}}X_{i}.\]
By induction, $X_0\cup \cdots\cup X_{i-1}$ admits a handle decomposition in which each $X_j$ contributes only $r$-handles for $r\leq 2j+1$. Extend this to the required handle decomposition of $X_0\cup \cdots\cup X_{i}$ as follows. Let $N$ be a collared neighborhood of $(X_0\cup \cdots\cup X_{i-1})\cap X_{i}$ in $X_{i}$. As in the proof of Proposition \ref{P:4}, first construct the disjoint union 
\[(X_0\cup \cdots\cup X_{i-1})\sqcup (X_i\setminus\text{int}(N)),\] thereby contributing 0- and 1-handles to $X_i$, as $X_i\setminus\text{int}(N)$ is PL homeomorphic to $X_i$; second, glue in $N$, thereby contributing $r$-handles for $r=1,\hdots,2i+1$, since $(X_0\cup \cdots\cup X_{i-1})\cap X_i$ is a $2i$-handlebody by Proposition \ref{P:5}.
\end{proof}



\subsection{Efficiency of multisections}

Next, we consider the efficiency of multisections in light of Proposition \ref{P:SmoothBoth}. Recall Definition \ref{D:Efficiency}.

\begin{prop}\label{P:SmoothEfficient1}
In dimension $n\neq2$, any multisection $X=\bigcup_{i\in\Z_k} X_i$ obeys
\[\min_{i\in\Z_k}g(X_i)\geq \textnormal{rank }\pi_1(X).\]
\end{prop}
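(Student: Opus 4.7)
Fix any $i_0\in\Z_k$; I plan to argue that the inclusion $X_{i_0}\hookrightarrow X$ induces a surjection on $\pi_1$. Since $X_{i_0}$ is a 1-handlebody of genus $g(X_{i_0})$, its fundamental group is free of rank $g(X_{i_0})$, so such a surjection will immediately give $\mathrm{rank}\,\pi_1(X)\leq g(X_{i_0})$; minimizing over $i_0$ then yields the stated bound.

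The plan is to relabel the multisection so that $X_{i_0}$ becomes the last piece $X_{k-1}$, and to set $Y=\overline{X\setminus X_{i_0}}=\bigcup_{i\neq i_0}X_i$. The key observation is that the inductive construction in the proof of Proposition \ref{P:SmoothBoth} actually produces a handle decomposition of each partial union $X_0\cup\cdots\cup X_i$ along the way; stopping one step early yields a handle decomposition of $Y$ with all handles of index at most $2(k-2)+1=n-2$ in the odd case (resp.\ $2(k-2)=n-2$ in the even case). Consequently $Y$ collapses onto a spine $\Sigma\subset\mathrm{int}(Y)$ of dimension at most $n-2$, and $\Sigma$ is disjoint from $X_{i_0}$ since $\mathrm{int}(Y)\cap X_{i_0}=\varnothing$.

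The hypothesis $n\neq 2$ enters here, because it forces $\Sigma$ to have codimension $\geq 2$ in $X$, so that any loop in $X$ may be homotoped off $\Sigma$ by PL general position. Combining this with the standard PL fact that $N\setminus\Sigma$ deformation retracts onto $\partial N$ whenever $N$ is a compact PL manifold collapsing to a spine $\Sigma\subset\mathrm{int}(N)$, one concludes, using $X=X_{i_0}\cup Y$ and $\partial Y=\partial X_{i_0}\subset X_{i_0}$, that $X\setminus\Sigma$ deformation retracts onto $X_{i_0}$. Every loop in $X$ is then homotopic to a loop in $X_{i_0}$, so $\pi_1(X_{i_0})\twoheadrightarrow\pi_1(X)$, as required. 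The main technical point I expect to have to spell out is that the induction in the proof of Proposition \ref{P:SmoothBoth} really does produce a handle decomposition of the partial union $Y$ on its own (not merely of all of $X$); this is essentially by inspection of that proof, but should be invoked cleanly. The remaining ingredients -- general position for codimension-$2$ subpolyhedra and the retraction of $N\setminus\Sigma$ onto $\partial N$ -- are standard.
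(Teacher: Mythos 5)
Your argument is correct and reaches the same intermediate goal as the paper — namely that $\pi_1(X_{i_0})\to\pi_1(X)$ is surjective — but the route is genuinely different in packaging. The paper's proof dualizes the Proposition~\ref{P:SmoothBoth} handle decomposition of $X$ (``flip $X$ upside down''): after dualizing, all $0$- and $1$-handles lie inside $X_{k-1}$, the inclusion of that $1$-handlebody into $X$ factors through $X_{k-1}$, and the $\pi_1$-surjection follows from the handle calculus alone, with no mention of spines or general position. You instead keep the forward handle decomposition of $Y=\bigcup_{i\neq i_0}X_i$, collapse $Y$ to a spine $\Sigma$ of dimension $\le n-2$, deformation retract $X\setminus\Sigma$ onto $X_{i_0}$, and push loops off $\Sigma$ by general position. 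Both arguments are valid and in some sense dual to each other (cores of the low-index handles of $Y$ versus co-cores of the high-index handles of $X$), but yours makes the $\pi_1$-surjection step completely explicit, whereas the paper compresses it into one ``so.''

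One small caveat worth fixing in the write-up: you attribute the use of $n\neq 2$ entirely to the codimension-$\ge 2$ general-position step, but in fact the hypothesis already enters earlier. In the even case of Proposition~\ref{P:SmoothBoth}, the piece $X_0$ is a $1$-handlebody and therefore contributes handles of index up to $1$ (not $0$); the true index bound for $Y$ is $\max\{1,\,2(k-2)\}$, which equals $n-2$ precisely when $n\neq 2$. So for $n=2$ the spine bound $\dim\Sigma\le n-2$ fails at the handle-decomposition step, not (only) at general position. This doesn't affect your conclusion for $n\neq 2$, but the stated bound ``$2(k-2)=n-2$ in the even case'' is not literally the correct index bound for $j=0$, and that is where $n\neq 2$ is actually earning its keep.
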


\begin{proof}
Given a multisection of $X$, label the pieces so that $g(X_{k-1})\leq g(X_i)$ for all $i$.  Construct a handle structure on $X$ as guaranteed by Proposition \ref{P:SmoothBoth}.  All the $n$- and $(n-1)$-handles are in $X_{k-1}$, since $n\neq 2$. Flip $X$ upside down.  Now all the 0- and 1-handles are in $X_{k-1}$, so 
\[\pushQED{\qed}
\text{rank }\pi_1(X)\leq\text{rank }\pi_1(X_{k-1})=g(X_{k-1})=\min_{i\in\Z_k}g(X_i).\qedhere\]  
\end{proof}

\begin{cor}\label{C:SmoothEfficient2}
In any dimension $n\neq 2$, no multisection of any manifold has efficiency greater than 1, and in any efficient multisection $X=\bigcup_{i\in \Z_k}X_i$, all $X_i$ have the same genus, $g(X_i)= \textnormal{rank }\pi_1(X)$.
\end{cor}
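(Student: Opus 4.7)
The plan is to derive this corollary as an almost immediate consequence of Proposition \ref{P:SmoothEfficient1}, combined with the trivial inequality $\min_i g(X_i) \le \max_i g(X_i)$.

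First, I would recall Definition \ref{D:Efficiency}, so that the efficiency of $X=\bigcup_{i\in\Z_k}X_i$ is $\frac{1+\text{rank }\pi_1(X)}{1+\max_i g(X_i)}$. Proposition \ref{P:SmoothEfficient1} gives $\text{rank }\pi_1(X)\leq \min_{i\in\Z_k}g(X_i)$, and since $\min_i g(X_i)\leq \max_i g(X_i)$, the denominator is at least as large as the numerator, so the efficiency is at most $1$. This handles the first assertion.

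For the second assertion, suppose the multisection is efficient, so $1+\text{rank }\pi_1(X)=1+\max_i g(X_i)$, i.e., $\text{rank }\pi_1(X)=\max_i g(X_i)$. Combining with Proposition \ref{P:SmoothEfficient1} yields
\[\max_i g(X_i)=\text{rank }\pi_1(X)\leq \min_i g(X_i)\leq \max_i g(X_i),\]
forcing equality throughout. In particular, $\max_i g(X_i)=\min_i g(X_i)$, so every $X_i$ has the same genus, and this common genus equals $\text{rank }\pi_1(X)$.

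There is essentially no obstacle here: the work has already been done in Proposition \ref{P:SmoothEfficient1} (whose proof in turn rested on the handle-decomposition result Proposition \ref{P:SmoothBoth}, where the hypothesis $n\neq 2$ was used to ensure that all top-dimensional handles can be pushed into a single piece $X_{k-1}$ of minimal genus). The corollary is really just unpacking the definition of efficiency in light of that inequality.
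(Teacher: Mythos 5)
Your proof is correct and matches the paper's (implicit) reasoning: the paper states this as an immediate corollary of Proposition \ref{P:SmoothEfficient1} without writing out the short argument, and your derivation supplies exactly the expected chain of inequalities.
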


\section{Motivating examples}\label{S:Motivation}

In this section, we describe our multisections of $T^3$, $T^4$, and $T^5$ in detail.  We also establish notation that will be used throughout the rest of the paper.

\subsection{Intuitive approach to $T^3$, $T^4$, and $T^5$}

\begin{figure}
\begin{center}
\includegraphics[width=\textwidth]{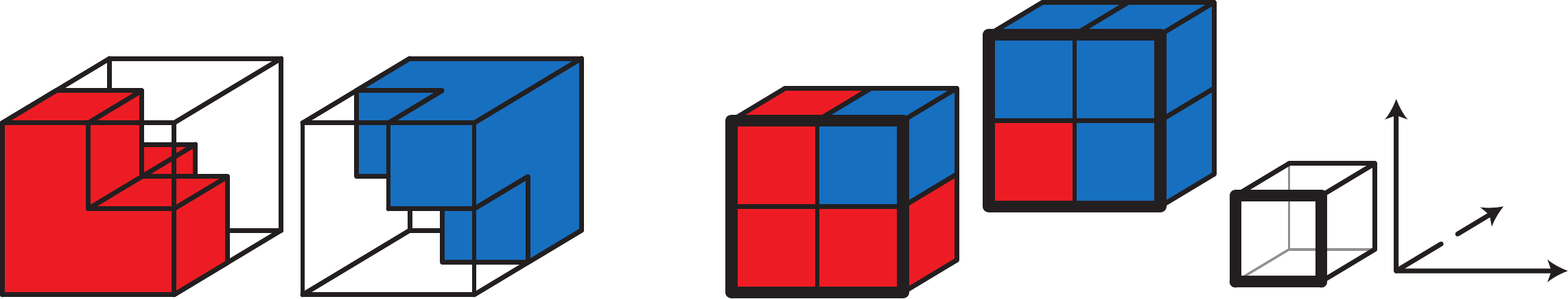}
\caption{A Heegaard splitting of $T^3$.}
\label{Fi:T3Intro}
\end{center}
\end{figure}

Figure \ref{Fi:T3Intro} illustrates an efficient Heegaard splitting of the 3-torus, which suggests viewing $T^3$ as $(\R/2\Z)^3$; then the splitting is determined by a partition of the eight unit cubes with vertices in the lattice $(\Z/2\Z)^3$.
Moreover, {\it this} partition satisfies two symmetry properties: first, the permutation action of $S_3$ on the indices in $T^3$ fixes each piece of the splitting, and second, the $\Z_2$ translation action along the main diagonal of $T^3$ switches the two pieces: $X_i+(1,1,1)=X_{i+1}$.

\begin{figure}
\begin{center}
\includegraphics[width=\textwidth]{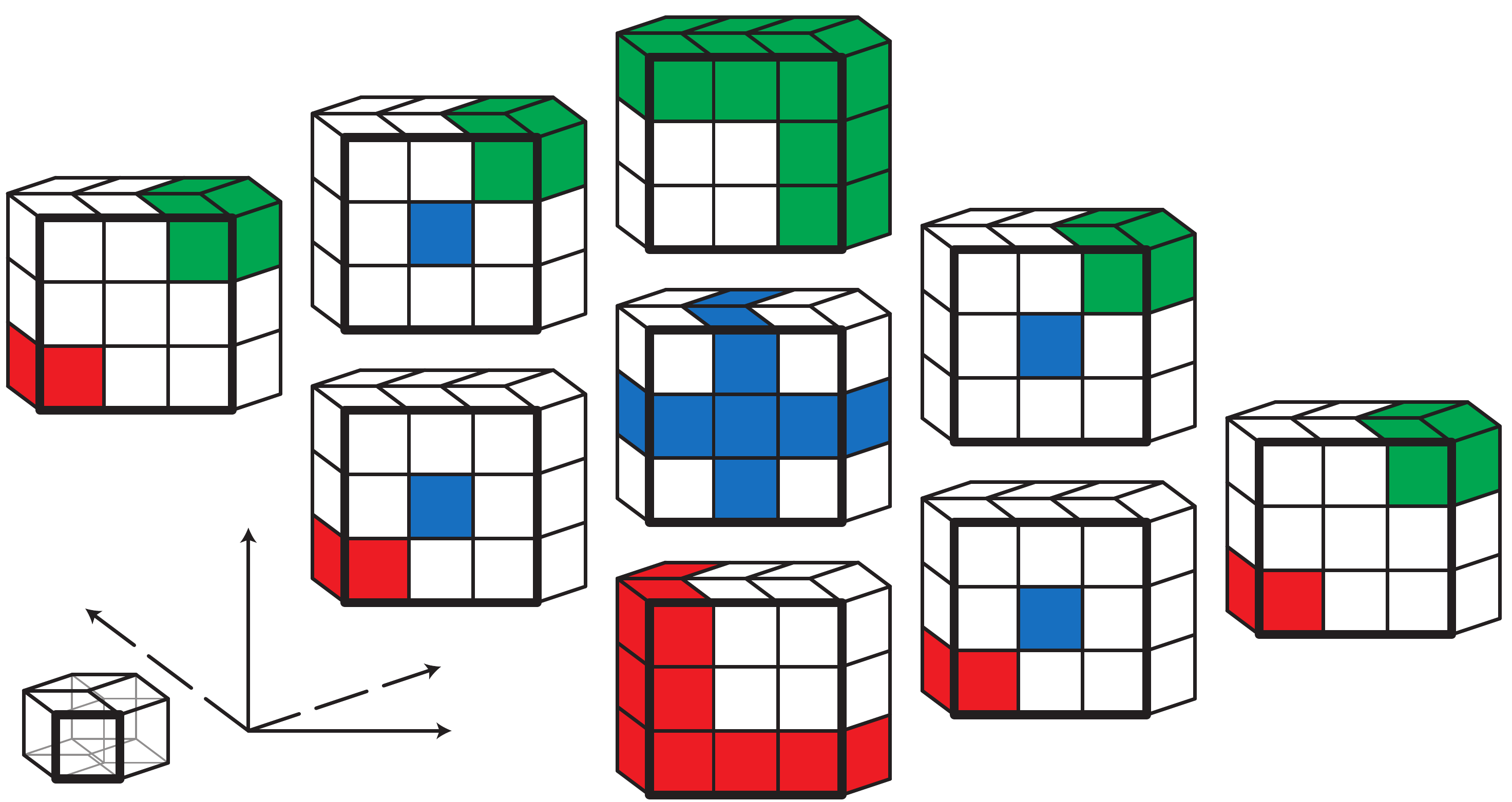}
\caption{Start partitioning the subcubes of $T^4=(\R/3\Z)^4$ like this, giving three 4-dimensional 1-handlebodies.}
\label{Fi:T4Start}
\end{center}
\end{figure}

How might one construct efficient trisections of $T^n$, $n=4,5$, with symmetry properties analogous to Figure \ref{Fi:T3Intro}'s splitting of $T^3$? To begin, one might view these $T^n$ as $(\R/3\Z)^n$---rather than, say, $(\R/2\Z)^n$, because we seek a trisection rather than a splitting---and seek an appropriate partition of the $3^n$ unit cubes with vertices in the lattice $(\Z/3\Z)^n$.  From now on, for brevity, we will refer to these unit cubes as {\it subcubes} of $T^n$.

To start forming this partition, one might assign each subcube $[i,i+1]^n$ to $X_i$ (because of the translation action).  Next, one might assign those subcubes of the forms $[i,i+1]^{n-1}[i+1,i+2]$ and $[i,i+1]^{n-1}[i-1,i]$ to $X_i$ as well, and extend these assignments using the permutation action on the indices.  
At this point, each $X_i$ is indeed an $n$-dimensional 1-handlebody, and so the rest of the partition should be constructed in a way that preserves this fact, while also giving rise to the needed intersection properties.  
Figure \ref{Fi:T4Start} illustrates this intermediate stage in the case of $T^4$.\footnote{All combinatorial data conveyed in Figures \ref{Fi:T4Start}--\ref{Fi:T4Pieces} comes from the arrangements of the nine $3\times 3$ squares outlined in bold; beyond this, the style of the illustration reflects the fact that each pictured subcube is a 4-cube.  A model 4-cube is also drawn, next to coordinate axes.  The solid axes represent directions in which abutting subcubes are shown in contact with each other (understanding that the interval that appears as $[0,3]$ actually represents the circle $\R/3\Z$); the dashed axes represent directions in which abutting subcubes align at a distance in the figure.  
Similarly, Figure \ref{Fi:T3Intro} shows a model 3-cube and coordinate axes, Figure \ref{Fi:T5Intro} a model 5-cube and coordinate axes.} 

\begin{figure}
\begin{center}
\includegraphics[width=\textwidth]{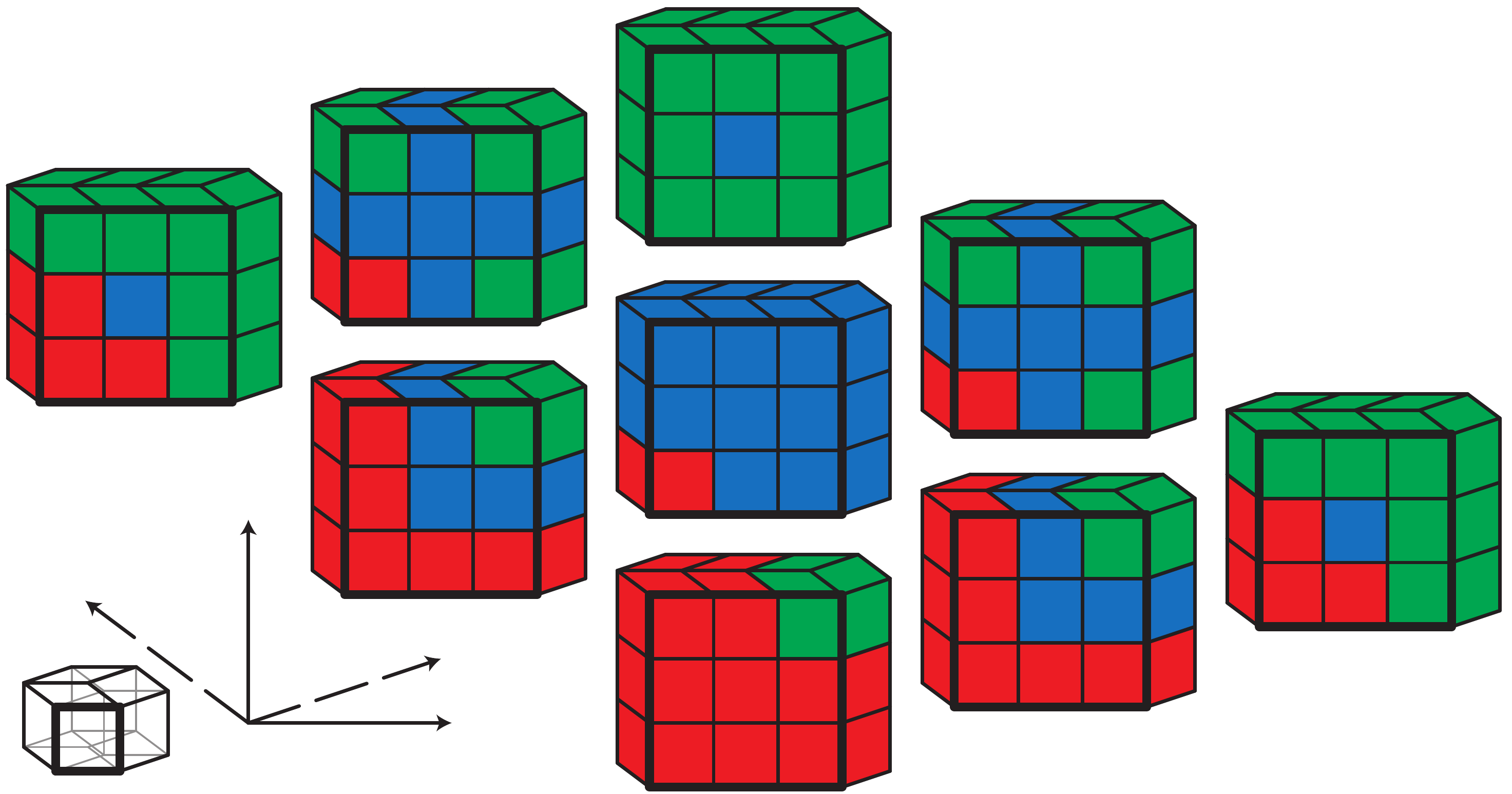}
\caption{Partitioning the $3^4$ subcubes of $T^4=(\R/3\Z)^4$ like this gives a symmetric efficient trisection.}
\label{Fi:T4Pieces}
\end{center}
\end{figure}

\begin{figure}
\begin{center}
\labellist
\small\hair 4pt
\pinlabel {$t\in(0,1)$} [r] at 300 720
\pinlabel {$t\in(1,2)$} [t] at 100 800
\pinlabel {$t\in(2,3)$} [t] at 50 905
\endlabellist
\includegraphics[width=\textwidth]{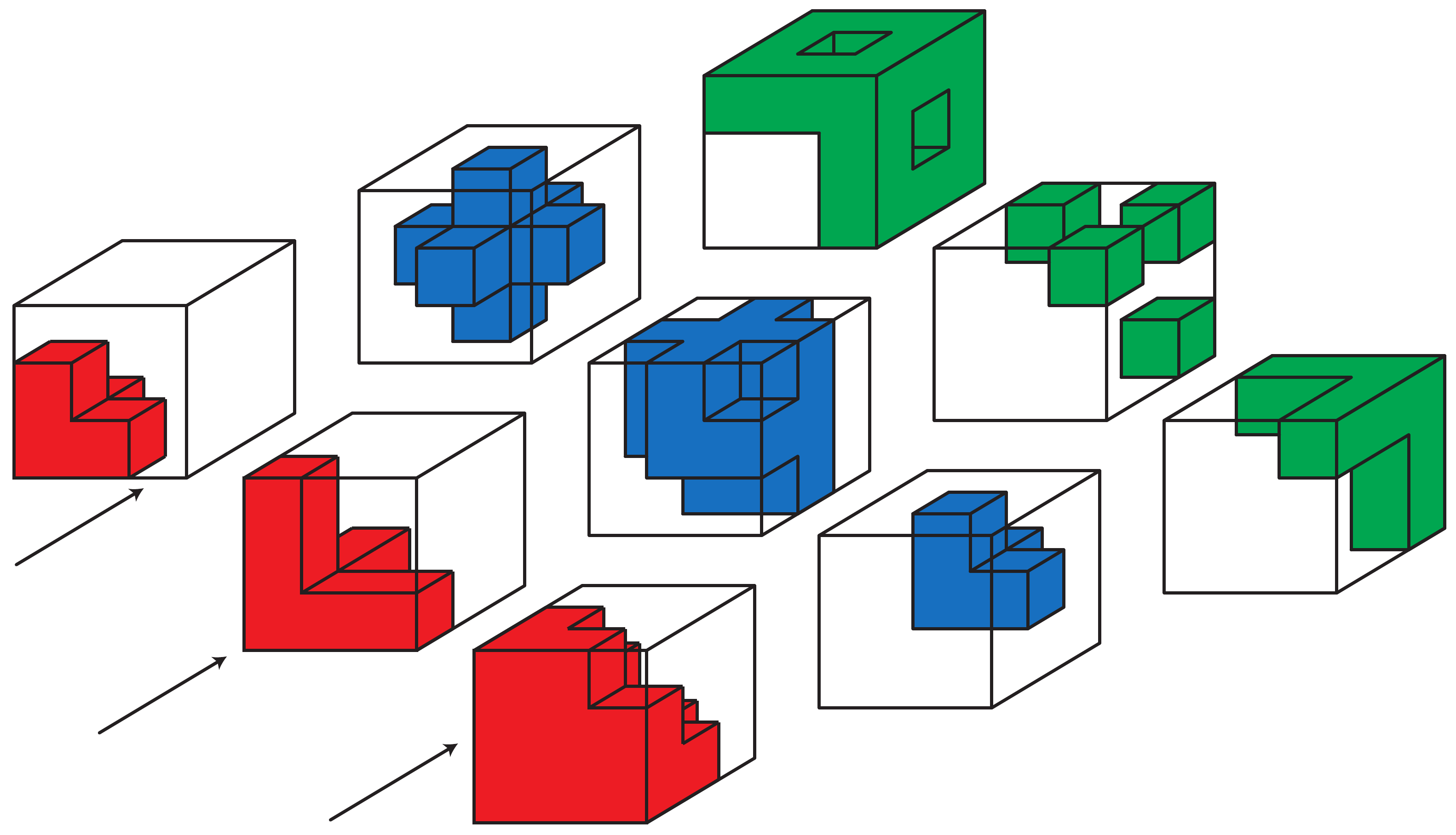}
\caption{In the multisection of $T^4$ from Figure \ref{Fi:T4Pieces}, each slice $T^3\times \{t\}$, $t\in(\R/3\Z)\setminus\Z_3$, intersects $X_0,X_1,X_2$ like this.}
\label{Fi:T4Slices}
\end{center}
\end{figure}

For $T^4$, the symmetry properties imply that the remaining partition is determined by the assignments of the subcubes $[0,1]^2[1,2][2,3]$ and $[0,1]^2[1,2]^2$.  Assigning both subcubes to $X_0$ and extending symmetrically gives the decomposition of $T^4$ illustrated in Figures \ref{Fi:T4Pieces} and \ref{Fi:T4Slices}.  Section \ref{S:T4} will confirm that this decomposition is indeed a trisection.

A similar approach leads to the decomposition of $T^5$ shown in Figure \ref{Fi:T5Intro}.  Section \ref{S:T5} will confirm that this, too, is a trisection.

\begin{figure}
\begin{center}
\includegraphics[width=\textwidth]{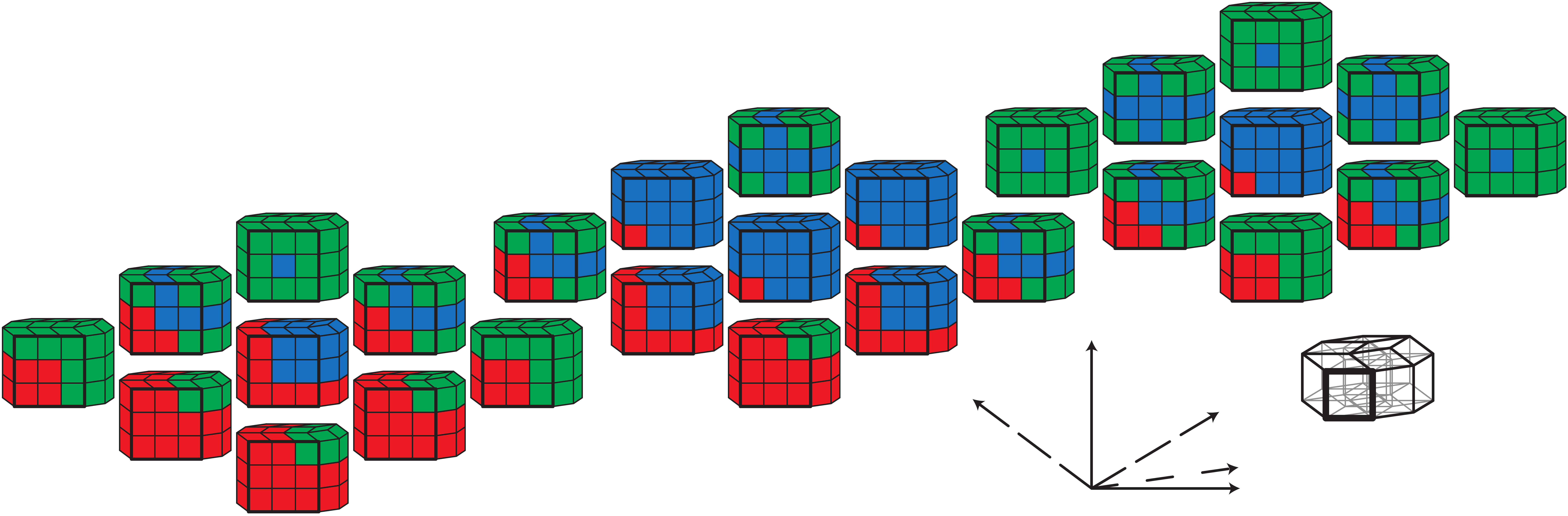}
\caption{Partitioning the $3^5$ subcubes of $T^5=(\R/3\Z)^5$ like this gives a symmetric efficient trisection.}
\label{Fi:T5Intro}
\end{center}
\end{figure}


\subsection{Notation}\label{S:NotationEx}

\begin{notation}\label{N:cut}
Let $X,Y\subset Z$ be compact subspaces of a topological space.  Denote ``$X$ cut along $Y$" by $X\setminus\setminus Y$.  In every example where we use this notation, $X\setminus\setminus Y$ equals the closure of $X\setminus Y$ in $Z$. (The general construction is somewhat more complicated.)
\end{notation}

Given $n=2k-1,2k-2$, view the $n$-torus $T^n$ as $(\R/k\Z)^n$. Let $S_n$ denote the permutation group on $n$ elements.
\begin{notation}\label{N:xsigma}
Given $\x=(x_1,\hdots,x_n)\in T^n$ and $\sigma\in S_n$,  denote 
\[\x_\sigma=(x_{\sigma(1)},\hdots,x_{\sigma(n)}).\]
Also, given $U\subset T^n$ and $\vv\in T^n$, denote
\[U+\vv=\{\uu+\vv:~\uu\in U\}.\]
\end{notation}
The symmetric group $S_n$ acts on $T^n$ by permuting the indices, $\sigma:\x\mapsto\x_\sigma$. Because we are interested in subsets of $T^n$ which are fixed by this action:
\begin{notation}\label{N:lla}
For any subset $U\subset T^n$, denote
\begin{equation*}\label{E:langle}
\lla U \rra=\left\{\x_\sigma:\x\in U,~\sigma\in S_n\right\}\subset T^n.
\end{equation*}
\end{notation}
Note, for any $U\subset T^n$, that $\lla U\rra$ is fixed by the action of $S_n$ on $T^n$.  We can state our main result explicitly:

\begin{T:Main}
For $n=2k-1$, the $n$-torus $T^n=(\R/k\Z)^n=[0,k]^n/\sim$ admits an efficient multisection $T^n=\bigcup_{i\in\Z_k}X_i$ defined by
\begin{equation}\label{E:X0ThmIntro}
\begin{split}
X_0&=\lla[0,1]^2\cdots[0,k-1]^2[0,k]\rra,\\
X_i&=X_0+(i,\hdots,i),~i\in\Z_k.
\end{split}
\end{equation}
\end{T:Main}

By construction, the decomposition is symmetric with respect to the permutation action on the indices and the translation action on the main diagonal.

Anticipating the concrete and (somewhat) low-dimensional nature of the examples in \textsection\textsection\ref{S:Motivation}, \ref{S:Examples2} and Appendix 1, we give the first few intervals $[i,i+1]$, $i\in\Z_k$, special notations:

\begin{notation}\label{N:alpha}
Denote
\begin{equation*}\label{E:alphabeta}
[0,1]=\alpha,~[1,2]=\beta,~[2,3]=\gamma,[3,4]=\delta,
[4,5]=\ep,~[5,6]=\zeta,~[6,7]=\eta.
\end{equation*}
\end{notation}
To further abbreviate our notation, we often omit $\times$ symbols and use exponents to denote repeated factors. For example, we can describe the two pieces of the Heegaard splitting of $T^3$ from Figure \ref{Fi:T3Intro} like this:
\[X_0=\alpha^3\cup\alpha^2\beta\cup\alpha\beta\alpha\cup\beta\alpha^2,\hspace{1in}
X_1=\beta^3\cup{\beta^2}\alpha\cup\beta\alpha\beta\cup\alpha{\beta^2}.\]
Using Notation \ref{N:lla}, we can further abbreviate this notation:
\begin{align*}X_0&=\alpha^3\cup\lla\alpha^2\beta\rra&
X_1&=\beta^3\cup\lla\alpha{\beta^2}\rra\\
&=\lla\alpha^2[0,2]\rra&&
=\lla[0,2]{\beta^2}\rra.
\end{align*}
We often omit the braces around singleton factors. For example, in $T^3$:
\begin{align*}
X_0\cap X_1&=\lla[0,1]\times[1,2]\times\{0\}\rra\cup\lla[0,1]\times[1,2]\times\{1\}\rra\\
&=\lla\alpha\beta0\rra\cup\lla\alpha\beta1\rra.
\end{align*}
We also extend Notation \ref{N:lla} in the way suggested by the following example:
\[\lla0\alpha\rra{\beta^2}=\left(\{0\}\times \alpha\times\beta\times\beta\right)\cup\left(\alpha\times\{0\}\times\beta\times\beta\right).\]
More precisely, if we decompose $T^n$ as a product $T^n=T^{n_1}\times\cdots\times T^{n_p}$ and $U_i\subset T^{n_i}$ for $i=1,\hdots, p$, then
\[\lla U_1\rra\cdots\lla U_p\rra=\left\{(\x^1_{\sigma_1},\x^2_{\sigma_2},\hdots,\x^p_{\sigma_p}):~\x^i\in T^{n_i},~\sigma_i\in S_{n_i},~i=1,\hdots,p\right\}\]
where, extending Notation \ref{N:xsigma} and denoting $\x^i=(x^i_1,\hdots,x^i_{n_i})$, each
\[\x^i_{\sigma_i}=\left(x^i_{\sigma_i(1)},\hdots,x^i_{\sigma_i(n_i)}\right).\]
Starting in dimension 7,  some handle decompositions will require subdividing unit subintervals $\alpha,\beta,\gamma,\delta,\hdots$ into halves or thirds. Anticipating this:
\begin{notation}\label{N:alpha+}
Denote
\[\textstyle
\alpha^-=\left[0,\frac{1}{2}\right],~\alpha^+=\left[\frac{1}{2},1\right],\hdots,\left[\eta^-\right]=\left[6,\frac{13}{2}\right],\eta^+=\left[\frac{13}{2},7\right]\]
and
\[
\textstyle
\alpha^-_3=\left[0,\frac{1}{3}\right],\alpha^\circ_3=\left[\frac{1}{3},\frac{2}{3}\right],~\alpha^+_3=\left[\frac{2}{3},1\right],\hdots,
\eta^\circ_3=\left[\frac{19}{3},\frac{20}{3}\right],~\eta^+_3=\left[\frac{20}{3},7\right].\]
\end{notation}

Because of the symmetry of our main construction under the $\Z_k$ translation action on $T^n$, it will suffice, when considering $X_I$ from that construction, to allow $I$ to be arbitrary {\it only up to cyclic permutation}. In order to utilize this convenience: 

\begin{notation}\label{N:i}
Given $I\subset \Z_k$ with $|I|=\ell>0$, denote $X_I=\bigcap_{i\in I}X_i$, and denote $I=\{i_s\}_{s\in\Z_{{\ell}}}$
such that 
\[0\leq i_0<i_1<\cdots<i_{\ell-1}\leq k-1.\]
\end{notation}

\begin{definition}\label{D:simple}
Let $I=\{i_s\}_{s\in\Z_\ell}$ as in Notation \ref{N:i}. For each $r\in\Z_{{\ell}}$, define $I^r=\{i+r:~i\in I\}\subset \Z_k$. Denote each $I^r=\{i^r_s\}_{s\in\Z_{{\ell}}}$ with $0\leq i^r_{0}<i^r_{1}<\cdots<i^r_{\ell-1}\leq k-1$.  
Say that $I$ is {\bf simple} if
, for each $r\in\Z_\ell$, we have $I\leq I^r$ under the lexicographical ordering of their elements, i.e. if each $I_r\neq I$ has some $s\in\Z_{{\ell}}$ with $i_t=i^r_{t}$ for each $t=0,\hdots,s-1$ and $i_s<i^r_{s}$.
\end{definition}

\begin{notation}\label{N:T}
Given simple $I=\{i_s\}_{s\in\Z_{\ell}}\subsetneqq \Z_k$ as in Notation \ref{N:i}, define 
\begin{equation*}\label{E:T}
T=\{s\in\Z_\ell:~i_s-1\notin I\}.
\end{equation*}
Denote $T=\{t_r\}_{r\in\Z_m}$ with $0=t_0<\cdots< t_m<\ell$ (see Observation \ref{O:Simple}). For each $r\in\Z_m$, denote $I_r=\{i_{t_r},\hdots,i_{t_{r+1}-1}\}$.  Then 
\begin{equation*}\label{E:IBlocks}
I=I_1\sqcup\cdots \sqcup I_m,
\end{equation*} 
and for each $r=0,\hdots, m-1$, we have $|I_r|=\max I_r+1-\min I_r$ (each block $I_r$ is comprised of consecutive indices) and $\min I_{r+1}\geq \max I_r+2$ (the blocks are nonconsecutive).  

Given $i_*\in I$ (denoted specifically as $i_*$), denote the block $I_r$ containing $i_*$ by $I_*$.
\end{notation}

\begin{convention}\label{Conv:ISimple}
Throughout, reserve the notations $n$, $k$, $\alpha,\hdots$, $\eta$, $\alpha^-,\hdots$, $\eta^+$, $\alpha^-_3,\hdots$, $\eta^+_3$, $I$, $X_I$, $\ell$, $T$, and $m$ for the way they are used in Notations \ref{N:alpha}-\ref{N:T}; assume, unless otherwise stated, that $I\subset\Z_k$ is simple; 
and reserve, for any $s\in\Z_\ell$ or $r\in\Z_m$, the notations $i_s$, $t_r$, $I_r$, $i_*$, and $I_*$ for the way they are used in Notations \ref{N:i} and \ref{N:T}.  
\end{convention}


\begin{obs}\label{O:Simple}
Given $I\subsetneqq\Z_k$, we have $i_0= 0$, $i_{\ell-1}\leq k-2$, and $|I_0|\geq |I_r|$ for each $r\in\Z_m$; if $|I_0|= |I_r|$, then $|I_1|\geq |I_{r+1}|$.
\end{obs}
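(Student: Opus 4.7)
The plan is to translate each assertion into a statement about the lexicographic comparison of $I$ with an appropriate cyclic shift $I^r$, and in each case to exhibit a specific $r$ whose shifted set would be lex-smaller than $I$ if the assertion failed, contradicting simplicity. Throughout, comparisons are of the sorted increasing sequences of elements in $\{0,1,\ldots,k-1\}$.

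First I would prove $i_0=0$ and $i_{\ell-1}\leq k-2$ in sequence. If $i_0\geq 1$, choose a shift sending some element of $I$ to $0$; then $0\in I^r$ but $0\notin I$, so $I^r<_{\text{lex}} I$, a contradiction. Supposing next that $k-1\in I$, together with $i_0=0$ there is a maximal cyclic run of consecutive elements of $I$ of the form $\{k-a,k-a+1,\ldots,k-1,0,1,\ldots,b\}$ with $a\geq 1$ and $0\leq b\leq k-a-2$; shifting by $r=a$ straightens this run into the single non-wrapping block $\{0,1,\ldots,a+b\}$ at the start of $I^r$, strictly longer than the block $\{0,1,\ldots,b\}$ at the start of $I$, and lex comparison at position $b+1$ gives $I^r<_{\text{lex}} I$, a contradiction. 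These two facts together justify $t_0=0$ and confirm that the block decomposition of $I$ contains no wrap-around block.

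For the size comparison, suppose for contradiction that $|I_r|>|I_0|$ for some $r$. The previous step guarantees $I_0=\{0,1,\ldots,|I_0|-1\}$. Shifting by $s$ chosen so that $\min I_r$ maps to $0$ sends $I_r$ to $\{0,1,\ldots,|I_r|-1\}$; since no block of $I$ wraps cyclically, no other block of $I$ shifts into this range in $I^s$. Comparing sorted sequences, the first $|I_0|$ entries agree, while at position $|I_0|$ the element of $I$ is $\geq |I_0|+1$ (the start of $I_1$, separated from $I_0$ by a gap of at least $1$), whereas the element of $I^s$ is $|I_0|$, still inside the enlarged first block of $I^s$; hence $I^s<_{\text{lex}} I$, a contradiction. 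For the tiebreaker, when $|I_0|=|I_r|$ the same shift $s$ makes the first $|I_0|$ entries of $I$ and $I^s$ agree; continuing the lex comparison past position $|I_0|$ forces the gap after $I_0$ to be at most the gap after $I_r$, and once these gaps coincide the comparison continues until the second blocks are reached, where the same enlargement argument applied to $I_1$ versus (the shifted copy of) $I_{r+1}$ forces $|I_1|\geq |I_{r+1}|$.

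The principal obstacle is careful bookkeeping across block boundaries, specifically verifying that after shifting by $s$ the block of $I^s$ immediately following the shifted $I_r$ really is a shifted copy of $I_{r+1}$; this relies on the first step, which ensures blocks of $I$ shift without interleaving or wrapping in $I^s$.
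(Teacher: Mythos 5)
Your method --- shift $I$ so a chosen element lands at $0$ and extract a lexicographic contradiction from the definition of simple --- is the natural one, and the paper gives no proof of its own (it is stated as an Observation), so there is nothing to compare against. The proofs of $i_0=0$, of $i_{\ell-1}\leq k-2$, and of $|I_0|\geq|I_r|$ are all correct, including the care taken to rule out wrap-around blocks before running the block-size comparison.

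The tiebreaker is where there is a genuine gap, and your own phrase ``once these gaps coincide'' is the tell. Granting $|I_0|=|I_r|$, the lex comparison of $I$ with its shift $I^s$ next pits the gap $\min I_1-\max I_0-1$ against the gap $\min I_{r+1}-\max I_r-1$, and only when these two gaps coincide does the comparison ever reach $|I_1|$ versus $|I_{r+1}|$; when the first gap is strictly smaller, $I<I^s$ terminates early and simplicity imposes no constraint at all on the second block sizes. So what your argument actually establishes is the conditional: if $|I_0|=|I_r|$ \emph{and} the gaps following $I_0$ and $I_r$ are equal, then $|I_1|\geq|I_{r+1}|$. That is strictly weaker than the printed observation --- and the unconditional form appears to be false. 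With $k=9$ and $I=\{0,1,3,5,6\}$ one checks $I\leq I^s$ for every $s$, yet $I_0=\{0,1\}$, $I_1=\{3\}$, $I_2=\{5,6\}$ give $|I_0|=|I_2|=2$ while $|I_1|=1<2=|I_0|=|I_{3\bmod 3}|$; the relevant comparison, $I=(0,1,3,5,6)$ against $I^4=(0,1,4,5,7)$, stops at position $2$ on the unequal gaps $1<2$ and never constrains $|I_1|$. In short: the conditional you proved is correct and your argument is sound for it, but it does not --- and cannot --- deliver the observation as literally stated; the gap-equality hypothesis you introduced is a needed correction to the statement, not a simplifying assumption in the proof.
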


Given $I\subset\Z_k$ and $s\in\Z_\ell$, denote
\[(i_1,\hdots,\widehat{i_s},\hdots,i_\ell)=(i_1,\hdots,i_{s-1},i_{s+1},\hdots,i_\ell)\subset T^{\ell-1}.\]
We now have enough notation to describe a closed formula for the $X_I$ coming from our main construction (\ref{E:X0ThmIntro}):

\begin{L:XI}
Given nonempty $I\subseteqq\Z_k$, $X_I$ is given by:
\begin{equation}\label{E:XI}
\bigcup_{i_*\in I}\lla(i_1,\hdots,\widehat{i_*},\hdots,i_\ell)\prod_{r\in\Z_{\ell}}[i_{r},i_{r}+1]^2\cdots[i_{r},i_{r+1}-1]^2{[i_{r},i_{r+1}]}
\rra.
\end{equation}
In particular,
\begin{equation}\label{E:XZk}
\bigcap_{i\in \Z_k}X_i=\bigcup_{i_*\in \Z_k}\lla(0,\hdots,\widehat{i_*},\hdots,k-1)\prod_{i\in\Z_k}[i,i+1]\rra.
\end{equation}
\end{L:XI}
We will prove Lemma \ref{L:XI} in \textsection \ref{S:EXI}.

\subsection{Trisection of $T^4$}\label{S:T4}

\begin{figure}
\begin{center}
\includegraphics[width=\textwidth]{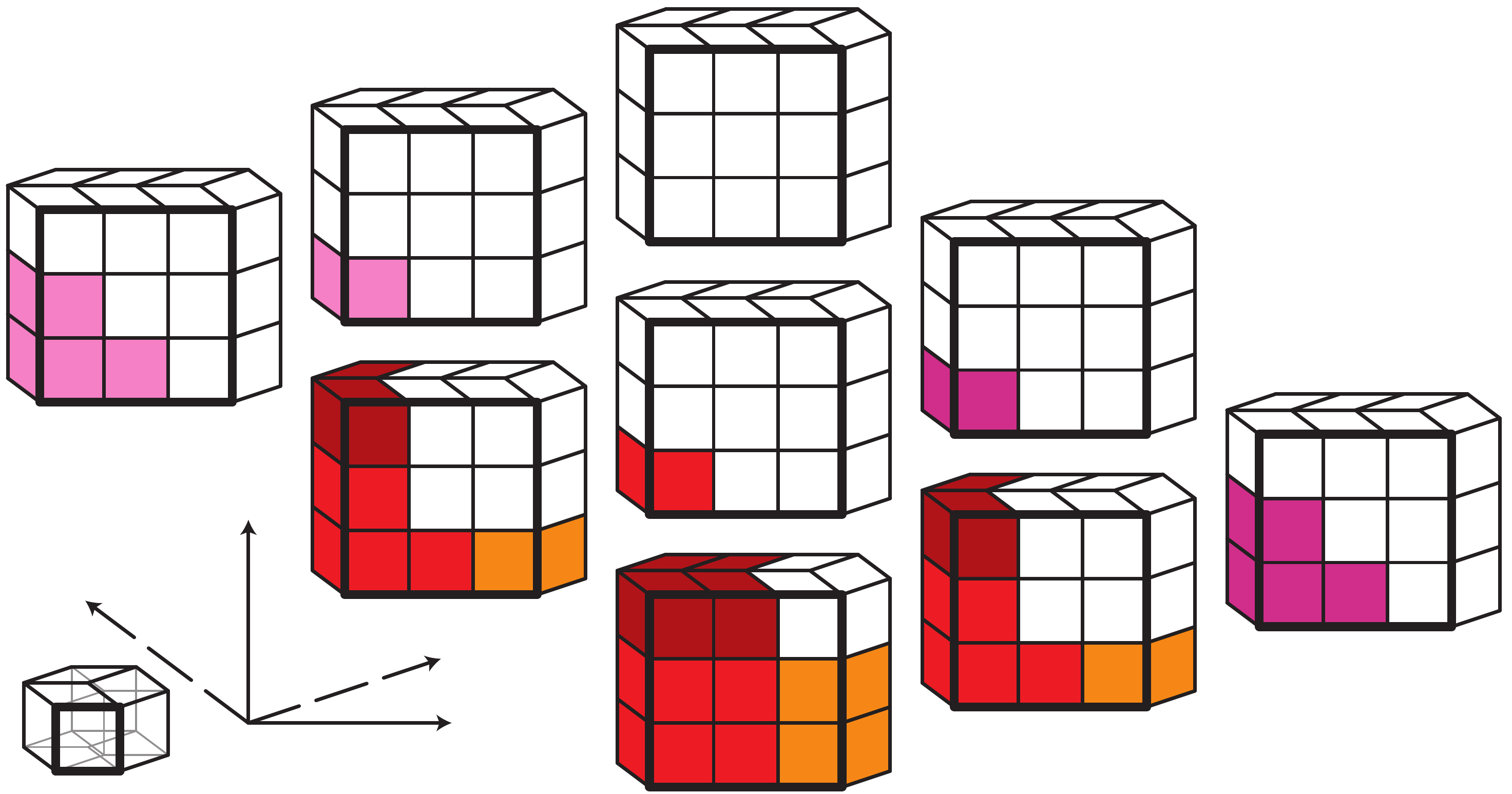}
\caption{A handle decomposition of $X_0$ in Figure \ref{Fi:T4Pieces}'s trisection of $T^4$. Each of the five handles has a different color. 
}
\label{Fi:T4Decomp}
\end{center}
\end{figure}

The decomposition of $T^4$ from Figure \ref{Fi:T4Pieces} is given by
\begin{equation}
\label{E:T4A}
\begin{split}
X_0=\lla \alpha^2[0,2]\violet{[0,3]}\rra=&\lla\alpha^4\rra\cup\lla\alpha^3{\beta}\rra\cup\lla\alpha^3{\gamma}\rra\cup\lla\alpha^2\beta^2\rra\cup\lla\alpha^2{\beta}{\gamma}\rra\\
X_i=&X_0+(i,i,i,i).
\end{split}
\end{equation}
It is evident from Figure \ref{Fi:T4Pieces} that $X_0\cup X_1\cup X_2=T^4$.
Also, $I=\{0\}$ and $I=\{0,1\}$ are the only proper subsets of $\{0,1,2\}$ which are simple.
Therefore, in order to check that (\ref{E:T4A}) determines a trisection of $T^4$, it suffices to prove that $X_0$ is a 4-dimensional 1-handlebody and $X_0\cap X_1$ is a 3-dimensional 1-handlebody with $\partial (X_0\cap X_1)=X_0\cap X_1\cap X_2$.

Indeed, Figure \ref{Fi:T4Decomp} shows a handle decomposition of $X_0$ in which 
$\lla\Navy{\alpha^2[0,2]^2}\rra$ is a 0-handle and  $\lla{\alpha^2[0,2]}{\gamma}\rra$ supplies four 1-handles, each a permutation of $\lla \Navy{\alpha^2[0,2]}\rra
{\gamma}$. More precisely, each 1-handle is given, in terms of some permutation $\sigma\in S_4$ (using Notation \ref{N:xsigma}), by 
\begin{equation}
\label{E:T41Handles}
\left\{\vec{x}_\sigma:~\vec{x}\in \lla \Navy{\alpha^2[0,2]}\rra
{\gamma}\right\}.
\end{equation}
\begin{figure}
\begin{center}
\includegraphics[width=.75\textwidth]{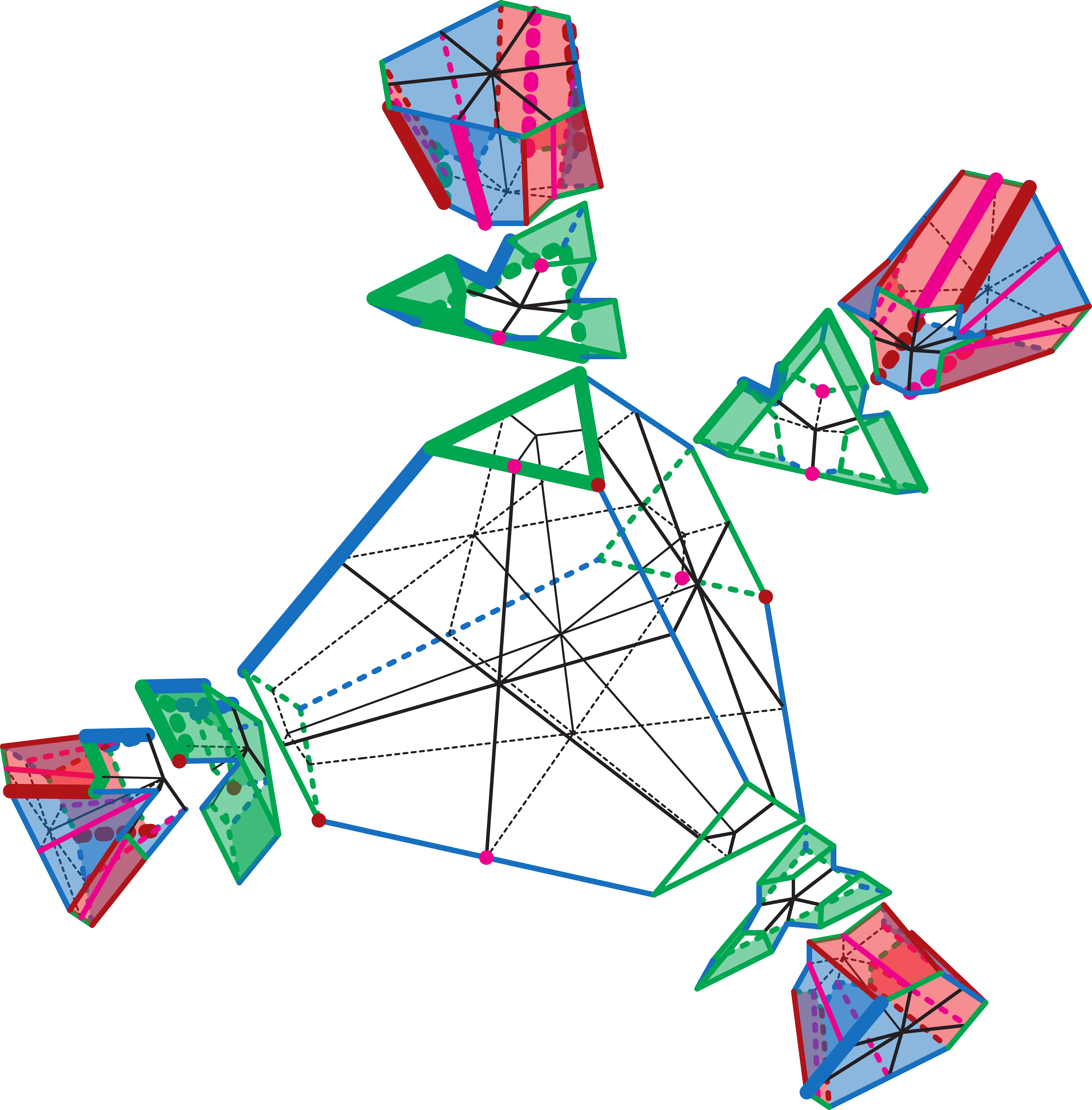}
\caption{A handle decomposition of $X_0\cap X_1$ in our trisection of $T^4$. The trisection diagram on $\partial (X_0\cap X_1)=X_0\cap X_1\cap X_2=\FG{\lla\alpha\beta02\rra}\cup\NavyBlue{\lla\alpha\gamma12\rra}\cup\red
{\lla\beta\gamma01\rra}$ has two types of red curves; one of each is bold. Same with blue and green.
}
\label{Fi:T4Actual}
\end{center}
\end{figure}
Now consider 
\begin{equation}\label{E:X014}
X_0\cap X_1=\lla\alpha1\beta{[1,3]}\rra\cup\lla0\alpha{\beta^2}\rra.
\end{equation}
We claim that this is a 3-dimensional 1-handlebody in which:
\begin{itemize}
\item $Y_1=\lla\Navy{\alpha}1\Navy{\beta^2}\rra$ is the 0-handle;
\item $Y_2=\lla0\alpha{\beta^2}\rra$ gives six 1-handles, all permutations of $Y_2^*=\lla0
{\alpha}\rra\Navy{\beta^2}$; 
\item $Y_3=\lla\alpha1\beta{\gamma}\rra$ gives four 1-handles, all permutations of $Y_3^*=\lla\Navy{\alpha}1\Navy{\beta}\rra
{\gamma}$.
\end{itemize}  
Figure \ref{Fi:T4Actual} shows this decomposition of $X_0\cap X_1$: 
\begin{itemize}
\item The shape in the center (which looks like a truncated tetrahedron) is the 0-handle $\lla\Navy{\alpha}1\Navy{\beta^2}\rra$, comprised of 12 cubes, each a permutation of $\alpha1\beta^{{2}}$ (c.f. (\ref{E:T41Handles}) and the paragraph before it). The interior lattice point is $(1,1,1,1)$, and each triangular-looking face is a permutation of $0\lla1{\beta^2}\rra$ (again, c.f. (\ref{E:T41Handles}) ). Each blue segment on $\partial \lla \alpha1\beta^{{2}}\rra$ is a permutation of $\lla\alpha1\rra2^2$.
\item Each of the four three-pronged pieces is a permutation of $0\lla\alpha{\beta^2}\rra$, glued to the 0-handle along $0\lla1{\beta^2}\rra$. The twelve cubes comprising these pieces are then glued in pairs:  $0\alpha{\beta^2}$ and $\alpha0{\beta^2}$, e.g., meet along the face $00{\beta^2}$, and the other pairs are permutations of this.  The union of each pair of cubes, (a permutation of) $Y_2^*=\lla 0
{\alpha}\rra\Navy{\beta^2}$, is a 1-handle which is glued to the 0-handle along (the corresponding permutation of) $\lla01\rra{\beta^2}$. Note that $Y_2^*$ intersects other permutations of $Y_2^*$, but only within $Y_2^*\cap Y_1$.  Therefore, attaching $Y_2^*$ to $Y_1$ amounts to attaching six 1-handles.
\item Each of the four remaining pieces is a permutation of $Y_3^*=\lla\Navy{\alpha}1\Navy{\beta}\rra
{\gamma}$ and attaches to $Y_1$ and $Y_2$, respectively, along (the corresponding permutations of) $\lla\alpha1\beta\rra2\subset\lla\alpha1\beta^2\rra$ and $\lla\alpha1\beta\rra0\subset\lla\alpha{\beta^2}\rra0$.
\end{itemize}
%
%
For emphasis, here are some key details of this decomposition which will be instructive toward the odd-dimensional case (we will justify some of these details in \textsection\ref{S:StarShaped}):
\begin{equation*}\label{E:4Y1}
Y_1=Y_1^*=\lla\Navy{\alpha}1\Navy{\beta^{{2}}}\rra\cong D^3,
\end{equation*}
so $Y_1$ is a 0-handle;
\begin{equation*}
\begin{split}
Y_2^*&=\lla0
{\alpha}\rra\Navy{\beta^2}\cong D^1\times D^2 \text{ and}\\
Y_2^*\cap (Y_2\setminus\setminus Y_2^*)\subset Y_2^*\cap Y_1&=\left(\partial\lla0
{\alpha}\rra\right)\times \Navy{\beta^2}
=\lla01\rra\Navy{\beta^2}\cong S^0\times D^2,\\
\end{split}
\end{equation*}
so attaching $Y_2$ to $Y_1$ amounts to attaching a collection of 1-handles; and
\begin{equation*}
\begin{split}
Y_3^*&=\lla\Navy{\alpha}1\Navy{\beta}\rra
{\gamma}\cong D^2\times D^1 \text{ and}\\
Y_3^*\cap (Y_3\setminus\setminus Y_3^*)\subset Y_3^*\cap (Y_1\cup Y_2)
&=\lla\Navy{\alpha}1\Navy{\beta}\rra\times\partial
{\gamma}
\cong D^2\times S^0,\\
\end{split}
\end{equation*}
so attaching $Y_3$ to $Y_1\cup Y_2$ amounts to attaching a collection of 1-handles. Thus, $X_0\cap X_1$ is a 4-dimensional 1-handlebody. 
Note in Figure \ref{Fi:T4Actual} that $\partial (X_0\cap X_1)$ is the central surface 
\begin{equation}\label{E:X012T4}
X_0\cap X_1\cap X_2=\FG{\lla\alpha\beta02\rra}\cup\NavyBlue{\lla\alpha\gamma12\rra}\cup\red{\lla\beta\gamma01\rra},
\end{equation}
which is colored in Figure \ref{Fi:T4Actual} according to the color scheme from (\ref{E:X012T4}).  Moreover, the red (resp. blue, green) line segments in Figure \ref{Fi:T4Actual} comprise the ``red (resp. blue, green) curves'' in a trisection diagram for this trisection, and so Figure \ref{Fi:T4Actual} {\it is}, in fact, a trisection diagram (see \cite{gk,msz}). 

Note that what we have actually shown is that Figures \ref{Fi:T4Pieces}, \ref{Fi:T4Slices}, and \ref{Fi:T4Actual} give a combinatorial description of an efficient trisection of $T^4$.  Thus, since the PL and smooth categories coincide in dimension 4, $T^4$ has a smooth structure for which we have described a trisection. Most likely, this is the standard smooth structure on $T^4$, but we have not yet proven this, nor will we in this paper.

One way to prove this would be to describe a (smooth=PL) isotopy (i.e. a sequence of handleslides on the central surface) between our trisection and another trisection of the standard $T^4$, such as either of those due to Koenig or Williams, the former obtained by viewing $T^4$ as $T^3\times S^1$ \cite{k21}, the latter by viewing $T^4$ as $T^2\times T^2$ \cite{w20}.  There may well be isotopies between our constructions are theirs, but attempting to construct such isotopies explicitly is messy, in part because the central surface has genus 10, and so it remains an open question as to whether or not {\it all} efficient trisections of $T^4$ are mutually isotopic. In other words does the following theorem, proven using minimal surface theory, extend to dimension four? 

\begin{theorem}[Frohman \cite{froh}]
Up to isotopy, $T^3$ has a unique minimal genus Heegaard splitting.
\end{theorem}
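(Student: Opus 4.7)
The plan is to combine Casson-Gordon's trichotomy for Heegaard splittings with Pitts-Rubinstein's minimal-surface existence theorem. I would first note that $\text{rank}\,\pi_1(T^3) = 3$ forces any Heegaard splitting of $T^3$ to have genus at least $3$ (by Corollary \ref{C:SmoothEfficient2}), while the splitting dual to the 1-skeleton of a cubulation achieves genus $3$. It then remains to show that any two genus-3 splittings are ambient isotopic.

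Let $T^3 = V_0 \cup_\Sigma V_1$ be an arbitrary genus-3 splitting. By Casson-Gordon, $\Sigma$ is reducible, weakly reducible, or strongly irreducible. Reducibility contradicts minimality, since $T^3$ is irreducible and a reducible minimal-genus splitting would destabilize. For weak reducibility, Scharlemann-Thompson untelescoping produces an incompressible surface in $T^3$, necessarily a union of essential tori; I would argue via an Euler-characteristic and amalgamation count that any such untelescoping at genus $3$ forces the component splittings of the resulting $T^2 \times I$ pieces to reamalgamate back to the standard splitting (up to isotopy).

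That reduces the problem to the strongly irreducible case, where minimal-surface theory takes over. Equipping $T^3$ with the flat metric, the Pitts-Rubinstein theorem says $\Sigma$ is isotopic either to an embedded minimal surface $M$ or to the boundary of a regular neighborhood of a one-sided minimal surface; a genus calculation rules out the non-orientable case. So $\Sigma$ is isotopic to a closed orientable minimal surface of genus $3$ in the flat 3-torus. The last step is to invoke the classification of such surfaces: the embedded orientable minimal surfaces of genus $3$ in flat $T^3$ form a connected moduli space containing the standard triply-periodic surface, and continuous deformation within this moduli produces an ambient isotopy from $M$ to the standard Heegaard surface.

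The hard part will be the weakly reducible case. Ruling out exotic Scharlemann-Thompson decompositions at genus $3$ requires careful bookkeeping of how incompressible tori can interact with a Heegaard surface, and while the strongly irreducible case is essentially handed over to minimal-surface theory once Pitts-Rubinstein applies, one still needs the classification of genus-3 minimal surfaces in flat $T^3$ together with connectedness of their moduli space, both of which are nontrivial geometric inputs.
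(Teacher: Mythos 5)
The paper cites Frohman's 1986 theorem rather than proving it, and Frohman's own argument uses minimal-surface theory in a more elementary and direct way than your sketch. Your outline contains two genuine gaps. The strongly irreducible endgame appeals to ``the classification of genus-3 minimal surfaces in flat $T^3$ together with connectedness of their moduli space'' and asserts that a path in moduli produces an ambient isotopy. Neither step is available: the moduli space of embedded genus-3 minimal surfaces in a flat 3-torus (studied by Meeks and later Traizet) is a notoriously delicate object whose connectedness was, and largely remains, conjectural, and even granted connectedness you would need the family to stay embedded and non-degenerate to extract an isotopy. You would be deducing Heegaard uniqueness---a comparatively soft statement---from a far harder and unproven geometric classification. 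Frohman's published proof instead leverages local rigidity of minimal surfaces in the flat metric (for instance, the Gauss map of a genus-3 minimal surface in flat $T^3$ has degree 2, forcing hyperellipticity and controlling the branch locus) rather than any global moduli statement.

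Second, the weakly reducible case is not merely ``the hard part''; it is where essentially all the content lives. The standard genus-3 splitting of $T^3$ is a self-amalgamation of a genus-2 splitting of $T^2\times I$ along an incompressible torus, and amalgamations along positive-genus surfaces are weakly reducible, so even a complete treatment of the strongly irreducible case would leave the central question---showing that any weakly reducible genus-3 splitting untelescopes and reamalgamates to the standard one---entirely unaddressed. A smaller point: ``a genus calculation rules out the non-orientable case'' is not the right obstruction; the correct reason is that a twisted $I$-bundle over a non-orientable surface has non-free fundamental group and so cannot be a handlebody side of a Heegaard splitting, which genus alone does not exclude.
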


\begin{question}\label{Q:T4UniqueSmooth}
Up to isotopy, does $T^4$ have a unique efficient trisection?
\end{question}

\begin{question}\label{Q:T4Exotic}
Does $T^4$ admit exotic smooth structures? If it does, then which of these exotic structures are compatible with efficient trisections?
\end{question}

\subsection{Trisection of $T^5$}\label{S:T5}

The decomposition of $T^5$ from Figure \ref{Fi:T5Intro} is given by 
\begin{equation}\label{E:X05}
X_0=\lla \alpha^2[0,2]^2{[0,3]}\rra,~X_i=X_0+(i,i,i,i,i).\end{equation}
The handle decompositions of $X_I$, $I=\{0\},\{0,1\}$, are quite similar to those from $T^4$. Focus first on $I=\{0\}$, i.e. on the handle decomposition of $X_0$. Note the single factor of $[0,3]$ in (\ref{E:X05}).   As we will explain shortly, the handle decomposition of $X_0$ here comes from the decomposition of the interval
\[\violet{[0,3]}=\Navy{[0,2]}\cup\red{\gamma},\]
and likewise for $X_0$ from the trisection of $T^4$.  These handle decompositions appear in Tables \ref{T:4X0} and \ref{T:5X0}.  These and subsequent tables are organized as follows.  In each $\boldsymbol{z}^\text{th}$ row, $\boldsymbol{Y_z}$ is a union of handles of index $\boldsymbol{h}$, $\boldsymbol{Y_z^*}$ is an {\it example} of such an $h$-handle, and the entry in the column {\bf glue to} lists those indices $z'$ for which $Y_z^*$ glues  to $Y_{z'}$ along at least one face of codimension 1. The other handles from $Y_z$ are related to $Y_z^*$ by permutation; for details, see \textsection\ref{S:Yzstar}.

\begin{table}[h!]
\begin{center}
\begin{tabular}{||c|c|ccc||}
\hline
$Y_z$&$Y_z^*$&$h$&$z$&glue to\\ 
\hline
$\lla\Navy{\alpha^2[0,2]^2}\rra$&$\lla\Navy{\alpha^2[0,2]^2}\rra$&0&1&\\
$\lla\Navy{\alpha^2[0,2]}\red{\gamma}\rra$&$\lla\Navy{\alpha^2[0,2]}\rra\red{\gamma}$&{\bf 1}&2&1\\ 
\hline
\end{tabular}
\caption{
$X_0$ from the trisection of $T^4$. }\label{T:4X0}
\end{center}
\end{table}

\begin{table}[h!]
\begin{center}
\begin{tabular}{||c|c|c|ccc||}
\hline
$J$&$Y_z$&$Y_z^*$&$h$&$z$&glue to\\ 
\hline
$\varnothing$&$\lla\Navy{\alpha^2[0,2]^3}\rra$&$\lla\Navy{\alpha^2[0,2]^3}\rra$&0&1&\\
$\{0\}$&$\lla\Navy{\alpha^2[0,2]^2}\red{\gamma}\rra$&$\lla\Navy{\alpha^2[0,2]^2}\rra\red{\gamma}$&{\bf 1}&2&1\\ 
\hline
\end{tabular}
\caption{
$X_0$ from the trisection of $T^5$}\label{T:5X0}
\end{center}
\end{table}

Note in both Tables \ref{T:4X0} and \ref{T:5X0} that $Y_1=Y_1^*$ is star-shaped in a particularly nice way (more detail to come in \textsection\ref{S:StarShaped}), hence is a ball which we may view as a 0-handle. 
Then $Y_2^*$ is the product of the same sort of star-shaped ball with the interval $\red{\gamma}$ and glues to $Y_1$ along the product of that ball with $\partial\red{\gamma}$.  The red $\gamma$ here, and all red henceforth, indicates a positive contribution to the handle index $h$.

Next, consider $X_I$, $I=\{0,1\}$ from $T^4$ and $T^5$. Similarly to the former (recall (\ref{E:X014})), the latter is given by
\begin{equation}\label{E:X015}
X_0\cap X_1=\lla\alpha1\beta^2{[1,3]}\rra\cup\lla0\alpha{\beta^2}[1,3]\rra.
\end{equation}
Handle decompositions are summarized in Tables \ref{T:4X01} and \ref{T:5X01}, which are organized largely the same way as Tables \ref{T:4X0} and \ref{T:5X0}. 


\begin{table}[h!]
\begin{center}
\begin{tabular}{||c|c|ccc||}
\hline
$Y_z$&$Y_z^*$&$h$&$z$&glue to\\ 
\hline
$\lla\Navy{\alpha}1\Navy{\beta}^{\Navy{2}}\rra$
&
$\lla\Navy{\alpha}1\Navy{\beta}^{\Navy{2}}\rra$
&0&1&\\
$\lla0\red{\alpha}\Navy{\beta}^{\Navy{2}}\rra$
&
$\lla0\red{\alpha}\rra\Navy{\beta}^{\Navy{2}}$
&1&2&1\\
$\lla\Navy{\alpha}1\Navy{\beta}\red{\gamma}\rra$
&
$\lla\Navy{\alpha}1\Navy{\beta}\rra\red{\gamma}$
&1&3&1,2\\
\hline
\end{tabular}
\caption{From the trisection of $T^4$: $X_0\cap X_1=\lla\alpha1{\beta}{[1,3]}\rra\cup\lla0{\alpha}{\Navy{\beta^2}}\rra$.
}\label{T:4X01}
\end{center}
\end{table}

\begin{table}[h!]
\begin{center}
\begin{tabular}{||cc|c|c|ccc||}
\hline
$J$&$i_*$&$Y_z$&$Y_z^*$&$h$&$z$&glue to\\ 
\hline
$\varnothing$&$0$&
$\lla\Navy{\alpha}1\Navy{\beta}^{\Navy{3}}\rra$
&
$\lla\Navy{\alpha}1\Navy{\beta}^{\Navy{3}}\rra$
&0&1&\\
&$1$&
$\lla0\red{\alpha}\Navy{\beta}^{\Navy{3}}\rra$
&
$\lla0\red{\alpha}\rra\Navy{\beta}^{\Navy{3}}$
&1&2&1\\
$\{0\}$&$0$&
$\lla\Navy{\alpha}1\Navy{\Navy{\beta^2}}\red{\gamma}\rra$
&
$\lla\Navy{\alpha}1\Navy{\Navy{\beta^2}}\rra\red{\gamma}$
&1&3&1,2\\
&$1$&
$\lla\red{\gamma}0\red{\alpha}\Navy{\Navy{\beta^2}}\rra$
&
$\lla\red{\gamma}0\red{\alpha}\rra\Navy{\Navy{\beta^2}}$
&{\bf 2}&4&2,3\\
\hline
\end{tabular}
\caption{From the trisection of $T^5$: $X_0\cap X_1=\lla\alpha1{\Navy{\beta^2}}{[1,3]}\rra\cup\lla0{\alpha}{\Navy{\beta^2}}{[1,3]}\rra.$
}\label{T:5X01}
\end{center}
\end{table}

Regarding the first columns of Table \ref{T:5X01}, each $Y_z$ there corresponds to a pair $(J,i_*)$, where $J\subset \{\min I_r\}=\{0\}$\footnote{Recall from Notation \ref{N:T} that $\{\min I_r\}=\{i_t:~t\in T\}=\{i_t\in I:~i_t-1\notin I\}$, so e.g. $\{\min I_r\}=\{0\}$ if $I=\{0\}$, $I=\{0,1\}$ or $I=\{0,1,2\}$, and $\{\min I_r\}=\{0,2\}$ if $I=\{0,2\}$.}  and $i_*\in I=\{0,1\}$. For details on this correspondence, see \textsection\ref{S:XIJi}.

\subsection{The difficulty with $T^6$}

Suppose we try to quadrisect $T^6$ in the same way, viewing $T^6$ as $(\R/4\Z)^6=[0,4]^6/\sim$ and partitioning the $4^6$ resulting subcubes into four classes.  The first problem is that no such partition is symmetric with respect to both the permutation action of $\Z_6$ on the indices and the translation action of $\Z_4$ along the main diagonal.  To see this, consider the subcube $\alpha^3\gamma^3$. The problem is that 
\[\alpha^3\gamma^3+(2,2,2,2,2,2)=\gamma^3\alpha^3\subset\lla\alpha^3\gamma^3\rra.\]
Fundamentally, the problem is that $k=4$ and $n=2k-2=6$  are not relatively prime.  (In odd dimensions, this trouble does not arise, since $k$ and $2k-1$ are relatively prime.) Perhaps there is a less symmetric way to partition the subcubes of $[0,4]^6/\sim$ which gives a quadrisection of $T^6$, but trial and error suggests to the author that this is unlikely. 

\begin{conjecture}\label{Conj:T6}
No partition of the subcubes of $[0,4]^6/\sim$ gives a quadrisection of $T^6$.
\end{conjecture}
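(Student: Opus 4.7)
I would proceed by contradiction: assume a partition of the $4^6$ subcubes of $T^6=(\R/4\Z)^6$ into classes $X_0,\ldots,X_3$ yields a quadrisection, and derive a contradiction. The first step is to extract a crisp local necessary condition from the axiom (Definition \ref{Def:Multi}) that $\bigcap_{i\in\Z_4}X_i$ is a closed 3-manifold. In any cubulated partition, each $d$-face $F$ of the cubulation lies on the boundary of exactly $2^{6-d}$ subcubes, and $F\subset\bigcap X_i$ iff all four classes occur among those subcubes. A 4-face has $2^{6-4}=4$ neighbors, so forcing $\bigcap X_i$ to have topological dimension at most 3 yields the condition:

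\medskip

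($\star$) For every 4-face $F$ of the cubulation, the four subcubes adjacent to $F$ omit at least one of the four classes.

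\medskip

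Equivalently, the partition restricted to any 2D coordinate slice is a 4-coloring of a $4\times 4$ torus grid in which no $2\times 2$ sub-block sees all four colors.

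The second step would combine $(\star)$ with the remaining handlebody constraints from Definition \ref{Def:Multi}: each $X_i$ is a connected 6-dimensional 1-handlebody, each $X_i\cap X_j$ is a 5-dimensional 1-handlebody, and each $X_i\cap X_j\cap X_k$ is a 4-dimensional 2-handlebody. In addition, the local structure at every 3-face of the cubulation (which has 8 surrounding subcubes) must either omit some class entirely or present a class pattern producing a 3-manifold point of the central intersection. Euler-characteristic bookkeeping -- matching handle counts against cubical cell counts via inclusion-exclusion over $I\subseteq\Z_4$ -- then yields a system of numerical constraints that any valid partition must satisfy, and the allowed local 3-face patterns impose further adjacency restrictions between classes.

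The third step is to deduce a contradiction from these constraints, and this is the main obstacle. Condition $(\star)$ alone is far too weak, since colorings depending on only a single coordinate satisfy it trivially (but fail the connectivity and handlebody requirements). So the argument must show that rainbow-freeness on \emph{all} $\binom{6}{2}\cdot 4^4$ coordinate 2-slices, combined with the handlebody and manifold axioms, admits no solution. Promising avenues include: (a) classifying rainbow-free $4\times 4$-torus colorings and showing that imposing one on every 2-slice forces some $X_i$ to be disconnected or of wrong Euler characteristic; (b) a computer-assisted enumeration modulo the $(\Z_4)^6\rtimes S_6$ symmetry group of the cubulation, which should become tractable after $(\star)$ and the local 3-face constraints cut down the search space; or (c) constructing an obstruction invariant that vanishes on every cubulated partition but is forced to be nonzero by the quadrisection axioms, perhaps derived from the $\Z_2$-action on $T^6$ generated by translation by $(2,\ldots,2)$, whose interaction with $S_6$ was precisely what ruled out the symmetric case. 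The author's hedge (``trial and error suggests'') indicates that any successful proof is likely to be intricate, relying on the interplay of several of these constraints rather than a single clean inequality.
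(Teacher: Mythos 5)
The statement you are addressing is labelled a \emph{conjecture} in the paper, and the paper offers no proof of it: the only argument given there is that the doubly symmetric construction (equivariant under both the $S_6$ permutation action on indices and the $\Z_4$ translation along the diagonal) is impossible, because translation by $(2,\ldots,2)$ carries the subcube $\alpha^3\gamma^3$ to $\gamma^3\alpha^3$, a permutation of itself, so it cannot carry $X_i$ to $X_{i+2}$; beyond that the author reports only that ``trial and error suggests'' no asymmetric partition works either. So there is no paper proof to compare against, and the real question is whether your proposal closes the gap. It does not. Your condition $(\star)$ is a correct and useful necessary condition --- a $4$-face of the cubulation meets exactly $2^{2}=4$ subcubes, and if all four classes occur among them the central intersection contains an open $4$-dimensional set, contradicting that $\bigcap_{i}X_i$ must be a closed $3$-manifold --- but everything after that is a list of candidate strategies rather than an argument. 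Step two gestures at ``Euler-characteristic bookkeeping'' and ``adjacency restrictions'' without deriving a single explicit numerical constraint, and step three is, by your own admission, ``the main obstacle'': you name three possible avenues (classifying rainbow-free colorings of the $4\times4$ torus grid, a computer search modulo the symmetry group, an obstruction invariant) but execute none of them, so no contradiction is ever reached.

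Concretely, the failure point is that nothing you write rules out a partition satisfying $(\star)$ on every coordinate $2$-slice together with the lower-dimensional face conditions; you have shown the constraint system is nontrivial, not that it is infeasible. As it stands the proposal is a reasonable plan of attack on what the paper leaves as an open problem, not a proof, and it should be presented as such.
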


\begin{question}\label{Q:T6}
Does the 6-dimensional torus admit an {\it efficient} quadrisection?
\end{question}

\section{Star-shaped building blocks}\label{S:StarShaped}

This section introduces three types of building blocks, each of which is PL homeomorphic to a ball.\footnote{Note that, in the PL category, an $n$-ball $D^n$ is any manifold PL homeomorphic to the standard $n$-simplex, and an $n$-sphere $S^n$ is any manifold PL homeomorphic to $\partial D^n$.} In \textsection\ref{S:MainGen}, when we describe and then justify the handle decomposition of arbitrary $X_I$  in arbitrary odd dimension, this will be particularly helpful.  The main idea is that we will decompose arbitrary $X_I$ into many pieces. Each piece will be a product of such building blocks, hence PL homeomorphic to a ball (see Lemma \ref{L:Yzr}).  Of course, we will still need to describe how all these balls are glued together and explain why this gives a handle decomposition.

In fact, we saw all three types of building blocks in \textsection\ref{S:Motivation}.  For example, denoting PL homeomorphism by $\cong$, the factors $\alpha^2[0,2]\cong D^3$, $\alpha^2[0,2]^2\cong D^4$, and $\alpha^2[0,2]^3\cong D^5$ from Tables \ref{T:4X0} and \ref{T:5X0} are examples of the first type of building block; see (\ref{E:C1}). The factor $\lla0\red{\alpha}\rra\cong D^1$ of $Y_2^*$ in Tables \ref{T:4X01} and \ref{T:5X01} is an example of the second type of building block; see (\ref{E:C2}).  The factor $\lla \red{\gamma} 0\red{\alpha}\rra\cong D^2$ from $Y_4^*$ in Table \ref{T:5X01} is an example of the third type, as are those factors $\lla \Navy{\alpha} 1\Navy{\beta^r}\rra\cong D^{r+1}$, which appear four places in Tables \ref{T:4X01} and \ref{T:5X01}. 

Given $\vec{p},\vec{q}\in\R^n$, denote the convex hull of $\{\vec{p},\vec{q}\}$ by
\[\left[\vec{p},\vec{q}\right]=\left\{t\vec{p}+(1-t)\vec{q}:~0\leq t\leq 1\right\}.\]
Let $\vec{p}\in Y\subset \R^n$. Define the {\it scope} of $\vec{p}$ in $Y$ to be the largest star of $\vec{p}$ in $Y$:
\[
\text{scope}(Y;\vec{p})=\{\vec{q}\in Y:~\left[\vec{p},\vec{q}\right]\subset Y\}.\]
Say that $Y$ is {\it star-shaped} about $\vec{p}$ if $Y=\text{scope}(Y;\vec{p})$.  The {\it link} of $\vec{p}$ in $Y$ is 
\[\text{lk}_Y(\vec{p})=\left\{\vv\in\R^n:~|\vv|=1,~\left[\vec{p},\vec{p}+\ep\vv\right]\subset Y\text{ for some }\ep>0\right\}.\]
Thus, $Y$ is a $d$-dimensional PL submanifold of $\R^n$ near $\vec{p}$ if and only if either
\begin{itemize}
\item $\text{lk}_Y(\vec{p})\cong S^{d-1}$, in which case $\vec{p}$ is in the {\it interior} of $Y$; or
\item $\text{lk}_Y(\vec{p})\cong D^{d-1}$, in which case $\vec{p}\in \partial Y$.
\end{itemize}
Suppose $Y=\text{scope}(Y;\vec{p})$ and $\text{lk}_Y(\vec{p})\cong S^{d-1}$, so $Y$ is star-shaped about $\vec{p}$ and is a PL $d$-submanifold of $\R^n$ near $\vec{p}$. In this situation, we say $Y$ is {\it strongly star-shaped} about $\vec{p}$ if moreover, for every point $\vec{q}\in  Y$, every point $\vec{x}\in\left[\vec{p},\vec{q}\right]\setminus\left\{\vec{q}\right\}$ satisfies $\text{lk}_Y(\vec{x})\cong S^{d-1}$. This extra requirement implies that, 
for each $\vec{q}\in\text{link}_Y(\vec{p})$, the ray from $\vec{p}$ through $\vec{q}$ contains at most one point of $\partial Y$. Moreover:

\begin{prop}\label{P:StarShaped}
If $Y\subset \R^n$ is compact and strongly star-shaped about $\vec{p}\in Y$, then $Y$ is PL homeomorphic to a compact ball.
\end{prop}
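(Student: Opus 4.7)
The plan is to show that radial projection from $\vec p$ exhibits $Y$ as the cone over its link $\Sigma=\text{lk}_Y(\vec p)\cong S^{d-1}$, which is a PL $d$-ball by definition. First I would verify that for each direction $\vv\in\Sigma$, the strongly star-shaped hypothesis forces every interior point of the segment $Y\cap\{\vec p+t\vv:t\geq 0\}$ to have link $S^{d-1}$ in $Y$, hence to be a PL manifold-interior point of $Y$. Compactness of $Y$ then produces a unique maximal exit distance $r(\vv)>0$ with $\vec p+r(\vv)\vv\in\partial Y$, and this is the only boundary point of $Y$ on that ray.

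Next I would assemble these exit data into the radial map $f\colon C(\Sigma)\to Y$, $f([\vv,t])=\vec p+t\,r(\vv)\vv$, where $C(\Sigma)=(\Sigma\times[0,1])/(\Sigma\times\{0\})$. Continuity of $r$ follows from closedness of $\partial Y$ in the compact set $Y$, so $f$ is a continuous bijection between compact Hausdorff spaces, and therefore a homeomorphism. Since $\Sigma\cong S^{d-1}$, this alone shows $Y\cong D^d$ topologically.

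To upgrade $f$ to a PL homeomorphism I would triangulate $Y$ as a subpolyhedron of $\R^n$ so that $\vec p$ is a vertex and $\partial Y$ is a subcomplex, and then perform a star subdivision at $\vec p$, ensuring that every top-dimensional simplex of $Y$ incident to $\vec p$ has the form $[\vec p]*\tau$ for some simplex $\tau$ in $\text{lk}(\vec p)$. Passing to a common refinement under which the radial bijection $\Sigma\to\partial Y$ identifies simplices on both sides makes $f$ simplicial, hence PL, completing the proof since the PL cone on $S^{d-1}$ is by definition the PL ball $D^d$.

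The principal obstacle is this last PL step: producing compatible triangulations of $\Sigma$ and $\partial Y$ under which $f$ is simplicial. The strongly star-shaped condition is precisely what makes this possible, since the unique-exit-point property guarantees that the linear cones $[\vec p]*\tau$ in $Y$ extend radially outward to disjoint straight-line pieces of $\partial Y$; iterated subdivision on either side then aligns the two triangulations.
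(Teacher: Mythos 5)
Your argument is essentially the same radial-projection-to-polar-coordinates argument as the paper's: both identify the exit distance on each ray (via the unique-boundary-point property coming from strong star-shapedness plus compactness), build the homeomorphism by coning off the link $\text{lk}_Y(\vec p)\cong S^{d-1}$, and then address the PL upgrade. Your write-up is, if anything, slightly more explicit than the paper about the topological step and the PL alignment via subdivisions; the paper's proof packages the same content into the maps $\psi$, $\Psi$, $\phi$, $\Phi$, and their inverses.
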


\begin{proof}
By definition, there is a PL homeomorphism $\phi: S^{d-1}\to\text{lk}_Y(\vec{p})$. There is also a map $\psi:Y\setminus\{\vec{p}\}\to\text{lk}_Y(\vec{p})$ given by $\psi:\vec{q}\mapsto \frac{\vec{q}-\vec{p}}{|\vec{q}-\vec{p}|}$.\footnote{We use the product metric on $\R^n$: if $\vec{p}=(p_1,\hdots,p_n)$ and $\vec{q}=(q_1,\hdots,q_n)$, then $|\vec{q}-\vec{p}|=\max_i|q_i-p_i|$.} Denote the restriction $\psi|_{\partial Y}$ by $\Psi$. The assumptions that $Y$ is compact and {\it strongly} star-shaped about $\vec{p}$ imply that $\Psi$ has a well-defined, continuous inverse map, hence is a PL homeomorphism.  Define a polar coordinate system $\Phi:Y\to D^d$ by $\Phi:\vec{p}\mapsto \vec{0}$ and, for $\vec{q}\neq \vec{p}$,
\[\Phi:\vec{q}\mapsto\frac{|\vec{q}-\vec{p}|}{|\Psi^{-1}\circ \psi(\vec{q})-\vec{p}|}\cdot\phi^{-1}\circ \psi(\vec{q}).\]
This map $\Phi$ is a PL homeomorphism, because the inverse map $D^d\to Y$ is 
\[\pushQED{\qed}
\Phi^{-1}:r\vec{\theta}\mapsto \vec{p}+r|\Psi^{-1}\circ \phi(\vec{\theta})-\vec{p}|\cdot\phi(\vec{\theta}).\qedhere\]
\end{proof}
\begin{figure}
\begin{center}
\includegraphics[width=\textwidth]{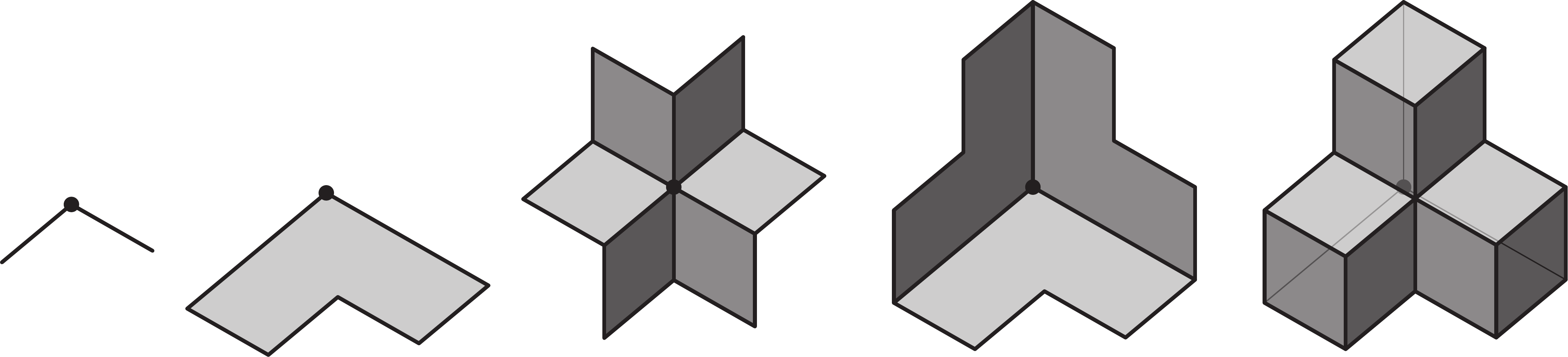}
\caption{Left to right: $\lla 0\alpha\rra$, $\lla\alpha[0,2]\rra$, $\lla \alpha1\beta\rra$, $\lla0\alpha[0,2]\rra$, $\lla \alpha^2[0,2]\rra$.}
\label{Fi:BuildingBlocks}
\end{center}
\end{figure}

In $T^n=(\R/k\Z)^n$, for $d\leq n-1$, identify $T^d=(\R/k\Z)^d$ with $(\R/k\Z)^d\times\{\vec{0}\}\subset T^n$, and likewise for $T^{d+1}$. 
For any $0<a_1\leq\cdots\leq a_d<k$ (not necessarily integers), define
\begin{align}\label{E:C1}
C_1&=\lla\prod_{r=1}^d[0,a_r]\rra\subset T^d,\\
\label{E:C2}
C_2&=\lla\{0\}\times \prod_{r=1}^d[0,a_r]\rra\subset T^{d+1},\text{ and }
\end{align}
\begin{equation}\label{E:C3}
C_3=\lla[0,a_1]\times\{a_1\}\times \prod_{r=2}^d[a_1,a_r]\rra\subset T^{d+1}.
\end{equation}

Figures \ref{Fi:BuildingBlocks} and \ref{Fi:BuildingBlocks2} show low-dimensional examples of these {\it building blocks}. In Figure \ref{Fi:BuildingBlocks}, $\lla\alpha[0,2]\rra$ and $\lla\alpha^2[0,2]\rra$ are examples of $C_1$, $\lla 0\alpha\rra$ and $\lla0\alpha[0,2]\rra$ are examples of $C_2$, and $\lla \alpha1\beta\rra$ is an example of $C_3$. In Figure \ref{Fi:BuildingBlocks2},  $\lla 0\alpha^3\rra$ and $\lla0\alpha^2[0,2]\rra$ are examples of $C_2$, and $\lla \alpha1\beta^2\rra$ is an example of $C_3$.

\begin{figure}
\begin{center}
\includegraphics[width=\textwidth]{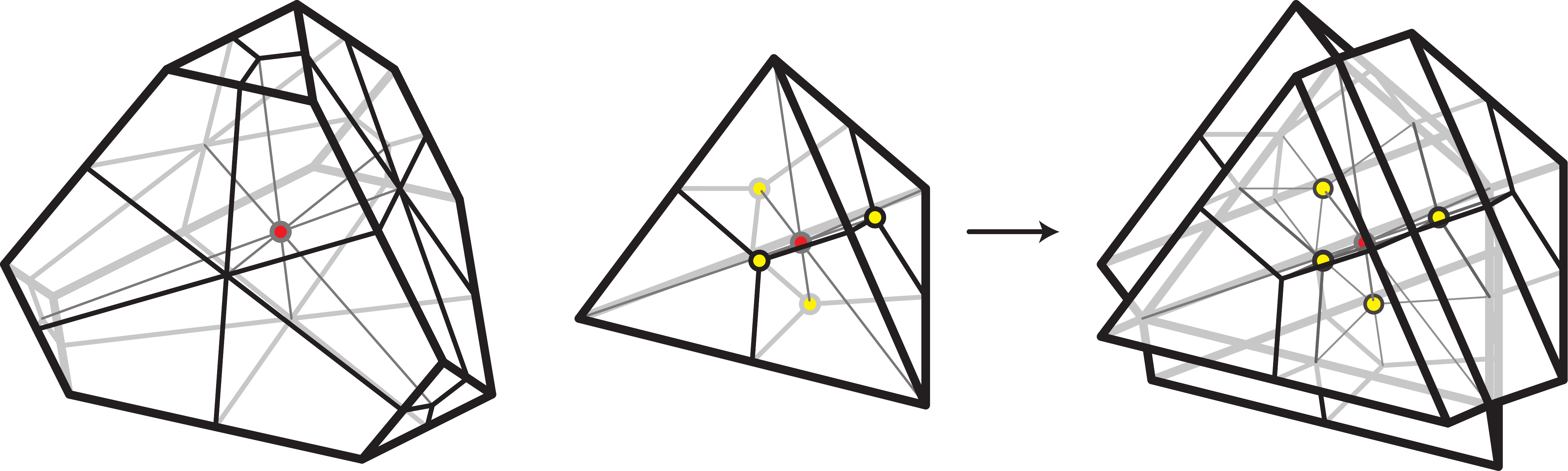}
\caption{Left to right: $\lla \alpha1{\beta^2}\rra$ and $\lla0\alpha^3\rra\to\lla0\alpha^2[0,2]\rra$.}
\label{Fi:BuildingBlocks2}
\end{center}
\end{figure}


\begin{lemma}\label{L:StarShaped}
$C_1$, $C_2$, and $C_3$ from (\ref{E:C1})-(\ref{E:C3}) are PL homeomorphic to $D^d$. 
\end{lemma}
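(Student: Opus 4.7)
The plan is to apply Proposition~\ref{P:StarShaped} to each building block by choosing an appropriate center and verifying strong star-shapedness. I would take $\vec{p}=(\epsilon,\hdots,\epsilon)$ with $0<\epsilon<a_1$ as the center for $C_1$, $\vec{p}=\0$ for $C_2$, and $\vec{p}=(a_1,\hdots,a_1)$ for $C_3$. In each case $\vec{p}$ belongs to every permuted copy of the defining box, and the segment from $\vec{p}$ to any $\vec{q}$ in a given permuted box stays inside that box: for $C_1$ because mixing with a constant vector preserves sorted-coordinate inequalities; for $C_2$ because scaling preserves the condition that some coordinate equals $0$; for $C_3$ because the segment preserves the fixed-coordinate value $a_1$ together with both the $[0,a_1]$ and $[a_1,a_r]$ interval bounds. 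Hence each $C_i$ is star-shaped about $\vec{p}$.

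The central calculation is to show that each $\vec{p}$ (and, more generally, every intermediate point on a segment from $\vec{p}$) has link $\cong S^{d-1}$ in $C_i$, so that the strong star-shapedness hypothesis of Proposition~\ref{P:StarShaped} is met. For $\vec{p}\in C_1$ this is immediate, since $\vec{p}$ lies in the topological interior of $[0,a_1]^d\subseteq C_1$, and an easy calculation shows every intermediate $\vec{x}$ also satisfies $x_i>0$ and $x_{(i)}<a_i$ strictly. For $\0\in C_2$, the local model is $\{\vec{v}\in\R^{d+1}_{\geq 0}: \min_i v_i = 0\}$, whose link via the max-norm unit sphere equals $\{\vec{v}\in[0,1]^{d+1}: \max v_i = 1\text{ and }\min v_i = 0\}$: this is the intersection in $\partial[0,1]^{d+1}\cong S^d$ of the two $d$-disks $\{\max v_i = 1\}$ and $\{\min v_i = 0\}$ (each a union of $d+1$ cubical $d$-faces sharing a common vertex), and that intersection is the equator $\cong S^{d-1}$. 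For $\vec{p}\in C_3$, the analogous polyhedral computation applied to the translated local model $\lla[-1,0]\times\{0\}\times[0,1]^{d-1}\rra$ (parametrizing cells by which coordinate plays the \emph{negative} role, which plays the \emph{zero} role, and tracking the max-norm constraint) identifies its link as a triangulated $S^{d-1}$.

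The main obstacle is to carry out the analogous link calculation at intermediate points $\vec{x}=t\vec{q}+(1-t)\vec{p}$ with $t<1$, particularly where several coordinates of $\vec{x}$ simultaneously attain the corner value ($0$ for $C_2$, $a_1$ for $C_3$). At such a point the local model factors as a corner of an orthant boundary $\{\vec{u}\in\R^k_{\geq 0}: \min u_i = 0\}$ (or its signed analogue for $C_3$) times a Euclidean factor; its link is the join $S^{k-2}*S^{d-k}\cong S^{d-1}$ by the same hemisphere decomposition, so $\vec{x}$ is an interior $d$-manifold point. Combined with star-shapedness, this establishes strong star-shapedness, and Proposition~\ref{P:StarShaped} yields each $C_i\cong D^d$.
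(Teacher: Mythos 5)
Your proposal is correct and takes essentially the same approach as the paper: both select the same (or, for $C_1$, an essentially equivalent) star-centers $\vec{p}_i$ and then invoke Proposition~\ref{P:StarShaped}. The paper's own proof simply asserts compactness and strong star-shapedness in one sentence, whereas you spell out the link computations at $\vec p$ and at intermediate points, which is the verification the paper leaves implicit.
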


\begin{proof}
Let $b=\frac{1}{2}(k+a_d)$. 
Then $C_1\subset [0,b]^d$ and $C_2,C_3\subset [0,b]^{d+1}$, where $b<k$, so we may view $C_1$ as a subset of $\R^d$ and $C_2,C_3$ as subsets of $\R^{d+1}$.  Let $a=\frac{a_1}{2}$, $\vec{p}_1=(a,\hdots,a)\in \R^d$, $\vec{p}_2=\vec{0}\in \R^{d+1}$, and $\vec{p}_3=(a_1,\hdots,a_1)\in\R^{d+1}$. Then, for $i=1,2,3$, $C_i$ is compact and strongly star-shaped about $\vec{p}_i$, with $\text{link}_{C_i}(\vec{p}_i)\cong S^{d-1}$, hence PL homeomorphic to $D^d$ by Proposition \ref{P:StarShaped}.
\end{proof}

\section{Further examples}\label{S:Examples2}

As noted in the introduction, the hardest part of verifying our multisection of $T^n$, in arbitrary odd dimension $n$, is describing the handle decomposition of $X_I$ for arbitrary $I\subset\Z_k$.  That task will follow three main steps. First, Lemma \ref{L:XI} will establish a closed formula (\ref{E:XI}) for arbitrary $X_I$. Second, \textsection\ref{S:Handles} will describe how (in several steps) to decompose $X_I$ into pieces, each of which is a product of the building blocks from \textsection\ref{S:StarShaped}, and will describe an order on these pieces. Third, \textsection\ref{S:HandleProp} will establish several properties of the resulting decomposition, eventually proving that it is an appropriate handle decomposition of $X_I$ and thus verifying Theorem \ref{T:Main}.  

To prepare, this section describes a few more examples, 
each of which confronts and resolves an additional complication in the handle decomposition of some $X_I$ in some dimension.
This section contains no proofs and little narration.  Instead, the reader is encouraged to peruse the tables that follow in order to build intuition for the denser sections that follow. Indeed, assuming only the correctness of the formula (\ref{E:XI}), the reader should now be able to use their understanding of the building blocks from \textsection\ref{S:StarShaped} to check the correctness of the handle decompositions, as detailed in the last five columns of the tables (starting with $Y_z$).  

The harder part will be understanding how each handle decomposition has been constructed.  This is the purpose of the columns in each table which precede $Y_z$, which we do not attempt to describe in detail until \textsection\ref{S:MainGen}.

\subsection{Quadrisection of $T^7$}\label{S:T7}

The next several examples come from the decomposition of $T^7$ given by $X_0=\lla\alpha^2[0,2]^2[0,3]^2[0,4]\rra$ and $X_i=X_0+(i,i,i,i,i,i,i)$.
The handle decompositions of $X_I$, $I=\{0\},\{0,1\}$,  summarized in Tables \ref{T:7X0} and \ref{T:7X01}, respectively, follow the same pattern in dimension seven (and all higher odd dimensions) as in dimension five (recall Tables \ref{T:5X0} and \ref{T:5X01} and the attending discussions).
More instructive examples follow.

 \begin{table}[h!]
\begin{center}
\begin{tabular}{||c|c|c|ccc||}
\hline
$J$&$Y_z$&$Y_z^*$&$h$&$z$&glue to\\ 
\hline
$\varnothing$&$ \lla\Navy{\alpha^2[0,2]^2[0,3]^2[0,4]^3}\rra$&$ \lla\Navy{\alpha^2[0,2]^2[0,3]^2[0,4]^3}\rra$&0&1&\\
$\{0\}$&$\lla\Navy{\alpha^2[0,2]^2[0,3]^2[0,4]^2}\red{\ep}\rra$&$ \lla\Navy{\alpha^2[0,2]^2[0,3]^2[0,4]^2}\rra\red{\ep}$&{\bf 1}&2&1\\ 
\hline
\end{tabular}
\caption{
$X_0$ from the quadrisection of $T^7$}\label{T:7X0}
\end{center}
\end{table}

\begin{table}[h!]
\begin{center}
\begin{tabular}{||cc|c|c|ccc||}
\hline
$J$&$i_*$&$Y_z$&$Y_z^*$&$h$&$z$&glue to\\ 
\hline
$\varnothing$&$0$&
$\lla\Navy{\alpha}1\Navy{\Navy{\beta^2}\Navy{[1,3]^3}}\rra$
&
$\lla\Navy{\alpha}1\Navy{\Navy{\beta^2}\Navy{[1,3]^3}}\rra$
&0&1&\\
&$1$&
$\lla0\red{\alpha}\Navy{\Navy{\beta^2}\Navy{[1,3]^3}}\rra$
&
$\lla0\red{\alpha}\rra\lla\Navy{\Navy{\beta^2}\Navy{[1,3]^3}}\rra$
&1&2&1\\
$\{0\}$&$0$&
$\lla\Navy{\alpha}1\Navy{\Navy{\beta^2}\Navy{[1,3]^2}}\red{\delta}\rra$
&
$\lla\Navy{\alpha}1\Navy{\Navy{\beta^2}\Navy{[1,3]^2}}\rra\red{\delta}$
&1&3&1,2\\
&$1$&
$\lla\red{\delta}0\red{\alpha}\Navy{\Navy{\beta^2}\Navy{[1,3]^2}}\rra$
&
$\lla\red{\delta}0\red{\alpha}\rra\lla\Navy{\Navy{\beta^2}\Navy{[1,3]^2}}\rra$
&{\bf 2}&4&2,3\\
\hline
\end{tabular}
\caption{
$X_I$, $I=\{0,1\}$ from the quadrisection of $T^7$}\label{T:7X01}
\end{center}
\end{table}

\subsubsection{$X_I$ when $I=\{0,2\}$}\label{S:X02}

From the quadrisection of $T^7$, consider 
\[X_0\cap X_2=\lla\alpha^2{[0,2]}{\gamma^2}[2,4]\rra
\cup
\lla\alpha^2{[0,2]}2{\gamma^2}[2,4]\rra.\]

\begin{table}[h!]
\begin{center}
\scalebox{1}{\begin{tabular}{||cc|cc|c|c|ccc||}
\hline
$J$&$i_*$&$V$&$V^-$&$Y_z$&$Y_z^*$&$h$&$z$&glue to\\ 
\hline
$\varnothing$&0&$\varnothing$&$\varnothing$&$\lla\Navy{\alpha^3}2\Navy{\gamma^3}\rra$&$\Navy{\alpha^3}\lla2\Navy{\gamma^3}\rra$&0&1&\\
&2&$\varnothing$&&$\lla0\Navy{\alpha^3}\Navy{\gamma^3}\rra$&$\lla0\Navy{\alpha^3}\rra\Navy{\gamma^3}$&0&2&\\
\hline%
\{0\}&0&$\varnothing$&$\varnothing$&$\lla\Navy{\alpha^3}2\Navy{\Navy{\gamma^2}}\red{\delta}\rra$&$\Navy{\alpha^3}\lla2\Navy{\Navy{\gamma^2}}\rra\red{\delta}$&1&3&1,2\\ 
&2&$\{0\}$&$\varnothing$&$\lla\Navy{\delta^+}0\Navy{\alpha^3}\Navy{\Navy{\gamma^2}}\rra$&$\lla\Navy{\delta^+}0\Navy{\alpha^3}\rra\Navy{\Navy{\gamma^2}}$&0&4&\\ 
&&&$\{0\}$&$\lla\red{\delta^-}0\Navy{\alpha^3}\Navy{\Navy{\gamma^2}}\rra$&$\red{\delta^-}\lla0\Navy{\alpha^3}\rra\Navy{\Navy{\gamma^2}}$&1&5&2,4\\ 
\hline%
$\{2\}$&0&$\{2\}$&$\varnothing$&$\lla\Navy{\alpha^2}\Navy{\beta^+}2\Navy{\gamma^3}\rra$&$\Navy{\alpha^2}\lla\Navy{\beta^+}2\Navy{\gamma^3}\rra$&0&6&\\
&&&$\{2\}$&$\lla\Navy{\alpha^2}\red{\beta^-}2\Navy{\gamma^3}\rra$&$\Navy{\alpha^2}\red{\beta^-}\lla2\Navy{\gamma^3}\rra$&1&7&1,6\\
&2&$\varnothing$&$\varnothing$&$\lla0\Navy{\alpha^2}\red{\beta}\Navy{\gamma^3}\rra$&$\lla0\Navy{\alpha^2}\rra\red{\beta}\Navy{\gamma^3}$&1&8&1,2\\
\hline%
$\{0,2\}$&0&$\{2\}$&$\varnothing$&$\lla\Navy{\alpha^2}\Navy{\beta^+}2\Navy{\Navy{\gamma^2}}\red{\delta}\rra$&$\Navy{\alpha^2}\lla\Navy{\beta^+}2\Navy{\Navy{\gamma^2}}\rra\red{\delta}$&1&9&6,8\\ 
&&&$\{2\}$&$\lla\Navy{\alpha^2}\red{\beta^-}2\Navy{\Navy{\gamma^2}}\red{\delta}\rra$&$\Navy{\alpha^2}\red{\beta^-}\lla2\Navy{\Navy{\gamma^2}}\rra\red{\delta}$&{\bf 2}&10&3,7,8,9\\ 
&2&$\{0\}$&$\varnothing$&$\lla\Navy{\delta^+}0\Navy{\alpha^2}\red{\beta}\Navy{\Navy{\gamma^2}}\rra$&$\lla\Navy{\delta^+}0\Navy{\alpha^2}\rra\red{\beta}\Navy{\Navy{\gamma^2}}$&1&11&3,4\\ 
&&&$\{0\}$&$\lla0\Navy{\alpha^2}\red{\beta}\Navy{\Navy{\gamma^2}}\red{\delta^-}\rra$&$\lla0\Navy{\alpha^2}\rra\red{\beta}\Navy{\Navy{\gamma^2}}\red{\delta^-}$&{\bf 2}&12&3,5,8,11\\ 
\hline \hline
\end{tabular}}
\caption{$X_I$, $I=\{0,2\}$ from the quadrisection of $T^7$%
}\label{T:7X02}
\end{center}
\end{table}

Table \ref{T:7X02} summarizes a handle decomposition $X_I=Y_1\cup \cdots\cup Y_{12}$.  As with $X_I$, $I=\{0,1\}$, the decomposition of $X_I$, $I=\{0,2\}$ is organized largely according to $\{(J,i_*):J\subset\{\min I_r\},~i_*\in I\}$. With $Y_4,Y_5,Y_7,Y_8,Y_{10}$, and $Y_{11}$ here, we have $J\setminus\{\min I_*\}\neq \varnothing$, requiring us to split a unit interval into subintervals, in this case halves.  Details on how this is done, including the definitions and purposes of the sets $V^-\subset V\subset I$, appear in \textsection\ref{S:Handles}, especially Table \ref{T:UV}, and in Tables \ref{T:UV1} and \ref{T:UV2} in Appendix 1.

\subsubsection{$X_I$ when $I=\{0,1,2\}$}\label{S:X012}

Still in dimension seven, consider 
\[X_0\cap X_1\cap X_2=\lla\alpha^2{[0,2]}{\gamma^2}[2,4]^2{[2,5]}\rra
\cup
\lla\alpha^2{[0,2]}2{\gamma^2}[2,4]^2{[2,5]}\rra.\]

\begin{table}[h!]
\begin{center}
\scalebox{1}{\begin{tabular}{||cc|ccc|c|ccc||}
\hline
$J$&$i_*$&$U$&$V$&$V^-$&$Y_z^*$&$h$&$z$&glue to\\ 
\hline
$\varnothing$&0&$\varnothing$&\{1,2\}&$\{1\}$&$\Navy{\alpha^-}1\lla\Navy{\beta^+}2\Navy{\gamma^3}\rra$&0&1&\\
&&&&$\varnothing$&$\lla\red{\alpha^+}1\rra\lla\Navy{\beta^+}2\Navy{\gamma^3}\rra$&1&2&1\\
&&&&\{1,2\}&$\Navy{\alpha^-}\lla1\red{\beta^-}\rra\lla2\Navy{\gamma^3}\rra$&1&3&1\\
&&&&\{2\}&$\lla\red{\alpha^+}1\red{\beta^-}\rra\lla2\Navy{\gamma^3}\rra$&2&4&2,3\\

&1&$\varnothing$&$\varnothing$&$\varnothing$&$\lla0\red{\alpha}\rra\lla\Navy{\beta}2\Navy{\gamma^3}\rra$&1&5&1,3\\

&2&$\{1\}$&$\varnothing$&$\varnothing$&$0\Navy{\alpha^\circ_3}\lla1\red{\beta}\rra\Navy{\gamma^3}$&1&6&5\\
&&&&&$\lla0\red{\alpha^-_3}\rra\lla1\red{\beta}\rra\Navy{\gamma^3}$&2&7&5,6\\
&&&&&$0\lla\red{\alpha^+_3}1\red{\beta}\rra\Navy{\gamma^3}$&2&8&5,6\\
$\{0\}$&0&$\varnothing$&\{1,2\}&$\{1\}$&$\red{\delta}\Navy{\alpha^-}1\lla\Navy{\beta^+}2\Navy{\Navy{\gamma^2}}\rra$&1&9&1,6,7\\
&&&&$\varnothing$&$\red{\delta}\lla\red{\alpha^+}1\rra\lla\Navy{\beta^+}2\Navy{\Navy{\gamma^2}}\rra$&2&10&2,6,8\\
&&&&\{1,2\}&$\red{\delta}\Navy{\alpha^-}\lla1\red{\beta^-}\rra\lla2\Navy{\Navy{\gamma^2}}\rra$&2&11&3,6,7\\
&&&&\{2\}&$\red{\delta}\lla\red{\alpha^+}1\red{\beta^-}\rra\lla2\Navy{\Navy{\gamma^2}}\rra$&{\bf 3}&12&4,6,8\\
&1&$\varnothing$&$\varnothing$&$\varnothing$&$\lla\red{\delta}0\red{\alpha}\rra\lla\Navy{\beta}2\Navy{\Navy{\gamma^2}}\rra$&2&13&5,9,11\\
&2&$\{1\}$&$\varnothing$&$\varnothing$&$\lla\red{\delta}0\rra\Navy{\alpha^\circ_3}\lla1\red{\beta}\rra\Navy{\Navy{\gamma^2}}$&2&14&6,13\\
&&&&&$\lla\red{\delta}0\red{\alpha^-_3}\rra\lla1\red{\beta}\rra\Navy{\Navy{\gamma^2}}$&{\bf 3}&15&7,13,14\\
&&&&&$\lla\red{\delta}0\rra\lla\red{\alpha^+_3}1\red{\beta}\rra\Navy{\Navy{\gamma^2}}$&{\bf 3}&16&8,13,14\\
\hline
\end{tabular}}
\caption{$X_I$, $I=\{0,1,2\}$ from the quadrisection of $T^7$
}\label{T:7X012}
\end{center}
\end{table}

Table \ref{T:7X012} summarizes a handle decomposition $X_I=Y_1\cup \cdots\cup Y_{12}$.  Again, the decomposition of $X_I$, $I=\{0,1,2\}$ is organized largely according to $\{(J,i_*):J\subset\{\min I_r\},~i_*\in I\}$. Here, we have a block $I_r$ (in this case $I_r=I$) with $|I_r|\geq 3$, requiring us at times to split a unit interval into thirds, as seen here in $Y_6-Y_{8}$ and $Y_{14}-Y_{16}$. Details on this and the set $U$ appear in \textsection\ref{S:Handles}, especially Table \ref{T:UV}, and in Tables \ref{T:UV1} and \ref{T:UV2} in Appendix 1.

Another new complication arises here in $Y_1-Y_4$ and $Y_9-Y_{12}$, where $i_*+2\in I_*$, requiring us to split certain unit intervals into halves according to a different rule than in \textsection\ref{S:X02}.  Again, all the rules for splitting unit intervals into halves and thirds are detailed in \textsection\ref{S:Handles}, especially Table \ref{T:UV}, and in Tables \ref{T:UV1} and \ref{T:UV2} in Appendix 1.

\subsection{$X_I$, $I=\{0,1,2,3,5\}$ from $T^{13}$}\label{S:0124}

There is one more complication, which arises, first in dimension 11, whenever 
$X_I$, $I=I_1\sqcup \cdots \sqcup I_m$, has some $I_r\not\ni i_*$ with $|I_r|\geq 3$. In fact, though, the {\it difficulty} of this complication only becomes apparent in dimension 13. From the septisection of $T^{13}$, consider $X_I$, $I=\{0,1,2,3,5\}$, which is given by 
\begin{equation*}\label{E:13X01235}
\begin{split}
\lla\alpha1\beta2{\gamma}3\delta^2{[3,5]}5{\ep^2}{[5,7]}\rra
&\cup\lla0\alpha\beta2{\gamma}3\delta^2{[3,5]}5{\ep^2}{[5,7]}\rra
\cup\lla0\alpha1\beta{\gamma}3\delta^2{[3,5]}5{\ep^2}{[5,7]}\rra\\
&\cup\lla0\alpha1\beta2{\gamma}\delta^2{[3,5]}5{\ep^2}{[5,7]}\rra
\cup\lla0\alpha1\beta2{\gamma}3\delta^2{[3,5]}{\ep^2}{[5,7]}\rra.
\end{split}
\end{equation*}
In this example, the new complication arises when $i_*=5$ and $0\notin J$, i.e in the part of $X_I$ given by 
 \[\lla0\alpha1\beta2{\gamma}3\delta^2{[3,5]}{\ep^3}\rra
,\]
part of which appears in the first several $Y_z$ in the handle decomposition of this $X_I$. See  Table \ref{T:13}. The tricky part here is how to order the pieces $Y_z$. See \textsection\ref{S:Order}, especially (\ref{E:order}).
\begin{table}[h!]
\begin{center}
\scalebox{1}{\begin{tabular}{||c|c|ccc||}
\hline
$V^-$&$Y_z^*$&$h$&$z$&glue to\\ 
\hline
$\varnothing$&$0\lla\Navy{\alpha^+}1\rra\lla\Navy{\beta^+}2\rra\lla\Navy{\gamma^+}3\Navy{\delta^3}\rra\Navy{\zeta^3}$&0&1&\\
$\{1\}$&$\lla0\red{\alpha^-}\rra1\lla\Navy{\beta^+}2\rra\lla\Navy{\gamma^+}3\Navy{\delta^3}\rra\Navy{\zeta^3}$&1&2&1\\
$\{1,2\}$&$\lla0\Navy{\alpha^-}\rra\lla1\red{\beta^-}\rra2\lla\Navy{\gamma^+}3\Navy{\delta^3}\rra\Navy{\zeta^3}$&1&3&2\\
$\{2\}$&$0\lla\red{\alpha^+}1\red{\beta^-}\rra2\lla\Navy{\gamma^+}3\Navy{\delta^3}\rra\Navy{\zeta^3}$&2&4&1,3\\
$\{2,3\}$&$0\lla\Navy{\alpha^+}1\Navy{\beta^-}\rra\lla2\red{\gamma^-}\rra\lla3\Navy{\delta^3}\rra\Navy{\zeta^3}$&1&5&4\\
$\{1,2,3\}$&$\lla0\red{\alpha^-}\rra\lla1\Navy{\beta^-}\rra\lla2\red{\gamma^-}\rra\lla3\Navy{\delta^3}\rra\Navy{\zeta^3}$&2&6&3,5\\
$\{1,3\}$&$\lla0\Navy{\alpha^-}\rra1\lla\red{\beta^+}2\red{\gamma^-}\rra\lla3\Navy{\delta^3}\rra\Navy{\zeta^3}$&2&7&2,6\\
$\{3\}$&$0\lla\red{\alpha^+}1\rra\lla\red{\beta^+}2\red{\gamma^-}\rra\lla3\Navy{\delta^3}\rra\Navy{\zeta^3}$&3&8&1,5,7\\
\hline
\end{tabular}}
\caption{From the septisection of $T^{13}$: the start of the handle decomposition of $X_I$ when $I=\{0,1,2,3,5\}$. Here, $J=\varnothing$, $i_*=5$, $U=\varnothing$, and $V=\{1,2,3\}$.}\label{T:13}
\end{center}
\end{table}

Also see Table \ref{T:15} in Appendix 1, which summarizes the start of the handle decomposition of $X_I$, $I=\{0,1,2,3,4,6\}$, from $T^{15}$

\section{Combinatorics}\label{S:Cutoff}

This section proves several combinatorial facts about the decompositions of $T^n$. In particular, \textsection\ref{S:DisjointCover} proves that $T^n=\bigcup_{i\in\Z_k}X_i$, and \textsection\ref{S:EXI} establishes a closed expression (\ref{E:XI}) for arbitrary $X_I$. Also, \textsection\ref{S:CombCor} establishes two combinatorial corollaries, which may be of independent interest but otherwise are not needed in this paper.
%
%


\subsection{Notation}\label{S:NotationCutoff}


Because each $X_i$ from our construction (\ref{E:X0ThmIntro}) is symmetric under the permutation action of $S_n$ on the indices in $T^n$, it will often suffice, when considering an arbitrary point $\x=(x_1,\hdots, x_n)\in (\R/k\Z)^n=T^n$, to assume that $\x$ is {\bf monotonic} in the sense that $x_1\leq x_2\leq\cdots\leq x_n\leq k+x_1$. 

Denoting the main diagonal of $T^n$ by $\Delta$, note that each monotonic point $\x=(x_1,\hdots,x_n)\in T^n\setminus\Delta$ corresponds to a unique point $(x_1,\hdots,x_n)\in \R^n$ with $0\leq x_1\leq x_2\leq\cdots\leq x_n\leq x_1+k<2k$.
For such $\x$, extend the point $(x_1,\hdots,x_n)\in\R^n$ to a point $\x_\infty=\left(x_r\right)_{r\in\Z}\in\R^\Z$ by defining for each $r\in\Z_k$ and $m\in\Z$:
\begin{equation*}\label{E:vecy}
x_{r+mn}=x_r+mk.
\end{equation*}
We will mainly be interested in $0\leq x_1\leq \cdots\leq x_{2n}$, where 
\[x_{2n}=x_n+k\leq x_1+2k< 3k.\]
With this setup for any monotonic $\x\in T^n\setminus\Delta$, define the following {\bf cutoff indices} $a_r(\x),b_r(\x)\in\Z$ for each $r\in\Z$:
\begin{equation*}\label{E:ar}
a_r(\x)=\min\{a:x_{a+1}\geq r\}\text{ and}
\end{equation*}
\begin{equation*}\label{E:br}
b_r(\x)=\min\{b:x_{b+1}> r\}.
\end{equation*}
Note that, in all cases, we have $a_0(\x)\leq 0$,
with equality if and only if $x_n\neq k\equiv 0\in\R/k\Z$. The main point is:
\begin{obs}\label{O:a0block}
Let $\x\in T^n\setminus\Delta$ be monotonic. Then $\x\in[0,1]^2\cdots[0,k-1]^2[0,k]$ if and only if $b_s(\x)\geq 2s$ for every $s=0,\hdots,k-1$.
\end{obs}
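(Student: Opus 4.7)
The plan is to verify both directions of the biconditional by translating each side into the same finite list of coordinate inequalities. First I would unpack the product $[0,1]^2[0,2]^2\cdots[0,k-1]^2[0,k]$: a monotonic $\x$ lies in it precisely when $x_{2s-1},x_{2s}\in[0,s]$ for each $s=1,\ldots,k-1$ and $x_n\in[0,k]$. The lower bounds are automatic from monotonicity together with the standing hypothesis $x_1\ge 0$, and the upper bound on $x_{2s-1}$ follows from $x_{2s-1}\le x_{2s}$. Thus membership collapses to the compact list $x_{2s}\le s$ for $s=1,\ldots,k-1$, together with $x_n\le k$.

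Next I would translate the cutoff conditions $b_s(\x)\ge 2s$. By the definition of $b_s(\x)$ as the least $b$ with $x_{b+1}>s$, the inequality $b_s(\x)\ge 2s$ says exactly that $x_{b+1}\le s$ for every $b<2s$. For $s\ge 1$, monotonicity makes the tightest of these constraints the one at $b=2s-1$; it also handles the negative-index tail, since $x_0=x_n-k\le x_1\le x_{2s}$. Hence $b_s(\x)\ge 2s$ is equivalent to the single inequality $x_{2s}\le s$.

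For $s=0$ the same principle gives $b_0(\x)\ge 0$ iff $x_b\le 0$ for all $b\le 0$. This is the only case that genuinely reaches outside $\{1,\ldots,n\}$ and thus actually uses the extension $\x_\infty$: by the periodicity relation $x_0=x_n-k$ together with monotonicity of the extended sequence, the condition reduces to $x_0\le 0$, i.e.\ $x_n\le k$.

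Combining these per-$s$ equivalences recovers precisely the reduced list of inequalities from the first paragraph, proving the observation. The only real subtlety is the $s=0$ case and the corresponding need to invoke $\x_\infty$; the remaining steps are straightforward bookkeeping with monotonicity, which is why I expect this lemma to be short.
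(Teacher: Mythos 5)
Your proof is correct. The paper states Observation~\ref{O:a0block} without an argument, so there is nothing to compare against; your verification is the natural one, and all the key reductions are sound. Both sides of the biconditional do reduce, using monotonicity, to the list $x_{2s}\le s$ for $s=1,\ldots,k-1$ together with $x_n\le k$: the pair of constraints at coordinates $2s-1,2s$ collapses to the single constraint on $x_{2s}$; for $s\ge 1$ the negative-index constraints $x_{b+1}\le s$ with $b<0$ are absorbed because $x_0=x_n-k\le x_1\le x_{2s}$, so $b=2s-1$ is the binding index; and for $s=0$ the monotonicity of the extended sequence reduces the condition to $x_0\le 0$, i.e.\ $x_n\le k$.

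One minor remark. The sentence immediately after the Observation in the paper (``Note that $b_0(\x)\geq 0$ in all cases'') indicates that the author regards the canonical lift of a monotonic point as lying in $[0,k]^n$, so that $x_n\le k$ is automatic and the $s=0$ clause is vacuous. Your paragraph treating $s=0$ honestly, without assuming this normalization, is harmless and actually a bit more careful; just be aware that the paper implicitly treats that case as trivial.
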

Note that $b_0(\x)\geq 0$ in all cases. In order to apply the principle of Observation \ref{O:a0block} more broadly, denote for each $r\in\Z$:
\begin{equation*}\label{E:xr}
\x_r=(x_{1+a_r(\x)},x_{2+a_r(\x)},\hdots,x_{n+a_r(\x)})
\end{equation*}
The point regarding monotonic points off the main diagonal is: 
\begin{obs}\label{O:brs}
If $\x\in T^n\setminus\Delta$ is monotonic and $r\in\Z$, then 
\[r\leq x_{1+a_r(\x)}\leq \cdots\leq x_{n+a_r(\x)}<r+k,\]
and the following conditions are equivalent:
\begin{itemize}
\item $\x_r\in [r,r+1]^2\cdots[r,r+k-1]^2[r,r+k]$;
\item $b_{r+s}(\x_r)\geq 2s$ for every $s=r+1,\hdots,r+k$;
\item $b_{r+s}(\x)\geq a_r(\x)+ 2s$ for every $s=r+1,\hdots,r+k$
\end{itemize}
\end{obs}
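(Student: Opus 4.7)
The plan is to reduce the three-way equivalence to Observation \ref{O:a0block} via a translation of coordinates, together with careful bookkeeping of how the cutoff indices $a_r(\x)$ and $b_r(\x)$ transform under shifting.

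First I would verify the chain $r \leq x_{1+a_r(\x)} \leq \cdots \leq x_{n+a_r(\x)} < r+k$. The lower bound $x_{1+a_r(\x)} \geq r$ is precisely the defining condition of $a_r(\x)$; the chain of inequalities is inherited from monotonicity of the extended sequence $\x_\infty$, which follows from monotonicity of $\x$ together with the periodicity rule $x_{r+mn} = x_r + mk$; and the upper bound uses minimality of $a_r(\x)$, namely $x_{a_r(\x)} < r$, so by periodicity $x_{n+a_r(\x)} = x_{a_r(\x)} + k < r+k$.

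For the equivalence of the first two bulleted conditions, I would translate by $-(r,\ldots,r)$: this sends $\x_r$ to a monotonic point in $[0,k)^n$ and sends the product $[r,r+1]^2\cdots[r,r+k-1]^2[r,r+k]$ to $[0,1]^2\cdots[0,k-1]^2[0,k]$. Applying Observation \ref{O:a0block} to this translated point, and noting that its level-$s$ cutoff index agrees with $b_{r+s}(\x_r)$, gives the equivalence directly.

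For the equivalence of the last two conditions, I would establish the shift identity
\[
b_{r+s}(\x_r) \;=\; b_{r+s}(\x) - a_r(\x).
\]
Unpacking definitions, $b_{r+s}(\x_r) = \min\{b : x_{b+1+a_r(\x)} > r+s\}$, and reindexing via $c = b + a_r(\x)$ in the minimum converts this to $b_{r+s}(\x) - a_r(\x)$. Rewriting the inequality $b_{r+s}(\x_r) \geq 2s$ as $b_{r+s}(\x) \geq a_r(\x) + 2s$ then completes the chain of equivalences.

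The only (modest) obstacle is keeping the index bookkeeping straight: one must be consistent about the range of $s$ (matching the range appearing in Observation \ref{O:a0block}) and about the fact that $\x_r$ is a view of the periodically extended sequence $\x_\infty$ starting at position $a_r(\x)+1$. No genuinely new ideas beyond Observation \ref{O:a0block} are required.
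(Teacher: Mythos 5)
Your proposal is correct and supplies exactly the routine "from the definitions" verification that the paper leaves implicit (the statement is an Observation, so the paper gives no proof). The structure you describe — establish the chain of inequalities from monotonicity of $\x_\infty$ and minimality of $a_r(\x)$, then deduce the equivalence of the first two bullets by translating $\x_r$ by $-(r,\ldots,r)$ and invoking Observation~\ref{O:a0block}, and finally connect the last two bullets via the shift identity $b_{r+s}(\x_r) = b_{r+s}(\x) - a_r(\x)$ — is precisely what the paper's setup is designed to make automatic. One point worth being explicit about: the range ``$s = r+1,\hdots,r+k$'' in the stated Observation is a typo and should read $s=0,\hdots,k-1$, as in Observation~\ref{O:a0block} and as reused later in Proposition~\ref{P:Xr}; you correctly flag and resolve this by matching Observation~\ref{O:a0block}'s range. (One can see the stated range cannot be right: for $\x_r$ with all entries in $(r,r+1)$, the condition at $s=k$ would demand $b_{r+k}(\x_r) = n = 2k-1 \geq 2k$, which is false, yet such $\x_r$ certainly lies in $[r,r+1]^2\cdots[r,r+k]$.)
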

Observations \ref{O:a0block} and \ref{O:brs} apply more generally using:
\begin{obs}\label{O:XrCr}
If $\x\in X_r\subset T^n\setminus\Delta$, then there is a permutation $\sigma\in S_n$ such that $\x_\sigma\in[r,r+1]^2\cdots[r,r+k-1]^2[r,r+k]$ is monotonic.
\end{obs}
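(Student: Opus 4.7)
My plan is to unwind the definitions to obtain a permutation putting $\x$ into the appropriate axis-aligned box, then re-sort the coordinates to make the result monotonic, and finally verify that sorting preserves the box constraint.

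By (\ref{E:X0ThmIntro}), the translation identity $X_r = X_0 + (r,\ldots,r)$, and Notation \ref{N:lla}, the hypothesis $\x \in X_r$ yields a permutation $\sigma_1 \in S_n$ such that
\[
\x_{\sigma_1} \in P_r := [r,r+1]^2 [r,r+2]^2 \cdots [r,r+k-1]^2 [r,r+k].
\]
I would then choose $\sigma_2 \in S_n$ that sorts the coordinates of $\x_{\sigma_1}$ in nondecreasing order and set $\sigma = \sigma_1 \sigma_2$, so that $\x_\sigma = (x_1,\ldots,x_n)$ satisfies $x_1 \leq \cdots \leq x_n$ by construction. The wrap-around condition $x_n \leq k + x_1$ then follows immediately, since $x_1 \geq r$ and $x_n \leq r+k$.

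The remaining task is to show $\x_\sigma \in P_r$. I would write $P_r = \prod_{i=1}^n [r, r + c_i]$ with $c_i = \lceil i/2 \rceil$, noting that $c_i$ is nondecreasing in $i$. The key observation is that, for each $s \in \{1, \ldots, k-1\}$, the first $2s$ coordinates of $\x_{\sigma_1}$ are all at most $r + s$. Hence at least $2s$ of the coordinates of $\x_\sigma$ are $\leq r + s$, which forces $x_{2s} \leq r + s$ and therefore $x_{2s-1} \leq x_{2s} \leq r + s$ as well; combined with the universal bounds $r \leq x_i \leq r + k$ this gives the desired inclusion, with the final coordinate $x_{2k-1}$ handled by $x_n \leq r+k$.

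I expect no serious obstacle: the whole argument reduces to a short rearrangement-type observation, and the only subtlety is recognizing that the doubled-interval form of $P_r$ with nondecreasing upper bounds $c_i$ is precisely what makes the sort-preservation argument go through. The off-diagonal hypothesis $\x \notin \Delta$ is not needed for the existence of $\sigma$ itself; it serves only to match the setup of the preceding discussion, in which a monotonic representative is lifted uniquely to $\R^n$.
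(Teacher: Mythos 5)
Your proof is correct. The paper states this as an unproved Observation, and your argument --- pass to the box form of $X_r$ via the $\lla\cdot\rra$ definition and translation, sort the coordinates, and verify that sorting preserves membership in $\prod_{i=1}^n[r,r+c_i]$ because the upper bounds $c_i=\lceil i/2\rceil$ are nondecreasing --- is exactly the routine rearrangement check the paper has in mind.
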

Note also that 
either class of cutoff indices provides two-sided bounds for the other class:
\begin{obs}\label{O:arbrar}
If $\x\in T^n$ is nonzero and monotonic and $r\in\Z$, then 
\begin{equation*}\label{E:Ineqs}
\cdots\leq a_r(\x)\leq b_r(\x)\leq a_{r+1}(\x)\leq b_{r+1}(\x)\leq \cdots
\end{equation*}
with $a_r(\x)=b_r(\x)$ if and only if $x_{a_r(\x)+1}\notin \Z_k$, and $b_r(\x)=a_{r+1}(\x)$ if and only if $x_{b_r(\x)+1}\geq r+1$.
\end{obs}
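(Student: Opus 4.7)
The inequality chain will follow from routine set inclusions. Since $x_{j+1} > r$ implies $x_{j+1} \geq r$, we have $\{b : x_{b+1} > r\} \subset \{a : x_{a+1} \geq r\}$, and taking the minimum of each side gives $a_r(\x) \leq b_r(\x)$. Similarly, $x_{j+1} \geq r+1$ implies $x_{j+1} > r$, so $\{a : x_{a+1} \geq r+1\} \subset \{b : x_{b+1} > r\}$, yielding $b_r(\x) \leq a_{r+1}(\x)$. Iterating over all $r \in \Z$ produces the full chain.

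For the equality conditions, I would argue directly from the definitions of the cutoff indices as minima. By definition $x_{a_r(\x)+1} \geq r$, so $a_r(\x)$ lies in the smaller set $\{b : x_{b+1} > r\}$ (whose minimum is $b_r(\x)$) precisely when $x_{a_r(\x)+1} > r$, i.e., when $x_{a_r(\x)+1} \neq r$. Viewing $x_{a_r(\x)+1}$ on $\R/k\Z$, this is the condition that its image does not coincide with the integer lattice point represented by $r$, which is the author's condition $x_{a_r(\x)+1}\notin \Z_k$. Symmetrically, by definition $x_{b_r(\x)+1} > r$, so $b_r(\x)$ lies in $\{a : x_{a+1} \geq r+1\}$ (whose minimum is $a_{r+1}(\x)$) precisely when $x_{b_r(\x)+1} \geq r+1$, giving the second equivalence.

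There is really no obstacle here: the whole observation unpacks to the trichotomy comparisons of $x_{a_r(\x)+1}$ versus $r$ and $x_{b_r(\x)+1}$ versus $r+1$, together with the two nested set inclusions. If anything, the only mild subtlety is verifying that $x_{a_r(\x)+1}\in[r,r+k)$, so that the condition $x_{a_r(\x)+1}\neq r$ really corresponds to the image on $\R/k\Z$ avoiding the lattice point at $r$; this follows from $x_{a_r(\x)}<r$, monotonicity of the extended sequence, and the periodicity identity $x_{j+n}-x_j=k$.
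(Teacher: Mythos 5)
Your inequality chain and the second equivalence (relating $b_r(\x)=a_{r+1}(\x)$ to $x_{b_r(\x)+1}\geq r+1$) are correct, and so is your reduction of the first equivalence to the trichotomy: indeed $a_r(\x)=b_r(\x)$ holds precisely when $x_{a_r(\x)+1}>r$, equivalently $x_{a_r(\x)+1}\neq r$. The gap is in the final identification of ``$x_{a_r(\x)+1}\neq r$'' with ``$x_{a_r(\x)+1}\notin\Z_k$.'' You correctly verify $x_{a_r(\x)+1}\in[r,r+k)$ (from $x_{a_r(\x)}<r$ and $x_{a_r(\x)+1}\leq x_{a_r(\x)+n}=x_{a_r(\x)}+k$), but that interval contains the $k$ integers $r,r+1,\dots,r+k-1$, not just $r$. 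Avoiding the single value $r$ is strictly weaker than avoiding all of $\Z_k$, so the implication $a_r(\x)=b_r(\x)\Rightarrow x_{a_r(\x)+1}\notin\Z_k$ does not follow. Concretely, take $n=3$, $k=2$, and the monotonic nonzero point $\x=(0,0.3,0.5)$, so $x_4=x_1+k=2$. For $r=1$, the entries $x_1,x_2,x_3$ are all below $1$ and $x_4=2$ is both the first entry $\geq 1$ and the first entry $>1$, giving $a_1(\x)=b_1(\x)=3$; yet $x_{a_1(\x)+1}=x_4=2\in\Z_k$.

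Two observations. First, the subtlety you flagged---checking $x_{a_r(\x)+1}\in[r,r+k)$---is not where the difficulty lies; what one would actually need is the much stronger $x_{a_r(\x)+1}\in[r,r+1)$, which is false in general since a monotonic sequence can jump by up to $k$ across the threshold at $r$. Second, the paper states Observation~\ref{O:arbrar} without proof, and the ``only if'' half of the first biconditional has the same issue as your argument; the direction actually needed later (for instance in the proof of Proposition~\ref{P:Int}, where $\x$ is chosen so that every $x_i\notin\Z_k$) is the sound implication $x_{a_r(\x)+1}\notin\Z_k\Rightarrow a_r(\x)=b_r(\x)$, and a clean correct version of the biconditional is $a_r(\x)=b_r(\x)\iff x_{a_r(\x)+1}\neq r$.
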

Note that $x_{b_r(\x)+1}$ is the first coordinate in $\x$ that exceeds $r$. Here is another convenient property:
\begin{obs}\label{O:ark}
Any nonzero monotonic $\x\in T^n$, $r\in\Z_\geq 0$ satisfy 
\begin{equation}\label{E:arar+k}
\begin{split}
a_{r+k}(\x)&=n+a_r(\x),\\
b_{r+k}(\x)&=n+b_r(\x).
\end{split}
\end{equation}
\end{obs}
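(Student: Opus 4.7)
The plan is to derive both identities directly from the extension property $x_{r+mn}=x_r+mk$. Iterating with $m=\pm 1$ gives the cleaner form $x_{i+n}=x_i+k$ for every $i\in\Z$. Substituting into the defining inequality for $a_{r+k}(\x)$ yields
\[x_{(a+n)+1}\geq r+k\iff x_{a+1}+k\geq r+k\iff x_{a+1}\geq r,\]
so that the map $a\mapsto a+n$ restricts to an order-preserving bijection
\[\{a:x_{a+1}\geq r\}\xrightarrow{+n}\{a:x_{a+1}\geq r+k\}.\]
Taking minima (both sets are nonempty and bounded below, since the extended sequence is nondecreasing and unbounded above) immediately delivers $a_{r+k}(\x)=n+a_r(\x)$. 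Replacing $\geq$ with $>$ throughout this argument yields $b_{r+k}(\x)=n+b_r(\x)$ by the same chain of equivalences.

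I do not foresee any real obstacle: the identities are a translation-equivariance property of the cutoff indices under the simultaneous shift $(a,r)\mapsto(a+n,r+k)$, which is precisely what the extension $x_{i+n}=x_i+k$ encodes. The hypotheses that $\x$ be nonzero and monotonic and that $r\in\Z_{\geq 0}$ play no role in the algebra itself; they serve only to pin down a concrete representative of $\x$ in $\R^\Z$ so that the defining formulas for $a_r(\x)$ and $b_r(\x)$ are unambiguous. The only substantive step, if one wishes to be careful, is verifying that the relevant minima exist, which follows from the monotonicity and unboundedness of $\x_\infty$.
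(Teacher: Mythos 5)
Your proof is correct and is essentially the canonical argument; the paper labels this an Observation and supplies no proof, but the intended reasoning is surely the one you give. The extension rule $x_{r+mn}=x_r+mk$ with $m=1$ gives $x_{i+n}=x_i+k$ for all $i\in\Z$, and plugging this into the defining inequalities for $a_r$ and $b_r$ shows the shift $a\mapsto a+n$ carries $\{a:x_{a+1}\geq r\}$ bijectively (and order-preservingly) onto $\{a:x_{a+1}\geq r+k\}$, hence sends the minimum to the minimum; same with $>$ in place of $\geq$. Your remark about existence of the minima is the right thing to check: monotonicity makes each set a ray, and the unboundedness of $\x_\infty$ in both directions (coming from $x_{i+n}=x_i+k$) makes the sets nonempty and bounded below. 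Nothing to add.
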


Noting that $X_r\cap \Delta=\{(x,\hdots,x):~x\in [r,r+1]\}$, we can express each $X_r$ in terms of cutoff indices as follows.

\begin{prop}\label{P:Xr}
Let $\x\in T^n\setminus\Delta$ be monotonic, and let $r\in\Z_k$. Then $\x\in X_r$ if and only if $\x_r\in [r,r+1]^2\cdots[r,r+k-1]^2[r,r+k]$. In particular, 
\begin{equation}\label{E:Xr}
X_r\setminus\Delta=\lla \text{monotonic }\x :~b_{r+s}(\x)\geq a_r(\x)+2s\text{ for }s=0,\hdots, k-1\rra.
\end{equation}
\end{prop}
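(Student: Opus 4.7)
The plan is to assemble the preceding observations. The core equivalence is $\x\in X_r\iff \x_r\in P_r$, where $P_r:=[r,r+1]^2\cdots[r,r+k-1]^2[r,r+k]$; the ``in particular'' formula (\ref{E:Xr}) is then a direct translation via Observation \ref{O:brs}.

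By construction (\ref{E:X0ThmIntro}) together with the translation symmetry, $X_r=X_0+(r,\ldots,r)=\lla P_r\rra$. For the ($\Leftarrow$) direction, if $\x_r\in P_r$, observe that re-indexing the coordinates of $\x$ by $a_r(\x)$ using the extension $x_{j+n}=x_j+k$ reduces, modulo $k\Z$ in each slot, to an $S_n$-coordinate permutation; hence $\x_r$ and $\x$ represent the same point in $T^n$, and so $\x\in\lla P_r\rra=X_r$.

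For ($\Rightarrow$), suppose $\x\in X_r$. Observation \ref{O:XrCr} supplies some $\sigma\in S_n$ with $\x_\sigma\in P_r$ monotonic, while the first assertion of Observation \ref{O:brs} shows $\x_r$ is monotonic with coordinates in $[r,r+k]$. Any two monotonic $\R^n$-representatives of the same point of $T^n$ with coordinates in $[r,r+k]^n$ must coincide, so $\x_\sigma=\x_r$ and therefore $\x_r\in P_r$. To derive (\ref{E:Xr}), apply the equivalence of the first and third bullets of Observation \ref{O:brs}: the membership $\x_r\in P_r$ translates, position by position, into $b_{r+s}(\x)\geq a_r(\x)+2s$ for the nontrivial values of $s$. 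The case $s=0$ reduces to $b_r(\x)\geq a_r(\x)$, which is automatic by Observation \ref{O:arbrar}, so we may adjoin it for uniformity; closing the resulting set under the $S_n$-action gives $X_r\setminus\Delta$ on the nose.

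The main obstacle I anticipate is a careful treatment of the boundary cases where some coordinate of $\x$ lies on the integer lattice $\Z_k\subset\R/k\Z$, since then the monotonic $\R^n$-representative of a point in $T^n$ is not quite unique: a coordinate equal to $r$ could equivalently be labeled $r+k$. Observation \ref{O:arbrar}, which describes precisely when $a_r=b_r$ or $b_r=a_{r+1}$, is set up to absorb this ambiguity into the weak inequalities in the cutoff conditions, so the equivalence survives on all of $T^n\setminus\Delta$ and yields (\ref{E:Xr}).
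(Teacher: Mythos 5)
Your $(\Leftarrow)$ direction is fine, and your translation of $\x_r\in P_r$ into the cutoff inequalities via Observation~\ref{O:brs} is also fine. The problem is the pivotal step in $(\Rightarrow)$: the claim ``any two monotonic $\R^n$-representatives of the same point of $T^n$ with coordinates in $[r,r+k]^n$ must coincide, so $\x_\sigma = \x_r$'' is false. Take $k=2$, $n=3$, $\x=(0,1,1)\in T^3$, $r=0$: both $(0,1,1)$ and $(1,1,2)$ are monotonic representatives with all coordinates in $[0,2]$, and Observation~\ref{O:XrCr} could hand you the latter. More to the point, Observation~\ref{O:brs} only gives $\x_r\in[r,r+k)^n$ (strict on top), while $\x_\sigma$ from Observation~\ref{O:XrCr} may have a coordinate equal to $r+k$, so the two need not be the same tuple in $\R^n$. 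You flag this boundary ambiguity yourself, but the assertion that Observation~\ref{O:arbrar} ``absorbs'' it is not an argument — that observation chains $a_r\le b_r\le a_{r+1}$ and says nothing about uniqueness of sorted lifts. The gap is closable (at most one coordinate of $\x_\sigma$ can equal $r+k$, since only the last factor of $P_r$ contains it; shifting that coordinate to $r$ and cyclically rotating keeps you in $P_r$ because the factors are nested), but that argument needs to be made, and as written it is not.

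The paper avoids the issue entirely by going through the contrapositive with a counting argument: it proves $\x_r\in P_r\iff \x_r\in X_r$ and, for the nontrivial containment, shows that $\x_r\notin P_r$ forces $b_{r+s}(\x_r)<2s$ for some $s$, whence at least $n+1-2s$ coordinates of $\x$ lie in the open interval $(r+s,r+k)$ while any point of any permuted copy of $P_r$ can have at most $n-2s$ coordinates there. This never asks for uniqueness of a sorted lift and so is indifferent to whether coordinates sit on the lattice. Your ``canonical-representative'' route is conceptually pleasant and could be made rigorous, but right now it rests on a false lemma and an unsupported appeal to Observation~\ref{O:arbrar}, so it does not yet constitute a proof.
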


\begin{proof}
Write $\x_r=(x_1,\hdots, x_n)$. Note that $r\leq x_1\leq \cdots\leq x_n<r+k$. To show that $\x_r\in [r,r+1]^2\cdots[r,r+k-1]^2[r,r+k]$ if and only if $\x_r\in X_r$, we will prove both containments. One is trivial.  For the other, suppose that $\x_r\notin [r,r+1]^2\cdots[r,r+k-1]^2[r,r+k]$. Then Observation \ref{O:brs} implies that $b_{r+s}(\x_r)<2s$ for some $s=0,\hdots, k-1$, so 
\[r+s<x_{2s},\hdots,x_n<r+k.\]
Thus, at least $n+1-2s$ of the coordinates of $\x$ lie in the open interval $(r+s,r+k)$.  Yet, $2s$ of the $n$ factors of $[r,r+1]^2\cdots[r,r+k-1]^2[r,r+k]$ are disjoint from that open interval. Contradiction. Observation \ref{O:XrCr} now implies that $\x\in X_r\setminus\Delta$ if and only if $\x$ is an element of the \textsc{rhs} of (\ref{E:Xr}). 
 \end{proof}

\subsection{The $X_r$ have disjoint interiors and cover $T^n$.}\label{S:DisjointCover}


\begin{prop}\label{P:Int}
With the setup from Theorem \ref{T:Main},  $X_r$ and $X_s$ have disjoint interiors whenever $0\leq r<s\leq k-1$.
\end{prop}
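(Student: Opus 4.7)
The plan is to argue by contradiction using the cutoff-index description in Proposition \ref{P:Xr}. Suppose $\x\in\text{int}(X_r)\cap\text{int}(X_s)$ for some $0\leq r<s\leq k-1$. Since this set is open in $T^n$, and (for $k\geq 2$) the union of the main diagonal $\Delta$ with the integer-coordinate hyperplanes has empty interior in $T^n$, I may perturb $\x$ so that $\x\notin\Delta$ and no coordinate of $\x$ is an integer. Using the $S_n$-invariance of both $X_r$ and $X_s$, I then permute coordinates so that $\x$ is also monotonic, without leaving the intersection. (The case $k=1$ is vacuous.)

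Because no coordinate of $\x$ is an integer, Observation \ref{O:arbrar} gives $a_t(\x)=b_t(\x)$ for every $t\in\Z$; denote this common value by $c_t$. Observation \ref{O:ark} then yields $c_{r+k}=c_r+n=c_r+2k-1$. Proposition \ref{P:Xr} translates the hypotheses $\x\in X_r$ and $\x\in X_s$ into
\[
c_{r+j}\geq c_r+2j \quad\text{and}\quad c_{s+j}\geq c_s+2j \qquad\text{for } j=0,\ldots,k-1.
\]
Writing $d=s-r\in\{1,\ldots,k-1\}$ and substituting $j=d$ into the first family and $j=k-d$ into the second yields
\[
c_s\geq c_r+2d \quad\text{and}\quad c_{r+k}=c_{s+(k-d)}\geq c_s+2(k-d).
\]
Combining these with $c_{r+k}=c_r+2k-1$ gives $2k-1\geq 2k$, the desired contradiction.

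Morally this is pigeonhole: the $k$ ``gaps'' $c_{r+j+1}-c_{r+j}$ (each the number of coordinates of $\x$ in the corresponding unit interval) sum to $n=2k-1$, yet the two interior conditions would force the first $d$ of them to contribute at least $2d$ and the remaining $k-d$ to contribute at least $2(k-d)$, for a total of $2k$. I expect the only subtle step to be the reduction in the first paragraph, namely justifying that we may replace $\x$ by a nearby monotonic point with no integer coordinates so that Observation \ref{O:arbrar} collapses $a_t$ and $b_t$ and so that Proposition \ref{P:Xr} applies cleanly.
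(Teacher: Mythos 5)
Your proof is correct and follows essentially the same route as the paper: reduce to a monotonic interior point with no integer coordinates so that $a_t(\x)=b_t(\x)$, then apply Proposition \ref{P:Xr} together with Observation \ref{O:ark} to derive $2k-1\geq 2k$. The only cosmetic difference is that the paper first normalizes $r=0$ via the translation symmetry and then works with $s$, whereas you keep $r<s$ general and introduce $d=s-r$; the two inequalities you extract are the same ones the paper chains together.
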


This will follow from Lemma \ref{L:XI}, but the following proof is much easier than that of the lemma; we include it for expository reasons.

\begin{proof}
By the symmetry of the construction, 
we may assume that $r=0$. Assume for contradiction that the interiors of $X_r$ and $X_s$ intersect.  Then $X_r\cap X_s$ has positive measure, so there is a monotonic point $\x=(x_1,\hdots,x_n)\in X_0\cap X_j$ such that for every $i=1,\hdots, n$ we have $x_i\notin \Z_k$. 

This implies that $a_i(\x)=b_i(\x)$ for each $i=1,\hdots, n$, by Observation \ref{O:arbrar}.  In particular, since $\x\in X_0$, we have $a_0=b_0=0$, and $a_s=b_s\geq 2s$ by Proposition \ref{P:Xr}.  But then, since $\x\in X_s$ and $a_s\geq 2s$, Observation \ref{O:ark} and Proposition \ref{P:Xr} give the following contradiction:
\begin{align*}
\pushQED{\qed}
n&=n+b_0=b_k=b_{s+(k-s)}\\
n
&\geq a_s+2(k-s)\\
n &\geq 2k.\qedhere
\end{align*}
\end{proof}


\begin{lemma}\label{L:cover}
We have $X_0\cup\cdots\cup X_{k-1}=T^n$.
\end{lemma}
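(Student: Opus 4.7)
The plan is to invoke Proposition \ref{P:Xr} and the cutoff-index machinery. Since each $X_r$ is $S_n$-symmetric, it suffices to show that every monotonic $\x \in T^n$ lies in some $X_r$. Points $\x \in \Delta$ are handled by direct inspection: if $\x = (x, \hdots, x)$ with $x \in [r, r+1]$, then every coordinate lies in $[r, r+s]$ for every $s \geq 1$, so $\x \in X_r$. For monotonic $\x \in T^n \setminus \Delta$, Proposition \ref{P:Xr} reduces the task to producing some $r^* \in \Z_k$ with
\[b_{r^*+s}(\x) \geq a_{r^*}(\x) + 2s \quad \text{for } s = 0, \hdots, k-1.\]

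The key reformulation is to set $c_r := b_r(\x) - 2r$ and $d_r := a_r(\x) - 2r$ for $r \in \Z$. By Observations \ref{O:arbrar} and \ref{O:ark}, one has $d_r \leq c_r$ together with the quasi-periodicity relations $c_{r+k} = c_r - 1$ and $d_{r+k} = d_r - 1$. With this notation, the $X_r$-membership criterion becomes simply $c_{r+s} \geq d_r$ for $s = 0, \hdots, k - 1$, i.e.\ $d_r \leq \min_{s=0,\hdots,k-1} c_{r+s}$.

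The selection step is to choose $r^* \in \{0, 1, \hdots, k-1\}$ minimizing $c_{r^*}$, breaking ties by taking the smallest such index. The case $s = 0$ is automatic from $d_{r^*} \leq c_{r^*}$. For $1 \leq s \leq k - 1 - r^*$, the index $r^* + s$ still lies in $\{0, \hdots, k-1\}$, so minimality gives $c_{r^* + s} \geq c_{r^*} \geq d_{r^*}$. For $k - r^* \leq s \leq k - 1$, write $s' = r^* + s - k \in \{0, \hdots, r^* - 1\}$; quasi-periodicity yields $c_{r^* + s} = c_{s'} - 1$, and since $s' < r^*$, the tie-breaker forces $c_{s'} > c_{r^*}$ strictly. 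Because $c$ is integer-valued, this upgrades to $c_{s'} \geq c_{r^*} + 1$, so $c_{r^* + s} \geq c_{r^*} \geq d_{r^*}$. Hence $\x \in X_{r^*}$.

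The main technical subtlety, though not a genuine obstacle, is the possibility that some $x_i$ sits at an integer lattice point, so that $a_r < b_r$; the separation into the two quantities $c_r$ and $d_r$ is designed to absorb this cleanly. The structural essence of the argument is the interplay between the quasi-periodicity $c_{r+k} = c_r - 1$, which says $c$ loses exactly one unit per cycle of $k$, and the integer-valuedness of $c$, which lets a strict tie-breaker recover precisely that missing unit when wrapping around.
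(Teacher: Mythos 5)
Your proof is correct and takes a genuinely different route from the paper's. The paper argues by an iterative greedy descent: starting from $s_0 = 0$, it repeatedly locates the next ``failure index'' $s_{t+1}$ at which the $X_{s_t}$-membership criterion first breaks, accumulates a chain of inequalities of the shape $a_{s_u} - 2s_u \le a_{s_t} - 2s_t - 1$, and then verifies directly that the terminal index $s_u$ works. You instead package the criterion into the single sequence $c_r = b_r(\x) - 2r$, observe the quasi-periodicity $c_{r+k} = c_r - 1$ together with $d_r \le c_r$, and pick $r^*$ as the first global minimizer of $c$ on the fundamental window $\{0,\dots,k-1\}$; the wrap-around case is then handled by combining strict tie-breaking with integer-valuedness, which recovers exactly the unit lost per period. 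Your reformulation isolates the right invariant ($c_r$ rather than $a_r$), replaces an inductive construction with a single extremal choice, and makes the mechanism --- why the drift by $-1$ per cycle is exactly compensated --- completely transparent. Both arguments are sound; yours is more conceptual and substantially shorter, at the small cost of introducing the auxiliary quantities $c_r$, $d_r$. One stylistic note: you invoke the quasi-periodicity for $s' \in \{0,\dots,r^*-1\}$, which is within the $r \ge 0$ hypothesis of Observation \ref{O:ark}, so no extension of that observation is required; it would be worth saying so explicitly.
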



\begin{proof}
Let $\x\in T^n$. We will prove that $\x\in X_s$ for some $s$.
If $\x=(x,\hdots,x)\in\Delta$, then $\x\in X_{\lfloor x\rfloor}$.  Assume instead that $\x\in T^n\setminus\Delta$. Also assume without loss of generality that $\x$ is monotonic with $a_0(\x)=0$. Throughout this proof, denote each $a_s(\x)$ by $a_s$ and each $b_s(\x)$ by $b_s$.

Let $s_0=0$, so that $a_{s_0}=a_0=0$.  If $b_s\geq 2s=2s-a_{s_0}$ for all $s=1,\hdots, k-1$, then $\x\in X_0=X_{s_0}$.  Otherwise, choose the smallest $s_1$ such that $b_{s_1}<2s_1$. Thus, $b_s\geq 2s$ whenever $s<s_1$, so by Observation \ref{O:arbrar}:
\[2s_1-2\leq b_{s_1-1}\leq a_{s_1}\leq b_{s_1}\leq 2s_1-1.\]
Continue in this way: for each $s_t$, choose the minimum $s_{t+1}=s_t+1,\hdots,k-1$ such that $b_{s_{t+1}}<a_{s_t}+2(s_{t+1}-s_t)$, if such $s_{t+1}$ exists.  Eventually this process terminates with some $s_{{u}}$, so that:
\begin{itemize}
\item $b_s\geq a_{s_t}+2(s-s_t)$ whenever $s_t\leq s\leq s_{t+1}$ for $t=0,\hdots,u-1$, 
\item $b_s\geq a_{s_{{t}}}+2(s-s_{{u}})$ whenever $s_{{u}}\leq s\leq k-1$, and
\item $b_{s_{t+1}}<a_{s_t}+2(s_{t+1}-s_t)$ for each $t=0,\hdots, u-1$.  
\end{itemize}
Hence, for each $t=0,\hdots,u-1$, Observation \ref{O:arbrar} gives:
\[a_{s_t}+2(s_{t+1}-1-s_t)\leq b_{s_{t+1}-1}\leq a_{s_{t+1}}\leq b_{s_{t+1}}\leq a_{s_{t}}+2(s_{t+1}-s_t)-1.\]
Subtracting $a_{s_t}$ from the first, middle, and last expressions gives:
\[2(s_{t+1}-s_t)-2
\leq a_{s_{t+1}}-a_{s_t}\leq
2(s_{t+1}-s_t)-1.
\]
Therefore, for any $t=0,\hdots, u-1$:
\begin{align*}
\pushQED{\qed}
a_{s_{{u}}}-a_{s_t}&=\sum_{r=t}^{u-1}(a_{s_{r+1}}-a_{s_r})\\
&\leq\sum_{r=t}^{u-1}\left(2({s_{r+1}}-{s_r})-1\right)\\
&=2(s_{{u}}-s_t)-(u-t)\\
a_{s_{{u}}}-a_{s_t}&\leq 2(s_{{u}}-s_t)-1.
\end{align*}
Rearranging gives
\begin{equation}\label{E:cover}
a_{s_{{u}}}-2s_{{u}}\leq a_{s_t}-2s_t-1.
\end{equation}
We claim that $\x\in X_{s_{{u}}}$.  This is true if (and only if) $b_s\geq a_{s_{{u}}}+2(s-s_{{u}})$ for each $s=s_{{u}},\hdots,s_{{u}}+k-1$.  Fix some $s=k,\hdots, s_{{u}}+k-1$.  Then $s_t\leq s-k\leq s_{t+1}-1$ for some $t=0,\hdots,u-1$. By construction, we have $b_{s-k}\geq a_{s_t}+2(s-k-s_t)$.  Together with (\ref{E:arar+k}) and (\ref{E:cover}), this gives:
\begin{align*}
b_s&=b_{s-k}+n\\
&\geq a_{s_t}+2(s-k-s_t)+2k-1\\
&=(a_{s_t}-2s_{t}-1)+2s\\
&\geq(a_{s_{{u}}}-2s_{u})+2s\\
&=a_{s_{{u}}}+2(s-s_{u}). \qedhere
\end{align*}
\end{proof}

\subsection{Combinatorial corollaries}\label{S:CombCor}

This subsection establishes two combinatorial corollaries, which may be of independent interest but otherwise are not needed in this paper.

We have proven that the pieces $X_r$ of the multisection of $T^n$ have disjoint interiors and cover $T^n$.  Also, each $X_r=X_0+(r,\hdots,r)$, so all $X_r$ have the same number of unit cubes. Since there are $k^n$ unit cubes in $T^n=(\R/k\Z)^n$, each $X_r$ contains $k^{n-1}$ unit cubes. By counting these unit cubes a different way, we obtain the following.\footnote{Note that by definition, if $a,b\in\Z$ with $b<0$, then $\binom{a}{b}=0$.}

\begin{cor}\label{Cor:Combo1}
For any $n=2k-1$, we have:
\begin{equation}\label{E:Combo1}
\begin{split}
k^{n-1}=&\sum_{i_0=2}^n\binom{n}{i_0}
\sum_{i_2=4-i_0}^{n-i_0}\binom{n-i_0}{i_1}
\sum_{i_3=6-i_0-i_1}^{n-i_0-i_1}\binom{n-i_0-i_1}{i_2}\cdots\\
&\cdots
\sum_{i_{k-2}=2k-2-\sum_{j=0}^{k-3}i_j}^{n-\sum_{j=0}^{k-3}i_j}\binom{n-\sum_{j=0}^{k-3}i_j}{i_{k-1}}.
\end{split}
\end{equation}
\end{cor}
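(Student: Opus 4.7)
The plan is a double-count of the unit subcubes of $T^n$ comprising $X_0$.  Each $X_r$ is a union of unit subcubes (visible from (\ref{E:X0ThmIntro})), and because $X_r = X_0 + (r,\ldots,r)$, all $k$ pieces contain the same number of them.  By Proposition~\ref{P:Int} and Lemma~\ref{L:cover}, the pieces have disjoint interiors and cover $T^n$, which contains $k^n$ subcubes in total, so $X_0$ contains exactly $k^{n-1}$ of them.  This is the left-hand side of (\ref{E:Combo1}).

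For the right-hand side, identify each subcube with its ``bottom'' vertex $(a_1,\ldots,a_n) \in \{0,1,\ldots,k-1\}^n$.  Since $X_0 = \lla [0,1]^2 [0,2]^2 \cdots [0,k-1]^2 [0,k] \rra$ is invariant under the $S_n$-action on indices, this subcube lies in $X_0$ iff some permutation $\sigma \in S_n$ places each interval $[a_{\sigma(j)}, a_{\sigma(j)}+1]$ inside the $j$th factor.  Sorting the coordinates in nondecreasing order $a_{(1)} \le \cdots \le a_{(n)}$ and applying a greedy Hall-type matching (tightest factor first, using the fact that the intervals $[0,s]$ are nested), membership is equivalent to the pointwise inequalities $a_{(2s-1)}, a_{(2s)} \le s-1$ for each $s=1,\ldots,k-1$; the trailing factor $[0,k]$ imposes no constraint.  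Rephrased, for every $s = 0,\ldots,k-2$ at least $2s+2$ of the coordinates lie in $\{0,1,\ldots,s\}$.

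Now let $i_s = |\{j : a_j = s\}|$, so $i_0 + \cdots + i_{k-1} = n$ and the membership condition reads $\sum_{t \le s} i_t \ge 2s+2$ for $s = 0,\ldots,k-2$.  The number of integer tuples realizing a given frequency vector $(i_0,\ldots,i_{k-1})$ is the multinomial coefficient $\binom{n}{i_0, i_1, \ldots, i_{k-1}}$, which factors as the iterated product of binomials appearing in (\ref{E:Combo1}); the implicit final factor $\binom{n - \sum_{t<k-1} i_t}{i_{k-1}}$ equals $1$ because $i_{k-1}$ is determined by the other entries.  Rewriting each frequency constraint as a lower bound $i_s \ge 2s+2 - \sum_{t<s} i_t$ and imposing the natural upper bound $i_s \le n - \sum_{t<s} i_t$ recovers the summation ranges in (\ref{E:Combo1}) exactly.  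Equating the two counts proves the identity.

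The only nonobvious step is the greedy-matching claim in the second paragraph, which is a short Hall-type argument relying on the nesting $[0,1] \subset [0,2] \subset \cdots \subset [0,k-1]$; once it is in place, everything else is bookkeeping in translating the frequency inequalities into the stated bounds of summation.
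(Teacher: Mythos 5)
Your proof is correct and follows the paper's approach exactly: count the $k^{n-1}$ subcubes of $X_0$ (via the translation symmetry together with the disjoint-interiors and covering results), then stratify them by the frequency vector $(i_0,\ldots,i_{k-1})$, where membership in $X_0$ is equivalent to $\sum_{t\le s} i_t \ge 2s+2$ for $s=0,\ldots,k-2$. The only place you go beyond the paper is in spelling out the nested-intervals Hall-type matching argument that justifies this characterization, which the paper asserts without proof.
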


Note that (\ref{E:Combo1}) is also the \href{https://oeis.org/A068087}{number of spanning trees} of the complete bipartite graph $K_{j,j}$ where $j=k$ \cite{oeis}. 

\begin{proof}
$X_0$ consists of $k^{n-1}$ subcubes, each of the form $\prod_{r=1}^n[w_r,w_r+1]$ for some $w_1,\hdots,w_n\in\Z_k$. For each $s=0,\hdots, k-2$, there are at least $2s+2$ indices among $r=1,\hdots, n$ with $w_r\in \{0,\hdots, s\}$, and conversely any subcube of that form with this property will be in $X_0$. (This characterization follows from the expression (\ref{E:X0ThmIntro}) for $X_0$.) In (\ref{E:Combo1}), each $i_s=\#\{r:w_r=s\}$, so $i_0\geq 2$, $i_0+i_1\geq 4$, and so on.  
\end{proof}

As noted above, each subcube of $T^n$ has the form $\prod_{r=1}^n[w_r,w_r+1]$ for some $w_1,\hdots,w_n\in\Z_k$. Say that two subcubes $\prod_{r=1}^n[w_r,w_r+1]$ and $\prod_{r=1}^n[w'_r,w_r'+1]$ have the same {\it combinatorial type} if $(w'_1,\hdots,w'_n)$ is a permutation of $(w_1,\hdots,w_n)$. Counting combinatorial {\it cube types} in three different ways yields:

\begin{cor}\label{Cor:Combo2}
For any $n=2k-1$, we have:
\begin{equation}\label{E:Combo2}
\begin{split}
k\sum_{i_0=2}^n~
\sum_{i_1=\max\{0,4-i_0\}}^{n-i_0}~&
\sum_{i_2=\max\{0,6-i_0-i_1\}}^{n-i_0-i_1}\cdots
\sum_{i_{k-2}=\max\left\{0,2k-2-\sum_{j=0}^{k-3}i_j\right\}}^{n-\sum_{j=0}^{k-3}i_j}1\\&=\sum_{i_0=0}^n~\sum_{i_1=0}^{n-i_0}~\sum_{i_2=0}^{n-i_0-i_1}\cdots\sum_{i_{k-2}=0}^{n-\sum_{j=0}^{k-3}i_j}1\\
&=\binom{3k-2}{k-1}.
\end{split}
\end{equation}
\end{cor}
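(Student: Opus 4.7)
The plan is to identify all three expressions with counts of \emph{cube types}, i.e.\ multisets of size $n=2k-1$ drawn from $\Z_k$, parametrized by tuples $(i_0,\dots,i_{k-1})\in\Z_{\geq 0}^k$ with $\sum_s i_s=n$, where $i_s$ records how many coordinates of the cube equal $s$. First I will interpret the right-hand side. By stars-and-bars the number of weak compositions of $n$ into $k$ parts is $\binom{n+k-1}{k-1}=\binom{3k-2}{k-1}$. The middle sum is a repackaging of this: setting $i_{k-1}:=n-\sum_{j=0}^{k-2}i_j$, its ranges cut out exactly the tuples $(i_0,\dots,i_{k-2})\in\Z_{\geq 0}^{k-1}$ with $\sum_{j=0}^{k-2}i_j\leq n$, which biject with weak compositions of $n$ into $k$ parts.

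For the left sum, I will argue that its inner nested sum counts cube types lying in $X_0$. Just as in the proof of Corollary \ref{Cor:Combo1}, the expression (\ref{E:X0ThmIntro}) for $X_0$ implies that a cube of type $(i_0,\dots,i_{k-1})$ lies in $X_0$ iff $\sum_{j\leq s}i_j\geq 2s+2$ for every $s=0,\dots,k-2$; and since $X_0$ is $S_n$-invariant, either every cube of a given type lies in $X_0$ or none does. The lower bounds $i_s\geq\max\{0,2s+2-\sum_{j<s}i_j\}$ built into the left sum encode precisely these inequalities, while the upper bounds enforce $\sum_{j\leq k-2}i_j\leq n$, so the inner sum equals $|\{\text{types in }X_0\}|$.

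It remains to justify the leading factor of $k$, that is, $k\cdot|\{\text{types in }X_0\}|=\binom{3k-2}{k-1}$. By Proposition \ref{P:Int} and Lemma \ref{L:cover} each subcube of $T^n$ lies in a unique $X_r$, and $S_n$-invariance of each $X_r$ promotes this to cube types. The diagonal translation $\x\mapsto\x+(1,\dots,1)$ carries $X_r$ to $X_{r+1}$ and acts on cube types by the cyclic shift $(i_0,\dots,i_{k-1})\mapsto(i_{k-1},i_0,\dots,i_{k-2})$. The key step, and the main point I expect to require care, is showing that every $\Z_k$-orbit of cube types has size exactly $k$: if a shift by some $0<d<k$ fixed a type, then with $e=\gcd(d,k)$ one would have $i_s=i_{s+e}$ for all $s$, so $n=(k/e)\sum_{s=0}^{e-1}i_s$, forcing $(k/e)\mid\gcd(k,n)=\gcd(k,2k-1)=1$ and contradicting $e\leq d<k$. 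Hence each orbit meets each $X_r$ in exactly one representative, supplying the desired factor of $k$.
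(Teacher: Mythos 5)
Your proposal is correct and follows essentially the same approach as the paper: count cube types in $T^n$ three ways, with the left expression being $k$ times the number of types lying in $X_0$, the middle the direct parametrization by tuples $(i_0,\dots,i_{k-1})$ summing to $n$, and the right a stars-and-bars count. The paper leaves implicit why $k\cdot\#\{\text{types in }X_0\}=\#\{\text{types in }T^n\}$, and you correctly supply the missing justification: $\gcd(k,n)=\gcd(k,2k-1)=1$ forces every $\Z_k$-translation-orbit of types (acting by cyclic shift) to have size exactly $k$, so each orbit meets $X_0$ in exactly one type.
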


\begin{proof}
The first expression is $k$ times the number of {\it cube types} in $X_0$, counted using the same principle and notation as in Corollary \ref{Cor:Combo1}. The second counts the number of cube types in $T^n$, each of which we may write in the form $\prod_{r=0}^{k-1}[r,r+1]^{i_r}$ and is thus characterized by a tuple $(i_0,\hdots,i_{k-1})$ with $\sum_{r=0}^{k-1}i_r=n$. The third counts the number of cube types in $T^n$ by denoting $a_0=0,a_k=3k-1$ and associating to each $A=\{a_1,\hdots,a_{k-1}\}\subset\{1,\hdots,3k-2\}$ satisfying $a_1<\cdots< a_{k-1}$ with the cube type 
\[\pushQED{\qed}
\prod_{i=1}^{k}~\prod_{j=a_{i-1}+1}^{a_i-1}[i-1,i].\qedhere\]
\end{proof}

See \cite{oeis} for \href{https://oeis.org/A045721}{other interpretations} of (\ref{E:Combo2}).

\subsection{Verification of the formula $X_I=(\ref{E:XI})$}\label{S:EXI}

Next, we will use the cutoff indices $a_r(\x),b_r(\x)$ 
 to verify (\ref{E:XI}).  To prepare this, we define subsets ${C}_{I,s}\subset T^n$ as follows.  Let $I\subset\Z_k$ following Convention \ref{Conv:ISimple}, with $s\in\Z_\ell$, and denote $i_s=i_*$.  Then define:\footnote{Note that the first line in (\ref{E:Dagger}) contributes no factors to $C_{I,s}$ if $s=0$, and likewise for the third line if $s=\ell-1$. In particular, if $I=\{0\}$, then $s=0$ and $C_{I,s}=[0,1]^2\times\cdots\times[0,k-1]^2[0,k]$, so $\lla C_{I,s}\rra=X_0$.}
\begin{equation}\label{E:Dagger}
\begin{split}
{C}_{I,s}=&\left(\prod_{t=0}^{s-1}\{i_t\}\times [i_t,i_t+1]^2\times\cdots\times[i_t,i_{t+1}-1]^2\times{[i_t,i_{t+1}]}\right)\\
&\times
[i_*,i_*+1]^2\times\cdots\times[i_*,i_{s+1}-1]^2\times{[i_*,i_{s+1}]}\\
&\times
\left(\prod_{t=s+1}^{\ell-1}\{i_t\}\times[i_t,i_t+1]^2\times\cdots\times[i_t,i_{t+1}-1]^2\times[i_t,i_{t+1}]\right).
\end{split}
\end{equation}
Note the ``missing'' $\{i_*\}$ at the start of the second line; this corresponds to the $\widehat{i_*}$ in (\ref{E:XI}).  Observe that the expression on the \textsc{rhs} of (\ref{E:XI}) equals 
\[\bigcup_{s\in\Z_\ell}\lla {C}_{I,s}\rra.\]

\begin{prop}\label{P:XIConditions}
Let $I\subset \Z_k$ follow Convention \ref{Conv:ISimple}, $s\in\Z_{\ell}$, and ${C}_{I,s}$ as in (\ref{E:Dagger}). Suppose $\x\in T^n\setminus \Delta$ is monotonic.  Then $\x\in{C}_{I,s}$ if and only if all of the following conditions hold:
\begin{itemize}
\item $b_t(\x)\geq 2t+1$ for $0\leq t<i_*$,
\item $b_t(\x)\geq 2t$ for $i_*\leq t\leq k-1$,
\item $a_t(\x)\leq 2t$ for $t=i_0,\hdots,i_*$, and
\item $a_t(\x)\leq 2t-1$ for $t=i_{s+1},\hdots,i_{\ell-1}$.
\end{itemize}
\end{prop}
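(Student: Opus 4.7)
The plan is to read off, position-by-position, what membership in $C_{I,s}$ requires of monotonic $\x$, convert these requirements into inequalities on the cutoff indices $a_t(\x), b_t(\x)$, and keep only the non-redundant ones. The crucial observation is that the factors of $C_{I,s}$, listed in the order they appear, have both maxima and minima forming nondecreasing sequences; consequently, for monotonic $\x$ the condition $\x \in C_{I,s}$ is equivalent to $x_j$ lying in the $j$-th factor $F_j = [\min_j, \max_j]$ for every $j$. I then translate via $x_j \leq M \iff b_M(\x) \geq j$ and $x_j \geq m \iff a_m(\x) \leq j-1$, and prune: for upper bounds only the last $j$ at which $\max_j$ equals a given value $r$ matters, and for lower bounds only those positions at which $\min_j$ strictly exceeds $\min_{j-1}$ contribute.

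To do the bookkeeping, I first catalog the positions. For $t = 0, \hdots, s-1$, block $t$ occupies positions $2 i_t + 1, \hdots, 2 i_{t+1}$, with singleton $\{i_t\}$ at position $2 i_t + 1$, two copies of $[i_t, r]$ at positions $2r, 2r+1$ for $r = i_t + 1, \hdots, i_{t+1} - 1$, and $[i_t, i_{t+1}]$ at position $2 i_{t+1}$. The middle block $t = s$ occupies positions $2 i_* + 1, \hdots, 2 i_{s+1} - 1$, with two copies of $[i_*, r]$ at positions $2r - 1, 2r$ for $r = i_* + 1, \hdots, i_{s+1} - 1$ and $[i_*, i_{s+1}]$ at position $2 i_{s+1} - 1$. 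For $t = s+1, \hdots, \ell - 1$, block $t$ occupies positions $2 i_t, \hdots, 2 i_{t+1} - 1$, with $\{i_t\}$ at position $2 i_t$, two copies of $[i_t, r]$ at positions $2r - 1, 2r$, and $[i_t, i_{t+1}]$ at position $2 i_{t+1} - 1$ (where $i_\ell = k$). Reading off the nondecreasing sequence of maxima $0, 1, 1, 2, 2, \hdots, i_* - 1, i_* - 1, i_*, i_* + 1, i_* + 1, \hdots, k - 1, k - 1, k$, the value $r$ appears last at position $2r + 1$ for $r = 0, \hdots, i_* - 1$ and at position $2r$ for $r = i_*, \hdots, k - 1$ (the case $r = k$ gives $b_k(\x) \geq n$, automatic); this produces the two $b$-inequalities of the proposition. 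Next, $\min_j$ jumps upward exactly at the start of each block: position $2 i_t + 1$ for $t = 1, \hdots, s-1$ (from $\{i_t\}$), position $2 i_* + 1$ for $t = s$ (from the first middle-block factor $[i_*, i_* + 1]$), and position $2 i_t$ for $t = s+1, \hdots, \ell - 1$ (from $\{i_t\}$); together with the automatic $a_{i_0}(\x) = a_0(\x) \leq 0$, these assemble into the two $a$-inequalities of the proposition. The converse direction holds because the posted $b$-bound for each $r$ is strongest at the last position with max $r$ and therefore implies the weaker $b$-bounds at earlier positions, while the lower-bound constraints between block starts are automatic from monotonicity of $\x$.

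The main obstacle is the one-position shift at the middle-block seam caused by the absent $\{i_*\}$: pre-middle-block singletons $\{i_t\}$ occupy position $2 i_t + 1$, but post-middle-block singletons occupy position $2 i_t$. This same shift is what produces the transition $b_r \geq 2r+1 \rightsquigarrow b_r \geq 2r$ at $r = i_*$ and $a_{i_t} \leq 2 i_t \rightsquigarrow a_{i_t} \leq 2 i_t - 1$ at $t = s+1$, and it is where one must be most careful in the indexing.
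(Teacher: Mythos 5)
Your proof is correct and fills in exactly the position-by-position bookkeeping that the paper's one-line proof ("follows immediately from the definitions, upon consideration of each entry in $\x$") leaves to the reader. The translations $x_j \leq M \iff b_M(\x) \geq j$ and $x_j \geq m \iff a_m(\x) \leq j-1$, the identification of the last position bearing each maximum and the first position of each block, and the careful handling of the $\pm 1$ index shift across the absent $\{i_*\}$ are all accurate and produce precisely the four stated inequalities.
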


\begin{proof} 
This follows immediately from the definitions, upon consideration of each entry in $\x$.
\end{proof}

Also note the following generalization of Observation \ref{O:XrCr}: 

\begin{obs}\label{O:XIPerm}
Let $I\subset \Z_k$ follow Convention \ref{Conv:ISimple}, $s\in\Z_{\ell}$, and ${C}_{I,s}$ as in (\ref{E:Dagger}). Suppose $\x\in\lla{C}_{I,s}\rra$.  Then there is a permutation $\sigma\in S_n$ such that $\x_\sigma\in{C}_{I,s}$ is monotonic. 
\end{obs}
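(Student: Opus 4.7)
The plan is to reduce the claim to sorting coordinates within a product structure. By definition of $\lla\,\cdot\,\rra$, write $\x=\y_\tau$ for some $\y\in C_{I,s}$ and $\tau\in S_n$; it then suffices to produce $\pi\in S_n$ with $\y_\pi\in C_{I,s}$ monotonic, since in that case $\sigma=\tau^{-1}\pi$ gives $\x_\sigma=\y_\pi$.

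View the product (\ref{E:Dagger}) as $\ell$ consecutive blocks indexed by $t=0,\ldots,\ell-1$, where block $t$ consists of the factors whose coordinates lie in $[i_t,i_{t+1}]$; use the cyclic convention $i_\ell=i_0+k=k$ for the last block. The coordinates of $\y$ partition accordingly into $\ell$ groups. Let $\pi$ be the block-preserving permutation that sorts each group in non-decreasing order. Monotonicity of $\y_\pi$ is then immediate: the sort is non-decreasing within each block; between consecutive blocks $t$ and $t+1$, the last entry of block $t$ is at most $i_{t+1}$ while the first entry of block $t+1$ is at least $i_{t+1}$; and since all coordinates lie in $[0,k]$, we also obtain $y_n\le y_1+k$.

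Checking that $\y_\pi\in C_{I,s}$ reduces to a block-by-block verification. Within block $t$, every factor has infimum $i_t$, and the suprema $s_1\le s_2\le\cdots\le s_{N_t}$ form a non-decreasing sequence, namely $i_t,i_t+1,i_t+1,\ldots,i_{t+1}$ when $t\ne s$ and $i_s+1,i_s+1,\ldots,i_{s+1}$ when $t=s$. Because $\y\in C_{I,s}$, the $q$-th factor of block $t$ already contains one coordinate $y_q\in[i_t,s_q]$, so for every $p$ there are at least $p$ coordinates of block $t$ that are bounded above by $s_p$; hence the $p$-th smallest coordinate of that block is also $\le s_p$. Combined with the common infimum $i_t$, this places the $p$-th smallest coordinate in the $p$-th factor $[i_t,s_p]$, which is what we needed.

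I expect no substantive obstacle here; the argument is essentially organized bookkeeping once the block decomposition is in place. The two minor technicalities are the cyclic convention $i_\ell=i_0+k$, which is needed so that the total range fits in $k$ and hence that sorting yields a monotonic tuple, and the slightly different factor structure of block $s$ (which lacks the singleton $\{i_s\}$); neither disrupts the non-decreasing sup-sequence that powers the counting step.
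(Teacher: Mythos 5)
The paper presents this as an Observation and gives no proof, so there is nothing to compare against; you have simply supplied the argument the paper leaves implicit. Your proof is correct. The key reduction — from an arbitrary permutation of a point in $C_{I,s}$ to the block-sorted one — is the natural move, and the two lemmas you need both check out: (i) within each block every factor shares the infimum $i_t$, and the suprema are listed in non-decreasing order in (\ref{E:Dagger}), so the ``at least $p$ coordinates are $\le s_p$'' counting argument does place the $p$-th smallest coordinate in the $p$-th factor; (ii) block $t$'s coordinates all lie in $[i_t,i_{t+1}]$ while block $t+1$'s all lie in $[i_{t+1},i_{t+2}]$, so block-sorting yields a globally non-decreasing string, and the bound $y_n\le y_1+k$ is automatic because every coordinate of a point of $C_{I,s}$ lies in $[0,k]$. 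One small wording point: ``the $q$-th factor of block $t$ already contains one coordinate $y_q$'' would read more cleanly as ``the $q$-th coordinate of block $t$ lies in the $q$-th factor $[i_t,s_q]$'' — the content is the same, but the ownership of $y_q$ by the $q$-th factor is what you actually use. Also worth making explicit (you use it implicitly at $p=1$ for $t\ne s$) that the argument covers the singleton factor $\{i_t\}=[i_t,i_t]$ as a degenerate case of $[i_t,s_1]$, which is why the sorted minimum of a non-initial block lands exactly on $i_t$.
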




\begin{lemma}\label{L:XI}
Given nonempty $I\subset\Z_k$ (following Convention \ref{Conv:ISimple}),
\begin{equation}\label{E:XIshort}
X_I=\bigcup_{s\in\Z_\ell} \lla{C}_{I,s}\rra.
\end{equation}
In particular,
\begin{equation}
\bigcap_{i_*\in \Z_k}X_i=\bigcup_{i_*\in I}\lla(i_1,\hdots,\widehat{i_*},\hdots,i_\ell)\prod_{i\in\Z_k}[i,i+1]\rra.
\tag{\ref{E:XZk}}
\end{equation}
\end{lemma}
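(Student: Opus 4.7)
The plan is to prove (\ref{E:XIshort}) by matching the cutoff-index characterization of $X_I$ from Proposition \ref{P:Xr} against the cutoff-index characterization of $\lla C_{I,s}\rra$ from Proposition \ref{P:XIConditions}. Both sides are $S_n$-invariant, and every orbit contains a monotonic representative (Observations \ref{O:XrCr} and \ref{O:XIPerm}), so it suffices to consider monotonic $\x \in T^n \setminus \Delta$. The diagonal case $\x \in \Delta \cap X_I$ forces $x \in \bigcap_{t \in \Z_\ell}[i_t, i_t+1]$ and is verified directly from the definition (\ref{E:Dagger}).

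For the inclusion $\bigcup_{s \in \Z_\ell}\lla C_{I,s}\rra \subseteq X_I$, I would fix monotonic $\x \in C_{I,s}$ with $i_s = i_*$. The explicit product form of (\ref{E:Dagger}) determines $a_{i_t}(\x)$ exactly for each $t \in \Z_\ell$, by counting the coordinates of $\x$ that fall in each specified block. Combining these exact values with the $b$-inequalities supplied by Proposition \ref{P:XIConditions} verifies the inequalities $b_{i_t+j}(\x) \geq a_{i_t}(\x) + 2j$ of Proposition \ref{P:Xr} for every $i_t \in I$ and every $j = 0, \hdots, k-1$, placing $\x$ in each $X_{i_t}$ and hence in $X_I$.

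For the reverse inclusion $X_I \subseteq \bigcup_s \lla C_{I,s}\rra$, I would fix monotonic $\x \in X_I$ and apply Proposition \ref{P:Xr} to each $r = i_t$, yielding the family $b_{i_t+j}(\x) \geq a_{i_t}(\x) + 2j$ for every $t$ and $j$. Together with the chain $\cdots \leq a_r \leq b_r \leq a_{r+1} \leq \cdots$ from Observation \ref{O:arbrar} and the $k$-periodicity $a_{r+k}(\x) = n + a_r(\x)$ from Observation \ref{O:ark}, these inequalities are tight enough to single out an index $s \in \Z_\ell$ at which the sequence $\bigl(a_{i_t}(\x) - 2i_t\bigr)_{t \in \Z_\ell}$ attains its extremum; this $s$ will be the index of the piece $C_{I,s}$ containing (a permutation of) $\x$. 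Selecting $s$ this way is analogous to the choice of the ``crossover'' index in the proof of Lemma \ref{L:cover}, and it makes the four cutoff conditions of Proposition \ref{P:XIConditions} hold simultaneously.

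The main obstacle is this last step: a careful accounting is needed to show that the combined $X_{i_t}$-inequalities, together with the choice of $s$, specialize to exactly the four cutoff conditions of Proposition \ref{P:XIConditions}, and that the block decomposition from Notation \ref{N:T} (made unambiguous by the simplicity hypothesis in Convention \ref{Conv:ISimple}) aligns properly at the boundaries between consecutive blocks $I_r$ of $I$. The second formula (\ref{E:XZk}) follows as the special case $I = \Z_k$: then $\ell = k$, each $i_{t+1} - i_t = 1$, the repeated-interval factors $[i_t, i_t+1]^2 \cdots [i_t, i_{t+1}-1]^2$ in (\ref{E:Dagger}) are empty, and each $C_{I,s}$ reduces to $(i_1, \hdots, \widehat{i_s}, \hdots, i_\ell) \times \prod_{i \in \Z_k}[i, i+1]$.
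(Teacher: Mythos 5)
Your easy direction ($\bigcup_s \lla C_{I,s}\rra \subseteq X_I$) is the same as the paper's: check the cutoff inequalities of Proposition \ref{P:Xr} for each $t\in I$ using Proposition \ref{P:XIConditions} and the explicit form of $C_{I,s}$. Your hard direction, however, departs from the paper's. The paper proves $X_I \subseteq \bigcup_s \lla C_{I,s}\rra$ by induction on $\ell$: starting from $\x_\sigma \in C_{I',s_0}$ for $I' = I\setminus\{i_{\ell-1}\}$, it splits on the value of $a_{i_{\ell-1}}(\x_\sigma)$, and when $a_{i_{\ell-1}}(\x_\sigma) \geq 2i_{\ell-1}+1$ it must first show $b_0(\x_\sigma)\geq 2$, hence $x_1 = x_2 = 0 \equiv k$, and then pass to the cyclically shifted monotonic representative $\y = (x_2,\hdots,x_n,x_1)$ before the four cutoff conditions hold. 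You instead propose to pick $s$ directly as an extremizer of $(a_{i_t}(\x)-2i_t)_{t\in\Z_\ell}$, by analogy with Lemma \ref{L:cover}.

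This is where the genuine gap lies, and you acknowledge it by calling it "the main obstacle." Three specific problems: (1) "extremum" is not pinned down — max or min, and with what tie-breaking; (2) even granting the right $s$, you have not verified that all four inequalities of Proposition \ref{P:XIConditions} then hold, which is the entire content of the lemma; and (3) most importantly, the extremizing $s$ need not make $\x$ itself (or its obvious monotonic representative) land in $C_{I,s}$ — the paper's Case~2 shows that one may need to replace $\x_\sigma$ by a cyclic shift that changes the cutoff indices $a_r$ and $b_r$. Your parenthetical "(a permutation of)" gestures at this but supplies no argument for when a shift is needed or why the shifted point satisfies the conditions. A direct, non-inductive argument along your lines is plausible, but it would require reproducing essentially the same two-case analysis on $a$-values and the $b_0\geq 2$ shift that the paper carries out inside the induction step; as written, that accounting is missing. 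The deduction of (\ref{E:XZk}) as the case $I = \Z_k$ is fine.
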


Note that the formula (\ref{E:XIshort}) is equivalent to (\ref{E:XI}).

\begin{proof}
We argue by induction on $\ell$. When $\ell=1$, $X_I=X_0=\lla C_{I,0}\rra=(\ref{E:XI})$.

Assume now that $\ell>1$. 
First, we will show that 
\begin{equation}\label{E:ClaimXI1}
X_I\subset \bigcup_{s\in\Z_\ell} \lla{C}_{I,s}\rra.
\end{equation}

%
Let $\x\in X_I$, and define
$I'=I\setminus\{i_{\ell-1}\}$. Note that $I'$ is simple 
and $X_I=X_{I'}\cap X_{i_{\ell-1}}$.  Since $\x\in X_{I'}$, the induction hypothesis implies that 
$\x\in \lla{C}_{I',s_0}\rra$
for some $s_0\in 
\Z_{\ell-1}$. 
By Observation \ref{O:XIPerm}, there exists $\sigma\in S_n$ such that $\x_\sigma$ is monotonic and $\x_\sigma\in {C}_{I',s_0}$. 
Proposition \ref{P:XIConditions} implies that: 
\begin{itemize}
\item $b_t(\x_\sigma)\geq 2t+1$ for $0\leq t\leq i_{s_0}-1$,
\item $b_t(\x_\sigma)\geq 2t$ for $i_{s_0}\leq t\leq k-1$,
\item $a_t(\x_\sigma)\leq 2t$ for $t=i_0,\hdots,i_{s_0}$, and
\item $a_t(\x_\sigma)\leq 2t-1$ for $t=i_{s_0+1},\hdots,i_{\ell-2}$.
\end{itemize}

If also $a_{i_{\ell-1}}(\x_\sigma)\leq 2i_{\ell-1}-1$, then Proposition \ref{P:XIConditions} implies that 
$\x_\sigma\in {C}_{I,s_0}$.
In that case, we are done proving the forward containment. Assume instead that $a_{i_{\ell-1}}(\x_\sigma)\geq 2i_{\ell-1}$. We now split into two cases:

\underline{Case 1:} Assume that $a_{i_{\ell-1}}(\x_\sigma)=2i_{\ell-1}$. 
We claim that $\x_\sigma\in {C}_{I,\ell-1}$. By Proposition \ref{P:XIConditions}, since $\x_\sigma$ is monotonic, it will suffice to show:
\begin{enumerate}[label=(\alph*)]
\item $b_t(\x_\sigma)\geq 2t+1$ for $0\leq t\leq i_{\ell-1}-1$,
\item $b_t(\x_\sigma)\geq 2t$ for $i_{\ell-1}\leq t\leq k-1$, and
\item $a_t(\x_\sigma)\leq 2t$ for $t=i_0,\hdots,i_{\ell-1}$.
\end{enumerate}
Observation \ref{O:ark}, Proposition \ref{P:Xr}, and the facts that $\x_\sigma\in X_{i_{\ell-1}}$ and $a_{i_{\ell-1}}(\x_\sigma)=2i_{\ell-1}$ imply for each $t=0,\hdots, i_{\ell-1}-1$ that:
\begin{equation*}
\begin{split}
b_t(\x_\sigma)&=b_{t+k}(\x_\sigma)-n\\
&\geq 2(t+k)+a_{i_{\ell-1}}(\x_\sigma)-2i_{\ell-1}-n\\
&\geq2t+1.
\end{split}
\end{equation*}
This verifies (a).
Taking $t=i_{\ell-1},\hdots, k-1$, similar reasoning confirms (b): 
\begin{equation*}
b_t(\x_\sigma)\geq a_{i_{\ell-1}}(\x_\sigma)+2(t-i_{\ell-1})
\geq 2t.
\end{equation*}
Finally, we have $a_t(\x_\sigma)\leq 2t$ for each $t=i_0,\hdots,i_{\ell-1}$. For $t=i_0,\hdots,i_{\ell-2}$, this is because $\x_\sigma\in X_{I'}$; for $t=i_{\ell-1}$, it is our assumption in Case 1.  Thus, in Case 1, (a), (b), and (c) hold, and so $\x_\sigma\in{C}_{I,\ell-1}$.

\underline{Case 2:} Assume instead that $a_{i_{\ell-1}}(\x_\sigma)\geq2i_{\ell-1}+1$. Denoting $\x_\sigma=(x_1,\hdots,x_n)$, we claim in this case that $x_1=x_2=0\equiv k$ and that $\y=(x_2,\hdots,x_n,x_1)\in C_{I,\ell-1}$. By similar reasoning to Case 1, we have:
\begin{equation*}\label{E:bt}
\begin{split}
b_0(\x_\sigma)&=b_k(\x_\sigma)-n\\
&\geq a_{i_{\ell-1}}(\x_\sigma)+2(k-i_{\ell-1})-n\\
&\geq 2.
\end{split}
\end{equation*}
Thus, $x_1=x_2=0\equiv k$. Define $\y$ as above. Note that, since $\x_\sigma$ is monotonic, $\y$ is also monotonic.  It remains to show that $\y\in C_{I,\ell-1}$. The arguments are almost identical to those in Case 1, except that we need to check that $a_{i_{\ell-1}}(\y)\leq 2{i_{\ell-1}}$. Using Observations \ref{O:arbrar} and \ref{O:ark} and the fact that $a_{i_{\ell-1}}(\y)=a_{i_{\ell-1}}(\x_\sigma)-1$, we compute:
\begin{equation*}
\begin{split}
a_{i_{\ell-1}}(\y)&=a_{i_{\ell-1}}(\x_\sigma)-1\\
&\leq b_{k-1}(\x_\sigma)-2(k-1-i_{\ell-1})-1\\
&\leq a_k(\x_\sigma)-2k+1+2i_{\ell-1}\\
&=a_0(\x_\sigma)+(n+1-2k)+2i_{\ell-1}\\
&=a_0(\x_\sigma)+2i_{\ell-1}\\
&\leq2i_{\ell-1}.
\end{split}
\end{equation*}
This completes the proof of the forward containment (\ref{E:ClaimXI1}).
For the reverse containment, keep the same subset $I\subset \Z_k$ from the start of the induction step of the proof, fix some $s\in \Z_\ell$,  let
$\x\in {C}_{I,s}$ be monotonic, and let $t\in I=\{i_0,\hdots, i_{\ell-1}\}$. We will show for each $r=0,\hdots,k-1$ that $b_{t+r}(\x)\geq a_t(\x)+2r$. Proposition \ref{P:Xr} will then imply that $\x\in X_t$. Since $t$ is arbitrary, this will imply that $\x\in X_I$, completing the proof. We will split into cases, but first note, since $\x$ is monotonic, that Proposition \ref{P:XIConditions} implies:

\begin{itemize}
\item $b_t(\x)\geq 2t+1$ for $0\leq t\leq i_s-1$,
\item $b_t(\x)\geq 2t$ for $i_s\leq t\leq k-1$,
\item $a_t(\x)\leq 2t$ for $t=i_0,\hdots,i_s$, and
\item $a_t(\x)\leq 2t-1$ for $t=i_{s+1},\hdots,i_{\ell-2}$.
\end{itemize}

\underline{Case 1:} If $t+r\leq k-1$, then $b_{t+r}(\x)\geq2(t+r) \geq a_{t}(\x)+2r.$

\underline{Case 2:} If instead $ t+r\geq k $ and $t+r\leq k+i_s-1$, then
\begin{align*}
b_{t+r}(\x)&= n+b_{t+r-k}(\x)\geq n+2(t+r-k)+1=2t+2r+(n+1-2k)\\
b_{t+r}(\x)&\geq a_{t}(\x)+2r
\end{align*}
\underline{Case 3:} Similarly, if $ t+r\geq k $ and $t\geq i_{s+1}$, then
\begin{align*}
b_{t+r}(\x)&= n+b_{t+r-k}(\x)\geq n+2(t+r-k)=(2t-1)+2r+(n+1-2k)\\
b_{t+r}(\x)&\geq a_{t}(\x)+2r
\end{align*}
Are there other cases? If there were, they would satisfy $t+r\geq k+i_s$ and $t\leq i_s$, giving
\begin{align*}
k+i_s&\leq t+r\leq i_s+r\\
k&\leq r.
\end{align*}
Yet $r\leq k-1$ by assumption. Therefore, in every case, $b_{t+r}(\x)\geq a_{t}(\x)+2r$, and so $\x\in X_{i_t}$ for arbitrary $t\in I$. Thus, $\x\in X_I$. This completes the proof of the reverse containment, and thus of the equality in (\ref{E:XI})=(\ref{E:XIshort}).
\end{proof}

\section{General construction}\label{S:MainGen}

This section confirms the remaining details of our main construction and completes the proof of our main result, Theorem \ref{T:Main}.  Namely, \textsection\ref{S:Handles} describes how to decompose arbitrary $X_I$, and \textsection\ref{S:HandleProp} shows that this decomposition does in fact give an appropriate handle structure for $X_I$.

Section \ref{S:MainGen} uses Notations \ref{N:lla},
\ref{N:i},  
\ref{N:T},
and Convention \ref{Conv:ISimple}.

\subsection{Handle decompositions: the general case}
\label{S:Handles}
Throughout \textsection\ref{S:Handles}, fix arbitrary $I=\{i_s\}_{s\in\Z_\ell}=\bigsqcup_{r\in \Z_m}I_r\subsetneqq\Z_k$
, following Convention \ref{Conv:ISimple}. Recall in particular that $T=\{t\in\Z_\ell:~i_t-1\notin I\}=\{t_r\}_{r\in\Z_m}$, so that $\{\min I_r\}_{r\in\Z_m}=\{i_{t}\}_{t\in T}$.

\subsubsection{Overview}

In \textsection\ref{S:Handles}, we will decompose $X_I$ into handles in several steps as follows. First, we will decompose $X_I$ into pieces $X_{I,J,i_*}$ determined by all pairs $(J,i_*)$ where $J\subset\{\min I_r\}$ and $i_*\in I$.  Second, for fixed $(J,i_*)$, we will define disjoint subsets $U,V\subset I$ for the purpose of dividing each interval $[i-1,i]$, $i\in I$, into thirds if $i\in U$, into halves if $i\in V$, or neither if $i\notin U,V$.  Third, still fixing $(J,i_*)$, after dividing certain intervals into halves and thirds as just described, we will decompose each piece $X_{I,J,i_*}$ into pieces $X_{I,J,i_*,V^-,U^\circ,U^-}$; these pieces are determined by all triples $(V^-,U^\circ,U^-)$ where $V^-\subset V$, $U^\circ\subset U$, and $U^-\subset U\setminus U^\circ$.  For each of the first three steps, we will describe what to do within each block $I_r$; then we will take a product across all blocks and extend by permutations of the indices.

Fourth, we will order the possibilities of the tuple $(J,i_*,V^-,U^\circ,U^-)$, thus determining an order on the pieces $X_{I,J,i_*,V^-,U^\circ,U^-}$. The order will be lexicographical, and will thus require defining orders on $\{J\subset \{\min I_r\}\}$, $\{i_*\in I\}$, $\{V^-\subset V\}$, $\{U^\circ\subset U\}$, and $\{U^-\subset U\setminus U^\circ\}$. Of these five orders, only the third will be somewhat complicated.  Once we define this order, we will use it to relabel the various pieces  $X_{I,J,i_*,V^-,U^\circ,U^-}$ as $Y_z$, with $z=1,2,3,\hdots.$  Fifth and finally, we will decompose each $Y_z$ into handles, one of which we denote $Y_z^*$ (each handle $H$ from $Y_z$ is related to $Y_z^*$ by $H=\{\x_\tau:~\x\in Y_z^*\}$ for some fixed permutation $\tau\in S_n$).

\subsubsection{Decomposing $X_I$ according to $(J,i^*)$}\label{S:XIJi}

Fix arbitrary $J\subset\{\min I_r\}$ 
and $i_*\in I$ for all of \textsection\ref{S:XIJi}.
Momentarily fixing arbitrary $r\in \Z_m$, denote 
\begin{equation}
\label{E:CrHat}
a=\min I_r,~b=\max I_r,~c=\min I_{r+1},~\text{and}~\widehat{C}_r=\prod_{j=b+1}^{c-1}[b,j]^2,
\end{equation}
and define
\begin{align}
\label{E:Cr}
\begin{split}
C_r=&\left.\begin{cases}[a-1,a]&i_*=a\in J\\
[a-1,a]\times\{a\}&i_*\neq a\in J\\
\{a\}&i_*\neq a\notin J\\
\text{(no factor)}&i_*=a\notin J\\
\end{cases}\right\}\\
&\times\prod_{i= a+1}^{b}\left.\begin{cases}[i-1,i]\times\{i\}&i\neq i_*\\
[i-1,i]&i=i_*\end{cases}\right\}
\times\left.\begin{cases}\widehat{C}_r\times[b,c-1]&c\notin J\\
\widehat{C}_r&c\in J\end{cases}\right\}.
\end{split}
\end{align}
Now the piece of $X_I$ corresponding to the pair $(J,i_*)$ is given by
\begin{equation*}
X_{I,J,i_*}=\lla\prod_{r\in\Z_m} C_r\rra.
\end{equation*}

\subsubsection{The index subsets $U,V\subset I$}\label{S:UV}

Fix arbitrary $J\subset\{\min I_r\}$ 
and $i_*\in I$ for all of \textsection\ref{S:UV}. For each $r\in \Z_m$, define subsets $U_r,V_r\subset I_r$ following Table \ref{T:UV} (or equivalently according to Tables \ref{T:UV1} and \ref{T:UV2} in Appendix 1, which present $U_r$ and $V_r$ more explicitly). Note that $\min I_r\notin(U_r\cup V_r)$ unless $I_r\neq I_*$ and $\min I_r=\max I_r\in J$. See  Table \ref{T:7X02} for an example of this exceptional case: $X_I$, $I=\{0,2\}$, from $T^7$.

\begin{table}[H]
\begin{center}
\scalebox{1}{\begin{tabular}{||c|c|c|c|c||}
\hline
& $\begin{matrix}i_*\notin I_r,\\a\notin J\end{matrix}$&$\begin{matrix}i_*\notin I_r,\\a\in J\end{matrix}$& $\begin{matrix}i_*\in I_r,\\ i_*\leq b-2\end{matrix}$ & $\begin{matrix}i_*\in I_r,\\ i_*\geq b-1\end{matrix}$\\
\hline%
$U_r$&$\varnothing$&$I_r\setminus\{a,b\}$&$I_r\setminus \{a,i_*,i_*+1,b\}$&$I_r\setminus \{a,i_*,b\}$\\
$V_r$&$I_r\setminus\{a\}$&$\{b\}$&$
\{i_*+1,b\}$&$\varnothing$\\
$I_r\setminus(U_r\cup V_r)$&$\{a\}$&$\{a\}\setminus\{b\}$&$\{a,i_*\}$&$
\{a,i_*,b\}$\\
\hline
\end{tabular}}
\caption{The index subsets $U_r,V_r\subset I_r$ when $I_r=\{a,\hdots,b\}$.}\label{T:UV}
\end{center}
\end{table}
Define
\begin{equation*}
U=\bigcup_{r\in\Z_m}U_r\text{ and }
V=\bigcup_{r\in \Z_m}V_r.
\end{equation*}
%

Next, decompose each $X_{I,J,i_*}$ into pieces $X_{I,J,i_*,V^-,U^\circ,U^-}$ as follows.  Denote
\begin{equation*}
\begin{split}
2^V&=\{V^-\subset V\},\\
2^U&=\{U^\circ \subset U\},\\
\end{split}
\end{equation*}
and given $U^\circ\subset U$, denote
\begin{equation*}
2^{U\setminus U^\circ}=\{U^-\subset U\setminus U^\circ\}.
\end{equation*}
Given $V^-\subset V$, denote $V^+=V\setminus V^-$, and given $U^\circ\subset U$ and $U^-\subset {U\setminus U^\circ}$, denote $U^+=U\setminus (U^\circ \cup U^-)$. Then $V=V^-\sqcup V^+$ and $U=U^-\sqcup U^\circ\sqcup U^+$. Momentarily fixing $r\in\Z_m$, denote $a$, $b$, $c$, and $\widehat{C}_r$ as in (\ref{E:CrHat}), and for each $i\in I_r$ define
\begin{equation*}\label{E:rho}
\rho_i=
\begin{cases}
{[i-1,i-\frac{2}{3}]}&i\in U^-\\
{[i-\frac{2}{3},i-\frac{1}{3}]}&i\in U^\circ\\
{[i-\frac{1}{3},i]}&i\in U^+\\
{[i-1,i-\frac{1}{2}]}&i\in V^-\\
{[i-\frac{1}{2},i]}&i\in V^+\\
{[\max I_{r-1},i-1]}&i=a\notin J\cup V\\
{[i-1,i]}&\text{else}.\\
\end{cases}
\end{equation*}
Note that $\rho_i\subset [i-1,i]$ for each $i=a+1,\hdots, b$, that $\rho_a\subset[a-1,a]$ if $a\in J$, and that $\rho_{c}=[b,c-1]$ if $c\notin J$.  Still fixing $r\in\Z_m$, define
\begin{equation*}\label{E:XIJiVUr1}
\begin{split}
X_{I,J,i_*,V^-,U^\circ,U^-,r}=&\left.\begin{cases}\rho_a&i_*=a\in J\\
\rho_a\times\{a\}&i_*\neq a\in J\\
\{a\}&i_*\neq a\notin J\\
\text{(no factor)}&i_*=a\notin J\\
\end{cases}\right\}\\
&\times\prod_{i= a+1}^{b}\left.\begin{cases}\rho_i\times\{i\}&i\neq i_*\\
\rho_i&i=i_*\end{cases}\right\}
\times\left.\begin{cases}\widehat{C}_r\times\rho_{c}&c\notin J\\
\widehat{C}_r&c\in J\end{cases}\right\}.
\end{split}
\end{equation*}
The piece of $X_I$ corresponding to the tuple $(J,i_*,V^-,U^\circ,U^-$ is:
\begin{equation*}\label{E:XIJiVU}
\begin{split}
X_{I,J,i_*,V^-,U^\circ,U^-}=&\lla \prod_{r\in \Z_m}X_{I,J,i_*,V^-,U^\circ,U^-,r}\rra
\end{split}
\end{equation*}
Note that $X_{I,J,i_*}=\bigcup_{V^-,U^\circ,U^-}X_{I,J,i_*,V^-,U^\circ,U^-}$.  

\subsubsection{Ordering the pieces $X_{I,J,i_*,V^-,U^\circ,U^-}$}\label{S:Order}

Next, we define orders $\prec$ on $\{J\subset \{\min I_r\}\}$, $I$, $2^{V}$, $2^{U}$, and $2^{U\setminus U^\circ}$ and use these to order the pieces $X_{I,J,i_*,V^-,U^\circ,U^-}$ lexicographically and then relabel them as $Y_1,Y_2,Y_3,\hdots$.

Order $\{J\subset \{\min I_r\}\}$ and $2^U$ partially by inclusion, so that $J'\prec J$ if $J'\subsetneqq J$ and $U'^\circ\prec U^\circ$ if $U'^\circ\subsetneqq U^\circ$; extend these partial orders arbitrarily to total orders.  Define an arbitrary total order $\prec$ on $2^{U\setminus U^\circ}$.  
Partially order $I$ such that $i<i'$ if  $i\in I_r$, $i'\in I_s$, and $i-\min I_{r}<i_s-\min I_{s}$; extend arbitrarily to a total order on $I$. 

It remains to order $2^V$. This will be slightly more complicated. To do this, we first define a total order $\prec_r$ on $2^{V_r}$ for each $r\in \Z_m$. First consider the case $I_r\ni i_*$, i.e. $I_r=I_*$. If $i_*\geq \max I_*-1$, we have $V_r=\varnothing$, so there is nothing to do. Otherwise, we have $i_*\leq \max I_*-2$ and $V_r=\{i_*+1,\max I_*\}$; in this case, order $2^{V_r}$ as follows:
\begin{equation*}
\{i_*+1\}\prec_r\varnothing \prec_r\{i_*+1,\max I_*\}\prec_r\{\max I_*\}.
\end{equation*}
Now consider the case $I_r\not\ni i_*$.  
Define $\prec_r$ on $2^{V_r}$ recursively by ${V^-_r}\prec_rV'^-_r$ if:
\begin{itemize}
\item $\max V^-_r<\max V'^-_r$, or
\item $\max V^-_r=\max V'^-_r$ and $V'^-_r\setminus\{\max V'^-_r\}\prec_rV^-_r\setminus\{\max V^-_r\}$.
\end{itemize}
Note the reversal of order on the line above.  If we assume without loss of generality that ${V}_r=\{0,\hdots,b\}$, we can write the order explicitly:
\begin{align}\label{E:order}
\begin{split}
\varnothing&\prec_r\{0\}\prec_r\{0,1\}\prec_r\{1\}\prec_r\{1,2\}
\prec_r\{0,1,2\}\prec_r\{0,2\}\prec_r\{2\}\\
&\prec_r\{2,3\}\prec_r\{0,2,3\}\prec_r\{0,1,2,3\}\prec_r\{1,2,3\}\prec_r\{1,3\}\prec_r
\cdots\\
\cdots&\prec_r\{0,1,2,b\}\prec_r\{1,2,b\}\prec_r\{1,b\}\prec_r\{0,1,b\}\prec_r\{0,b\}\prec_r\{b\}.
\end{split}
\end{align}
See Tables \ref{T:13} and \ref{T:15}, and the part of Table \ref{T:111} where $i_*=4$.

Use the orderings $\prec_r$ on $2^{V_r}$ to define a partial order on $2^V$ by declaring $V^-\prec V'^-$ if 
\begin{itemize}
\item $V^-\cap I_r\prec_r V'^-\cap I_r$ for some $r$, and
\item there is no $r$ for which $V'^-\cap I_r\prec_r V^-\cap I_r$.
\end{itemize}
Extend $\prec$ arbitrarily to a total order on $2^V$. %
This determines a total order on 
\begin{equation}\label{E:ToOrder}
\left\{(J,i_*,V^-,U^\circ,U^-)\right\}_{J\subset T,~i_*\in I,~V^-\subset {V},~U^\circ \subset {U},~U^-\subset {U\setminus U^\circ}},
\end{equation}
and thus on the pieces $X_{I,J,i_*,V^-,U^\circ,U^-}$. Relabel these pieces as $Y_z$, $z=1,\hdots,\#(\ref{E:ToOrder})$, according to this order.  

\subsubsection{Decomposing each $Y_z$ into handles}\label{S:Yzstar}

Each $Y_z$ is now given by an expression of the form
\begin{equation}\label{E:xi}
\lla\prod_{r=1}^{n}\chi_r\rra,
\end{equation}
where each $\chi_r$ is either a closed interval or a singleton.  Fixing arbitrary $z$, use the expression (\ref{E:xi}) to define the coarsest equivalence relation $\sim$ on $\{1,\hdots,n\}$ that obeys the following property: whenever $\chi_r\subset\chi_s$, we have $r\sim s$.  Denote the set of equivalence classes under $\sim$ by $P=\{R_1,\hdots,R_p\}$, and for each $r=1,\hdots,p$, denote $\lla \prod_{s\in R_r}\chi_s\rra=\xi_r$.  Define
\begin{equation}\label{E:Yzstar}
Y_z^*=\prod_{r=1}^p\xi_r.
\end{equation}
In \textsection\ref{S:HandleProp}, we will see that each $Y_z^*$ is a handle, and that attaching $Y_z$ to $\bigcup_{s=1}^{z-1}Y_{s}$ amounts to attaching a collection of handles, each of which is related to $Y_z^*$ as follows. Let 
\[G=\{\sigma\in S_n:~\vec{x}_\sigma\in Y_z^*\text{ whenever }\vec{x}\in Y_z^*\}=S_{n_{|R_1|}}\times\cdots\times S_{n_{|R_p|}}\] consist of the permutations on the indices of $T^n$ which fix $Y_z^*$ setwise.  Then there is a one-to-one correspondence between the left cosets of $G$ and the handles comprising $Y_z$:
\[\tau G\longleftrightarrow \{\x_\tau:~\x\in Y_z^*\}.\]

\begin{example}\label{Ex:T9}
Consider $X_I\subset T^9$ where $I=\{0,1,2,3\}$, which is detailed in Tables \ref{T:92} and \ref{T:93}. Note that $T=\{0\}$. 
In particular, consider the first and twelfth rows of Table \ref{T:92} (after the headings), where $J=\varnothing$, $i_*=0$, $U=\{2\}$, and $V=\{1,3\}$. The first row of Table \ref{T:92} corresponds to 
\begin{equation}\label{E:Y1}
Y_1=X_{I,J,s,V^-,U^\circ,U^-}=\lla{\alpha^-}1{\beta^\circ_3}2{\gamma^+}3\delta^{{3}}\rra,
\end{equation}
where $V^-=\{1\}$, $U^\circ=\{2\}$, and $U^-=\varnothing$ with
\begin{equation*}
\begin{split}\chi_1&={\alpha^-}=\left[0,\frac{1}{2}\right],~\chi_2=\{1\},~\chi_3={\beta^\circ_3}=\left[\frac{4}{3},\frac{5}{3}\right],~\chi_4=\{2\},\\
\chi_5&={\gamma^+}=\left[\frac{5}{2},3\right],~\chi_6=\{3\},~\text{and }\chi_7=\chi_8=\chi_9=\delta=[3,4].
\end{split}
\end{equation*}
The ensuing partition of $\{1,\hdots,9\}$ gives
\[P=\{\{1\},\{2\},\{3\},\{4\},\{5,6,7,8,9\}\},\]
and so 
\begin{align*}Y_1^*&=\chi_1\times\chi_2\times\chi_3\times\chi_4\times\lla\chi_5\times\chi_6\times\chi_7\times\chi_8\times\chi_9\rra\\
&={\alpha^-}1{\beta^\circ_3}2\lla{\gamma^+}3\delta^{{3}}\rra,
\end{align*}
where
\[\xi_1={\alpha^-},~\xi_2=\{1\},~\xi_3={\beta^\circ_3},\xi_4=\{2\},~\text{and}~\xi_5=\lla{\gamma^+}3\delta^{{3}}\rra.\]
The twelfth row of Table \ref{T:92} corresponds to 
\begin{equation*}\label{E:Y12}
Y_{12}=X_{I,J,s,V^-,U^\circ,U^-}=\lla{\alpha^+}1{\beta^+_3}2{\gamma^-}3\delta^{{3}}\rra,
\end{equation*}
where $V^-=\{3\}$, $U^\circ=\varnothing=U^-$.  The ensuing partition of $\{1,\hdots,9\}$ gives
\[P=\{\{1,2\},\{3,4,5\},\{6,7,8,9\}\},\]
and so 
\begin{align*}Y_{12}^*&=\lla\chi_1\times\chi_2\rra\times\lla\chi_3\times\chi_4\times\chi_5\rra\times\lla\chi_6\times\chi_7\times\chi_8\times\chi_9\rra\\
&=\underbrace{\lla{\alpha^+}1\rra}_{\xi_1}\underbrace{\lla{\beta^+_3}2{\gamma^-}\rra}_{\xi_2}\underbrace{\lla3\delta^{{3}}\rra}_{\xi_3}.
\end{align*}
\end{example}

\subsection{Properties of handle decompositions}\label{S:HandleProp}

\subsubsection{Combinatorics}

\begin{prop}\label{P:VOrder}
Let $i\in I_s\cap V^-$ for some  $s\in \Z_m$, where $i_*\notin I_s$.  Denote $b=\max I_s$, $c=\max (I_s\cap V^-)$. Let $V'^-=V^-\setminus\{i\}$.  Then $V'^-\prec V^-$ if and only if $|V^-\cap\{i+1,\hdots,b\}|$ is even.
\end{prop}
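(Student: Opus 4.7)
The plan is to reduce the comparison of $V^-$ and $V'^-=V^-\setminus\{i\}$ in the order $\prec$ on $2^V$ to a comparison within the single block $I_s$, and then unwind the recursive definition of $\prec_s$.

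First I would observe that since $V^-\cap I_r=V'^-\cap I_r$ for every $r\neq s$, the definition of $\prec$ on $2^V$ forces the comparison to reduce to the comparison of $W:=V^-\cap I_s$ with $W':=V'^-\cap I_s=W\setminus\{i\}$ under $\prec_s$. (Thus $V'^-\prec V^-$ iff $W'\prec_s W$.) Next, since $I_s\not\ni i_*$, the recursive definition of $\prec_s$ from (\ref{E:order}) applies.

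The main step is to iterate the recursion. Let $c=c_1>c_2>\cdots>c_{k}$ denote the elements of $W$ that exceed $i$, where $k=|V^-\cap\{i+1,\ldots,b\}|$; since $I_s$ consists of consecutive indices containing both $i$ and $b$, these are exactly the elements of $W\cap\{i+1,\ldots,b\}$. I would prove by induction on $j=0,1,\ldots,k$ that after $j$ applications of the recursion, we are comparing
\[
W_j:=W\setminus\{c_1,\ldots,c_j\}\quad\text{with}\quad W'_j:=W'\setminus\{c_1,\ldots,c_j\},
\]
and the original order $W'\prec_s W$ holds iff $W'_j\prec_s W_j$ when $j$ is even, and iff $W_j\prec_s W'_j$ when $j$ is odd. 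The induction step uses the fact that $\max W_j=\max W'_j=c_{j+1}$ for $j<k$ (since $i\in W_j\cap W'_j$ is below all the $c_t$'s for $t\le j+1$ and $i\notin W'$; more precisely $W$ and $W'$ agree above $i$), together with the clause in the definition of $\prec_s$ that reverses the induced order upon stripping a common maximum.

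Finally, after $j=k$ strippings, we have $\max W_k=i$ (since all elements of $W$ above $i$ are removed and $i\in W$), while $\max W'_k<i$ (since $W'_k\subseteq W\setminus\{i,c_1,\ldots,c_k\}$). By the first clause of the definition of $\prec_s$, $W'_k\prec_s W_k$ directly, with no further recursion. Combining this with the parity statement above, $W'\prec_s W$ iff $k$ is even, i.e.\ iff $|V^-\cap\{i+1,\ldots,b\}|$ is even, proving the proposition. I expect the only subtlety to be bookkeeping the parity of reversals accumulated through the recursion; the boundary case $i=c$ (where $k=0$) is handled directly by the first clause of the definition and agrees with the claimed parity.
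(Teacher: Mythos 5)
Your proof is correct and follows essentially the same approach as the paper's: strip the common maximum of $V^-\cap I_s$ and $V'^-\cap I_s$ one element at a time, flipping the direction of the comparison with each strip, until you reach the base case where the two sets first have different maxima (namely $i$ versus something smaller). The paper packages this as an induction on $c-i$, applying the proposition itself as the inductive hypothesis after removing $c$ from both sets, while you make the iteration explicit by listing $c_1>\cdots>c_k$ and tracking the parity of reversals, but the underlying mechanism is identical.
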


\begin{proof}
We argue by induction on $c-i$.  When $c-i=0$, we have $c=i>\max (I_s\cap V^-\setminus\{i\})$ and $I_r\cap V^-=I_r\cap V^-\setminus\{i\}$ for all $r\neq s$, so $V'^-\prec V^-$.

Now assume that $c-i=t>0$, and assume that the claim is true whenever $\max (I_s\cap V^-)-i<t$. Let $W^-=V^-\setminus \{c\}$ and $W'^-=V'^-\setminus\{c\}$.  
Then $|V^-\cap\{i+1,\hdots,b\}|$ and $|W^-\cap\{i+1,\hdots,b\}|$ have opposite parities. Also, by construction, $V^-\prec V'^-$ if and only if $W'^-\prec W^-$. The result now follows by induction.
\end{proof}

\begin{notation}\label{N:ominus}
Denote the symmetric difference of sets $R$ and $S$ by 
\[R\ominus S=(R\setminus S)\cup (S\setminus R).\]
\end{notation}

\begin{prop}\label{P:VOrderSet}
Let $A\subset V$ such that $V^-\prec V^-\ominus\{a\}$ for each $a\in A$. Then $V^-\prec V^-\setminus A$.
\end{prop}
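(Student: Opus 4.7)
The plan is to reduce to a per-block statement and then induct on $|A_r|$ within each block. First, observe that any $a\in A\setminus V^-$ satisfies $V^-\ominus\{a\}=V^-\cup\{a\}$ and does not affect $V^-\setminus A$, so such elements are irrelevant to both the operative part of the hypothesis and to the conclusion; I will assume without loss of generality that $A\subseteq V^-$, in which case $V^-\ominus\{a\}=V^-\setminus\{a\}$. Since $V^-$ and $V^-\setminus\{a\}$ differ only in the block $I_{r(a)}$ containing $a$, they are comparable in the partial order on $2^V$ that is built from the block orders $\prec_r$, so the hypothesis (stated in the total-order extension) is equivalent to the block-level statement $W_{r(a)}\prec_{r(a)} W_{r(a)}\setminus\{a\}$, where $W_r:=V^-\cap I_r$. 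For the conclusion it suffices to establish $W_r\preceq_r W_r\setminus A_r$ in every block $I_r$ (with strict inequality whenever $A_r:=A\cap I_r\neq\varnothing$), because this already gives $V^-\prec V^-\setminus A$ in the partial order, and hence in every extension.

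This reduces the proposition to the following single-block claim: \emph{if $A_r\subseteq W_r\subseteq I_r$ and $W_r\prec_r W_r\setminus\{a\}$ for each $a\in A_r$, then $W_r\prec_r W_r\setminus A_r$.} When $i_*\in I_r$, the ordering on $2^{V_r}$ consists of only the four elements $\{i_*+1\}\prec_r\varnothing\prec_r\{i_*+1,\max I_*\}\prec_r\{\max I_*\}$; a direct enumeration shows that the condition $W_r\prec_r W_r\setminus\{a\}$ can hold simultaneously for every $a\in A_r$ only when $A_r=\{i_*+1\}$, and in that case the conclusion $W_r\prec_r W_r\setminus A_r$ is immediate from the chain above.

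When $i_*\notin I_r$, I will induct on $|A_r|$, with the base case $|A_r|=1$ being the hypothesis itself. For the inductive step, set $a=\min A_r$; by Proposition~\ref{P:VOrder}, the hypothesis $W_r\prec_r W_r\setminus\{a\}$ is equivalent to $|W_r\cap\{a+1,\ldots,\max I_r\}|$ being odd. For each $b\in A_r\setminus\{a\}$ we have $b>a$, hence $a\notin\{b+1,\ldots,\max I_r\}$, and so $|(W_r\setminus\{a\})\cap\{b+1,\ldots,\max I_r\}|=|W_r\cap\{b+1,\ldots,\max I_r\}|$ remains odd; by Proposition~\ref{P:VOrder} the hypothesis therefore transfers verbatim to the pair $(W_r\setminus\{a\},\,A_r\setminus\{a\})$. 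Induction gives $W_r\setminus\{a\}\prec_r W_r\setminus A_r$, and transitivity of $\prec_r$ applied to $W_r\prec_r W_r\setminus\{a\}\prec_r W_r\setminus A_r$ finishes the proof.

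The main obstacle will be the choice of element to remove in the induction step: picking $a=\max A_r$ instead would force every remaining $b\in A_r\setminus\{a\}$ to satisfy $b<a$, so $a\in\{b+1,\ldots,\max I_r\}$ and the cardinality $|W_r\cap\{b+1,\ldots,\max I_r\}|$ would drop by one when passing to $W_r\setminus\{a\}$, flipping its parity and destroying the inductive hypothesis via Proposition~\ref{P:VOrder}. The choice $a=\min A_r$ is precisely what makes the parity condition in Proposition~\ref{P:VOrder} invariant under the inductive step.
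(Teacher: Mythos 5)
Your argument matches the paper's almost exactly: reduce to a single block, dispatch the $i_*\in I_r$ case directly, and iterate over $A_r$ in increasing order using the parity criterion of Proposition~\ref{P:VOrder} --- the key observation being that removing (or $\ominus$-ing out) the smallest remaining element does not change the parity of $|W_r\cap\{b+1,\ldots,\max I_r\}|$ for the larger elements $b$. Your induction on $|A_r|$ is just the unrolled form of the paper's explicit chain $V^-\prec V^-_1\prec\cdots\prec V^-_q$ with $V^-_r=V^-\ominus\{a_1,\ldots,a_r\}$ and $a_1<\cdots<a_q$.

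One caution about your opening WLOG step. For the conclusion as printed, $V^-\prec V^-\setminus A$, discarding $A\setminus V^-$ is indeed harmless. But notice that the paper's chain actually terminates at $V^-_q=V^-\ominus A$, which equals $V^-\setminus A$ only if $A\subseteq V^-$, and the place this proposition is later invoked (the proof of Lemma~\ref{L:PrecConverse}) takes $A=V^-\ominus V'^-$, which need not lie in $V^-$; what is used there is the stronger conclusion $V^-\prec V^-\ominus A$. For that version, $a\in A\setminus V^-$ is not ``irrelevant'' --- $\ominus$ adds it --- so your WLOG does not carry over. The fix is easy and does not change your mechanism: drop the WLOG and run the same smallest-first iteration with $\ominus$ in place of $\setminus$. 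Proposition~\ref{P:VOrder} characterizes the hypothesis $V^-\prec V^-\ominus\{a\}$ symmetrically in both cases ($|V^-\cap\{a+1,\ldots,\max I_r\}|$ is odd if and only if $a\in V^-$), and $\ominus$-ing out $\min A_r$ preserves those parities for the remaining elements, so the chain yields $V^-\prec V^-\ominus A$ just as the paper's does.
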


\begin{proof}
Suppose first that $A\subset I_s$ for some $s\in \Z_m$.  Denote $A=\{a_1,\hdots,a_q\}$ with $\min I_s\leq a_1\leq \cdots\leq a_q\leq \max I_s=b$.
Assume that $i_*\notin I_s$ and $|I_s|\geq 3$ (the other cases are trivial). Proposition \ref{P:VOrder} implies, for each $a\in A$, that $|V^-\cap\{a+1,\hdots,b\}|$ is odd if and only if $a\in V^-$.  For each $r=1,\hdots, q$, denote the symmetric difference $V^-_r=V^-\ominus\{a_1,\hdots,a_r\}$.  Then, $|V^-_a\cap\{a+1,\hdots,b\}|=|V^-\cap\{a+1,\hdots,b\}|$ for each $a=0,\hdots,q-1$. Since this quantity is odd if and only if $a\in V^-$, Proposition \ref{P:VOrder} implies:
\[V^-\prec V^-_1\prec\cdots\prec V^-_q=V^-\setminus A.\]
For the general case, apply this argument repeatedly for each $s\in \Z_m$.
\end{proof}

\subsubsection{Topology}

\begin{obs}\label{O:U-Disjoint}
In $X_I$, if $Y_z$ comes from $(J,i_*,V^-,U^\circ,U^-)$ and $Y_w$ comes from $(J,i_*,V^-,U^\circ,U'^-)$, then $Y_z\cap Y_w=\varnothing$ unless $U^-=U'^-$. That is, if $U^-\neq U'^-$, then 
\[X_{I,J,i_*,V^-,U^\circ,U^-}\cap X_{I,J,i_*,V^-,U^\circ,U'^-}=\varnothing.\]
\end{obs}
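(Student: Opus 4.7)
The plan is to lift the disjointness of the defining intervals from the factor-space products to the symmetrized pieces. With $J$, $i_*$, $V^-$, $U^\circ$ all held fixed, the only factors of $\prod_r X_{I,J,i_*,V^-,U^\circ,U^-,r}$ that depend on $U^-$ are the intervals $\rho_i$ for $i \in U \setminus U^\circ$: these equal $[i-1,i-\tfrac{2}{3}]$ when $i \in U^-$ and $[i-\tfrac{1}{3},i]$ otherwise, and these two possibilities are disjoint, separated by the gap $(i-\tfrac{2}{3}, i-\tfrac{1}{3})$. Thus the factor-space products for different $U^-$ are pairwise disjoint. To extend this to disjointness after the symmetrization $\lla\cdot\rra$, I would establish the key claim that for each $i \in U$ the slot $\rho_i$ is the unique slot in the entire product whose constraint meets the open interval $(i-1,i)$. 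This is a short case analysis: every other $\rho_j$ with $j \neq i$ is contained in $[j-1,j]$ and so misses $(i-1,i)$; the long slots $\rho_a=[\max I_{r-1},a-1]$ cannot reach $(i-1,i)$ because $i \in I$ lies inside a block rather than in a gap between blocks; the $\widehat{C}_r$ intervals $[b,j]$ all have $b \geq \max I_{r_0} > i$ for the block $I_{r_0}$ containing $i$; and singletons contribute only at integers.

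Assuming for contradiction that $\vec{x}$ lies in both pieces with $U^-\neq U'^-$, I would pick $i_0 \in U^- \setminus U'^-$ and choose permutations $\sigma_1,\sigma_2$ with $\vec{y}^{(p)}:=\sigma_p(\vec{x})$ in the corresponding factor-space product, so that the multisets satisfy $M(\vec{y}^{(1)}) = M(\vec{x}) = M(\vec{y}^{(2)})$. By the uniqueness claim applied at $i = i_0$, the number of coordinates of $\vec{y}^{(2)}$ lying in the open interval $(i_0-1,\, i_0-\tfrac{2}{3})$ is zero (because $\rho_{i_0}^{(2)} = [i_0-\tfrac{1}{3}, i_0]$ misses this range), and by multiset equality the same holds for $\vec{y}^{(1)}$. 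Hence the coordinate filling $\rho_{i_0}^{(1)} = [i_0-1, i_0-\tfrac{2}{3}]$ must sit at an endpoint, and since the non-integer endpoint $i_0-\tfrac{2}{3}$ appears in no other slot's constraint of the $U'^-$-product, this coordinate must equal the integer $i_0-1$. Symmetrically, the coordinate filling $\rho_{i_0}^{(2)}$ equals the integer $i_0$.

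The hard part will be converting these integer-endpoint coincidences into a genuine contradiction, since integer values can in principle be absorbed by many slots. My plan is a cascade argument tracking the multiplicities of the values $i_0-1$ and $i_0$ in $M(\vec{x})$: the surplus coordinate equal to $i_0$ demanded by $\vec{y}^{(2)}$ (from $\rho_{i_0}^{(2)}$, beyond what the singleton slot $\{i_0\}$ accounts for) must in $\vec{y}^{(1)}$ be absorbed by an adjacent slot $\rho_{i_0+1}^{(1)}$ whose constraint reaches $i_0$ at its left endpoint, which forces $i_0+1$ into $U^- \cup V^-$ or the ``else'' branch; symmetrically, the surplus $i_0-1$ forces an analogous endpoint coincidence on $\rho_{i_0-1}^{(2)}$. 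Iterating these propagations through consecutive indices within the finite block $I_*$ containing $i_0$, we eventually reach its boundary indices $\min I_*$ and $\max I_*$, where the exclusions recorded in Table~\ref{T:UV} (which ensure that the endpoints of a block are absent from $U$ in the relevant cases) shut off further cascading and produce the desired contradiction.
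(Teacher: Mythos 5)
Your opening moves are sound: the claim that $\rho_i$ is the unique slot whose constraint meets $(i-1,i)$ is correct (the $\widehat{C}$-intervals $[\max I_s,j]$ all lie in the gaps between blocks, while any $i\in U$ is strictly interior to its block), and the forced endpoint coincidences $\rho_{i_0}^{(1)}\mapsto i_0-1$, $\rho_{i_0}^{(2)}\mapsto i_0$ follow exactly as you say. The gap is the termination step, and it is a genuine one: the cascade does \emph{not} shut off at $\min I_*$ or $\max I_*$. When the forward cascade reaches $b=\max I_*$, there is no slot $\rho_{b+1}$ (since $b+1\notin I$), but the surplus coordinate $b$ can instead be absorbed by the $\widehat{C}_*$ slots $[b,j]^2$ or the $\rho_c$ slot $[b,c-1]$, every one of which has $b$ as a left endpoint and every one of which is identical in the two factor-space products. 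Symmetrically, the backward cascade's surplus at the other end can be absorbed by the $[\max I_{*-1},j]$ slots. When $I$ is a single block these are the \emph{same} pool of intervals, and the two surpluses can cancel.

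In fact the Observation as stated appears to be false. In $T^9$ (so $k=5$), take $I=\{0,1,2,3\}$, $J=\{0\}$, $i_*=3$, $V^-=\varnothing$, $U^\circ=\varnothing$; then $U=\{1,2\}$, $V=\varnothing$, $\rho_0=[-1,0]$, $\rho_3=[2,3]$, $\widehat{C}_0=[3,4]^2$. Take $U^-=\{1,2\}$ and $U'^-=\varnothing$. Any point whose coordinate multiset is $\{0,0,1,1,2,2,3,t,4\}$, $t\in[3,4]$, lies in both pieces: for $U^-=\{1,2\}$ fill $\rho_0\mapsto4$, $\{0\}\mapsto0$, $[0,\frac13]\mapsto0$, $\{1\}\mapsto1$, $[1,\frac43]\mapsto1$, $\{2\}\mapsto2$, $[2,3]\mapsto2$, $[3,4]^2\mapsto(3,t)$; for $U'^-=\varnothing$ fill $\rho_0\mapsto0$, $\{0\}\mapsto0$, $[\frac23,1]\mapsto1$, $\{1\}\mapsto1$, $[\frac53,2]\mapsto2$, $\{2\}\mapsto2$, $[2,3]\mapsto3$, $[3,4]^2\mapsto(t,4)$. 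The two surplus integers $4$ and $3$ migrate into the shared $\widehat{C}_0$ pool and cancel, precisely the mechanism your termination argument overlooks. (In the paper's own usage this appears survivable: Observation~\ref{O:U-Disjoint} is only invoked inside the proof-by-contradiction of Lemma~\ref{L:PrecConverse} after assuming every $\xi_a(z)$ with $a\in S$ is of class (B); in the counterexample above $\xi_3=\lla[\frac53,2]\{2\}[2,3]\rra$ is of class (A) with $3\in S$, so that hypothesis already fails. But as a stand-alone statement the Observation needs an extra hypothesis, and your proof cannot close without one.)
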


\begin{lemma}\label{L:Yzr}
Each factor $\xi_r$  in the expression  (\ref{E:Yzstar}) for $Y_z^*$   has one of the forms described in Lemma  \ref{L:StarShaped}, and thus is PL homeomorphic to  $D^{d(r)}$ for some $d(r)\geq 0$.

Moreover, $\sum_{r=1}^{p}d(r)=n+1-|I|$, so $Y_z^*\cong D^{n+1-|I|}$.
\end{lemma}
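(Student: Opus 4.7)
The plan is to prove both claims by a case analysis of the equivalence relation $\sim$ on the factors in (\ref{E:xi}) and by counting singleton factors.

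For the dimension sum, I would count singleton factors directly. From the definition of $X_{I,J,i_*,V^-,U^\circ,U^-,r}$, each index $i \in I_r$ with $i \neq i_*$ produces exactly one singleton factor $\{i\}$---as the second factor of $\rho_i \times \{i\}$, as the second factor of $\rho_a \times \{a\}$ when $a = \min I_r \in J$, or as the standalone $\{a\}$ when $a \notin J$---while $i = i_*$ produces none. Summing over blocks, the product has $|I|-1$ singleton factors and therefore $n + 1 - |I|$ interval factors among the $n$ factors of (\ref{E:xi}). Each interval contributes $1$ and each singleton $0$ to the dimension of the $\xi_r$ containing it, so $\sum_r d(r) = n + 1 - |I|$; combined with the classification below, $Y_z^* \cong \prod_r D^{d(r)} \cong D^{n+1-|I|}$.

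For the classification, I would establish two structural facts by working through the seven cases of $\rho_i$ and the four columns of Table \ref{T:UV}. First, no interval factor $\chi$ contains two distinct singleton factors: each $\rho_i \subset [i-1, i]$ is forced to miss one of its integer endpoints (being a half- or third-length subinterval) whenever both $i$ and $i-1$ lie in $I \setminus \{i_*\}$; each interval $[b_r, j]$ of $\widehat{C}_r$ and the interval $\rho_{c_r}$ have $b_r$ as their only $I$-point since $b_r < j \leq c_r - 1 < \min I_{r+1}$; and each form of $\rho_{i_*}$ contains at most one $I$-singleton other than $\{i_*\}$ itself (which is absent from the product). Hence every equivalence class contains at most one singleton. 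Second, for each $p \in I \setminus \{i_*\}$, the intervals containing $p$ consist of at most one interval ending at $p$ (the relevant cases of $\rho_p$) together with intervals starting at $p$ ($\rho_{p+1}$ when applicable and, when $p = \max I_r$, the factors of $\widehat{C}_r$ and $\rho_{c_r}$ if $c_r \notin J$). Consequently each equivalence class has one of four shapes: a lone singleton (type $C_1$ with $d=0$); a singleton paired with intervals on one side only (type $C_2$); a singleton flanked by one left interval and one or more right intervals (type $C_3$); or a cluster of intervals sharing a common endpoint with no singleton (type $C_1$ with $d \geq 1$, arising when $i_* = b_r$ removes $\{b_r\}$ but leaves $\widehat{C}_r$, or from an isolated $\rho_{i_*}$ or $\rho_a$). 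Each shape matches one of $C_1, C_2, C_3$ up to translation and reflection, so Lemma \ref{L:StarShaped} gives $\xi_r \cong D^{d(r)}$.

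The main obstacle is the case analysis verifying the two structural facts, especially at block boundaries where $\rho_a$, $\rho_{i_*}$, $\rho_{c_{r-1}}$, and $\widehat{C}_{r-1}$ can interact, and in the exceptional row of Table \ref{T:UV} where $I_r = \{a\} = \{b\} \in J$ forces $a \in V$. Once these are handled, the identification of each class with $C_1, C_2$, or $C_3$ is immediate by inspection, and combined with the dimension sum yields $Y_z^* \cong D^{n+1-|I|}$.
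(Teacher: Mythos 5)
Your proposal is correct and takes essentially the same approach as the paper. The paper's proof also proceeds by (1) analyzing the equivalence relation $\sim$ to classify the factors $\xi_r$, and (2) counting singletons to get $\sum d(r) = n - (|I|-1) = n+1-|I|$. The paper states the classification as constraints on when $\chi_r \subset \chi_{r'}$ can occur (either $\chi_r$ is a singleton endpoint of $\chi_{r'}$, or $\chi_r \supset [\max I_s, \max I_s+1]$; no $\chi_{r'}$ meets more than one point of $I\setminus\{i_*\}$; no $\chi_{r'}$ contains more than one $[\max I_s, \max I_s+1]$), then defers the explicit accounting to Tables~\ref{T:YzrNoSing}--\ref{T:YzrSing}. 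Your two ``structural facts''---each interval meets at most one $I\setminus\{i_*\}$-point, and the intervals through a singleton $\{p\}$ consist of at most one ending at $p$ plus intervals starting at $p$---are the same observations phrased from the vantage of the singleton rather than the containment pair, and they lead to the same four-shape classification matching $C_1, C_2, C_3$. You are somewhat more explicit in spelling out why the product contains exactly $|I|-1$ singletons (the paper simply asserts this), and you correctly flag the block-boundary interactions and the exceptional row of Table~\ref{T:UV} as the places where the case analysis is delicate, which is also where the paper's argument relies on its tables.
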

\begin{proof}
Regarding the first claim, we examine the equivalence relation $\sim$ that led to (\ref{E:Yzstar}). Suppose $\chi_r\subset\chi_{r'}$.  Then, by construction, either $\chi_r$ is a singleton (in $I\setminus \{i_*\}$) and $\chi_{r'}$ is an interval with this singleton as an endpoint, or else $\chi_r\supset [\max I_s,\max I_s+1]$ for some $s\in\Z_\ell$. Moreover, by construction, if $\chi_{r'}$ contains a point of $I\setminus\{i_*\}$, then it contains only one such point and it contains no interval of the form $[\max I_s,\max I_s+1]$, and no $\chi_{r'}$ contains more than one interval of the form $[\max I_s,\max I_s+1]$. The first claim now follows. (For an explicit accounting of the types of factors $\xi_r=\lla\prod_{s\in R_r}\chi_s\rra$, see Tables \ref{T:YzrNoSing}, \ref{T:YzrRe}, and \ref{T:YzrSing}.)

Regarding the second claim, note for each $r=1,\hdots,p$, that $d(r)$ equals the number of intervals among $\{\chi_s\}_{s\in R_r}$, which equals the order of $R_r$ minus the number of singletons among $\{\chi_s\}_{s\in R_r}$. Since $\sum_{r=1}^p|R_r|=|P|=n$ and $\{\chi_s:~s=1,\hdots n\}$ contains a total of $|I|-1$ singletons, it follows that $\sum_{r=1}^{p}d(r)=n+1-|I|$. Thus, $Y_z^*\cong D^{n+1-|I|}$.
\end{proof}

We wish to show, in arbitrary $X_I$, that attaching any $Y_z$ to $\bigcup_{w<z}Y_w$ amounts to attaching a collection of $(n+1-|I|)$-dimensional $h(z)$-handles for some $h(z)$.  Indeed, Lemma \ref{L:Yzr} confirms that each $Y_z^*$ from $X_I$ is a compact $(n+1-|I|)$-ball, so it remains to consider how everything is glued together.  Our goal is to show that 
\begin{equation}\label{E:hHandle}
Y_z^*\cap \bigcup_{w<z}Y_w\cong S^{h(z)-1}\times D^{n+1-|I|-h(z)}
\end{equation}
and
\begin{equation}\label{E:Within}
Y_z^*\cap (Y_z\setminus\setminus Y_z^*)\subset Y_z^*\cap \bigcup_{w<z}Y_w.
\end{equation}
The former will imply that attaching $Y_z^*$ to $\bigcup_{w<z}Y_w$ amounts to attaching an $(n+1-|I|)$-dimensional $h(z)$-handle, and the latter will further imply that if we attach all the copies of $Y_z^*$ one at a time to $\bigcup_{w<z}Y_w$, then attaching each copy amounts to attaching another $(n+1-|I|)$-dimensional $h(z)$-handle. 

Recall that each $Y_z^*$ has the form $\prod_{r=1}^p\xi_r(z)$. Hence, 
\[\partial Y_z^*=\bigcup_{a=1}^p\left(\prod_{r=1}^{a-1}\xi_r(z)\times\partial\xi_a(z)\times\prod_{r=a+1}^p\xi_r(z)\right).\]
We will show, given arbitrary $Y_z^*$ in $X_I$, that there is a subset $S(z)\subset \{1,\hdots, p\}$ such that 
\begin{equation}\label{E:Sz}
Y_z^*\cap \bigcup_{w<z}Y_w=\bigcup_{a\in S(z)}\left(\prod_{r=1}^{a-1}\xi_r(z)\times\xi_a(z)\times\prod_{r=a+1}^p\xi_r(z)\right).
\end{equation}
Then, denoting $h(z)=\sum_{r\in S(z)}\text{dim}(\xi_r)$, we will obtain (\ref{E:hHandle}):
\begin{align*}
Y_z^*\cap \bigcup_{w<z}Y_w&=\bigcup_{a\in S(z)}\left(\prod_{r=1}^{a-1}\xi_r(z)\times\xi_a(z)\times\prod_{r=a+1}^p\xi_r(z)\right)\\
&\cong\left(\partial \prod_{r\in S(z)}\xi_r(z)\right)\times\prod_{r\notin S(z)}\xi_r(z)\\
&\cong \partial D^{h(z)}\times D^{n+1-|I|-h(z)}\\
&=S^{h(z)-1}\times D^{n+1-|I|-h(z)},
\end{align*}

Our next step is to describe the subset $S(z)\subset \{1,\hdots,p\}$.  To do so, we characterize each $\xi_r(z)$ as type $\red{\text{(A)}}$ or type $\Navy{\text{(B)}}$; then $S(z)$ will consist of those $r=1,\hdots,p$ for which $\xi_r(z)$ has type $\red{\text{(A)}}$. After that, Lemmas \ref{L:Prec} and \ref{L:PrecConverse} will establish (\ref{E:Sz}) by double containment, implying (\ref{E:hHandle}), and Lemma \ref{L:PrecWithin} will establish (\ref{E:Within}).  

Consider an arbitrary $Y_z^*=\prod_{r=1}^p\xi_r(z)$ from an arbitrary $X_I$.  In the following way, classify each factor $\xi_r(z)$ into one of two classes, $\red{\text{(A)}}$ or $\Navy{\text{(B)}}$.  Say that $\xi_r(z)$ is in class $\Navy{\text{(B)}}$ if \begin{itemize}
\item $\xi_r(z)=\left[i-\frac{2}{3},i-\frac{1}{3}\right]$ for some $i\in I$;
\item $\xi_r(z)=\left[i_*,i_*+\frac{1}{2}\right]$; 
\item $[\max I_s,j]$ is a factor in the expression for $\xi_r(z)$ for some $s,j$; or
\item Some $\{i\}$ is a factor in the expression for $\xi_r(z)$ and:
\begin{itemize}
\item $i\in V^+$ and $i+1\in U^\circ\cup U^+\cup V^+$, or $i\in U^-\cup U^\circ\cup V^-$ and $i+1\in V^-$; and
\item $|V^-\cap\{i+1,\hdots,\max I_s\}|$ is even, where $i\in I_s$.
\end{itemize}
\end{itemize}
All other types of $\xi_r(z)$ are of class $\red{\text{(A)}}$.  Tables \ref{T:YzrNoSing}, \ref{T:YzrRe}, and \ref{T:YzrSing} in Appendix 1 list the possibilities explicitly.

\begin{lemma}\label{L:Prec}
Suppose $Y_z^*=\prod_{r=1}^p\xi_r(z)$ comes from $(J,i_*,V^-,U^\circ,U^-)$. If, for some $a=1,\hdots,p$, $\xi_a(z)$ is of class $\red{\text{(A)}}$ and 
\[\x=(x_1,\hdots,x_n)\in\prod_{r=1}^{a-1} \xi_r(z)\times \partial\xi_a(z)\times \prod_{r=a+1}^{p}\xi_r(z),\] then $\x\in Y_w$ for some $w<z$.
\end{lemma}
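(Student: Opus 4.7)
The plan is to argue by case analysis on the form of the class-$\red{\text{(A)}}$ factor $\xi_a(z)$, using the explicit enumeration in Tables~\ref{T:YzrNoSing}--\ref{T:YzrSing}. Given $\x \in \prod_{r=1}^{a-1}\xi_r(z) \times \partial\xi_a(z) \times \prod_{r=a+1}^p \xi_r(z)$, being on $\partial \xi_a(z)$ pins the relevant coordinate of $\x$ (or a tuple of coordinates, once the wrap $\lla\cdot\rra$ is unpacked) to one of: a lattice point of $\Z_k$; a half-division point $i-\tfrac12$; or a third-division point $i-\tfrac{2}{3}$ or $i-\tfrac{1}{3}$. In each case the plan is to exhibit an explicit modification of the tuple $(J,i_*,V^-,U^\circ,U^-)$ whose associated piece $Y_w$ contains $\x$ and is strictly earlier in the lexicographic order of \textsection\ref{S:Order}.

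The modification dictionary reads as follows. A boundary at a third-division point for $i \in U^- \cup U^+$ permits re-assigning $i$ between $U^-$, $U^\circ$, and $U^+$, which is strictly earlier in the totalization of $2^U$ by inclusion together with the arbitrary totalization on $2^{U\setminus U^\circ}$. A boundary at a half-division point $i-\tfrac{1}{2}$ toggles $V^- \mapsto V^- \ominus \{i\}$; the key point is that the definition of class $\red{\text{(A)}}$, via the parity of $|V^-\cap\{i+1,\ldots,\max I_s\}|$, is engineered so that Proposition~\ref{P:VOrder} produces an order decrease in $\prec_r$, and when several $V$-half-boundaries are pinned simultaneously, Proposition~\ref{P:VOrderSet} assembles them into a joint decrease. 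A boundary at a lattice point of a wrapped factor (or of a full-interval factor attached to an endpoint singleton) forces either an element of $\{\min I_r\}$ into or out of $J$, or moves $i_*$ within $I$ to a smaller element; both corresponding orders are totalizations of (respectively) inclusion and block-offset-decrease, so the modification is strictly earlier.

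The main obstacle will be the case check for the class-(A) singleton-coupled factors (Table~\ref{T:YzrSing}), where one must match the ``coupling'' condition (relating $i$ and $i+1$ among $U^\pm, U^\circ, V^\pm$) with the parity condition on $|V^-\cap\{i+1,\ldots,\max I_s\}|$ so that the corresponding $V^-$-toggle decreases rather than increases the tuple. By the definition of class (A) as the negation of the class-(B) parity, this match is automatic, but it must be tracked carefully across the several sub-types of singleton-coupled factors, in particular when the relevant coordinate of $\x$ lives in a wrap $\lla\cdot\rra$ that lumps the singleton together with an adjoining interval on one side but not the other. The remaining cases are comparatively routine, since the orders on $J$, $i_*$, $U^\circ$, and $U^-$ are by inclusion or by a similarly direct rule, and the verification that the assembled $(J',i'_*,V'^-,U'^\circ,U'^-)$ is strictly less than $(J,i_*,V^-,U^\circ,U^-)$ then follows immediately from the component-wise construction of the lexicographic order in \textsection\ref{S:Order}.
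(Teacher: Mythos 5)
Your proposal mirrors the paper's own proof: a case analysis on the form of the class-$\red{\text{(A)}}$ factor $\xi_a(z)$, producing in each case an explicit modification of the tuple $(J,i_*,V^-,U^\circ,U^-)$ whose piece $Y_w$ contains $\x$ and is lexicographically earlier, with Proposition \ref{P:VOrder} supplying the order decrease for the $V^-$-toggle cases. (One minor over-inclusion: Proposition \ref{P:VOrderSet} is not actually invoked in this direction—since $\x$ lies on the boundary of just the single factor $\xi_a(z)$, the single-element toggle of \ref{P:VOrder} suffices; \ref{P:VOrderSet} is reserved for the converse Lemma \ref{L:PrecConverse}, where several boundary conditions must be aggregated.)
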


\begin{proof}
Suppose first that some $\{i\}$ appears in the expression for $\xi_a(z)$, with $i\in I_s$; $i\in V^+$ and $i+1\in U^\circ\cup U^+\cup V^+$, or $i\in U^-\cup U^\circ\cup V^-$ and $i+1\in V^-$; and $|V^-\cap\{i+1,\hdots,\max I_s\}|$ is odd. Then $\x$ is in the $Y_w$ coming from $(J,i_*,V'^-,U^\circ,U^-)$ where $V'^-$ is either $V^-\cup\{i\}$ or $V^-\setminus\{i+1\}$.  In either case, Proposition \ref{P:VOrder} implies that $V'^-\prec V$ and thus $w<z$. 

Next, suppose that $\xi_a(z)$ has no singleton factors.  There are two possibilities. If  $\xi_a(z)=[i_*-1,i_*]$ with $i_*\in J$, then $\x$ is in some $Y_w$ coming from $J\setminus\{i_*\}\prec J$. Otherwise, $\xi_a(z)=\left[i-1,i-\frac{1}{2}\right]$ for some $i\in J\cap V^-$; in this case, $i+1\notin I$, and so $\x$ is in some $Y_w$ coming either from $J\setminus\{i\}\prec J$ or the from same $J$ and $i_*$ and $V'^-=V^-\setminus\{i\}$, where Proposition \ref{P:VOrder} implies that $V'^-\prec V^-$ because $i+1\notin I$.  

The remaining cases follow by similar reasoning. The interested reader may find Table \ref{T:YzrSing} useful for this.
\end{proof}

\begin{lemma}\label{L:PrecConverse}
Let $Y_z^*=\prod_{r=1}^p\xi_r(z)$ come from some $(J,i_*,V^-,U^\circ,U^-)$. If
\[\x=(x_1,\hdots,x_n)\in Y_z^*\cap\bigcup_{w<z}Y_w,\]
then 
\[\x\in\prod_{r=1}^{a-1} \xi_r(z)\times \partial\xi_a(z)\times \prod_{r=a+1}^{p}\xi_r(z)\] 
for some $a=1,\hdots,p$, such that $\xi_a(z)$ is of class $\red{\text{(A)}}$.
\end{lemma}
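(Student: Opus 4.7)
The plan is to argue by case analysis driven by the lexicographic order on the tuples $(J,i_*,V^-,U^\circ,U^-)$. Suppose $\vec{x} \in Y_z^* \cap Y_w$ with $w < z$, and let $Y_w$ come from $(J',i'_*,V'^-,U'^\circ,U'^-) \prec (J,i_*,V^-,U^\circ,U^-)$. I aim to produce an index $a \in \{1,\hdots,p\}$ such that $\xi_a(z)$ is of class (A) and $\vec{x}$ lies in $\prod_{r<a}\xi_r(z) \times \partial\xi_a(z) \times \prod_{r>a}\xi_r(z)$.

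The first step is to locate the earliest entry of the tuple at which the two tuples differ. This splits the analysis into the cases $J' \prec J$, $i'_* \prec i_*$ (with matching $J$), $V'^- \prec V^-$ (with matching prefix), $U'^\circ \prec U^\circ$, or $U'^- \prec U^-$, where the last is precluded by Observation \ref{O:U-Disjoint}. In the $J$, $i_*$, and $U^\circ$ cases, the agreement of the earlier slots, together with $\vec{x} \in Y_w$, pins some coordinate of $\vec{x}$ to a specific lattice, half, or third point. A direct comparison with Tables \ref{T:YzrNoSing}--\ref{T:YzrSing} shows that these pinned coordinates always sit on the boundary of a factor $\xi_a(z)$ of class (A). For instance, when $i \in J \setminus J'$, a coordinate of $\vec{x}$ is locked at the value $i-1$, which is the boundary of a factor of the form $\lla\cdots\rra[i-1,i]$ that is always of class (A) by construction; analogous readings handle the $i_*$ slot (missing-singleton boundary) and the $U^\circ$ slot (forced thirds boundary).

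The main technical work is the case $V'^- \prec V^-$. Here the two subdivisions of $X_{I,J,i_*}$ agree outside of how certain unit intervals $[i-1,i]$ with $i \in V$ are split into halves or left as singletons, and $\vec{x}$ must lie at a common interface coordinate. Applying Proposition \ref{P:VOrderSet} with $A = V^- \ominus V'^-$, the relation $V'^- \prec V^-$ forces the existence of at least one $i \in A$ for which $V^- \ominus \{i\} \prec V^-$; by Proposition \ref{P:VOrder}, this is equivalent to the parity of $|V^- \cap \{i+1,\hdots,\max I_s\}|$ being odd. This is precisely the parity condition that excludes the corresponding factor (the singleton $\{i\}$ or adjacent half-interval) from class (B), so the factor $\xi_a(z)$ housing the pinned coordinate of $\vec{x}$ is of class (A), as desired.

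The main obstacle is the $V^-$ case: the order $\prec$ on $2^V$ is defined (via (\ref{E:order})) precisely to mirror the parity condition in the class-(B) classification, and confirming this correspondence through the machinery of Propositions \ref{P:VOrder} and \ref{P:VOrderSet} is the heart of the argument. The other cases reduce to inspecting which interval boundaries and fractional points are forced by the disagreeing tuple slot and then reading off class (A) from the classification tables.
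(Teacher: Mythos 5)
Your proposal takes a genuinely different structure from the paper's proof (a direct lexicographic case analysis on the first differing tuple slot, versus the paper's contradiction argument ``assume every boundary hit is class (B), derive $Y_w = Y_z$''), but the direct approach as sketched has a real gap.

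The paper's proof begins by choosing the \emph{smallest} $w < z$ with $\x \in Y_w$, and this minimality is used essentially in the $U^\circ$ step: if $i \in U^\circ \setminus U'^\circ$, the pinned coordinate lies at $i - \tfrac{2}{3}$ or $i - \tfrac{1}{3}$, on the boundary of the middle-third factor $\left[i-\tfrac{2}{3}, i-\tfrac{1}{3}\right]$, which by Table \ref{T:YzrNoSing} is class \Navy{\text{(B)}}, not \red{\text{(A)}}. The paper escapes this by passing to a yet-earlier $Y_{w'}$ containing $\x$, contradicting minimality of $w$. You take an arbitrary $w < z$ and assert that the pinned ``thirds boundary'' gives a class-\red{\text{(A)}} factor; for the direction $U'^\circ \subsetneq U^\circ$ this is simply false, so your $U^\circ$ case does not close. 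A related but milder problem afflicts the $J$ case: the order on $\{J \subset T\}$ is only a partial order by inclusion extended \emph{arbitrarily} to a total order, so ``$J' \prec J$'' does not license the reduction to ``some $i \in J \setminus J'$''; the paper instead runs a coordinate-counting argument to conclude $J' = J$ or $J' = T \setminus J$ (and then separately rules out the latter using the coordinates in $[\min I_*, \max I_*]$), neither of which appears in your sketch. The $V^-$ case is closest to the paper (both route through Propositions \ref{P:VOrder} and \ref{P:VOrderSet}), though your parity bookkeeping does not track which singleton $\{i-1\}$ versus $\{i\}$ governs the class-\red{\text{(A)}}/\Navy{\text{(B)}} distinction, which matters because the two parities in Proposition \ref{P:VOrder} and Table \ref{T:YzrRe} are offset by whether $i \in V^-$.

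In short: the contradiction framing plus minimal-$w$ choice are not stylistic preferences but load-bearing features of the argument, and a slot-by-slot direct proof needs additional machinery (effectively re-proving the minimality step) to handle $U'^\circ \subsetneq U^\circ$ and the non-inclusion comparisons for $J$.
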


\begin{proof}
Let $\x=(x_1,\hdots,x_n)\in Y_z^*\cap Y_{w'}$ for some $w'<z$. Choose the smallest $w<z$ such that $\x\in Y_w$, and assume that $Y_w$ comes from some $(J',i'_*,V'^-,U'^\circ,U'^-)$ with $V'^-\subset V'$ and $U'^\circ\subset U'$, whereas $Y_z$ comes from some $(J,i_*,V^-,U^\circ,U^-)$ with $V^-\subset V$ and $U^\circ \subset U$. Denote 
\[S=\left\{a=1,\hdots,p:~\x\in \prod_{r=0}^{a-1} \xi_r(z)\times \partial\xi_a(z)\times \prod_{r=a+1}^{p}\xi_r(z)\right\}.\]
Assume for contradiction that $\xi_a(z)$ is of class $\Navy{\text{(B)}}$ for every $a\in S$.  If $S=\varnothing$, then no coordinate of $\x$ equals $i_*$, so $i'_*=i_*$.  Also, in that case, no coordinate of $\x$ equals $\min I_s-1$ for any $s\in\Z_m$, and so $J$ and $J'$ completely determine the number of coordinates that $\x$ has in each open interval $(\min I_s-1,\min I_{s+1}-1)$.  It follows that either $J'=J$ or $J'=T\setminus J$.  
If $J'=T\setminus J$, then considering the coordinates of $\x$ in $[\min I_*,\max I_*]$ yields a contradiction. 
 If $J'=J$, then the fact that $S=\varnothing$ implies that $V^-=V'^-$, $U^\circ=U'^\circ$, and $U^-=U'^-$, contradicting the fact that $w<z$.

Therefore, $S\neq \varnothing$. If no coordinate of $\x$ equals $i_*$, then $i'_*=i_*$, so again either $J'=J$ or $J'=T\setminus J$.  The latter case gives the same contradiction as before.  Therefore $J'=J$, and so $V'=V$. 

For each $i\in V^-\ominus V'^-$, $\x$ has a coordinate $x_t=i-\frac{1}{2}$ (using the fact that $i'_*=i_*$ and $J'=J$).  The corresponding $\xi_r(z)$ has $r\in S$, and so by assumption $\xi_r(z)$ is of class $\Navy{\text{(B)}}$. Therefore, $V^-\prec V^-\ominus\{i\}$ for each $i\in V^-\ominus V'^-$. Proposition  \ref{P:VOrderSet} implies that $V^-\prec V'^-$ unless $V^-=V'^-$.  Since $w<z$, we must have $V^-=V'^-$.  

Each $i\in U'^\circ$ must also be in $U^\circ$, or else the corresponding coordinate of $\x$ would equal $i-\frac{1}{3}$ or $i-\frac{2}{3}$, and the corresponding $\xi_a(z)$ would be of class $\red{\text{(A)}}$ with $a\in S$, contrary to assumption.  Thus, $U^\circ\subset U'^\circ$. Similarly, each $i\in U^\circ$ must also be in $U'^\circ$, or else the $Y_{w'}$ coming from $J,i_*,V,U'^\circ\cup\{i\},U^-\setminus\{i\}$ would still contain $\x$ but with $w'<w$, contrary to assumption.  Thus, $U'^\circ=U^\circ$.  

Finally, we must have $U'^-=U^-$, by Observation \ref{O:U-Disjoint}.  This implies, contrary to assumption, that $Y_w=Y_z$.
\end{proof}  

\begin{lemma}\label{L:PrecWithin}
Let $Y_z^*=\prod_{r=1}^p\xi_r(z)$ come from some $(J,i_*,V^-,U^\circ,U^-)$. If
\[\x=(x_1,\hdots,x_n)\in Y_z^*\cap(Y_z\setminus\setminus Y_z^*),\]
then 
\[\x\in\prod_{r=1}^{a-1} \xi_r(z)\times \partial\xi_a(z)\times \prod_{r=a+1}^{p}\xi_r(z)\] 
for some $a=1,\hdots,p$, such that $\xi_a(z)$ is of class $\red{\text{(A)}}$.
\end{lemma}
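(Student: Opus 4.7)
The argument will parallel that of Lemma \ref{L:PrecConverse}, with ``$\x$ lies in some earlier $Y_w$'' replaced by ``$\x$ lies in a different chunk of $Y_z$ obtained from $Y_z^*$ by a permutation outside the setwise stabilizer $G$ described in \textsection\ref{S:Yzstar}.''

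First I would unpack the hypothesis. By Notation \ref{N:cut}, $Y_z\setminus\setminus Y_z^*=\overline{Y_z\setminus Y_z^*}$, and by construction $Y_z=\lla\prod_{r=1}^{n}\chi_r\rra=\bigcup_{\tau G}\{\y_\tau:\y\in Y_z^*\}$. Hence $\x\in Y_z^*\cap(Y_z\setminus\setminus Y_z^*)$ forces the existence of some $\tau\in S_n\setminus G$ with $\x\in Y_z^*\cap\{\y_\tau:\y\in Y_z^*\}$. Since $\tau\notin G$, it fails to preserve the partition $P=\{R_1,\hdots,R_p\}$: there exist $r\neq r'$ and an index $s\in R_r$ with $\tau(s)\in R_{r'}$. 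Consequently some coordinate $x_t$ of $\x$ satisfies both $x_t\in\chi_s$ (read from $Y_z^*$) and $x_t\in\chi_{s'}$ for some $s'\in R_{r'}$ (read from the $\tau$-translate), where $\chi_s$ and $\chi_{s'}$ belong to distinct equivalence classes of $\sim$.

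Next I would exploit the defining property of $\sim$ as the coarsest equivalence satisfying $\chi_r\subset\chi_s\Rightarrow r\sim s$: if $\chi_s$ and $\chi_{s'}$ lie in different classes, then neither contains the other, and $\chi_s\cap\chi_{s'}$ is at most a single boundary endpoint of each. Enumerating factor types from Tables \ref{T:UV1} and \ref{T:UV2} (singletons $\{i\}$ for $i\in I\setminus\{i_*\}$; halves $[i-1,i-\tfrac12]$ and $[i-\tfrac12,i]$ for $i\in V$; thirds for $i\in U$; intervals inside $\widehat{C}_r$; and boundary-gluing intervals $\rho_a=[\max I_{r-1},a-1]$), the possible crossing values $v=x_t$ form a short list: integers, half-integers, thirds, and endpoints of $\widehat{C}_r$-type intervals. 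In each case, $x_t\in\partial\xi_a(z)$ where $\xi_a(z)$ is the factor of $Y_z^*$ containing that coordinate, giving the needed ``face'' conclusion.

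Finally I would verify that each such $\xi_a(z)$ is necessarily of class $\red{\text{(A)}}$, by consulting Tables \ref{T:YzrNoSing}, \ref{T:YzrRe}, and \ref{T:YzrSing}. The main obstacle will be the singleton-factor case $\chi_t=\{i\}$: here the class-$\red{\text{(A)}}$/$\Navy{\text{(B)}}$ classification depends delicately on membership in $V^\pm, U^\pm, U^\circ$ together with the parity of $|V^-\cap\{i+1,\hdots,\max I_s\}|$. The same parity bookkeeping used in Lemma \ref{L:PrecConverse} (via Propositions \ref{P:VOrder} and \ref{P:VOrderSet}) handles this case: if one assumed for contradiction that every coordinate forcing a class-$\red{\text{(A)}}$ face were instead on a class-$\Navy{\text{(B)}}$ face, then the very argument used in Lemma \ref{L:PrecConverse} (tracking how $J$, $V^-$, $U^\circ$, and $U^-$ are determined by $\x$) shows the $\tau$-translated copy of $Y_z^*$ must coincide with $Y_z^*$ itself, forcing $\tau\in G$ and contradicting our choice. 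This completes the proof sketch.
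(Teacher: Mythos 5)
Your high-level setup is sound and, in fact, reconstructs the intuition behind the paper's argument: since $Y_z=\bigcup_{\tau G}\{\y_\tau:\y\in Y_z^*\}$ with the cosets having disjoint interiors, a point of $Y_z^*\cap(Y_z\setminus\setminus Y_z^*)$ must lie in $Y_z^*\cap\{\y_\tau:\y\in Y_z^*\}$ for some $\tau\notin G$, and such a $\tau$ moves some position between blocks of the partition $P$. Up to this point you are on the same track as the paper.

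However, the two steps where you carry the argument to its conclusion both have gaps. First, you assert that if $\chi_s,\chi_{s'}$ lie in distinct $\sim$-classes then ``neither contains the other, and $\chi_s\cap\chi_{s'}$ is at most a single boundary endpoint.'' The non-containment is immediate, but non-containment of two closed intervals does not in general restrict their overlap to an endpoint; that the overlap here is always a single integer is a fact about the specific intervals $\rho_i$, $\{i\}$, $[\max I_{r-1},a-1]$, and the factors in $\widehat{C}_r$, and this is exactly the case analysis the lemma needs, not a consequence of the definition of $\sim$. Relatedly, your ``short list'' of crossing values (integers, half-integers, thirds, and $\widehat{C}_r$-endpoints) is too generous: in a fixed $Y_z$ only one of $[i-1,i-\tfrac12]$ and $[i-\tfrac12,i]$ (depending on whether $i\in V^-$ or $V^+$) appears as a $\chi$, and similarly for the thirds, so half-integers and thirds are never shared endpoints of $\chi$'s in distinct classes. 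The crossing value is always an integer $c$, and the paper's proof hinges on identifying which integers can occur: $c$ is either $i-1$ for some $i\in T\cap J$ (in which case the factor $\xi_b$ beginning at $c$ contains $[i-1,i]\{i\}\cdots$ and is class $\red{\text{(A)}}$ by Table~\ref{T:YzrSing}) or $c=i_*$ (in which case the factor $\xi_a$ ending at $c$ is of class $\red{\text{(A)}}$ by Table~\ref{T:YzrNoSing}). Your proposal never makes this identification, which is the actual content.

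Second, the closing ``parity bookkeeping'' step does not transfer from Lemma~\ref{L:PrecConverse} the way you claim. That lemma's argument pins down the competing tuple $(J',i'_*,V'^-,U'^\circ,U'^-)$ and concludes it equals $(J,i_*,V^-,U^\circ,U^-)$, i.e.\ $Y_w=Y_z$. But in the present lemma the competing chunk is a $\tau$-translate of $Y_z^*$ \emph{within the same} $Y_z$, so the tuple is tautologically equal; what you would need to pin down is $\tau$ itself, namely that $\tau$ preserves the block partition $P$. The tuple-tracking argument says nothing about $\tau$, so invoking ``the very argument used in Lemma~\ref{L:PrecConverse}'' to conclude $\tau\in G$ is a non-sequitur. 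You should instead argue directly, as the paper does, that any coincidence $\max\xi_a=\min\xi_b=c$ forces $c\in\{i-1:i\in T\cap J\}\cup\{i_*\}$ and hence a class-$\red{\text{(A)}}$ face.
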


\begin{proof}
This follows from a case analysis, for which the interested reader may find Tables \ref{T:YzrNoSing}--\ref{T:YzrSing} useful. It comes down to this. Consider two pieces $\xi_a(z)$ and $\xi_b(z)$ of $Y_z^*$ for which the infimum $\min \xi_b(z)$ of all coordinates in $(0,k)$ among all points in $\xi_b(z)$ equals the supremum $\max \xi_a(z)$ of all coordinates in $(0,k)$ among all points in $\xi_a(z)$. Denote  $\max \xi_a(z)=\min \xi_b(z)=c$. Then $c\in\Z_k$. If $c$ equals $i-1$ for some $i\in T$, then $i\in J$ and $\xi_b(z)$ is of class $\red{\text{(A)}}$.  Otherwise, $c=i_*$ and $\xi_a(z)$ is of class $\red{\text{(A)}}$. 
\end{proof}

\subsection{Proof of the main result}

The results of \textsection\textsection\ref{S:Cutoff}, \ref{S:HandleProp} provide all the details we need to prove:

\begin{theorem}\label{T:Main}
For $n=2k-1\in\Z_+$, the $n$-torus admits a multisection $T^n=\bigcup_{r\in\Z_k}X_r$ defined by
\begin{equation}
\begin{split}
X_0&=\left\{\x_\sigma:~\x\in [0,1]^2\cdots[0,k-1]^2[0,k]/\sim,~\sigma\in S_n\right\},\\
X_i&=\{\x+(i,\hdots,i):~\x\in X_0\}.
\end{split}\tag{\ref{E:X0ThmIntro}}
\end{equation}
\end{theorem}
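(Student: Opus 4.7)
The plan is to assemble Theorem \ref{T:Main} from the machinery developed in \textsection\ref{S:Cutoff} and \textsection\ref{S:MainGen}, after which relatively little remains. By construction, $X_i=X_0+(i,\dots,i)$ and $X_0$ is defined using the symmetric notation $\lla\cdot\rra$, so both the $\Z_k$ translation symmetry along the diagonal and the $S_n$ permutation symmetry on the indices are immediate. Efficiency will follow once the handlebody structure is confirmed: $X_0$ admits a handle decomposition in which the 0-handle is $\lla\alpha^2[0,2]^2\cdots[0,k-1]^2\rra$ and the 1-handles are supplied by the factor $[0,k]$ (compare Tables \ref{T:4X0}, \ref{T:5X0}, \ref{T:7X0}), giving $g(X_i)=n=\mathrm{rank}\,\pi_1(T^n)$, which by Corollary \ref{C:SmoothEfficient2} is optimal.

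The structural properties of Definition \ref{Def:Multi} will be verified as follows. First, Lemma \ref{L:cover} shows $T^n=\bigcup_{i\in\Z_k}X_i$, and Proposition \ref{P:Int} shows the pieces have pairwise disjoint interiors. Second, for each nonempty $I\subsetneqq\Z_k$, Lemma \ref{L:XI} supplies the closed formula (\ref{E:XI}) for $X_I$, and \textsection\ref{S:Handles} partitions $X_I$ into the ordered sequence $Y_1,Y_2,\dots$, where each $Y_z$ is a union of $S_n$-translates of the model piece $Y_z^*=\prod_{r=1}^p\xi_r(z)$. By Lemma \ref{L:Yzr}, each factor $\xi_r(z)$ is one of the star-shaped building blocks from \textsection\ref{S:StarShaped}, hence PL homeomorphic to a ball, and $Y_z^*\cong D^{n+1-|I|}$. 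Lemmas \ref{L:Prec} and \ref{L:PrecConverse} together identify the attaching region of $Y_z^*$ to $\bigcup_{w<z}Y_w$ as the union of those $\partial$-faces corresponding to class-$\red{(\mathrm{A})}$ factors, yielding the product decomposition (\ref{E:Sz}) and hence (\ref{E:hHandle}): $Y_z^*$ attaches as an $h(z)$-handle where $h(z)=\sum_{r\in S(z)}\dim\xi_r(z)$. Lemma \ref{L:PrecWithin} then shows (\ref{E:Within}), ensuring that the $S_n$-translates of $Y_z^*$ attach independently, each as an $h(z)$-handle. Third, since by direct inspection of the classification tables the index $h(z)$ never exceeds $|I|$ and the maximum $|I|$ is realized (e.g., by the final piece in each block), $X_I$ is an $(n+1-|I|)$-dimensional $|I|$-handlebody, as required.

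For the central intersection $\bigcap_{i\in\Z_k}X_i$, the formula (\ref{E:XZk}) describes it as a union of pieces of the form $\lla(i_1,\dots,\widehat{i_*},\dots,i_{k-1})\prod_{i\in\Z_k}[i,i+1]\rra$, each a union of $k$-dimensional cubes permuted under $S_n$, and the same local analysis used in \textsection\ref{S:HandleProp} — specifically that each coordinate hyperplane where two cubes meet has a compatible link — will show that this is a closed $k$-manifold; alternatively, one may view it as $\partial X_I$ for $I$ of size $k-1$, which inherits manifold structure from the handlebody conclusion.

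The main obstacle is bookkeeping: verifying that the decomposition $X_I=\bigcup_z Y_z$ actually covers $X_I$ without overlap beyond lower-order attachments, and that every case of the type-$\red{(\mathrm{A})}$/$\Navy{(\mathrm{B})}$ classification is captured correctly by the ordering $\prec$ on tuples $(J,i_*,V^-,U^\circ,U^-)$. The combinatorial heart is Proposition \ref{P:VOrderSet} (and its input Proposition \ref{P:VOrder}), which controls how flipping elements of $V^-$ interacts with the order (\ref{E:order}); once this is in hand, Lemma \ref{L:PrecConverse}'s contradiction argument closes the loop. With these verifications in place, Theorem \ref{T:Main} follows by assembling the covering property, disjoint interiors, handlebody structure of each $X_I$ for $|I|\leq k-1$, and the closed-manifold structure of the central intersection.
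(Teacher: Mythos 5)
Your proposal follows essentially the same route as the paper's own proof: invoking Lemma \ref{L:cover} for coverage, Lemma \ref{L:XI} for the closed formula $X_I=(\ref{E:XI})$, the decomposition from \textsection\ref{S:Handles} with Lemmas \ref{L:Yzr}, \ref{L:Prec}, \ref{L:PrecConverse}, \ref{L:PrecWithin} to realize each $X_I$ as an $(n+1-|I|)$-dimensional $|I|$-handlebody, and then identifying $X_{\Z_k}$ with $\partial X_{\Z_k\setminus\{k-1\}}$ to get a closed $k$-manifold. The extra remarks about efficiency, disjoint interiors (Proposition \ref{P:Int}), and the index $|I|$ being realized are consistent but go beyond what the paper's proof actually requires or states.
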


\begin{proof}
Lemma \ref{L:cover} implies that $X=\bigcup_{i\in\Z_k}X_i$, so it remains only to prove for each nonempty proper subset $I\subset \Z_k$, that $X_I=\bigcap_{i\in I}X_i$
is an $(n+1-|I|)$-dimensional submanifold of $X$ with a spine of dimension $|I|$.

Fix some such $I$.  Assume \textsc{wlog} that $I$ is simple.  Then $X_I=(\ref{E:XI})$, by Lemma \ref{L:XI}.  Decompose $X_I=\bigcup_zY_z$ as described in \textsection\ref{S:Handles}.  Lemmas \ref{L:Yzr} and \ref{L:Prec} imply that $Y_1^*$ is an $(n+1-|I|)$-dimensional 0-handle with no pieces $\xi_r(1)$ of class $\red{\text{(A)}}$; Lemma \ref{L:PrecWithin} and the symmetry of the construction imply further that $Y_1$ is a union of $(n+1-|I|)$-dimensional  0-handles. 

For each $z$, denote $S(z)=\{r:~\xi_r(z) \text{ is of class}$ $\red{\text{(A)}}\}$. Lemmas \ref{L:Yzr}, \ref{L:Prec}, and \ref{L:PrecConverse} imply that attaching $Y_z^*$ to $\bigcup_{w<z}Y_z$ amounts to attaching an $(n+1-|I|)$-dimensional $h$-handle, where $h(z)$ is the sum of the dimensions of those $\xi_r(z)$ of class $\red{\text{(A)}}$:
\[h(z)=\sum_{r\in S(z)}\text{dim}(\xi_r(z))\leq |I|.\]

Lemma \ref{L:PrecWithin} and the symmetry of the construction imply further that attaching all of $Y_z$ to $\bigcup_{w<z}Y_w$ amounts to attaching several such handles. Thus, $X_I$ is an $(n+1-|I|)$-dimensional $|I|$-handlebody in $T^n$.

It remains to check that $X_{\Z_k}=\bigcap_{i\in\Z_k}X_i$ is a closed $k$-manifold. We know from Lemma \ref{L:XI} that $X_{\Z_k}$ is given by (\ref{E:XZk}). 

Since $X_{\Z_k\setminus\{k-1\}}$ is $(k+1)$-manifold, it suffices to check that $X_{\Z_k}$ equals $\partial X_{\Z_k\setminus\{k-1\}}$, which is the union of those $k$-faces of the $Y_z$ from the handle decomposition  of $X_{\Z_k\setminus\{k-1\}}$ that are not glued to any other $Y_w$.  Case analysis confirms that this union equals the expression from (\ref{E:XZk}). (The reader may find Tables \ref{T:YzrNoSing}-\ref{T:YzrSing} useful).
\end{proof}

Alternatively, one can construct a handle decomposition of $X_{\Z_k}$ as follows. Cut each unit interval $[i,i+1]$ into thirds and, for each $i_*\in\Z_k$, further cut $\left[i_*-\frac{1}{3},i_*\right]$ and $\left[i_*,i_*+\frac{1}{3}\right]$ into halves.  Then, for each $i_*\in \Z_k$, $U^\circ\subset \Z_k$, $U^- \subset \Z_k\setminus U^\circ$, and $U^*\subset(\{i_*+1\}\cap U^-)\cup(\{i_*\}\setminus (U^\circ\cup U^-)$, define
\begin{align*}
\rho_i&=\begin{cases}
{[i-\frac{2}{3},i-\frac{1}{3}]}&i\in U^\circ\\
{[i-1,i-\frac{2}{3}]}&i_*+1\neq i\in U^-\\
{[i-\frac{1}{3},i]}&i_*\neq i\in \Z_k\setminus (U^\circ\cup U^-)\\
{[i_*,i_*+\frac{1}{6}]}&i_*+1= i\in U^*\\
{[i_*+\frac{1}{6},i_*+\frac{1}{3}]}&i_*+1= i\in U^-\setminus U^*\\
{[i_*-\frac{1}{6},i_*]}&i_*= i\in U^*\\
{[i_*-\frac{1}{3},i_*-\frac{1}{6}]}&i_*= i\in U^+\setminus U^*,\\
\end{cases}\\
X_{\Z_k,i_*,U^\circ,U^-,U^*}&=\prod_{i\in\Z_k}\left.\begin{cases}
\rho_i\times\{i\}&i\neq i_*\\
\rho_i&= i_*\\
\end{cases}
\right\}.
\end{align*}
Order the pieces $X_{\Z_k,i_*,U^\circ,U^-,U^*}$ as $Y_z$, $z=1,2,3,\hdots,$ lexicographically according to the following orders on the possibilities for $(i_*,U^\circ,U^-,U^*)$. Order $\{i_*\in I\}$ and $U^-\subset U^\circ$ arbitrarily. Partially order $\{U^\circ\subset\Z_k\}$ by inclusion, with $U^\circ\prec U'^\circ$ if $U^\circ \subset U'^\circ$, and extend arbitrarily to a total order. Order the possibilities for $U^*$ the same way. Then
\begin{equation*}
\bigcup_{i=1,\hdots,k}Y_z=\bigcup_{i_*\in \Z_k}X_{\Z_k,i_*,\Z_k,\varnothing,\varnothing}
\end{equation*}
is a union of 0-handles, and to attach each $Y_z=X_{\Z_k,i_*,U^\circ,U^-,U^*}$ to $\bigcup_{w<z}Y_w$ is to attach a collection of $h(z)$-handles for $h(z)=k-|U^\circ|-|U^*|$.  

We leave the following question open:

\begin{question}
Are the multisections in Theorem \ref{T:Main} smoothable?
\end{question}

That is, for odd $n$, does $T^n$ (under its standard smooth structure) admit a smooth multisection such that, when one passes to the unique PL structure on $T^n$, there is a PL homeomorphism $f:T^n\to T^n$ sending each piece of this smooth multisection to a piece of the multisection from Theorem \ref{T:Main}? 


\section{Cubulated manifolds of odd dimension}\label{S:Cube}

This section extends Theorem \ref{T:Main} to certain cubulated manifolds.  Consider a covering space $p:M\to T^n$, where $n=2k-1$.  Multisect $T^n=\bigcup_{i\in\Z_k}X_i$ as in Theorem \ref{T:Main}.  Then, by Corollary 17 of \cite{rt}, $M=\bigcup_{i\in\Z_k}p^{-1}(X_i)$ determines a PL multisection of $M$.  In general, one expects such multisections to be less efficient than those from Theorem \ref{T:Main}. Also, there seems to be no reason to expect that one can extend the main construction to cubulated odd-dimensional manifolds in general.  There is, however, an intermediate case to which our construction does extend. 

First, we propose a modest generalization of the usual notion of a cubulation. The generalization is similar to Hatcher's $\Delta$-complexes vis a vis simplicial complexes \cite{hatcher}.  A {\it cube} is a homeomorphic copy of $I^n$ for some $n\geq0$, with the usual cell structure; its {\it faces} are defined in the traditional way. 

Consider an arbitrary edge of $I^n$, joining $\vec{a}=(a_1,\hdots,a_{i-1},0,a_{i+1},\hdots,a_n)$ and $\vec{b}=(a_1,\hdots,a_{i-1},1,a_{i+1},\hdots,a_n)$. Orient this edge so that it runs from $\vec{a}$ to $\vec{b}$.  Do the same with every edge of the $n$-cube.  Call these the {\it standard orientations} on the edges of the $n$-cube. Call a face of $I^n$ {\it positive} if it contains $\vec{0}$; otherwise it is {\it negative}, containing $\vec{1}=(1,\hdots,1)$.

\begin{definition}\label{Def:CubeComplex}
A {\bf \cube-complex} $K$ is a quotient space of a collection of disjoint cubes obtained by identifying certain faces of theirs via PL homeomorphisms.\footnote{Unlike the traditional notion of cubulation, we do not require that these identifications are between faces of {\it distinct} cubes.} If all of these face identifications glue a positive face of one cube to a negative face of another (not necessarily distinct) cube and respect the standard orientations on all edges, then $K$ is a {\bf directed} \cube-complex.
\end{definition}
Note that, by definition, a \cube-complex comes equipped with a cell structure.  
\begin{definition}\label{Def:Cubulation}
A {\bf generalized cubulation} of a manifold $M$ is a PL homeomorphism to a \cube-complex. A {\bf directed cubulation} of $M$ is a PL homeomorphism to a \cube-complex.
\end{definition}
In other words, a generalized cubulation of an $n$-manifold $M$ imposes a cell structure on $M$ in which every $n$-cell ``looks like" an $n$-cube, and in a directed cubulation, the $n$-cells are glued in a particularly nice way. 

\begin{example}\label{Ex:Tn}
The usual cell structure on $T^n$ determines a generalized cubulation, and in fact a directed cubulation, but not a cubulation in the traditional sense.
\end{example}

Let $f:M\to K$ be a directed cubulation of an $n$-manifold, $n=2k-1$, let $g:I^n=[0,k]^n\to T^n=(\R/k\Z)^n=[0,k]^n/\sim$ be the quotient map, and multisect $T^n=\bigcup_{i\in\Z_k}X_i$ as in Theorem \ref{T:Main}.  Multisect $M$ as follows. For each $n$-cell $C$ in $K$, let $h_C:I^n\to C$ be the identification from $K$.  For each $i\in\Z_k$,  
define 
\[X'_i=\bigcup_{n\text{-cubes }C\text{ in }K}f^{-1}(h_C(g^{-1}(X_i))).\]

\begin{prop}\label{P:Cube}
With the setup above, $M=\bigcup_{i\in\Z_k}X'_i$ determines a multisection of $M$.
\end{prop}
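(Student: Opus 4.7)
The plan is to reduce the proposition to the local structure of the multisection of $T^n$ established in Theorem \ref{T:Main}, exploiting the fact that a directed cubulation patches together local pictures in exactly the same way that the standard cubulation of $T^n$ does.

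First I would unpack the definition of $X'_i$ and verify well-definedness. For the standard directed cubulation of $T^n$ (Example \ref{Ex:Tn}), the subsets $g^{-1}(X_i) \subset [0,k]^n$ match across the identifications of positive and negative faces of $[0,k]^n$ that produce the quotient, and they do so compatibly with the standard edge orientations (since the $X_i$ are themselves invariant under the $\Z_k$-translation along the main diagonal, and hence their combinatorial pattern on each positive face agrees with that on the negative face it is glued to). Therefore, for any directed \cube-complex $K$, each face identification in $K$ carries the image of $g^{-1}(X_i)$ on the positive face of one cube to the image of $g^{-1}(X_i)$ on the negative face of the glued cube. This makes the sets $X'_i$ well-defined subsets of $M$, and $M = \bigcup_{i \in \Z_k} X'_i$ follows immediately from Lemma \ref{L:cover}.

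Next I would verify that each intersection $X'_I = \bigcap_{i\in I}X'_i$, for $\varnothing \neq I \subsetneqq \Z_k$, is a submanifold of $M$ of the correct dimension with an appropriate handle decomposition. The key observation is that, because the cubulation is directed, a PL neighborhood of any point $x \in M$ can be matched with a PL neighborhood of a corresponding point in $T^n$ in such a way that the local pieces of $X'_I$ are carried to the local pieces of $X_I$. Thus $X'_I$ is locally PL homeomorphic to $X_I$, which by Theorem \ref{T:Main} is an $(n+1-|I|)$-dimensional $|I|$-handlebody. To upgrade this local statement to a global handle decomposition, I would transport the decomposition of $X_I$ described in \textsection\ref{S:Handles} cube-by-cube: within each $n$-cell $C$ of $K$, the preimages $f^{-1}(h_C(g^{-1}(Y_z)))$ of the pieces $Y_z$ in $X_I$ glue along positive-to-negative face identifications to pieces in adjacent cells, yielding a global handle decomposition of the same index structure as that of $X_I$ in $T^n$. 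Finally, the same local argument shows that the central intersection $X'_{\Z_k}$ is locally $X_{\Z_k} \subset T^n$, which is a closed $k$-manifold by the analysis at the end of \textsection\ref{S:MainGen}; since $M$ is compact, $X'_{\Z_k}$ is a closed $k$-manifold as well.

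The main obstacle is the bookkeeping required to see that the handles of \textsection\ref{S:Handles} lift globally, since some of the building blocks $C_1, C_2, C_3$ from \textsection\ref{S:StarShaped} span multiple unit subcubes of $T^n$ (for example the factor $[0,k-1]$ appearing in (\ref{E:X0ThmIntro})). Here the precise hypotheses of Definition \ref{Def:CubeComplex} — that identifications glue positive faces only to negative faces and respect the standard edge orientations — are exactly what is needed: they guarantee that when a building block exits one $n$-cell through a positive face, it re-enters the next $n$-cell through the corresponding negative face and continues with the same orientation, just as in $T^n$. This is also why the construction does not extend to arbitrary (generalized) cubulations, since non-directed identifications could introduce the type of diagonal obstruction illustrated by Conjecture \ref{Conj:T6}.
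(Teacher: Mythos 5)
Your overall approach — transport the multisection and its handle decomposition cube-by-cube from $T^n$ to the cubulated manifold, using invariance to handle the face gluings — matches the paper's. But your well-definedness paragraph contains a genuine error: the $X_i$ are \emph{not} individually invariant under the $\Z_k$-translation along the main diagonal; rather, $X_i + (1,\hdots,1) = X_{i+1}$, so the translation permutes them cyclically. For the standard cubulation of $T^n$ (a single cell $[0,k]^n$ with positive face glued to negative face by the identity), well-definedness of $X'_i$ needs no invariance at all; it holds simply because $X_i$ is already a well-defined subset of the quotient $T^n=[0,k]^n/\sim$. What you actually need for a \emph{general} directed cubulation — where face identifications are nontrivial coordinate permutations — is the $S_n$-invariance $X_i = \lla\cdots\rra$, and this is precisely the invariance the paper invokes (``the multisection of $T^n$ is fixed by the permutation action on the indices'').

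Beyond that, the paper's proof is structured more cleanly: it first treats the covering-space case, observing that each handle in the decomposition of $X_I$ from \textsection\ref{S:Handles} sits inside an evenly covered open region (a product of open intervals of length $k$), so the handle structure lifts verbatim to any cover; the general directed-cubulation case then follows from this plus $S_n$-invariance. Your ``main obstacle'' paragraph also mis-frames the subtlety: a building block like $[0,k-1]$ spanning several \emph{unit} subcubes of $T^n$ is not a concern, since all unit subcubes lie interior to a single $n$-cell $[0,k]^n$ of $K$. The real point is that some handles touch $\partial[0,k]^n$ and therefore meet the face identifications of $K$; you end up addressing this via the directed hypothesis, but only after identifying the wrong obstruction. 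None of this sinks the argument — the conclusion is right and the cube-by-cube transport is the right idea — but the stated justifications need to be replaced.
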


\begin{proof}
First consider the case where $p:M\to T^n$ is a covering space. Let $I\subset\Z_k$ be arbitrary.  Construct a handle structure on $X_I\subset T^n$, as in \textsection\ref{S:Handles}.  By construction, each handle is a subset of some open cube $(a,a+k)^n\subset T^n$.  Hence, the handle structure on $X_I\subset T^n$ pulls back to a handle structure on $X'_I\subset M$.  
The general case follows for the same reason, due to the fact that the multisection of $T^n$ is fixed by the permutation action on the indices. 
\end{proof}

\begin{rem}
In any multisection $M=\bigcup_{i\in \Z_k}X'_i$ from Proposition \ref{P:Cube}, all $X'_i$ have genus $n\#(n\text{-cubes in }K)$.  In particular, if $p:M\to T^n$ is an $r:1$ covering space, then $M$ has a multisection $M=\bigcup_{i\in \Z_k}X'_i$ in which each $X'_i$ has genus $nr$.
\end{rem}

\begin{example}\label{Ex:T3Twist}
Consider the quotient space $M$ obtained from $I^3$ by identifying the front and right faces, the left and top faces, and the bottom and back faces, all in the way that respects the standard orientations on the edges of $I^n$. See Figure \ref{Fi:T3Twist}, left.  The natural cell structure on $M$ consists of one vertex, three edges, three faces, and one 3-cell. It is easy to check that the link of the vertex is a 2-sphere, and so $M$ is a 3-manifold.  Geometrically, $M$ is geometrically flat, since there is a 27:1 covering space $T^3\to M$ (see Figure \ref{Fi:T3Twist}, right).  But $M$ is not $T^3$, since $H_1(M)\cong\Z\oplus\Z_3$. 
Proposition \ref{P:Cube} gives a genus 3 Heegaard splitting of $M$. Does $M$ have an efficient (genus 2) splitting? We leave this as a puzzle for the reader.
\end{example}

\begin{figure}
\begin{center}
\includegraphics[width=.7\textwidth]{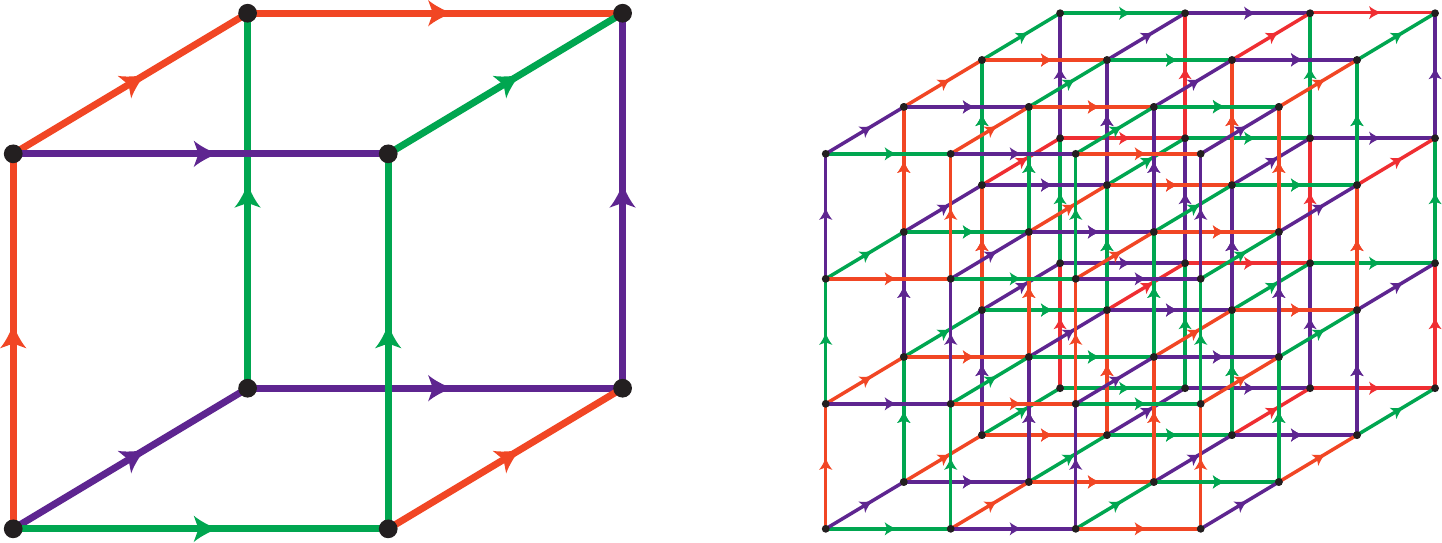}
\caption{Face identifications (left) for the 3-manifold $M$ from Example \ref{Ex:T3Twist}, and a 27:1 covering space (right) $T^3\to M$.}
\label{Fi:T3Twist}
\end{center}
\end{figure}

\begin{example}\label{Ex:TnTwist}
Generalizing Example \ref{Ex:T3Twist}, let $n=2k-1$, and let $\sigma\in S_n$ be an even permutation.  Denote the faces of $I^n$ by $F^\pm_i$, where $F^+_i=\{(x_1,\hdots,x_n):x_i=1\}$ and $F^-_i=\{(x_1,\hdots,x_n):x_i=0\}$. Identify each $F^+_{i}$ with $F^-_{\sigma(i)}$ by identifying each point
$(x_1,\hdots,1,\hdots,x_n)\in F^+_i$ (where the 1 is in the $i^{\text{th}}$ spot) with $(x_{\sigma^{-1}(1)},\hdots,0,\hdots,x_{\sigma^{-1}(n)})\in F^-_{\sigma(i)}$ (where the 0 is in the $\sigma^{-1}(i)^\text{th}$ spot).
\end{example}

\begin{question}
For what $n$ and $\sigma\in S_n$ does the construction in Example \ref{Ex:TnTwist} produce a manifold $M$? When it is a manifold, is $M$ always distinct from $T^n$? Is the multisection of $M$ from Proposition \ref{P:Cube} ever efficient? 
\end{question}

\section*{Appendix 1: Additional tables detailing handle decompositions}
Tables \ref{T:UV1} and \ref{T:UV2} explicitly detail $U_r,V_r\subset I_r$ for arbitrary $I_r$ (following Notation \ref{N:T}).  For simplicity, these tables have $I_r=I_0=\{0,\hdots,w\}$, listing $U_0,V_0$; this is not necessarily consistent with Convention \ref{Conv:ISimple}. To adapt $U_0,V_0\subset I_0$ to the general case $U_r,V_r\subset I_r$, add $\min I_r$ in each coordinate.

\begin{table}[H]
\begin{center}
\scalebox{1}{\begin{tabular}{||c|ccc|ccc||}
\hline
$I_0$& &$0\notin J$&&& $0\in J$&\\
\hline%
&$U_0$&&$V_0$&$U_0$&&$V_0$\\
\hline

\{0\}&%
$\varnothing$&&$\varnothing$
&
$\varnothing$&&\{0\}\\

\{0,1\}&%
$\varnothing$&&$\{1\}$
&%
$\varnothing$&&\{1\}\\

\{0,1,2\}&%
$\varnothing$&&\{1,2\}
&%
\{1\}&&\{2\}\\

\{0,1,2,3\}&%
$\varnothing$&&\{1,2,3\}
&%
\{1,2\}&&\{3\}\\

\{0,1,2,3,4\}&%
$\varnothing$&&\{1,2,3,4\}
&%
\{1,2,3\}&&\{4\}\\



$\{0,\hdots,w\}$&%
$\varnothing$&&$\{1,\hdots,w\}$
&%
$\{1,\hdots,w-1\}$&&$\{w\}$\\
\hline
\end{tabular}}
\caption{The index subsets $U_0,V_0\subset I_0$ when $i_*\notin I_0$.}\label{T:UV1}
\end{center}
\end{table}

\begin{table}[H]
\begin{center}
\scalebox{1}{\begin{tabular}{||c|cc||}
\hline
$I_0$& 
$U_0$&$V_0$\\
\hline

\{0\}&%
$\varnothing$&$\varnothing$\\

\{0,1\}&%
$\varnothing$&$\varnothing$\\

\{0,1,2\}&%
$\left.\begin{cases}
\{1\}&i_*=2\\
\varnothing&i_*\neq 2\\
\end{cases}\right\}$
&$\left.\begin{cases}
\{1,2\}&i_*=0\\
\varnothing&i_*\neq 0\\
\end{cases}\right\}$\\

\{0,1,2,3\}&%
$\left.\begin{cases}
\{2\}&i_*=0\\
\varnothing&i_*=1\\
\{1\}&i_*=2\\
\{1,2\}&i_*=3\\
\end{cases}\right\}$
&$\left.\begin{cases}
\{i_*+1,3\}&i_*\leq1\\
\varnothing&i_*\geq 2\\
\end{cases}\right\}$\\

$\{0,\hdots,w\}$&%
$I_0\setminus\{0,i_*,i_*+1,w\}$&
$\left.\begin{cases}
\{i_*+1,w\}&i_*\leq w-2\\
\varnothing&i_*\geq w-1\\
\end{cases}\right\}$\\
\hline
\end{tabular}}
\caption{The index subsets $U_0,V_0\subset I_0$ when $i_*\in I_0$.}\label{T:UV2}
\end{center}
\end{table}

Table \ref{T:91} details the handle decomposition of $X_I$ from $T^{9}$ with $I=\{0,1,3\}=I_1\sqcup I_2$, $I_1=\{0,1\}$, $I_2=\{3\}$.  The interesting feature of this example is how the two blocks of indices $I_1,I_2$ interact.

\begin{table}[H]
\begin{center}
\scalebox{1}{\begin{tabular}{||cc|ccc|c|ccc||}
\hline
$J$&$i_*$&$U$&$V$&$V^-$&$Y_z^*$&$h$&$z$&glue to\\ 
\hline

$\varnothing$&0&$\varnothing$&$\varnothing$&&${\lla\Navy{\alpha}1\Navy{\beta^3}\rra\lla3\Navy{\delta^3}\rra}$&$0$&1&\\ 

&1&$\varnothing$&$\varnothing$&$\varnothing$&${\lla0\red{\alpha}\rra\Navy{\beta^3}\lla3\Navy{\delta^3}\rra}$&$1$&2&1\\ 

&3&$\varnothing$&\{1\}&$\varnothing$&${0\lla\Navy{\alpha^+}1\Navy{\beta^3}\rra\Navy{\delta^3}}$&$0$&3&\\ 
&&&&\{1\}&${\lla0\red{\alpha^-}\rra\lla1\Navy{\beta^3}\rra\Navy{\delta^3}}$&$1$&4&3\\ 
\hline%

\{0\}&0&$\varnothing$&$\varnothing$&$\varnothing$&${\lla\Navy{\alpha}1\Navy{\beta^3}\rra\lla3\Navy{\delta^2}\rra\red{\ep}}$&$1$&5&1,3,4\\ 

&1&$\varnothing$&$\varnothing$&$\varnothing$&${\lla\red{\ep}0\red{\alpha}\rra\Navy{\beta^3}\lla3\Navy{\delta^2}\rra}$&$2$&6&2,5\\ 

&3&$\varnothing$&\{1\}&$\varnothing$&${\lla\red{\ep}0\rra\lla\Navy{\alpha^+}1\Navy{\beta^3}\rra\Navy{\delta^2}}$&$1$&7&3\\ 
&&&&\{1\}&${\lla\red{\ep}0\red{\alpha^-}\rra\lla1\Navy{\beta^3}\rra\Navy{\delta^2}}$&$2$&8&4,7\\ 

\hline%

\{3\}&0&$\varnothing$&\{3\}&$\varnothing$&${\lla\Navy{\alpha}1\Navy{\beta^2}\rra\lla\Navy{\gamma^+}3\Navy{\delta^3}\rra}$&0&9&\\
&&&&\{3\}&${\lla\Navy{\alpha}1\Navy{\beta^2}\rra\red{\gamma^-}\lla3\Navy{\delta^3}\rra}$&$1$&10&1,9\\ 

&1&$\varnothing$&\{3\}&$\varnothing$&${\lla0\red{\alpha}\rra\Navy{\beta^2}\lla\Navy{\gamma^+}3\Navy{\delta^3}\rra}$&1&11&9\\ 
&&&&\{3\}&${\lla0\red{\alpha}\rra\Navy{\beta^2}\red{\gamma^-}\lla3\Navy{\delta^3}\rra}$&2&12&2,10,11\\ 

&3&$\varnothing$&\{1\}&$\varnothing$&${0\lla\Navy{\alpha^+}1\Navy{\beta^2}\rra\red{\gamma}\Navy{\delta^3}}$&$1$&13&2,3\\ 
&&&&\{1\}&${\lla0\red{\alpha^-}\rra\lla1\Navy{\beta^2}\rra\red{\gamma}\Navy{\delta^3}}$&$2$&14&2,4,13\\ 

\hline%

\{0,3\}&0&$\varnothing$&\{3\}&$\varnothing$&${\lla\Navy{\alpha}1\Navy{\beta^2}\rra\lla\Navy{\gamma^+}3\Navy{\delta^2}\rra\red{\ep}}$&$1$&15&9,13,14\\ 
&&&&$\{3\}$&${\lla\Navy{\alpha}1\Navy{\beta^2}\rra\red{\gamma^-}\lla3\Navy{\delta^2}\rra\red{\ep}}$&$2$&16&10,13,14,15\\ 

&1&$\varnothing$&\{3\}&$\varnothing$&${\lla\red{\ep}0\red{\alpha}\rra\Navy{\beta^2}\lla\Navy{\gamma^+}3\Navy{\delta^2}\rra}$&$2$&17&11,15\\ 
&&&&\{3\}&${\lla\red{\ep}0\red{\alpha}\rra\Navy{\beta^2}\red{\gamma^-}\lla3\Navy{\delta^2}\rra}$&${\bf 3}$&18&6,12,16,17\\ 

&3&$\varnothing$&\{1\}&$\varnothing$&${\lla\red{\ep}0\rra\lla\Navy{\alpha^+}1\Navy{\beta^2}\rra\red{\gamma}\Navy{\delta^2}}$&$2$&19&6,7,13\\ 
&&&&\{1\}&${\lla\red{\ep}0\red{\alpha^-}\rra\lla1\Navy{\beta^2}\rra\red{\gamma}\Navy{\delta^2}}$&${\bf 3}$&20&6,8,14,19\\ 
 
\hline
\end{tabular}}
\caption{A genus 9 quintisection of $T^9$: $X_I$ when $I=\{0,1,3\}$}
\label{T:91}
\end{center}
\end{table}

Tables \ref{T:92}-\ref{T:93} detail the handle decomposition of $X_I$, $I=\{0,1,2,3\}$, from the quintisection of $T^9$.  Note that, since $I=I_1$ consists of a single block in this example, we always have $I_1=I_*$.

\begin{table}[H]
\begin{center}
{\begin{tabular}{||c|ccc|c|ccc||}
\hline
$i_*$&$U$&$V$&$V^-$&$Y_z^*$&$h$&$z$&glue to\\ 
\hline

0&\{2\}&\{1,3\}&$\varnothing$&$\Navy{\alpha^-}1\Navy{\beta^\circ_3}2\lla\Navy{\gamma^+}3\Navy{\delta^3}\rra$&0&1&\\ 
&&&&$\Navy{\alpha^-}\lla1\red{\beta^-_3}\rra2\lla\Navy{\gamma^+}3\Navy{\delta^3}\rra$&1&2&1\\ 
&&&&$\Navy{\alpha^-}1\lla\red{\beta^+_3}2\rra\lla\Navy{\gamma^+}3\Navy{\delta^3}\rra$&1&3&1\\ 
&&&\{1\}&$\lla\red{\alpha^+}1\rra\Navy{\beta^\circ_3}2\lla\Navy{\gamma^+}3\Navy{\delta^3}\rra$&1&4&1\\ 
&&&&$\lla\red{\alpha^+}1\red{\beta^-_3}\rra2\lla\Navy{\gamma^+}3\Navy{\delta^3}\rra$&2&5&2,4\\ 
&&&&$\lla\red{\alpha^+}1\rra\lla\red{\beta^+_3}2\rra\lla\Navy{\gamma^+}3\Navy{\delta^3}\rra$&2&6&3,4\\ 
&&&\{3\}&$\Navy{\alpha^-}1\Navy{\beta^\circ_3}\lla2\red{\gamma^-}\rra\lla3\Navy{\delta^3}\rra$&1&7&1\\ 
&&&&$\Navy{\alpha^-}\lla1\red{\beta^-_3}\rra\lla2\red{\gamma^-}\rra\lla3\Navy{\delta^3}\rra$&2&8&2,7\\ 
&&&&$\Navy{\alpha^-}1\lla\red{\beta^+_3}2\red{\gamma^-}\rra\lla3\Navy{\delta^3}\rra$&2&9&3,7\\ 
&&&\{1,3\}&$\lla\red{\alpha^+}1\rra\Navy{\beta^\circ_3}\lla2\red{\gamma^-}\rra\lla3\Navy{\delta^3}\rra$&2&10&4,7\\ 
&&&&$\lla\red{\alpha^+}1\red{\beta^-_3}\rra\lla2\red{\gamma^-}\rra\lla3\Navy{\delta^3}\rra$&3&11&5,8,10\\ 
&&&&$\lla\red{\alpha^+}1\rra\lla\red{\beta^+_3}2\red{\gamma^-}\rra\lla3\Navy{\delta^3}\rra$&3&12&6,9,10\\

\hline

1&$\varnothing$&\{2,3\}&$\varnothing$&$\lla0\red{\alpha}\rra\Navy{\beta^-}2\lla\Navy{\gamma^+}3\Navy{\delta^3}\rra$&1&13&1,2\\ 
&&&\{2\}&$\lla0\red{\alpha}\rra\lla\red{\beta^+}2\rra\lla\Navy{\gamma^+}3\Navy{\delta^3}\rra$&2&14&1,3,13\\ 
&&&\{3\}&$\lla0\red{\alpha}\rra\Navy{\beta^-}\lla2\red{\gamma^-}\rra\lla3\Navy{\delta^3}\rra$&2&15&7,8,13\\ 
&&&\{2,3\}&$\lla0\red{\alpha}\rra\lla\red{\beta^+}2\red{\gamma^-}\rra\lla3\Navy{\delta^3}\rra$&3&16&7,9,14,15\\ 
\hline%

2&\{1\}&$\varnothing$&$\varnothing$&$0\Navy{\alpha^\circ_3}\lla1\red{\beta}\rra\lla\Navy{\gamma}3\Navy{\delta^3}\rra$&1&17&13\\ 
&&&&$\lla0\red{\alpha^-_3}\rra\lla1\red{\beta}\rra\lla\Navy{\gamma}3\Navy{\delta^3}\rra$&2&18&13,17\\ 
&&&&$0\lla\red{\alpha^+_3}1\red{\beta}\rra\lla\Navy{\gamma}3\Navy{\delta^3}\rra$&2&19&13,17\\ 
\hline%

3&\{1,2\}&$\varnothing$&$\varnothing$&$0\Navy{\alpha^\circ_3}1\Navy{\beta^\circ_3}\lla2\red{\gamma}\rra\Navy{\delta^3}$&1&20&17\\ 
&&&&$\lla0\red{\alpha^-_3}\rra1\Navy{\beta^\circ_3}\lla2\red{\gamma}\rra\Navy{\delta^3}$&2&21&18,20\\ 
&&&&$0\lla\red{\alpha^+_3}1\rra\Navy{\beta^\circ_3}\lla2\red{\gamma}\rra\Navy{\delta^3}$&2&22&19,20\\ 
&&&&$0\Navy{\alpha^\circ_3}\lla1\red{\beta^-_3}\rra\lla2\red{\gamma}\rra\Navy{\delta^3}$&2&23&17,20\\ 
&&&&$\lla0\red{\alpha^-_3}\rra\lla1\red{\beta^-_3}\rra\lla2\red{\gamma}\rra\Navy{\delta^3}$&3&24&18,20,23\\ 
&&&&$0\lla\red{\alpha^+_3}1\red{\beta^-_3}\rra\lla2\red{\gamma}\rra\Navy{\delta^3}$&3&25&19,22,23\\ 
&&&&$0\Navy{\alpha^\circ_3}1\lla\red{\beta^+_3}2\red{\gamma}\rra\Navy{\delta^3}$&2&26&17,20\\ 
&&&&$\lla0\red{\alpha^-_3}\rra1\lla\red{\beta^+_3}2\red{\gamma}\rra\Navy{\delta^3}$&3&27&18,21,26\\ 
&&&&$0\lla\red{\alpha^+_3}1\rra\lla\red{\beta^+_3}2\red{\gamma}\rra\Navy{\delta^3}$&3&28&19,22,26\\ 

\hline
\hline
\end{tabular}}
\caption{$X_I$, $I=\{0,1,2,3\}$, from $T^{9}$. Part 1: $J=\varnothing$.}\label{T:92}
\end{center}
\end{table}

\begin{table}[H]
\begin{center}
{\begin{tabular}{||c|ccc|c|ccc||}
\hline
$i_*$&$U$&$V$&$V^-$&$Y_z^*$&$h$&$z$&glue to\\ 
\hline

0&\{2\}&\{1,3\}&$\varnothing$&$\Navy{\alpha^-}1\Navy{\beta^\circ_3}2\lla\Navy{\gamma^+}3\Navy{\delta^2}\rra\red{\ep}$&1&29&1,19,20\\ 
&&&&$\Navy{\alpha^-}\lla1\red{\beta^-_3}\rra2\lla\Navy{\gamma^+}3\Navy{\delta^2}\rra\red{\ep}$&2&30&2,22,23,29\\ 
&&&&$\Navy{\alpha^-}1\red{\beta^+_3}2\lla\Navy{\gamma^+}3\Navy{\delta^2}\rra\red{\ep}$&2&31&3,25,26,29\\ 
&&&\{1\}&$\lla\red{\alpha^+}1\rra\Navy{\beta^\circ_3}2\lla\Navy{\gamma^+}3\Navy{\delta^2}\rra\red{\ep}$&2&32&4,19,21\\ 
&&&&$\lla\red{\alpha^+}1\red{\beta^-_3}\rra2\lla\Navy{\gamma^+}3\Navy{\delta^2}\rra\red{\ep}$&3&33&5,22,24,30,32\\ 
&&&&$\lla\red{\alpha^+}1\rra\red{\beta^+_3}2\lla\Navy{\gamma^+}3\Navy{\delta^2}\rra\red{\ep}$&3&34&6,25,27,31,32\\ 
&&&\{3\}&$\Navy{\alpha^-}1\Navy{\beta^\circ_3}\lla2\red{\gamma^-}\rra\lla3\Navy{\delta^2}\rra\red{\ep}$&2&35&7,19,20,29\\ 
&&&&$\Navy{\alpha^-}\lla1\red{\beta^-_3}\rra\lla2\red{\gamma^-}\rra\lla3\Navy{\delta^2}\rra\red{\ep}$&3&36&8,22,23,30,35\\ 
&&&&$\Navy{\alpha^-}1\lla\red{\beta^+_3}2\red{\gamma^-}\rra\lla3\Navy{\delta^2}\rra\red{\ep}$&3&37&9,25,26,31,35\\ 
&&&\{1,3\}&$\lla\red{\alpha^+}1\rra\Navy{\beta^\circ_3}\lla2\red{\gamma^-}\rra\lla3\Navy{\delta^2}\rra\red{\ep}$&3&38&10,19,21,32,35\\
&&&&$\lla\red{\alpha^+}1\red{\beta^-_3}\rra\lla2\red{\gamma^-}\rra\lla3\Navy{\delta^2}\rra\red{\ep}$&{\bf 4}&39&11,22,24,33,36,38\\
&&&&$\lla\red{\alpha^+}1\rra\lla\red{\beta^+_3}2\red{\gamma^-}\rra\lla3\Navy{\delta^2}\rra\red{\ep}$&{\bf 4}&40&12,25,27,34,37,38\\ 
\hline

1&$\varnothing$&\{2,3\}&$\varnothing$&$\lla\red{\ep}0\red{\alpha}\rra\Navy{\beta^-}2\lla\Navy{\gamma^+}3\Navy{\delta^2}\rra$&2&41&13,29,30\\ 
&&&\{2\}&$\lla\red{\ep}0\red{\alpha}\rra\lla\red{\beta^+}2\rra\lla\Navy{\gamma^+}3\Navy{\delta^2}\rra$&3&42&14,29,31,41\\ 
&&&\{3\}&$\lla\red{\ep}0\red{\alpha}\rra\Navy{\beta^-}\lla2\red{\gamma^-}\rra\lla3\Navy{\delta^2}\rra$&3&43&15,35,36,41\\ 
&&&\{2,3\}&$\lla\red{\ep}0\red{\alpha}\rra\lla\red{\beta^+}2\red{\gamma^-}\rra\lla3\Navy{\delta^2}\rra$&{\bf 4}&44&16,35,37,42,43\\ 
\hline%

2&\{1\}&$\varnothing$&$\varnothing$&$\lla\red{\ep}0\rra\Navy{\alpha^\circ_3}\lla1\red{\beta}\rra\lla\Navy{\gamma}3\Navy{\delta^2}\rra$&2&45&17,41,43\\ 
&&&&$\lla\red{\ep}0\red{\alpha^-_3}\rra\lla1\red{\beta}\rra\lla\Navy{\gamma}3\Navy{\delta^2}\rra$&3&46&18,41,43,45\\ 
&&&&$\lla\red{\ep}0\rra\lla\red{\alpha^+_3}1\red{\beta}\rra\lla\Navy{\gamma}3\Navy{\delta^2}\rra$&3&47&19,41,43,45\\ 
\hline%

3&\{1,2\}&$\varnothing$&$\varnothing$&$\lla\red{\ep}0\rra\Navy{\alpha^\circ_3}1\Navy{\beta^\circ_3}\lla2\red{\gamma}\rra\Navy{\delta^2}$&2&48&20,45\\ 
&&&&$\lla\red{\ep}0\red{\alpha^-_3}\rra1\Navy{\beta^\circ_3}\lla2\red{\gamma}\rra\Navy{\delta^2}$&3&49&21,46,48\\ 
&&&&$\lla\red{\ep}0\rra\lla\red{\alpha^+_3}1\rra\Navy{\beta^\circ_3}\lla2\red{\gamma}\rra\Navy{\delta^2}$&3&50&22,47,48\\ 
&&&&$\lla\red{\ep}0\rra\Navy{\alpha^\circ_3}\lla1\red{\beta^-_3}\rra\lla2\red{\gamma}\rra\Navy{\delta^2}$&3&51&23,45,48\\ 
&&&&$\lla\red{\ep}0\red{\alpha^-_3}\rra\lla1\red{\beta^-_3}\rra\lla2\red{\gamma}\rra\Navy{\delta^2}$&{\bf 4}&52&24,46,49,51\\
&&&&$\lla\red{\ep}0\rra\lla\red{\alpha^+_3}1\red{\beta^-_3}\rra\lla2\red{\gamma}\rra\Navy{\delta^2}$&{\bf 4}&53&25,47,50,51\\ 
&&&&$\lla\red{\ep}0\rra\Navy{\alpha^\circ_3}1\lla\red{\beta^+_3}2\red{\gamma}\rra\Navy{\delta^2}$&3&54&26,45,48\\ 
&&&&$\lla\red{\ep}0\red{\alpha^-_3}\rra1\lla\red{\beta^+_3}2\red{\gamma}\rra\Navy{\delta^2}$&{\bf 4}&55&27,46,49,54\\ 
&&&&$\lla\red{\ep}0\rra\lla\red{\alpha^+_3}1\rra\lla\red{\beta^+_3}2\red{\gamma}\rra\Navy{\delta^2}$&{\bf 4}&56&28,47,50,54\\ 

\hline
\hline
\end{tabular}}
\caption{$X_I$, $I=\{0,1,2,3\}$, from $T^{9}$. Part 2: $J=\{0\}$.}\label{T:93}
\end{center}
\end{table}


Tables \ref{T:111} and \ref{T:112} detail handle decompositions of $X_I$, $I=\{0,1,2,4\}$ from the sexasection of $T^{11}$. The parts of these tables with $i_*=4$ and $0\notin J$ feature a complication that does not appear in dimensions $n\leq 9$. Also see Tables \ref{T:13} and \ref{T:15} for more complicated examples of this pattern.

\begin{table}[H]
\begin{center}
\scalebox{.925}{\begin{tabular}{||cc|ccc|c|ccc||}
\hline
$J$&$i_*$&$U$&$V$&$V^-$&$Y_z^*$&$h$&$z$&glue to\\ 
\hline
$\varnothing$&4&$\varnothing$&\{1,2\}&$\varnothing$&$0\lla\Navy{\alpha^+}1\rra\lla\Navy{\beta^+}2\Navy{\gamma^3}\rra\Navy{\ep^3}$&0&1&\\
&&&&$\{1\}$&$\lla0\red{\alpha^-}\rra1\lla\Navy{\beta^+}2\Navy{\gamma^3}\rra\Navy{\ep^3}$&1&2&1\\
&&&&$\{1,2\}$&$\lla0\Navy{\alpha^-}\rra\lla1\red{\beta^-}\rra\lla2\Navy{\gamma^3}\rra\Navy{\ep^3}$&1&3&2\\
&&&&$\{2\}$&$0\lla\red{\alpha^+}1\red{\beta^-}\rra\lla2\Navy{\gamma^3}\rra\Navy{\ep^3}$&2&4&1,3\\
&0&$\varnothing$&\{1,2\}&$\varnothing$&$\Navy{\alpha^-}1\lla\Navy{\beta^+}2\Navy{\gamma^3}\rra\lla4\Navy{\ep^3}\rra$&0&5&\\
&&&&\{1\}&$\lla\red{\alpha^+}1\rra\lla\Navy{\beta^+}2\Navy{\gamma^3}\rra\lla4\Navy{\ep^3}\rra$&1&6&5\\
&&&&\{2\}&$\Navy{\alpha^-}\lla1\red{\beta^-}\rra\lla2\Navy{\gamma^3}\rra\lla4\Navy{\ep^3}\rra$&1&7&5\\
&&&&\{1,2\}&$\lla\red{\alpha^+}1\red{\beta^-}\rra\lla2\Navy{\gamma^3}\rra\lla4\Navy{\ep^3}\rra$&2&8&5,6\\
&1&$\varnothing$&$\varnothing$&$\varnothing$&$\lla0\red{\alpha}\rra\lla\Navy{\beta}2\Navy{\gamma^3}\rra\lla4\Navy{\ep^3}\rra$&1&9&5,7\\
&2&$\{1\}$&$\varnothing$&$\varnothing$&$0\Navy{\alpha^\circ_3}\lla1\red{\beta}\rra\Navy{\gamma^3}\lla4\Navy{\ep^3}\rra$&1&10&9\\
&&&&&$\lla0\red{\alpha^-_3}\rra\lla1\red{\beta}\rra\Navy{\gamma^3}\lla4\Navy{\ep^3}\rra$&2&11&9,10\\
&&&&&$0\lla\red{\alpha^+_3}1\red{\beta}\rra\Navy{\gamma^3}\lla4\Navy{\ep^3}\rra$&2&12&9,10\\
\hline
$\{4\}$&4&$\varnothing$&\{1,2\}&$\varnothing$&$0\lla\Navy{\alpha^+}1\rra\lla\Navy{\beta^+}2\Navy{\gamma^2}\rra\red{\delta}\Navy{\ep^3}$&1&13&1,10,12\\
&&&&$\{1\}$&$\lla0\red{\alpha^-}\rra1\lla\Navy{\beta^+}2\Navy{\gamma^2}\rra\red{\delta}\Navy{\ep^3}$&2&14&2,10,11,13\\
&&&&$\{1,2\}$&$\lla0\Navy{\alpha^-}\rra\lla1\red{\beta^-}\rra\lla2\Navy{\gamma^2}\rra\red{\delta}\Navy{\ep^3}$&2&15&3,10,11,14\\
&&&&$\{2\}$&$0\lla\red{\alpha^+}1\red{\beta^-}\rra\lla2\Navy{\gamma^2}\rra\red{\delta}\Navy{\ep^3}$&3&16&4,10,12,13,15\\
&0&$\varnothing$&\{1,2\}&$\varnothing$&$\Navy{\alpha^-}1\lla\Navy{\beta^+}2\Navy{\gamma^2}\rra\lla\Navy{\delta^+}4\Navy{\ep^3}\rra$&0&17&\\
&&&&&$\Navy{\alpha^-}1\lla\Navy{\beta^+}2\Navy{\gamma^2}\rra\red{\delta^-}\lla4\Navy{\ep^3}\rra$&1&18&5,17\\
&&&&\{1\}&$\lla\red{\alpha^+}1\rra\lla\Navy{\beta^+}2\Navy{\gamma^2}\rra\lla\Navy{\delta^+}4\Navy{\ep^3}\rra$&1&19&17\\
&&&&&$\lla\red{\alpha^+}1\rra\lla\Navy{\beta^+}2\Navy{\gamma^2}\rra\red{\delta^-}\lla4\Navy{\ep^3}\rra$&2&20&6,18,19\\
&&&&\{2\}&$\Navy{\alpha^-}\lla1\red{\beta^-}\rra\lla2\Navy{\gamma^2}\rra\lla\Navy{\delta^+}4\Navy{\ep^3}\rra$&1&21&19\\
&&&&&$\Navy{\alpha^-}\lla1\red{\beta^-}\rra\lla2\Navy{\gamma^2}\rra\red{\delta^-}\lla4\Navy{\ep^3}\rra$&2&22&7,20,21\\
&&&&\{1,2\}&$\lla\red{\alpha^+}1\red{\beta^-}\rra\lla2\Navy{\gamma^2}\rra\lla\Navy{\delta^+}4\Navy{\ep^3}\rra$&2&23&19,21\\
&&&&&$\lla\red{\alpha^+}1\red{\beta^-}\rra\lla2\Navy{\gamma^2}\rra\red{\delta^-}\lla4\Navy{\ep^3}\rra$&3&24&8,20,22,23\\
&1&$\varnothing$&$\{4\}$&$\varnothing$&$\lla0\red{\alpha}\rra\lla\Navy{\beta}2\Navy{\gamma^2}\rra\lla\Navy{\delta^+}4\Navy{\ep^3}\rra$&1&25&17,21\\
&&&&$\{4\}$&$\lla0\red{\alpha}\rra\lla\Navy{\beta}2\Navy{\gamma^2}\rra\red{\delta^-}\lla4\Navy{\ep^3}\rra$&2&26&9,18,22,25\\
&2&$\{1\}$&$\{4\}$&$\varnothing$&$0\Navy{\alpha^\circ_3}\lla1\red{\beta}\rra\Navy{\gamma^2}\lla\Navy{\delta^+}4\Navy{\ep^3}\rra$&1&27&25\\
&&&&&$\lla0\red{\alpha^-_3}\rra\lla1\red{\beta}\rra\Navy{\gamma^2}\lla\Navy{\delta^+}4\Navy{\ep^3}\rra$&2&28&25,27\\
&&&&&$0\lla\red{\alpha^+_3}1\red{\beta}\rra\Navy{\gamma^2}\lla\Navy{\delta^+}4\Navy{\ep^3}\rra$&2&29&25,27\\
&&&&$\{4\}$&$0\Navy{\alpha^\circ_3}\lla1\red{\beta}\rra\Navy{\gamma^2}\red{\delta^-}\lla4\Navy{\ep^3}\rra$&2&30&10,26,27\\
&&&&&$\lla0\red{\alpha^-_3}\rra\lla1\red{\beta}\rra\Navy{\gamma^2}\red{\delta^-}\lla4\Navy{\ep^3}\rra$&3&31&11,26,28,30\\
&&&&&$0\lla\red{\alpha^+_3}1\red{\beta}\rra\Navy{\gamma^2}\red{\delta^-}\lla4\Navy{\ep^3}\rra$&3&32&12,26,29,30\\
\hline

\end{tabular}}
\caption{Part 1 of $X_I$, $I=\{0,1,2,4\}$, from $T^{11}$.}\label{T:111}
\end{center}
\end{table}

\begin{table}[H]
\begin{center}
\scalebox{.925}{\begin{tabular}{||cc|ccc|c|ccc||}
\hline
$J$&$i_*$&$U$&$V$&$V^-$&$Y_z^*$&$h$&$z$&glue to\\ 
\hline
$\{0\}$&4&$\{1\}$&\{2\}&$\varnothing$&$\lla\red{\zeta}0\rra\Navy{\alpha^\circ_3}1\lla\Navy{\beta^+}2\Navy{\gamma^3}\rra\Navy{\ep^2}$&1&33&1,2\\
&&&&&$\lla\red{\zeta}0\rra\lla\red{\alpha^+_3}1\rra\lla\Navy{\beta^+}2\Navy{\gamma^3}\rra\Navy{\ep^2}$&2&34&1,33\\
&&&&&$\lla\red{\zeta}0\red{\alpha^-_3}\rra1\lla\Navy{\beta^+}2\Navy{\gamma^3}\rra\Navy{\ep^2}$&2&35&2,33\\
&&&&$\{2\}$&$\lla\red{\zeta}0\rra\Navy{\alpha^\circ_3}\lla1\red{\beta^-}\rra\lla2\Navy{\gamma^3}\rra\Navy{\ep^2}$&2&36&3,4,33\\
&&&&&$\lla\red{\zeta}0\rra\lla\red{\alpha^+_3}1\red{\beta^-}\rra\lla2\Navy{\gamma^3}\rra\Navy{\ep^2}$&3&37&3,34,36\\
&&&&&$\lla\red{\zeta}0\red{\alpha^-_3}\rra\lla1\red{\beta^-}\rra\lla2\Navy{\gamma^3}\rra\Navy{\ep^2}$&3&38&4,35,36\\
&0&$\varnothing$&\{1,2\}&$\varnothing$&$\red{\zeta}\Navy{\alpha^-}1\lla\Navy{\beta^+}2\Navy{\gamma^3}\rra\lla4\Navy{\ep^2}\rra$&1&39&2,5\\
&&&&\{1\}&$\red{\zeta}\lla\red{\alpha^+}1\rra\lla\Navy{\beta^+}2\Navy{\gamma^3}\rra\lla4\Navy{\ep^2}\rra$&2&40&1,6,39\\
&&&&\{2\}&$\red{\zeta}\Navy{\alpha^-}\lla1\red{\beta^-}\rra\lla2\Navy{\gamma^3}\rra\lla4\Navy{\ep^2}\rra$&2&41&3,7,39\\
&&&&\{1,2\}&$\red{\zeta}\lla\red{\alpha^+}1\red{\beta^-}\rra\lla2\Navy{\gamma^3}\rra\lla4\Navy{\ep^2}\rra$&3&42&4,8,40\\
&1&$\varnothing$&$\varnothing$&$\varnothing$&$\lla\red{\zeta}0\red{\alpha}\rra\lla\Navy{\beta}2\Navy{\gamma^3}\rra\lla4\Navy{\ep^2}\rra$&2&43&9,39,41\\
&2&$\{1\}$&$\varnothing$&$\varnothing$&$\lla\red{\zeta}0\rra\Navy{\alpha^\circ_3}\lla1\red{\beta}\rra\Navy{\gamma^3}\lla4\Navy{\ep^2}\rra$&2&44&10,43\\
&&&&&$\lla\red{\zeta}0\red{\alpha^-_3}\rra\lla1\red{\beta}\rra\Navy{\gamma^3}\lla4\Navy{\ep^2}\rra$&3&45&11,43,44\\
&&&&&$\lla\red{\zeta}0\rra\lla\red{\alpha^+_3}1\red{\beta}\rra\Navy{\gamma^3}\lla4\Navy{\ep^2}\rra$&3&46&12,43,44\\
\hline

$\{0,4\}$&4&$\{1\}$&\{2\}&$\varnothing$&$\lla\red{\zeta}0\rra\Navy{\alpha^\circ_3}1\lla\Navy{\beta^+}2\Navy{\gamma^2}\rra\red{\delta}\Navy{\ep^2}$&2&47&13,14,33,44\\
&&&&&$\lla\red{\zeta}0\rra\lla\red{\alpha^+_3}1\rra\lla\Navy{\beta^+}2\Navy{\gamma^2}\rra\red{\delta}\Navy{\ep^2}$&3&48&13,34,45,47\\
&&&&&$\lla\red{\zeta}0\red{\alpha^-_3}\rra1\lla\Navy{\beta^+}2\Navy{\gamma^2}\rra\red{\delta}\Navy{\ep^2}$&3&49&14,35,46,47\\
&&&&$\{2\}$&$\lla\red{\zeta}0\rra\Navy{\alpha^\circ_3}\lla1\red{\beta^-}\rra\lla2\Navy{\gamma^2}\rra\red{\delta}\Navy{\ep^2}$&3&50&15,16,36,44,47\\
&&&&&$\lla\red{\zeta}0\rra\lla\red{\alpha^+_3}1\red{\beta^-}\rra\lla2\Navy{\gamma^2}\rra\red{\delta}\Navy{\ep^2}$&4&51&16,37,45,48,50\\
&&&&&$\lla\red{\zeta}0\red{\alpha^-_3}\rra\lla1\red{\beta^-}\rra\lla2\Navy{\gamma^2}\rra\red{\delta}\Navy{\ep^2}$&4&52&15,38,46,49,50\\
&0&$\varnothing$&\{1,2\}&$\varnothing$&$\red{\zeta}\Navy{\alpha^-}1\lla\Navy{\beta^+}2\Navy{\gamma^2}\rra\lla\Navy{\delta^+}4\Navy{\ep^2}\rra$&1&53&14,17\\
&&&&&$\red{\zeta}\Navy{\alpha^-}1\lla\Navy{\beta^+}2\Navy{\gamma^2}\rra\red{\delta^-}\lla4\Navy{\ep^2}\rra$&2&54&14,18,39,53\\
&&&&\{1\}&$\red{\zeta}\lla\red{\alpha^+}1\rra\lla\Navy{\beta^+}2\Navy{\gamma^2}\rra\lla\Navy{\delta^+}4\Navy{\ep^2}\rra$&2&55&13,19,53\\
&&&&&$\red{\zeta}\lla\red{\alpha^+}1\rra\lla\Navy{\beta^+}2\Navy{\gamma^2}\rra\red{\delta^-}\lla4\Navy{\ep^2}\rra$&3&56&13,20,40,54,55\\
&&&&\{2\}&$\red{\zeta}\Navy{\alpha^-}\lla1\red{\beta^-}\rra\lla2\Navy{\gamma^2}\rra\lla\Navy{\delta^+}4\Navy{\ep^2}\rra$&2&57&15,21,53\\
&&&&&$\red{\zeta}\Navy{\alpha^-}\lla1\red{\beta^-}\rra\lla2\Navy{\gamma^2}\rra\red{\delta^-}\lla4\Navy{\ep^2}\rra$&3&58&15,22,41,54,57\\
&&&&\{1,2\}&$\red{\zeta}\lla\red{\alpha^+}1\red{\beta^-}\rra\lla2\Navy{\gamma^2}\rra\lla\Navy{\delta^+}4\Navy{\ep^2}\rra$&3&59&16,23,55,57\\
&&&&&$\red{\zeta}\lla\red{\alpha^+}1\red{\beta^-}\rra\lla2\Navy{\gamma^2}\rra\red{\delta^-}\lla4\Navy{\ep^2}\rra$&4&60&16,24,42,56,58,59\\
&1&$\varnothing$&$\{4\}$&$\varnothing$&$\lla\red{\zeta}0\red{\alpha}\rra\lla\Navy{\beta}2\Navy{\gamma^2}\rra\lla\Navy{\delta^+}4\Navy{\ep^2}\rra$&2&61&25,53,57\\
&&&&$\{4\}$&$\lla\red{\zeta}0\red{\alpha}\rra\lla\Navy{\beta}2\Navy{\gamma^2}\rra\red{\delta^-}\lla4\Navy{\ep^2}\rra$&3&62&26,43,54,58,61\\
&2&$\{1\}$&$\{4\}$&$\varnothing$&$\lla\red{\zeta}0\rra\Navy{\alpha^\circ_3}\lla1\red{\beta}\rra\Navy{\gamma^2}\lla\Navy{\delta^+}4\Navy{\ep^2}\rra$&2&63&27,61\\
&&&&&$\lla\red{\zeta}0\red{\alpha^-_3}\rra\lla1\red{\beta}\rra\Navy{\gamma^2}\lla\Navy{\delta^+}4\Navy{\ep^2}\rra$&3&64&{28},61,63\\
&&&&&$\lla\red{\zeta}0\rra\lla\red{\alpha^+_3}1\red{\beta}\rra\Navy{\gamma^2}\lla\Navy{\delta^+}4\Navy{\ep^2}\rra$&3&65&{29},61,63\\
&&&&$\{4\}$&$\lla\red{\zeta}0\rra\Navy{\alpha^\circ_3}\lla1\red{\beta}\rra\Navy{\gamma^2}\red{\delta^-}\lla4\Navy{\ep^2}\rra$&3&66&{30},44,62,63\\
&&&&&$\lla\red{\zeta}0\red{\alpha^-_3}\rra\lla1\red{\beta}\rra\Navy{\gamma^2}\red{\delta^-}\lla4\Navy{\ep^2}\rra$&4&67&{31},44,62,64,66\\
&&&&&$\lla\red{\zeta}0\rra\lla\red{\alpha^+_3}1\red{\beta}\rra\Navy{\gamma^2}\red{\delta^-}\lla4\Navy{\ep^2}\rra$&4&68&32,45,62,65,66\\

\hline
\end{tabular}}
\caption{Part 2 of $X_I$, $I=\{0,1,2,4\}$, from $T^{11}$.}\label{T:112}
\end{center}
\end{table}

Table \ref{T:15} details the start of the handle decomposition of $X_I$ from $T^{15}$ with $I=\{0,1,2,3,4,6\}$, focusing on the first few pieces $Y_z$. Those pieces have $J=\varnothing$, $i_*=6$, $U=\varnothing$, $V=\{1,2,3,4\}$. The interesting feature of this example is the ordering of these pieces.  Compare to (\ref{E:order}) and Tables \ref{T:13}, \ref{T:111}, \ref{T:112}.
\begin{table}[H]
\begin{center}
\scalebox{1}{\begin{tabular}{||c|c|ccc||}
\hline
$V^-$&$Y_z^*$&$h$&$z$&glue to\\ 
\hline
$\varnothing$&$0\lla\Navy{\alpha^+}1\rra\lla\Navy{\beta^+}2\rra\lla\Navy{\gamma^+}3\rra\lla\Navy{\delta^+}4\Navy{\ep^3}\rra\Navy{\eta^3}$&0&1&\\
$\{1\}$&$\lla0\red{\alpha^-}\rra1\lla\Navy{\beta^+}2\rra\lla\Navy{\gamma^+}3\rra\lla\Navy{\delta^+}4\Navy{\ep^3}\rra\Navy{\eta^3}$&1&2&1\\
$\{1,2\}$&$\lla0\Navy{\alpha^-}\rra\lla1\red{\beta^-}\rra2\lla\Navy{\gamma^+}3\rra\lla\Navy{\delta^+}4\Navy{\ep^3}\rra\Navy{\eta^3}$&1&3&2\\
$\{2\}$&$0\lla\red{\alpha^+}1\red{\beta^-}\rra2\lla\Navy{\gamma^+}3\rra\lla\Navy{\delta^+}4\Navy{\ep^3}\rra\Navy{\eta^3}$&2&4&1,3\\
$\{2,3\}$&$0\lla\Navy{\alpha^+}1\Navy{\beta^-}\rra\lla2\red{\gamma^-}\rra3\lla\Navy{\delta^+}4\Navy{\ep^3}\rra\Navy{\eta^3}$&1&5&4\\
$\{1,2,3\}$&$\lla0\red{\alpha^-}\rra\lla1\Navy{\beta^-}\rra\lla2\red{\gamma^-}\rra3\lla\Navy{\delta^+}4\Navy{\ep^3}\rra\Navy{\eta^3}$&2&6&3,5\\
$\{1,3\}$&$\lla0\Navy{\alpha^-}\rra1\lla\red{\beta^+}2\red{\gamma^-}\rra3\lla\Navy{\delta^+}4\Navy{\ep^3}\rra\Navy{\eta^3}$&2&7&2,6\\
$\{3\}$&$0\lla\red{\alpha^+}1\rra\lla\red{\beta^+}2\red{\gamma^-}\rra3\lla\Navy{\delta^+}4\Navy{\ep^3}\rra\Navy{\eta^3}$&3&8&1,5,7\\
$\{3,4\}$&$0\lla\Navy{\alpha^+}1\rra\lla\Navy{\beta^+}2\Navy{\gamma^-}\rra\lla3\red{\delta^-}\rra\lla4\Navy{\ep^3}\rra\Navy{\eta^3}$&1&9&8\\
$\{1,3,4\}$&$\lla0\red{\alpha^-}\rra1\lla\Navy{\beta^+}2\Navy{\gamma^-}\rra\lla3\red{\delta^-}\rra\lla4\Navy{\ep^3}\rra\Navy{\eta^3}$&2&10&7,9\\
$\{1,2,3,4\}$&$\lla0\Navy{\alpha^-}\rra\lla1\red{\beta^-}\rra\lla2\Navy{\gamma^-}\rra\lla3\red{\delta^-}\rra\lla4\Navy{\ep^3}\rra\Navy{\eta^3}$&2&11&6,10\\
$\{2,3,4\}$&$0\lla\red{\alpha^+}1\red{\beta^-}\rra\lla2\Navy{\gamma^-}\rra\lla3\red{\delta^-}\rra\lla4\Navy{\ep^3}\rra\Navy{\eta^3}$&3&12&5,9,11\\
$\{2,4\}$&$0\lla\Navy{\alpha^+}1\Navy{\beta^-}\rra2\lla\red{\gamma^+}3\red{\delta^-}\rra\lla4\Navy{\ep^3}\rra\Navy{\eta^3}$&2&13&4,12\\
$\{1,2,4\}$&$\lla0\red{\alpha^-}\rra\lla1\Navy{\beta^-}\rra2\lla\red{\gamma^+}3\red{\delta^-}\rra\lla4\Navy{\ep^3}\rra\Navy{\eta^3}$&3&14&3,11,13\\
$\{1,4\}$&$\lla0\Navy{\alpha^-}\rra1\lla\red{\beta^+}2\rra\lla\red{\gamma^+}3\red{\delta^-}\rra\lla4\Navy{\ep^3}\rra\Navy{\eta^3}$&3&15&2,10,14\\
\{4\}&$0\lla\red{\alpha^+}1\rra\lla\red{\beta^+}2\rra\lla\red{\gamma^+}3\red{\delta^-}\rra\lla4\Navy{\ep^3}\rra\Navy{\eta^3}$&4&16&1,9,13,15\\
\hline
\end{tabular}}
\caption{Start of the handle decomposition from $T^{15}$ with $I=\{0,1,2,3,4,6\}$, $J=\varnothing$, $i_*=6$, $U=\varnothing$, $V=\{1,2,3,4\}$.}\label{T:15}
\end{center}
\end{table}

Tables \ref{T:YzrNoSing}, \ref{T:YzrRe}, and \ref{T:YzrSing} list the possible forms for $\xi_r(z)$. Table \ref{T:YzrNoSing} lists those {\it with no} singleton factor. Table \ref{T:YzrRe} lists those  {\it with} a singleton factor $\{i\}$, where $i\in V^+$ and $i+1\in U^\circ\cup U^+\cup V^+$, or $i\in U^-\cup U^\circ\cup V^-$ and $i+1\in V^-$; the class of this case depends on the parity of $\#(V^-\cap \{i+1,\hdots,\max I_{s}\})$, where $i\in I_s$. Table \ref{T:YzrSing} lists the remaining possibilities for $\xi_r(z)$.

\begin{table}[H]
\begin{center}
\begin{tabular}{||c|c|c||}
\hline
class&$\xi_r(z)$&conditions\\
\hline
$\red{\text{(A)}}$& ${[i_*-1,i_*]}$ & $i_*\in J$\\
$\red{\text{(A)}}$& ${\left[i-1,i-\frac{1}{2}\right]}$ & $i\in J\cap V^-\implies i\neq i_*$, $i+1\notin I$\\
\hline
$\Navy{\text{(B)}}$& ${\left[i_*,i_*+\frac{1}{2}\right]}$ & $a\leq i_*\leq b-2$, $i_*+1\in V^-$\\ 
$\Navy{\text{(B)}}$& ${\left[i-\frac{2}{3},i-\frac{1}{3}\right]}$ & $i\in U^\circ$\\
$\Navy{\text{(B)}}$& $ \prod_{j=i_*+1}^{c-1}[i_*,j]^2$ & $i_*=b$, $c\in J$\\
$\Navy{\text{(B)}}$& $ \prod_{j=i_*+1}^{c-2}[i_*,j]^2[i_*,c-1]^3$ & $i_*=b$, $c\notin J$\\
\hline
\end{tabular}
\caption{The possible forms for $\xi_r(z)$ with no singleton factor, where $i_*\in I_{s}$, $a=\min I_s$, $b=\max I_s$, $c=\min I_{s+1}$.
}\label{T:YzrNoSing}
\end{center}
\end{table}

\begin{table}[H]
\begin{center}
\scalebox{.945}{\begin{tabular}{||c|c|cc|c||}
\hline
class&$\xi_r(z)$&conditions on $i$&conditions on $i+1$&parity\\ \hline
$\red{\text{(A)}}$ & ${\left[i-\frac{1}{2},i\right]} \{i\}$ & $i\in V^+$& $i+1\in U^\circ\cup U^+\cup V^+$ & odd\\
 $\red{\text{(A)}}$ & $\{i\} {\left[i,i+\frac{1}{2}\right]}$ & $i\in U^-\cup U^\circ\cup V^-$& $i+1\in V^-$ & odd\\
 $\red{\text{(A)}}$ & ${\left[i-\frac{1}{2},i\right]} \{i\} {\left[i,i+\frac{1}{2}\right]}$ & $i\in V^+$& $i+1\in V^-$ & odd\\
 \hline
 $\Navy{\text{(B)}}$ &  ${\left[i-\frac{1}{2},i\right]} \{i\}$ & $i\in V^+$& $i+1\in U^\circ\cup U^+\cup V^+$ & even\\
 $\Navy{\text{(B)}}$ &  $\{i\} {\left[i,i+\frac{1}{2}\right]}$ & $i\in U^-\cup U^\circ\cup V^-$& $i+1\in V^-$ & even\\
 $\Navy{\text{(B)}}$ &  ${\left[i-\frac{1}{2},i\right]} \{i\} {\left[i,i+\frac{1}{2}\right]}$ & $i\in V^+$& $i+1\in V^-$ & even\\
\hline
\end{tabular}}
\caption{The possible forms for each $\xi_r(z)$ containing a singleton factor $\{i\}$, where $i\in V^+$ and $i+1\in U^\circ\cup U^+\cup V^+$, or $i\in U^-\cup U^\circ\cup V^-$ and $i+1\in V^-$; the class depends on the parity of $\#(V^-\cap \{i+1,\hdots,\max I_{s}\})$, where $i\in I_s$.
}\label{T:YzrRe}
\end{center}
\end{table}

\begin{table}[H]
\begin{center}
\begin{tabular}{||c|c|c||}
\hline
class&$\xi_r(z)$&conditions on $i$\\
\hline
$\red{\text{(A)}}$& ${[i-1,i]}\{i\}$ & $i\in J$,  $i+1\in U^\circ\cup U^+\cup V^+$\\
$\red{\text{(A)}}$& ${[i-1,i]}\{i\}{\left[i,i+1\right]}$ & $i\in J$, $i_*=i+1$\\
$\red{\text{(A)}}$& ${[i-1,i]}\{i\}{\left[i,i+\frac{1}{3}\right]}$ & $i\in J$, $i+1\in U^-$\\
$\red{\text{(A)}}$& ${[i-1,i]}\{i\}{\left[i,i+\frac{1}{2}\right]}$ & $i\in J$, $i+1\in V^-$\\
 $\red{\text{(A)}}$& ${\left[i-\frac{1}{3},i\right]}\{i\}$ & $i\in U^+$, $i+1\in U^\circ\cup U^+\cup V^+$\\
 $\red{\text{(A)}}$&  $\{i\}{\left[i,i+\frac{1}{3}\right]}$ & $i+1\in U^-$, $i\in U^-\cup U^\circ\cup V^-$\\
 $\red{\text{(A)}}$& ${\left[i-\frac{1}{3},i\right]}\{i\}{\left[i,i+\frac{1}{3}\right]}$ & $i\in U^+$,  $i+1\in U^-$\\
 $\red{\text{(A)}}$& ${\left[i-\frac{1}{3},i\right]}\{i\}{\left[i,i+\frac{1}{2}\right]}$,  & $i\in U^+$, $i+1\in V^-$\\
&& $\implies i+1=\max I_s\neq i_*$\\
  $\red{\text{(A)}}$& ${\left[i-\frac{1}{2},i\right]}\{i\}{\left[i,i+\frac{1}{3}\right]}$,  & $i\in V^+$, $i+1\in U^-$\\
  && $\implies i=i_*+1\leq \max I_s-1$\\
  $\red{\text{(A)}}$ & $\{i\}$ & $ i\in (T\setminus J)\cup U^-\cup U^\circ\cup V^-$,\\
&& 
$i+1\in U^\circ\cup U^+\cup V^+$\\
 \hline
$\Navy{\text{(B)}}$& $ [i-1,i]\{i\} \prod_{j=i+1}^{c-2}[i,j]^2[i,c-1]^q$ &  $i_*=\min I_s= i-1=\max I_s-1$\\
$\Navy{\text{(B)}}$& $ \left[i-\frac{1}{2},i\right]\{i\} \prod_{j=i+1}^{c-2}[i,j]^2[i,c-1]^q$ & $ i=\max I_s\in V^+$\\
$\Navy{\text{(B)}}$&$ \{i\} \prod_{j=i+1}^{c-2}[i,j]^2[i,c-1]^q$ & $ i=\max I_s\in V^-$\\
\hline
\end{tabular}
\caption{The possible forms for each $\xi_r(z)$ not listed in Tables \ref{T:YzrNoSing}, \ref{T:YzrRe}.  Each contains a singleton factor $\{i\}$, $i_*\neq i\in I_s$, $s\in\Z_m$. Denote $c=\min I_{s+1}$ with $q\in\{2,3\}$.}\label{T:YzrSing}
\end{center}
\end{table}

\section*{Appendix 2: Four other attempts to multisect $T^n$ for $n$ odd}\label{S:Fail}


\subsection*{From the handle decomposition}\label{S:Fail1}
The $n$-torus has a natural handle decomposition, with $\binom{n}{h}$ $h$-handles for each $h=0,\hdots, n$, which one can construct as follows. View $T^n$ as $(\R/2\Z)^n$, and decompose it into the $2^n$ subcubes with vertices in $(\Z/2\Z)^n$. Then, using notation \ref{N:alpha}, for each $h=0,\hdots,n$, the $h$-handles are the subcubes which are permutations of $\alpha^{n-h}\beta^{h}$.\footnote{Note that this handle decomposition is optimal in the sense that it has the minimum possible number of handles of each index, since $H_h(T^n)$ has rank $\binom{n}{h}$.}  

One might hope that $X_i=\lla \alpha^{n-i}\beta^i\rra\cup\lla\alpha^{n+1-i}\beta^{i-1}\rra$ determines a multisection.\footnote{Note that $n$ is odd throughout Appendix 2.} Indeed, in dimension 3, this is the Heegaard splitting shown in Figure \ref{Fi:T3Intro}.  Yet, the construction does not work beyond dimension 3, as one can see by noting, e.g., that $X_0\cap X_{k-1}=\bigcup_{r=0}^{n-2}\lla\alpha\beta0^r1^{n-2-r}\rra$ is always 2-dimensional.  

\subsection*{By gluing pairs of balls}\label{S:Fail2}
Instead, one might attempt to generalize the following construction. See Figure \ref{Fi:MultiAttempt}. View $T^n$ as $(\R/2k\Z)^n=[0,2k]^n/\sim$. Partition the $(2k)^n$ unit cubes with vertices in the lattice $\left(\Z/2k\Z\right)^n$ so as to form $V_0,\hdots, V_n$ subject to the following conditions:\footnote{These conditions uniquely determine $V_0,\hdots,V_n$.}

\begin{figure}
\begin{center}
\includegraphics[width=\textwidth]{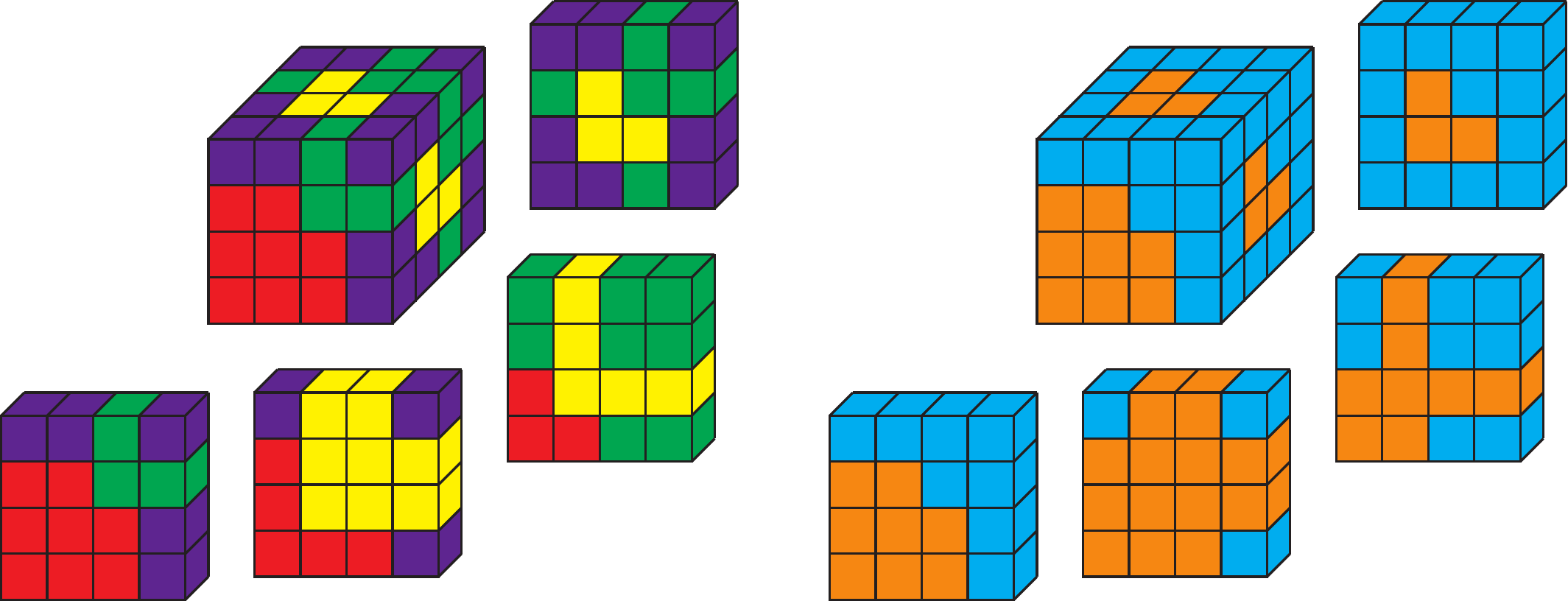}
\caption{Another construction of the minimal genus Heegaard splitting of $S^3$}
\label{Fi:MultiAttempt}
\end{center}
\end{figure}

\begin{itemize}
\item If $\x\in V_0$, then $\x+(r,\hdots,r)\in V_r$;
\item The permutation action on the indices fixes each $V_r$; 
\item $V_0$ contains $[0,1]^n$, is star-shaped about $(0,\hdots,0)$, and contains no points with any coordinate in $(n-1,n)$.
\end{itemize}
Then, for $i=0,\hdots, k=\frac{n+1}{2}$, let $X_i=V_{2i}\cup V_{2i+1}$.  Figure \ref{Fi:MultiAttempt} shows that this construction does in fact give a genus 3 Heegaard splitting of $T^3$.

In higher dimensions,
this construction is promising for many of the same reasons as the construction behind Theorem \ref{T:Main}.  This construction has at least one additional advantage, namely that each $V_i$ is a ball. This makes it easy to check that each $X_i$ is indeed an $n$-dimensional handlebody of genus $n$.  Unfortunately, the complexity of this construction grows much more rapidly than the construction behind Theorem \ref{T:Main}, making it hard to check the other details, even in dimension 5. Indeed, see Figure \ref{Fi:MultiAttemptT5}.

\begin{question}
Does this construction also give a trisection of $T^5$? Does it give a multisection of $T^n$ for arbitrary $n=2k-1$?
\end{question}

\begin{figure}
\begin{center}
\includegraphics[width=\textwidth]{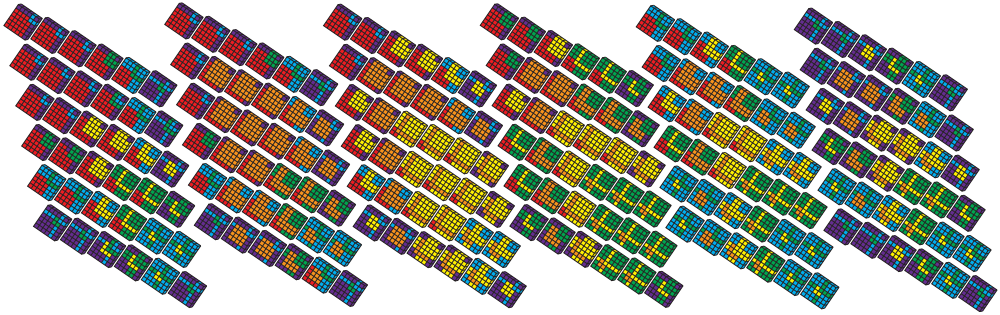}
\caption{Decomposing $T^5=[0,6]^5/\sim$ as $V_0\cup\cdots\cup V_5$. Does $(V_0\cup V_1,V_2\cup V_3,V_4\cup V_5)$ determine a trisection?}
\label{Fi:MultiAttemptT5}
\end{center}
\end{figure}

\subsection*{By summing coordinates}

As shown in Figure \ref{Fi:T3Sum}, the genus 3 Heegaard splitting of $T^3=
[0,2]^3/\sim$ can be constructed as $T^3=X_0\cup X_1$ where each
\[X_i=\{(x_1,x_2,x_3):3i\leq x_1+x_2+x_3\leq 3(i+1)\}/\sim.\]
The splitting surface consists of the hexagon $\{(x_1,x_2,x_3):x_1+x_2+x_3=3\}/\sim$ together with three other hexagons. One is $\{(0,x_2,x_3):1\leq x_2+x_3\leq 5\}/\sim$, and the others are obtained from this one by permuting coordinates. A co-core of one 1-handle in $X_0$ is the triangle $\{(0,x_2,x_3): x_2+x_3\leq 1\}/\sim$, and a co-core of a 1-handle in $X_0$ is the triangle $\{(0,x_2,x_3): 5\leq x_2+x_3\}/\sim$; the other 1-handles of $X_0$ and $X_1$ are related to these by permuting coordinates.

\begin{figure}
\begin{center}
\includegraphics[width=.4\textwidth]{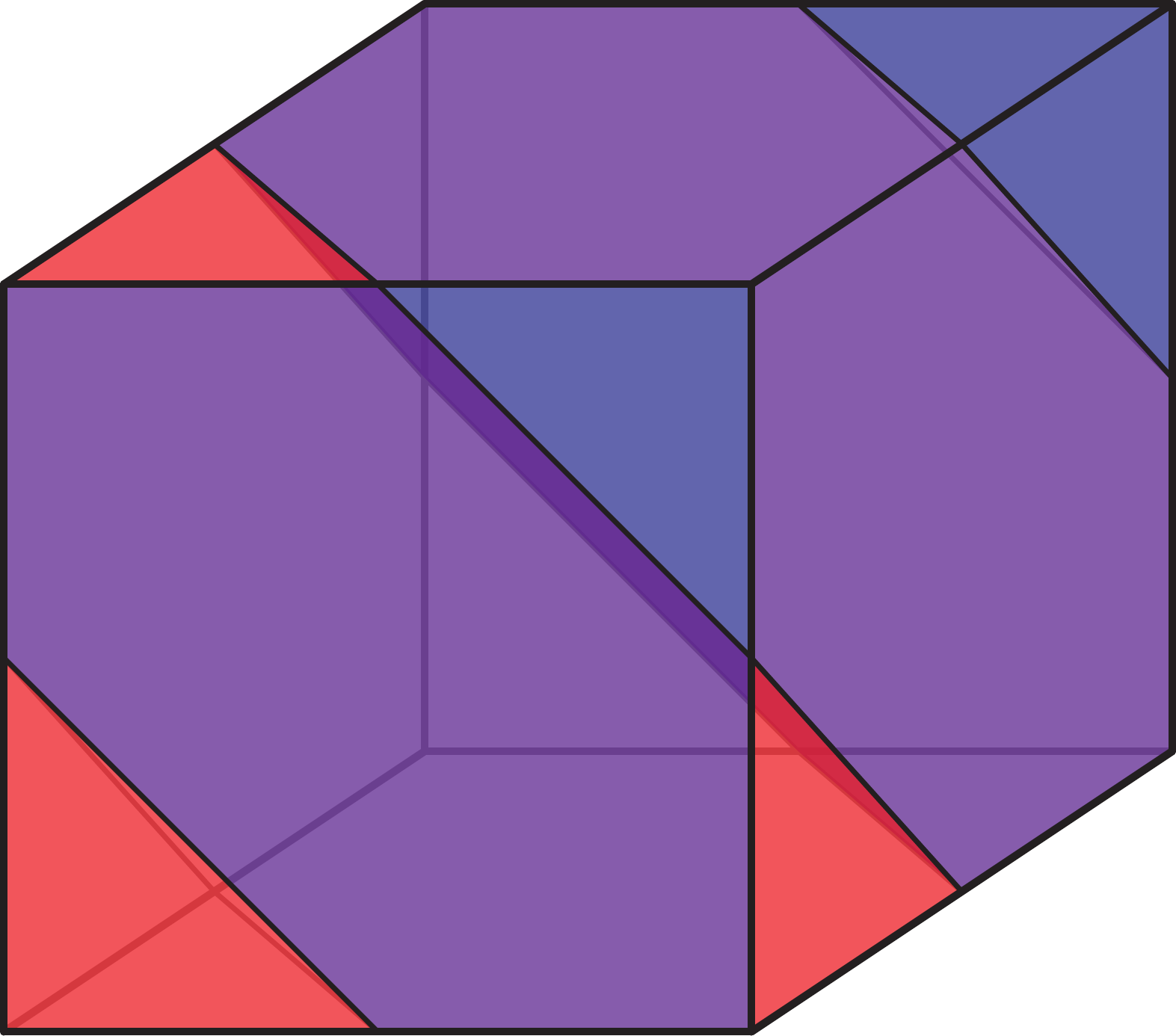}
\caption{The efficient Heegaard splitting $T^3=X_0\cup X_1$ constructed by summing coordinates.  Four purple hexagons comprise the splitting surface. Red and blue triangles are co-cores of the 1-handles in $X_0$ and $X_1$, respectively.}
\label{Fi:T3Sum}
\end{center}
\end{figure}
One might attempt to trisect $T^5=[0,3]^5/\sim$ as $T^5=X_0\cup X_1\cup X_2$ with
\[X_i=\{(x_1,\hdots,x_5):5i\leq x_1+\cdots+x_5\leq 5(i+1)\}/\sim.\]
Then each $X_i$ is in fact a 4-dimensional 1-handlebody of genus 4: a co-core of a 1-handle of $X_0$ is the 4-simplex  $\{(0,x_2,x_3,x_4,x_5): x_2+x_3+x_4+x_5\leq 2\}/\sim$, a co-core of a 1-handle of $X_1$ is $\{(0,x_2,x_3,x_4,x_5): 5\leq x_2+x_3+x_4+x_5\leq 7\}/\sim$, and a co-core of a 1-handle of $X_2$ is $\{(0,x_2,x_3,x_4,x_5): 12\leq x_2+x_3+x_4+x_5\}/\sim$; the other 1-handles of $X_0$, $X_1$, and $X_2$ are related to these by permuting coordinates. 

Yet, this is not a trisection, because 
\[X_0\cap X_2=\{(x_1,x_2,x_3,0,0)_\sigma:~4\leq x_1+x_2+x_3\leq 5,~\sigma\in S_5\}/\sim\]
is 3-dimensional, not 4-.

To fix this problem, one could choose $0=a_0<a_1<a_2<a_3=15$ differently and define each
\[X_i=\{(x_1,\hdots,x_5):a_i\leq x_1+\cdots+x_5\leq a_{i+1}\}.\]
Then $X_0\cap X_2$ will be 4-dimensional if and only if $a_2-a_1<3$.  This creates a new problem: if $a_2-a_1<3$, then $X_1$ is contractible, hence a 5-ball. It now follows from Proposition \ref{P:SmoothEfficient1} that no choice of $a_1$ and $a_2$ produces a trisection of $T^5$.  The same difficulty prevails in all other dimensions $n>3$ (including even dimensions).

\subsection*{Using the symmetric space $T^n/S_n$}\label{S:Fail3}

Given a triangulation $K$ of an $n$-manifold $X$, Rubinstein--Tillmann multisect $X$ by mapping each $n$-simplex of $K$ to the standard 
$(k-1)$-simplex 
\begin{equation}\label{E:Simplex}
\Delta_{k-1}=[\vv_0,\hdots,\vv_{k-1}]=\biggl\{\sum_{j\in \Z_k}a_j\vv_j:~0\leq a_j,~\sum_{j\in\Z_k}a_j=1\biggr\},
\end{equation}
decomposing $\Delta_{k-1}=\bigcup_{i\in\Z_k}Z_i$ where each
\begin{equation}\label{E:RTDelta}
Z_i=\{\x\in \Delta_{k-1}:~|\x-\vv_i|\leq|\x-\vv_j|~\forall j\in\Z_k\},
\end{equation}
(see Figure \ref{Fi:DeltaDecomp}), and pulling back.  Their maps from the $n$-simplices of $K$ to $\Delta_{k-1}$ are simplest to construct in odd dimension $n=2k-1$. Namely:
\begin{itemize}
\item map the barycenter of each $r$-face to $\vv_j\in\Delta_{k-1}$, $j=2r,2r+1$; and
\item extend linearly in the first barycentric subdivision of $K$.
\end{itemize}
The even-dimensional case is similar, but with an extra move.

\begin{figure}
\begin{center}
\labellist
\small\hair 4pt
\pinlabel {$\red{v_0}$} [t] at 25 45
\pinlabel {$\NavyBlue{v_1}$} [t] at 295 45
\pinlabel {$\red{v_0}$} [t] at 440 5
\pinlabel {$\NavyBlue{v_1}$} [t] at 700 5
\pinlabel {$\yellow{v_2}$} [t] at 560 320
\pinlabel {$\red{v_0}$} [t] at 780 120
\pinlabel {$\NavyBlue{v_1}$} [t] at 935 -120
\pinlabel {$\yellow{v_2}$} [t] at 1330 60
\pinlabel {$v_3$} [t] at 1100 350
\endlabellist
  \includegraphics[width=.1875\textwidth]{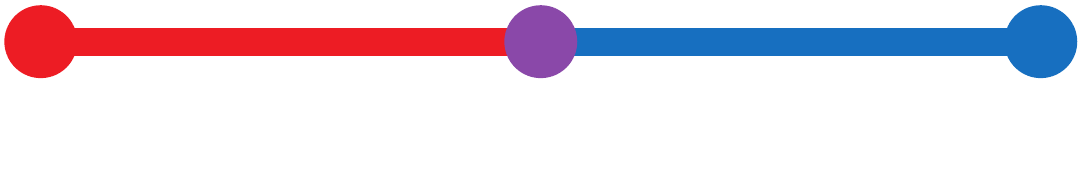}
\hspace{.04\textwidth}
 \raisebox{-.025\textwidth}{\includegraphics[width=.1875\textwidth]{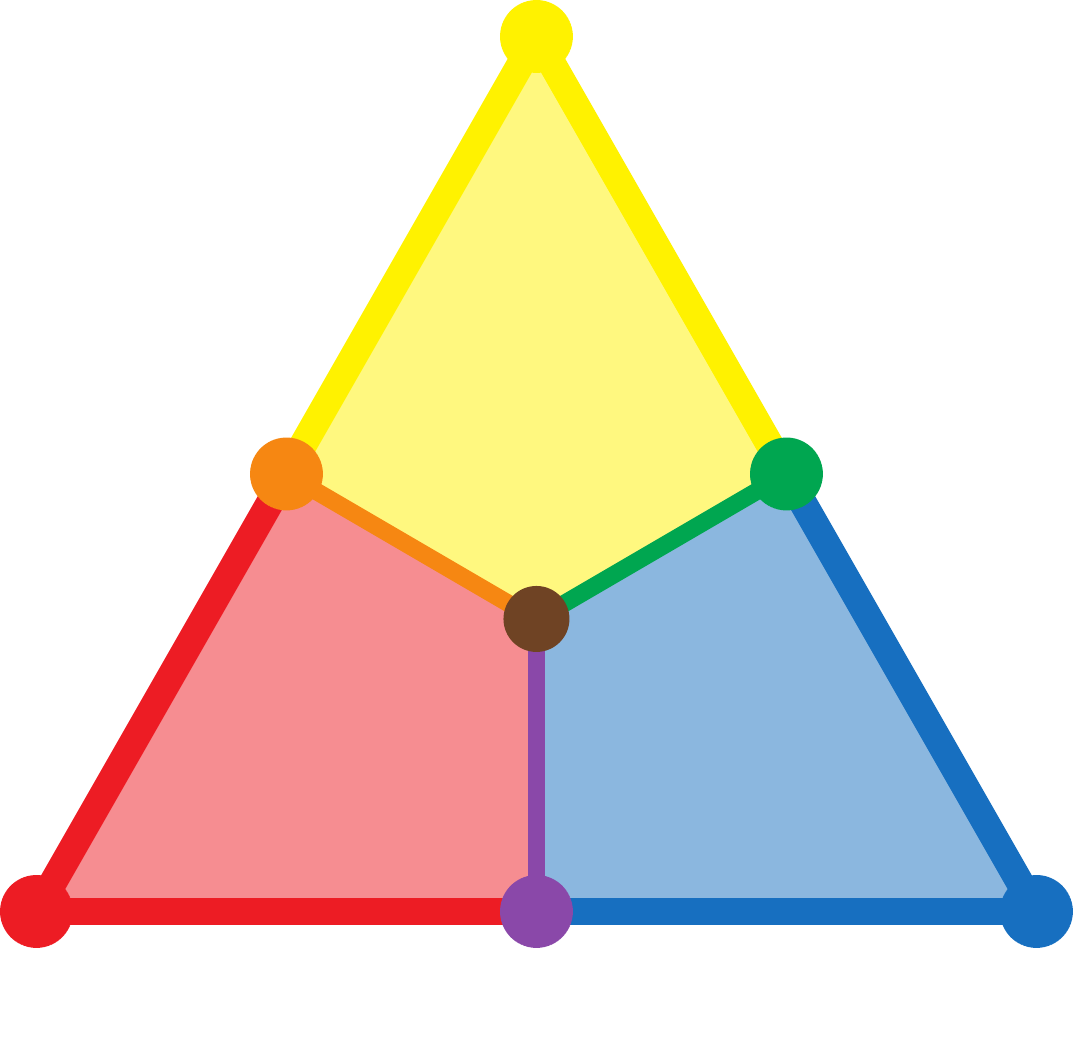}}
\hspace{.04\textwidth}
 \raisebox{-.1\textwidth}{\includegraphics[width=.3\textwidth]{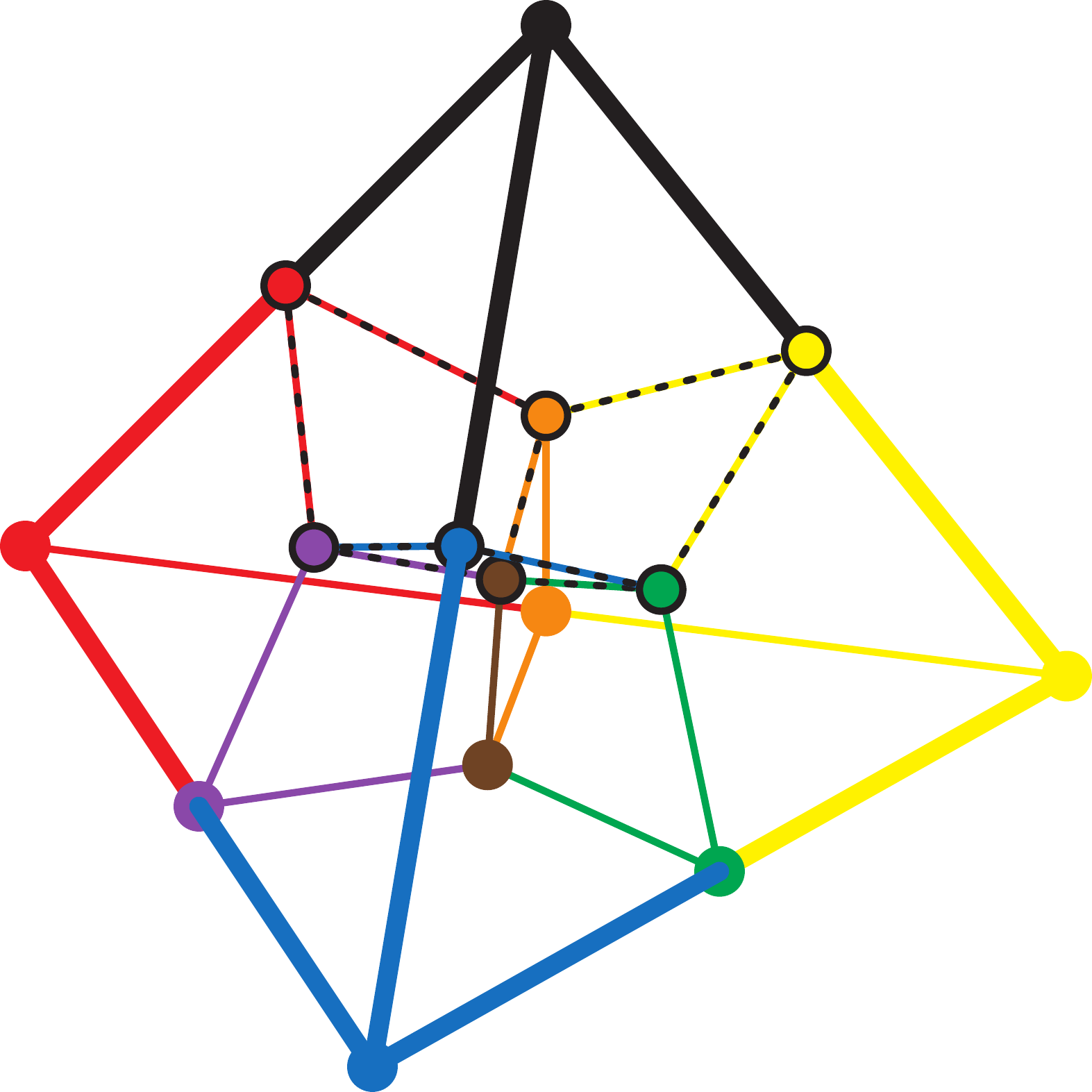}}
\caption{The decompositions $\Delta_{k-1}=\bigcup_{i\in\Z_k}Z_i$ of the $1$-, $2$-, and $3$-simplices following Rubinstein--Tillmann.}
\label{Fi:DeltaDecomp}
\end{center}
\end{figure}

For example, the triangulation of $S^3$ with two 3-simplices gives a genus 3 Heegaard splitting, as shown in Figure \ref{Fi:S3PL}.

\begin{figure}
\begin{center} 
\labellist
\small\hair 4pt
\pinlabel {$\red{v_0}$} [t] at 650 260
\pinlabel {$\NavyBlue{v_1}$} [t] at 930 260
\endlabellist
\includegraphics[width=.95\textwidth]{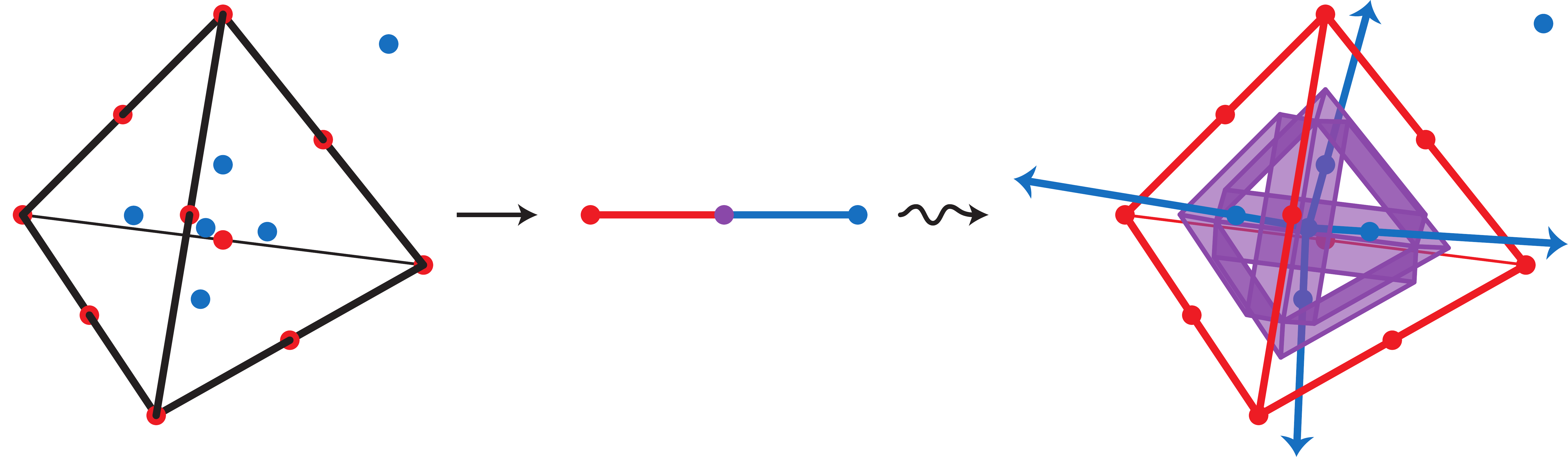}
\caption{A genus 3 Heegaard splitting (right) of $S^3$, following Rubinstein--Tillmann's construction.}
\label{Fi:S3PL}
\end{center}
\end{figure}

Following Rubinstein-Tillmann, one might try to construct a, say PL, multisection of $T^n$ using the symmetric space $T^n/S_n$, which is homeomorphic to a disk-bundle over the circle; this bundle is twisted when $n$ is even and untwisted when $n$ is odd. 

One can also view the symmetric space $T^n/S_n$ 
as an $n$-simplex $\Delta_n=[\vv_0,\hdots,\vv_n]$ with certain faces identified.  When $n=2k-1$, one can also view $\Delta_n$ as an iterated join of intervals,
\[\Delta_n=[\vv_0,\vv_1]*\cdots*[\vv_{2(k-1)},\vv_{2k-1}].\]
Hence, there is a map 
$\phi:\Delta_n\to\Delta_{k-1}=[\vv_0,\hdots,\vv_n]$ given by 
\[\phi:\x=\sum_{i=0}^{k-1}w_i(c_i\vv_{2i}+(1-c_i)\vv_{2i+1})\mapsto \sum_{i=0}^{k-1}w_i\vv_i.\]
One can then decompose $\Delta_{k-1}$ symmetrically into $k$ pieces using barycentric coordinates as in (\ref{E:RTDelta}) and Figure \ref{Fi:RT}. Following Rubinstein--Tillmann's construction of PL multisections from triangulations \cite{rt}, one might attempt to construct a multisection of $T^n$ by pulling back each $X_i$ via $\phi$, mapping forward by the quotient map $\Delta_n\to T^n/S_n$, and pulling back by the quotient map $T^n\to T^n/S_n$.

\begin{figure}
\begin{center}
\labellist
\small\hair 4pt
\pinlabel {$\red{v_0}$} [t] at 10 30
\pinlabel {$\red{v_1}$} [t] at 210 30
\pinlabel {$\NavyBlue{v_2}$} [t] at 350 70
\pinlabel {$\NavyBlue{v_3}$} [t] at 445 160
\pinlabel {$\yellow{v_4}$} [t] at 480 320
\pinlabel {$\yellow{v_5}$} [t] at 480 460
\pinlabel {$\red{v_0}$} [t] at 605 105
\pinlabel {$\NavyBlue{v_1}$} [t] at 850 105
\pinlabel {$\yellow{v_2}$} [t] at 725 410
\endlabellist
\includegraphics[width=.6\textwidth]{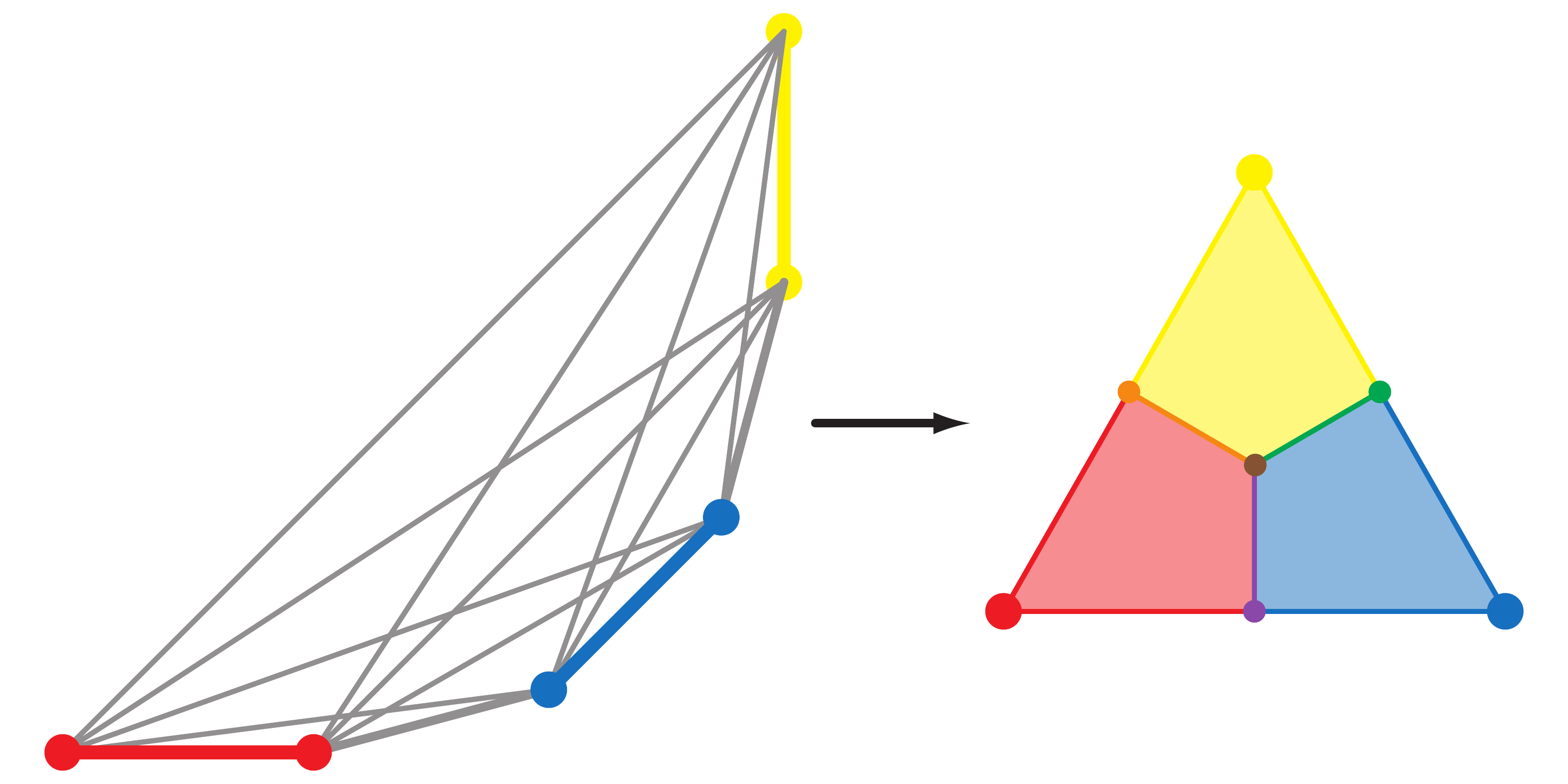}
\caption{Try viewing $T^n/S_n$ as $\Delta_n/\sim$ and $\Delta_n$ as an iterated join of $k$ intervals.  Then map $\Delta_n\to\Delta_{k-1}$, decompose $\Delta_{k-1}$, and pull back. It fails, even for $n=5$, shown.}
\label{Fi:RT}
\end{center}
\end{figure}

This construction works for $T^3$ and cuts any $T^n$ into $k$ 1-handlebodies of genus $n$.  
Unfortunately, the needed intersection properties fail, even for $T^5$, so the decomposition is not a multisection. Note that by writing 
\[\Delta_n=[\vv_0,\vv_1]*\cdots*[\vv_{2(k-1)},\vv_{2k-1}]\]
we made an asymmetric choice, and that the resulting decomposition is generally different than the one obtained by writing 
\[\Delta_n=[\vv_{\sigma(0)},\vv_{\sigma(1)}]*\cdots*[\vv_{\sigma(2k-2)},\vv_{\sigma(2k-1)}]\]
for arbitrary $\sigma\in S_n$ and then following the same procedure.

\end{document}